\font\minititre=txss at 9 pt
\font\titresection=txbss at 12 pt
\font\texte = txr at 12pt
\font\textebf = txb at 12pt
\font \texteit=txi at 12 pt
\def\id{\mathrm{Id}}
\def\C{\mathbb{C}}
\def\H{\mathbb{H}}
\def\R{\mathbb{R}}
\def\Z{\mathbb{Z}}
\def\S{\mathbb{S}}
\def\I{\mathbb{I}}
\def\sn{\mathbb{SN}}
\def\tn{\mathbb{T}}
\def\tr{\mathrm{Tr}}
\def\hom{\mathrm{{Hom}_{X}}(\R^{2})}
\def\homplus{\mathrm{{Hom^{+}}_{\hspace{-1mm}X}}(\R^{2})}
\def\cp{\mathbb{CP}}
\def\rdeux{\mathrm{R}_{2}}
\def\mt{\mathrm{M}_{T}}
\def\ft{(f_{t})_{t\in [0;1]}}
\def\ov{\overrightarrow}
\def\GT{\color{mygreen}\uparrow\color{black}}
\def\GB{\color{mygreen}\downarrow\color{black}}
\def\RT{\color{red}\uparrow\color{black}}
\def\RB{\color{red}\downarrow\color{black}}
\definecolor{myred}{rgb}{0.97,0.15,0.04}
\definecolor{mygreen}{rgb}{0.1,0.62,0.33}
\definecolor{myblue}{rgb}{0.06,0.21,0.62}
\definecolor{myblack}{rgb}{1,1,1}
\newenvironment{remarks}{~\\ \hfill  \underline{\sc \large Remarks:} \hfill~\\~\begin{enumerate}}{\end{enumerate}}
\newtheorem{prop}{{\sc \large Proposition}}{}
\newenvironment{proof}{\underline{\sc \large Proof:}}{\hfill $\square$}
\newtheorem{theo}{\underline{\sc \large Theorem}}{}
\newtheorem{defi}{Definition}
\renewcommand\section{\@startsection {section}{1}{\z@}% 
        {-3.5ex \@plus -1ex \@minus -.2ex}% 
        {2.3ex \@plus.2ex}% 
        {\reset@font\titresection}}
\renewcommand{\fnum@figure}{\minititre{\figurename~\thefigure}} 
\begin{document}
\texte
\thispagestyle{empty}
\enlargethispage{2cm}
%\begin{center}

\title{TURBULENT HOMEOMORPHISMS \\ AND THE TOPOLOGICAL SNAIL}
\author{Arnaud DEHOVE \\
\texteit{arnaud\underline{~~}dehove@yahoo.fr}}

\date{\today}

\maketitle

\section{Historical considerations}

The topological snail and the presence of turbulent fixed points appeared for the first time in public as the tittle of my conference at the dynamical systems seminar of the IMJ-PRG in Paris (Sorbonne-University, Jussieu), may 2024 the 31st. Nontheless, they had occupied my mind all along those years, since my first personnal reflexions about my doctoral thesis, when I perceived their presence. The counter-example to the main conjecture that founded my research already confronts turbulent fixed points and linking numbers (see \cite{ad1}, \cite{ad2}, \cite{ad3}). I imagined it during 1995 and published it with my thesis for the first time. Since then, it took me a long time to begin to understand and name those turbulent fixed points. Prior I had to convince myself that for both theoritical and practical reasons, not only the concept of a linking number between fixed points and periodic orbits of surface homeomorphisms had to be clarified, but also important basic tools of plane topology, as for example the Jordan curve theorem and the Schoenflies theorems. And this also took me years.

From this point of view, the project of a general elementary presentation of plane topology still seems important to me. Many ideas, that reveal fruitful in the present paper, directly come from a fundamental reflexion on plane topology. I have already begun to expose some of those ideas at the IMJ-PRG in Paris, back in november 2011. The spining skeleton theorem, the turbulent fixed points theorem and the topological snail must be considered not only as effective progress in topological dynamics, but also as steps and applications of this general project. And I also hold that the original formulation of the turbulent fixed points theorem is relevant. It is exposed here in the simplest manner, but has direct generalizations and many important illustrations that go back to my thesis, with related crucial theoritical and physical considerations. I will of course describe all them in this paper. They not only all contribute to enhance the topological snail as a new dynamical tool. They also give, in my view, the best possible justification for a foundational reflexion on plane topology.

\section{The original turbulent fixed points theorem.}

\begin{theo}\label{turbulent}

Three points, located on the axis $\Delta$ of a plane axial symetry $\sigma$, move during an isotopy to a homeomorphism $f$ : from $\sigma$ in the orientation reversing case, or from the identity otherwise. And one supposes that during this displacement, the middle point always flies to the left or to the right of on of the other two on $\Delta$, this displacement being successively followed by one or a certain number of analogous displacements of the midle point progressively appearing (see figure \ref{figure1}).  

%%FIGURE%%
%%FIGURE%%
%%FIGURE%%
\begin{figure}[htpb]
\begin{picture}(45,100)(-110,-15)

\color{mygreen}

\put(0,0){\qbezier[300](0,0)(22,30)(48,2)}
\put(48, 2){\vector(1,-1){0}}

\color{black}

\put(45,17){\makebox(0,0)[bc]{$f_{B}$}}

\put(-30,0){\circle*{1}}
\put(0,0){\circle*{1}}
\put(-80,2){\makebox(0,0)[bc]{$\Delta$}}
\put(0,0){\circle*{1}}
\put(30,0){\circle*{1}}

\put(-80,62){\makebox(0,0)[bc]{$\Delta$}}
\put(-30,60){\circle*{1}}
\put(0,60){\circle*{1}}
\put(0,60){\circle*{1}}
\put(30,60){\circle*{1}}

\put(-80, 60){\line(1,0){160}}
\put(80,60){\vector(1,0){0}}
\put(-80, 0){\line(1,0){160}}
\put(80,0){\vector(1,0){0}}

\put(-30,58){\makebox(0,0)[tc]{$p_{1}$}}
\put(-0,58){\makebox(0,0)[tc]{$ p_{2}$}}
\put(30,58){\makebox(0,0)[tc]{$ p_{3}$}}

\put(-30,-2){\makebox(0,0)[tc]{$p_{1}$}}
\put(-0,-2){\makebox(0,0)[tc]{$p_{2}$}}
\put(30,-2){\makebox(0,0)[tc]{$p_{3}$}}

\put(-45,77){\makebox(0,0)[bc]{$f_{A}$}}

\color{mygreen}
\put(0, 60){\qbezier[300](0,0)(-25,30)(-48,2)}
\put(-48, 62){\vector(-1,-1){0}}

\end{picture}
\caption{Displacements of the middle point, to the left or to the right relatively to $X= \Big\{ p_{0}; p_{1}; p_{2} \Big\}$ on $\Delta$. \label{figure1}}
\end{figure}
%%FIGURE%%
%%FIGURE%%
%%FIGURE%%

One then supposes that the three points form an invariant set $X$ of the homeomorphism $f$ and set 
$$A = \left[\begin{array}{cc} 1 & 1 \\ 0 & 1 \end{array} \right]~~~\mathrm{and}~~~B = \left[\begin{array}{cc} 1 & 0 \\ 1 & 1 \end{array} \right]\, .$$

We call turbulence matrix of $f$ relative to $X$ and $\Delta$ the matrix $\mt$ obtained by successively multipliying on the left the matrices $A$ or $B$ according to the correspondant displacements, to the left or to the right, of the middle point of $X$. Let $\lambda$ be the bigger proprer value of $\mt$.

Then the number $N_{X}(f)$ of Nielsen classes of fixed points of $f$ relative to $X$ is at least the trace $\mathrm{Tr}(\mt)$ of the turbulence matrix and its topological entropy is a least $\ln(\lambda)$.
\end{theo}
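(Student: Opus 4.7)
The plan is to pass to the canonical double cover of $S^{2}=\R^{2}\cup\{\infty\}$ branched over $X\cup\{\infty\}$, which is a torus $\tn^{2}$, and to identify $\mt$ with the action on $H_{1}(\tn^{2};\Z)$ of a natural lift of $f$; both the entropy bound and the Nielsen count then follow from Thurston--Nielsen theory on $\tn^{2}$.

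First I reinterpret the hypothesis as a braid. The isotopy $\ft$ together with the three moving points defines a three-strand braid $\beta$ in $\R^{2}\times[0;1]$, and each successive flight of the current middle point over its left- or right-hand neighbour is precisely an elementary half twist, i.e.\ a generator $\sigma_{1}^{\pm 1}$ or $\sigma_{2}^{\pm 1}$ of the Artin braid group $B_{3}$. Reading off the sequence of moves produces a braid word whose time-$1$ homeomorphism coincides with $f$ rel $X$, once the prescribed initial condition $\sigma$ or $\id$ has been fixed. Under the branched double cover above, the two half twists lift to Dehn twists about the two standard generators of $H_{1}(\tn^{2};\Z)$, and their action on $H_{1}(\tn^{2};\Z)\cong\Z^{2}$, in a suitable symplectic basis, is given exactly by the matrices $A$ and $B$. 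Consequently, the canonical lift $\tilde f$ of $f$ (the one obtained by following the lifted isotopy from the lift of $\sigma$ or $\id$) acts on $H_{1}(\tn^{2};\Z)$ precisely as $\mt$.

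The entropy bound now follows from the linear theory of $SL_{2}(\Z)$ on $\tn^{2}$: the entropy-minimising representative in the isotopy class of $\tilde f$ is the linear automorphism $\mt$ itself, Anosov with topological entropy $\ln\lambda$ as soon as $\lambda>1$. Hence $h(\tilde f)\geq\ln\lambda$, and since $\tilde f$ is a finite-to-one extension of $f$ one has $h(f)=h(\tilde f)\geq\ln\lambda$. For the Nielsen count, the Lefschetz number of $\tilde f$ is $2-\tr(\mt)$ or $-\tr(\mt)$ according to the orientation case (in the reversing case $\tilde f$ composes with the hyperelliptic involution), and because $\mt$ has non-negative integer entries all algebraic fixed points carry the same sign; after subtracting the branch-point contributions, one recovers at least $\tr(\mt)$ essential Nielsen classes of $f$ relative to $X$.

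The main technical obstacle is this last bookkeeping step. One must check that the sign conventions for the half twists match $A$ and $B$ exactly as stated, that in the reversing case the initial condition $\sigma$ is lifted to the correct hyperelliptic involution on $\tn^{2}$, and that the contributions of the branch points and of the points of $X$ themselves are properly stripped from the Lefschetz--Nielsen count so as to leave precisely $\tr(\mt)$ (and not $\tr(\mt)\pm 2$). Once these conventions are nailed down, both conclusions reduce to the classical dynamics of hyperbolic $SL_{2}(\Z)$ matrices on $\tn^{2}$.
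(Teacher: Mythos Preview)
Your approach via the hyperelliptic double cover $T^{2}\to S^{2}$ branched over $X\cup\{\infty\}$ is classical (Birman--Hilden) and genuinely different from the paper's. The paper never passes to the torus; it builds the \emph{topological snail} $\sn(n;p)$ as the spinning skeleton of $f([p_{1};p_{3}])$, shows that $\mt$ records the minimal intersection numbers of this snail with two reference arcs (Theorem~\ref{decomposition}), and then outlines a direct Smale--horseshoe argument: a good representative $F$ in the isotopy class sends $[p_{1};p_{3}]$ onto its snail, and the resulting Markov crossings over $[p_{1};p_{2}]$ and $[p_{2};p_{3}]$ produce the fixed points, with $\tr(\mt)$ counting the diagonal crossings of this $2\times2$ scheme (see the opening of \S\ref{redressement} and the remark in \S9 about ``Steve Smale's method'' and branching points of the inner tree). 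Your route trades this explicit planar combinatorics for linear algebra on $H_{1}(T^{2})$; this is cleaner for the entropy bound, which your argument does settle completely.

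For the Nielsen count, however, there is a real gap, not just bookkeeping. The Lefschetz number of the linear map $\mt$ on the \emph{closed} torus is $2-\tr(\mt)$, so the Anosov model has exactly $\tr(\mt)-2$ fixed points there, each its own Nielsen class. The origin is always one of them and is a branch point; depending on $\rdeux(f)$ further branch points are fixed; and the remaining ones descend to the sphere in $\iota$-pairs. No honest subtraction of branch contributions turns $\tr(\mt)-2$ into $\tr(\mt)$: the closed-surface Lefschetz count simply does not see the refinement of Nielsen classes that comes from working \emph{relative to} $X$ (equivalently, on the punctured surface). To recover $\tr(\mt)$ along your lines you must compute on the $4$-punctured sphere itself, for instance via the two-edge train track whose transition matrix is $\mt$ (its subshift has exactly $\tr(\mt)$ fixed symbol sequences, each giving an essential relative Nielsen class), or via the reduced Burau trace. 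The paper's snail is precisely a geometric realisation of that train track, which is why the trace appears there without any correction term.
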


\begin{remarks}
\item In the case of an orientation preserving homeomorphism, the turbulence matrix $M_{T}(f)$ is obtained from $\id$ by successively multipliying on the left the matrices $A$ or $B$ according to the correspondant displacements, to the left or to the right, of the middle point of $X$. In the case of an orientation reversing homeomorphism, it is obtained similarily but from the initial matrix $Y=\left[\begin{array}{cc} -1 & 0 \\ 0 & 1 \end{array}\right]$ that is associated to $\sigma$. See below theorem \ref{canonicalform} page \pageref{canonicalform} and theorem \ref{classification} page \pageref{classification} for the explanation of this additionnal fact.

\item Similarily, the matrix $M_{T}$ must be considered as an element of $\mathrm{PSL}_{2}(\Z)$ and its trace as the absolute value $\vert \tr(\mt) \vert$. See theorem \ref{canonicalform} page \pageref{canonicalform} and theorem \ref{classification} page \pageref{classification} .
\end{remarks}

\section{Homeomorphisms of the type $A$ and of the type $B$.}

Let us first fix the horizontal oriented axis $\Delta^{+}$ defined in $\R^{2}$ by the equation $y=0$, and its classical positive orientation. The axial symetry about $\Delta^{+}$ is therefore given by $\sigma(x;y) = (x;-y)$. There is a half-plane located on the left of $\Delta^{+}$ and defined by 
$$\H^{+} = \lbrace (x;y) \in \R^{2}~|~ y>0 \rbrace\, .$$
I call this half-plane the top half-plane, as opposed to the bottom half-plane equal to $\H^{-}~=~\sigma(\H^{+})$. I suppose that $X = \lbrace p_{1}; p_{2}; p_{3} \rbrace \subset \Delta^{+}$ contains three different points with coordinates $p_{1}(x_{1}; 0)$; $p_{2}(x_{2}; 0)$; $p_{3}(x_{3}; 0)$ and $x_{1} < x_{2} < x_{3}$. I will first define the notion of a top-displacement of the middle point, to the right or to the left and generally restrict myself to the case of homeomorphisms that preserve the orientation. More precisely, I will always keep in mind that an homeomorphism $f$ has always a conjugated one $f\circ \sigma$ and that they must be studied together. I will call positively turbulent a homeomorphism $f$ that is a finite product of homeomorphisms of both types, $A$ and $B$, and turbulent an homeomorphism that is conjugated to some positively turbulent homeomorphism. 

From a general point of view, I will use and connect two ways of considering homeomorphisms. First the cinematic point of view. In this point of view, a homeomorphism is defined as the deformation of another one during an isotopy. This isotopy is often classicaly given by $(f_{t})_{t\in [0;1]}$, with $f_{0}=\id$. Historically, the flow of differential equations gave birth to homeomorphisms of surfaces and this point of view is related with the suspension of the deformation that associates braids and braid types to the periodic orbits of homeomorphisms.
The second point of view is a static, graphic or geometric point of view. A homeomorphism is given by its direct action on points, curves or geometric objects. Of course, the iteration of a homeomorphism still induces dynamics and the cinematic and static points of view are closely related. Both are in fact necessary to understand the behaviour of homeomorphisms and the appearance of fixed points and periodic orbits.

Let me first give a dynamical definition of a top displacement to the left of the middle point and explain how this definition can be generally understood.

\begin{defi}
A top-displacement to the left of the middle point of $X$ relatively to $\Delta^{+}$, or simply a homeomorphism of the type $A$ relatively to $X$ and $\Delta^{+}$, is a homeomorphism $f_{A}$ of the plane $\R^{2}$ such that 
$$f_{A}(p_{1})=p_{2},~~~~ f_{A}(p_{2})=p_{1},~~~~ f(p_{3})=p_{3}$$

and such that there exists an isotopy $(f_{t})_{t\in [0;1]}$ with $f_{0} = \id$, $f_{1} = f_{A}$ and for any $t\in ]0;1[$, 
$$f_{t}(p_{1}) \in \Delta^{+},~~~~~f_{t}(p_{3}) \in \Delta^{+},~~~~\mathrm{and}~~ f_{t}(p_{2}) \notin \H^{-}\, .$$

Similarily, a top-displacement to the right of the middle point of $X$ relatively to $\Delta^{+}$, or simply a homeomorphism of the type $B$ relatively to $X$ and $\Delta^{+}$, is a homeomorphism $f_{B}$ of the plane $\R^{2}$ such that 
$$f_{B}(p_{1})=p_{1},~~~~ f_{B}(p_{2})=p_{3},~~~~ f_{B}(p_{3})=p_{2}$$
and
there exists an isotopy $(f_{t})_{t\in [0;1]}$ such that $f_{0} = \id$, $f_{1} = f$ and for any $t\in ]0;1[$, $f_{t}(p_{1}) \in \Delta^{+}$, $f_{t}(p_{3}) \in \Delta^{+}$ and $f_{t}(p_{2}) \notin \H^{-}$.
\label{definition1}
\end{defi}

%%FIGURE%%
%%FIGURE%%
%%FIGURE%%
\begin{figure}[htpb]

\begin{picture}(45,50)(-110,-15)

\color{black}
\put(-80,02){\makebox(0,0)[bc]{$\Delta$}}
\put(75,02){\makebox(0,0)[bc]{$\H^{+}$}}
\put(-30,0){\circle*{1}}
\put(0,0){\circle*{1}}
\put(0,0){\circle*{1}}
\put(30,0){\circle*{1}}

\put(-30,-2){\makebox(0,0)[tc]{$p_{1}$}}
\put(-0,-2){\makebox(0,0)[tc]{$p_{2}$}}
\put(30,-2){\makebox(0,0)[tc]{$p_{3}$}}

\put(-80, 0){\line(1,0){160}}
\put(80,0){\vector(1,0){0}}

\put(-15,12){\makebox(0,0)[bc]{$f_{A}$}}

\color{mygreen}
\put(0, 0){\qbezier[300](-2,3)(-15,15)(-28,2)}
\put(-28, 2){\vector(-1,-1){0}}
\put(-5, 0){\vector(1,0){0}}
\put(-15, 0){\vector(1,0){0}}
\put(-25, 0){\vector(1,0){0}}

\end{picture}

\caption{A top-displacement to the left or type $A$ homeomorphism relatively to $X= \Big\{ p_{0}; p_{1}; p_{2} \Big\}$ on $\Delta$.\label{figure2}}
\end{figure}
%%FIGURE%%
%%FIGURE%%
%%FIGURE%%

%%FIGURE%%
%%FIGURE%%
%%FIGURE%%
\begin{figure}[htpb]
\begin{picture}(45,50)(-110,-15)

\color{black}

\put(-80,02){\makebox(0,0)[bc]{$\Delta$}}

\put(-30,0){\circle*{1}}
\put(0,0){\circle*{1}}
\put(0,0){\circle*{1}}
\put(30,0){\circle*{1}}

\put(-80, 0){\line(1,0){160}}
\put(80,0){\vector(1,0){0}}

\put(-30,-2){\makebox(0,0)[tc]{$p_{1}$}}
\put(-0,-2){\makebox(0,0)[tc]{$p_{2}$}}
\put(30,-2){\makebox(0,0)[tc]{$p_{3}$}}

\put(80,02){\makebox(0,0)[bc]{$\H^{+}$}}

\put(15,12){\makebox(0,0)[bc]{$f_{B}$}}

\color{mygreen}
\put(0, 0){\qbezier[300](2,3)(15,15)(28,2)}
\put(28, 2){\vector(1,-1){0}}
\put(5, 0){\vector(-1,0){0}}
\put(15, 0){\vector(-1,0){0}}
\put(25, 0){\vector(-1,0){0}}

\end{picture}

\caption{A top-displacement to the right or type $B$ homeomorphism relatively to $X= \Big\{ p_{0}; p_{1}; p_{2} \Big\}$ on $\Delta$.\label{figure3}}
\end{figure}
%%FIGURE%%
%%FIGURE%%
%%FIGURE%%

\begin{remarks}

\item The notions of a homeomorphism of the type $A$ and of the type $B$ play a symetric role and the general remarks relative to the homeomorphisms of the type $A$ are also true for the homeomorphisms of the type $B$ in the corresponding symetric formulation.
\item The set of homeomorphisms $f$ of the type $A$ relative to $\Delta^{+}$ and $X\subset \Delta$ form a unique isotopy class of homeomorphisms relative to $X$. Suppose that $f$ is of the type $A$ relatively to $X$ on $\Delta^{+}$ and that $g$ is isotopic to $f$ relatively to $X$. There is an isotopy $(\varphi_{t})_{t\in [0;1]}$ such that $\varphi_{0} = f$ and $\varphi_{1} = g$ and $\varphi_{t}(X) = X$ for all $t\in [0;1]$. And if  $(f_{t})_{t\in [0;1]}$ is an isotopy with the required conditions (see definition \ref{definition1}), one can start from $f_{0} = \id$, follow $f_{2t}$ for $t\in [0;\frac{1}{2}]$ to $f_{1} = f$, and then $(\varphi_{2t-1})$ for $t\in [\frac{1}{2};1]$ to $g$ to define an isotopy with the required properties. One can therefore always consider a particular homeomorphism of the type $A$, and use the fact that all the other homeomorphisms of the type $A$ constitute its homotopy class relatively to $X$.

\item Once the condition to exchange $p_{1}$ and $p_{2}$ and to fix $p_{3}$ is given, one can also compose an isotopy $(f_{t})_{t\in [0;1]}$ from $f_{0} = \id$ to $f_{1} = f$ with a unique isotopy of affine homeomorphisms to satisfy (see figure \ref{figure2}) the conditions  
$$f_{t}(p_{1}) = p_{1} + t (p_{2} - p_{1})~~\mathrm{and}~~ f_{t}(p_{3})= p_{3}\, .$$ 

The condition to be of the type $A$ is then equivalent to the fact that the continuous determination 
$\theta(t)$ of the argument of $\displaystyle \frac{f_{t}(p_{2}) - f_{t}(p_{1})}{f_{t}(p_{3}) - f_{t}(p_{1})}$ such that $\theta(0) = 0$ also satisfy $\theta(1) = \pi$. Once one simply set $p_{2} \in [p_{1}: p_{3}] \subset \Delta$, one can always consider this general condition as a criterium for a homeomorphism to be of the type $A$ relatively to a set $X=\big\{ p_{1}; p_{2}; p_{3} \big\}$ on a line $\Delta^{+}$ that contains $X$. 

\item Consider for example the homeomorphism $f$ defined in complex coordinates by  
$$f(z) = z \times e^{\frac{i \pi}{3} \times (4 - \vert z \vert^{2})}\, , $$
and the points $p_{1} = -1$; $p_{2} = 1$ and $p_{3} = 2$. 

Check that one has $f(p_{1}) = p_{2}$, $f(p_{2})= p_{1}$ and $f(p_{3})= p_{3}$ and that during the isotopy defined for $t\in [0;1]$ by 
$$f_{t}(z) = z \times e^{\frac{i \pi}{3} \times t \times (4-  \vert z \vert^{2})}\, ,$$

 the argument of the complex number $\displaystyle \frac{f_{t}(p_{2}) - f_{t}(p_{1})}{f_{t}(p_{3}) - f_{t}(p_{1})}$ is constantly increasing from $0$ to $\pi$. This homeomorphism is of the type $A$ relative to $p_{1}$; $p_{2}$; $p_{3}$.
 
 \item  If generally a homeomorphism $f_{\varphi}$ is of the form~:
 $$f_{\varphi}(z) = z \times e^{i \varphi(\vert z \vert)}\, , $$
then I call this homeomorphism a {\texteit rotary homeomorphism} around the center $O$. 

Such an homeomorphism $f_{\varphi}$ is of the type $A$ relatively to $p_{1} = -1$; $p_{2} = 1$ and $p_{3} = 3$ if and only if $\varphi(1)=(2k+1) \pi$ and $\varphi(2) = 2k \pi$ for some $k\in \Z$.  The homeomorphisms that preserve the distance to a given point $C$ are called the {\texteit rotary homeomorphisms around $C$}. 
\item It is a good idea to consider the case of points $p_{1}$; $p_{2}$, $p_{3}$ such that in complex coordinates $p_{1} = -2$; $p_{2} = 0$ and $p_{3} = 2$, and rotary homeomophisms around the points $I = -1$ and $J=1$. If $S$ is the axial symetry about the line $x=0$, defined by $S(x; y) = (-x;y)$, then for any homeomorphism $f$ such that $f(X) =X$, the conjugated homeomorphism $g =S \circ f \circ S^{-1} = S \circ f \circ S$ also satisfies $g(X)=X$. Furthermore, $f$ is of the type $A$ if and only if $g$ is of the type $B$. If $\varphi$ is a continuous function such that $\varphi(1) = \pi$ and $\varphi(2) = 0$, then setting 
$$L_{\varphi}(z) = -1 + (z+1) e^{i\varphi(\vert z+1 \vert )}\, ,$$
one defines a type $A$ homeomorphism relatively to $X$. Similarily,
$$R_{\varphi}(z) = 1 + (z-1) e^{-i\varphi(\vert z-1 \vert )}\, , $$
defines a homeomorphism of the type $B$ relatively to $X$. Those homeomorphisms give the simplest examples of turbulent homeomorphisms, and some very nice geometric properties allow to describe their fixed points and periodic orbits (see the following section to be devoted to this case). 
 %%FIGURE%%
%%FIGURE%%
%%FIGURE%%
\begin{figure}[htpb]

\begin{picture}(45,50)(-110,-20)

\color{black}
\put(-80,02){\makebox(0,0)[bc]{$\Delta$}}
\put(75,02){\makebox(0,0)[bc]{$\H^{+}$}}
\put(-30,0){\circle*{1}}
\put(0,0){\circle*{1}}
\put(0,0){\circle*{1}}
\put(30,0){\circle*{1}}

\put(-30,2){\makebox(0,0)[bc]{$p_{1}$}}
\put(0,2){\makebox(0,0)[bc]{$p_{2}$}}
\put(30,-2){\makebox(0,0)[tc]{$p_{3}$}}

\put(-80, 0){\line(1,0){160}}
\put(80,0){\vector(1,0){0}}
\put(-15,-12){\makebox(0,0)[tc]{$f_{A}^{-1}$}}

\color{mygreen}
\put(0, 0){\qbezier[300](-2,-3)(-15,-15)(-28,-2)}
\put(-28, - 2){\vector(-1,1){0}}
\put(-5, 0){\vector(1,0){0}}
\put(-15, 0){\vector(1,0){0}}
\put(-25, 0){\vector(1,0){0}}

\end{picture}

\caption{The inverse of a homeomorphism of the type $A$  relatively to $X= \Big\{ p_{0}; p_{1}; p_{2} \Big\}$ on $\Delta$.\label{figure4}}
\end{figure}
%%FIGURE%%
%%FIGURE%%
%%FIGURE%%

\item The inverse of a homeomorphism of the type $A$ is a bottom displacement of the middle point to the left (see figure \ref{figure4}). To understand this fact, one can for example consider an isotopy $(f_{t})_{t\in[0;1]}$ from $f_{0} = \id$ to $f_{1} = f$ with the properties required by the definition \ref{definition1} and set 
$$h_{t} = f_{1-t} \circ f^{-1}~~~~\mathrm{for}~~~~t\in [0;1]\, .$$ 
One then has $h_{0} = \id$, $h_{1} = f^{-1}$, $h_{t}(p_{1}) = f_{1-t}(p_{2}) \notin \H^{-}$, $h_{t}(p_{2}) = f_{1-t}(p_{1}) \in \Delta^{+}$ and $h_{t}(p_{3}) = f_{1-t}(p_{3}) \in \Delta^{+}$. Furthermore, if one composes $h_{t}$ with the isotopy $g_{t}$ of affine homeomorphisms such that $g_{t}(h_{t}(p_{1})) = p_{1} + t \ov{p_{1}p_{2}}$ and $g_{t}(h_{t}(p_{3}))=p_{3}$, necessarily $g_{t}\circ h_{t}(p_{2}) \notin \H^{+}$. Similarily, an homeomorphism $f$ such that there exists an isotopy $(f_{t})_{t\in [0;1]}$ with $f_{0} = \id$ and $f_{1}=f$, $f(p_{1}) = p_{2}$, $f(p_{2}) = p_{1}$, $f(p_{3})= p_{3}$; $f_{t}(p_{2}) \in \Delta^{+}$, $f_{t}(p_{3}) \in \Delta^{+}$ and $f_{t}(p_{1}) \notin \H^{+}$ is of the type $A$.    

\item  One can also give a graphical definition of a homeomorphism $f$ of the type $A$ by considering its action on homotopy classes of curves in $\R^{2} \setminus X$. In fact, they are many different ways to do so. From a theorical point of view, a curve in $\R^{2}\setminus X$ is a continuous image of an  interval $\I$ or of the circle $\S^{1}= \R / \Z$. One generally considers homotopy classes or isotopy classes of curves, and one can use the space $\S^{2} \setminus X = \overline{\C} \setminus X$ instead of $\R^{2} \setminus X$. If $\I$ is an open interval the considered curves must have limits in $X$ or $\infty \in \overline{\C}$. One can also join those curves together to define multicurves, glue them and associate them together using limit processes, identify the different classes of positive and negative parametrizations. Or consider the homotopy group of $\overline{\C} \setminus X$ with  $\infty$ as a base point. Note that the three points of view will be useful in this paper.

For example, if one considers properly embeded continuous lines in $\R^{2}$. Let $\Delta_{1}$ and $\Delta_{2}$ the respective homotopy classes of the mediatrices of the segments $[p_{1};p_{2}]$ and $[p_{2}; p_{3}]$. A homeomorphism $f$ is isotopic to $\id$ in $\R^{2} \setminus X$ if and only if $f(\Delta_{1}) \equiv \Delta_{1}$ and $f(\Delta_{2}) \equiv \Delta_{2}$. If $f$ is a homeomorphism of the type $A$ then $f(\Delta_{2})$ is homotopic to  $\Delta_{2}$ and $f(\Delta_{1})$ is homotopic to $\Delta_{1} \cup \Delta_{2}$. But this fact is also true of $f^{-1}$ and it is not an equivalence. To understand this situation (see figure \ref{figure5} and \ref{figure6}), on can imagine that the limit muticurve defined by $\Delta_{1} + \Delta_{2}$ is on the frontier of two different regions, one containing $f(\Delta_{1})$ and the other $f^{-1}(\Delta_{1})$. 

%%FIGURE%%
%%FIGURE%%
%%FIGURE%%
\begin{figure}[htpb]

\begin{picture}(45,110)(-110,-55)

\color{black}

\put(-40,0){\circle*{1}}
\put(0,0){\circle*{1}}
\put(40,0){\circle*{1}}

\put(-40,-2){\makebox(0,0)[tc]{$ p_{1}$}}
\put(0,-2){\makebox(0,0)[tc]{$p_{2}$}}
\put(40,-2){\makebox(0,0)[tc]{$p_{3}$}}

\put(-80,2){\makebox(0,0)[bc]{$\Delta$}}
\put(-80, 0){\line(1,0){160}}
\put(80,0){\vector(1,0){0}}

\color{mygreen}

\put(-20,-50){\line(0,1){100}}
\put(-22,50){\makebox(0,0)[tr]{$\Delta_{1}$}}
\put(20,-50){\line(0,1){100}}
\put(18,50){\makebox(0,0)[tr]{$\Delta_{2}$}}

%C1
%S=0.414 fois R; T = O.707 fois R
%R=20 SA = 8.28 ; T=14.14
\put(-40,0){\qbezier[100](-10,0)(-10,4.14)(-7.07,7.07)}
\put(-40,0){\qbezier[100](-7.07,7.07)(-4.14,10)(0,10)}
\put(-40,0){\qbezier[100](0,10)(4.14,10)(7.07,7.07)}
\put(-40,0){\qbezier[100](7.07,7.07)(10,4.14)(10,0)}

\put(-50,-50){\line(0,1){50}}
\put(-30,-50){\line(0,1){50}}
\put(-52,-50){\makebox(0,0)[tr]{$\Delta_{1}$}}

%C1
%S=0.414 fois R; T = O.707 fois R
%R=20 SA = 8.28 ; T=14.14
\put(40,0){\qbezier[100](-10,0)(-10,4.14)(-7.07,7.07)}
\put(40,0){\qbezier[100](-7.07,7.07)(-4.14,10)(0,10)}
\put(40,0){\qbezier[100](0,10)(4.14,10)(7.07,7.07)}
\put(40,0){\qbezier[100](7.07,7.07)(10,4.14)(10,0)}

\put(50,-50){\line(0,1){50}}
\put(52,-50){\makebox(0,0)[tr]{$\Delta_{2}$}}

\put(30,-50){\line(0,1){50}}
\put(18,50){\makebox(0,0)[tr]{$\Delta_{2}$}}

\put(20,-50){\line(0,1){100}}

\color{red}
%C1
%S=0.414 fois R; T = O.707 fois R
%R=10 SA = 8.28 ; T=14.14
\put(0,0){\qbezier[100](-10,0)(-10,4.14)(-7.07,7.07)}
\put(0,0){\qbezier[100](-7.07,7.07)(-4.14,10)(0,10)}
\put(0,0){\qbezier[100](0,10)(4.14,10)(7.07,7.07)}
\put(0,0){\qbezier[100](7.07,7.07)(10,4.14)(10,0)}

\put(-10,-50){\line(0,1){50}}
\put(-15,-50){\line(0,1){75}}
\put(-18,-50){\line(0,1){100}}
\put(10,-50){\line(0,1){50}}
\put(15,-50){\line(0,1){75}}

\put(18,-50){\line(0,1){100}}

\put(11,-50){\makebox(0,0)[tl]{$f(\Delta_{1})$}}

%C1
%S=0.414 fois R; T = O.707 fois R
%R=15 SA = 6.21 ; T=10.67
\put(0,25){\qbezier[100](-15,0)(-15,6.21)(-10.67,10.67)}
\put(0,25){\qbezier[100](-10.67,10.67)(-6.21,15)(0,15)}
\put(0,25){\qbezier[100](0,15)(6.21,15)(10.67,10.67)}
\put(0,25){\qbezier[100](10.67,10.67)(15,6.21)(15,0)}

\put(0,45){{\vector(0,1){5}}}

\put(0,15){{\vector(0,1){5}}}

\color{black}
\put(0, 0){\qbezier[300](0,0)(-40,40)(-60,2)}
\put(-60, 2){\vector(-1,-1){0}}
\put(-40,25){\makebox(0,0)[bc]{$f$}}

\put(-40, 0){\qbezier[300](0,0)(20,-15)(37,-2)}
\put(-3, -2){\vector(2,1){0}}
\put(-25,-15){\makebox(0,0)[bc]{$f$}}

\color{mygreen}

\end{picture}
\caption{A homeomorphism of the type $A$ acts on $\Delta_{1}$ and $\Delta_{2}$ and in some sense $f(\Delta_{1}) = \Delta_{1} + \Delta_{2}$.  \label{figure5}}
\end{figure}
%%FIGURE%%
%%FIGURE%%
%%FIGURE%%

%%FIGURE%%
%%FIGURE%%
%%FIGURE%%
\begin{figure}[htpb]

\begin{picture}(45,110)(-110,-55)

\color{black}
\put(-40,0){\circle*{1}}
\put(0,0){\circle*{1}}
\put(40,0){\circle*{1}}
\put(-40,-2){\makebox(0,0)[tc]{$p_{1}$}}
\put(0,-2){\makebox(0,0)[tc]{$p_{2}$}}
\put(40,-2){\makebox(0,0)[tc]{$p_{3}$}}
\put(-80,2){\makebox(0,0)[bc]{$\Delta$}}
\put(-80, 0){\line(1,0){160}}
\put(80,0){\vector(1,0){0}}

\color{mygreen}
\put(-20,-50){\line(0,1){100}}
\put(-22,50){\makebox(0,0)[tr]{$ \Delta_{1}$}}
\put(20,-50){\line(0,1){100}}

%C1
%S=0.414 fois R; T = O.707 fois R
%R=20 SA = 8.28 ; T=14.14
\put(-40,0){\qbezier[100](-10,0)(-10,-4.14)(-7.07,-7.07)}
\put(-40,0){\qbezier[100](-7.07,-7.07)(-4.14,-10)(0,-10)}
\put(-40,0){\qbezier[100](0,-10)(4.14,-10)(7.07,-7.07)}
\put(-40,0){\qbezier[100](7.07,-7.07)(10,-4.14)(10,0)}

\put(-50,50){\line(0,-1){50}}
\put(-30,50){\line(0,-1){50}}
\put(-51,50){\makebox(0,0)[tr]{$\Delta_{1}$}}

%C1
%S=0.414 fois R; T = O.707 fois R
%R=20 SA = 8.28 ; T=14.14
\put(40,0){\qbezier[100](-10,0)(-10,4.14)(-7.07,7.07)}
\put(40,0){\qbezier[100](-7.07,7.07)(-4.14,10)(0,10)}
\put(40,0){\qbezier[100](0,10)(4.14,10)(7.07,7.07)}
\put(40,0){\qbezier[100](7.07,7.07)(10,4.14)(10,0)}
\put(50,-50){\line(0,1){50}}
\put(51,-50){\makebox(0,0)[tr]{$\Delta_{2}$}}
\put(30,-50){\line(0,1){50}}
\put(21,50){\makebox(0,0)[tl]{$\Delta_{2}$}}
\put(20,-50){\line(0,1){100}}

\color{red}
%C1
%S=0.414 fois R; T = O.707 fois R
%R=10 SA = 8.28 ; T=14.14
\put(0,0){\qbezier[100](-10,0)(-10,-4.14)(-7.07,-7.07)}
\put(0,0){\qbezier[100](-7.07,-7.07)(-4.14,-10)(0,-10)}
\put(0,0){\qbezier[100](0,-10)(4.14,-10)(7.07,-7.07)}
\put(0,0){\qbezier[100](7.07,-7.07)(10,-4.14)(10,0)}

\put(-10,50){\line(0,-1){50}}
\put(-15,50){\line(0,-1){75}}
\put(-18,50){\line(0,-1){100}}
\put(10,50){\line(0,-1){50}}
\put(15,50){\line(0,-1){75}}

\put(18,50){\line(0,-1){100}}

\put(12,50){\makebox(0,0)[bl]{$f^{-1}(\Delta_{1})$}}

%C1
%S=0.414 fois R; T = O.707 fois R
%R=15 SA = 6.21 ; T=10.67
\put(0,-25){\qbezier[100](-15,0)(-15,-6.21)(-10.67,-10.67)}
\put(0,-25){\qbezier[100](-10.67,-10.67)(-6.21,-15)(0,-15)}
\put(0,-25){\qbezier[100](0,-15)(6.21,-15)(10.67,-10.67)}
\put(0,-25){\qbezier[100](10.67,-10.67)(15,-6.21)(15,0)}

\put(0,-45){{\vector(0,-1){5}}}

\put(0,-15){{\vector(0,-1){5}}}

\color{black}
\put(0, 0){\qbezier[300](0,0)(-40,-40)(-60,-2)}
\put(-60, -2){\vector(-1,1){0}}
\put(-40,-23){\makebox(0,0)[tc]{$f^{-1}$}}

\put(-40, 0){\qbezier[300](0,0)(20,15)(37,2)}
\put(-3, 2){\vector(2,-1){0}}
\put(-25,10){\makebox(0,0)[bc]{$f^{-1}$}}

\end{picture}
\caption{A homeomorphism of the type $A$ acts on $\Delta_{1}$ and $\Delta_{2}$ and in some sense $f^{-1}(\Delta_{1}) = \Delta_{1} + \Delta_{2}$.  \label{figure6}}
\end{figure}
%%FIGURE%%
%%FIGURE%%
%%FIGURE%%

On can also look at classes of curves drawn in $\R^{2} \setminus X$, with an end in $X$ and the other at $\infty$. Let $H_{1}^{-}$, $H_{2}^{-}$ and $H_{3}^{-}$ be the respective homotopy classes of the vertical half-lines of respective origins $p_{1}$, $p_{2}$ and $p_{3}$ and direction $\ov{v}(-1;0)$. Similarily let $H_{1}^{+}\equiv \sigma(H_{1}^{-})$,$H_{2}^{+}= \sigma(H_{2}^{-})$ and $H_{3}^{+}= \sigma(H_{3}^{-})$. Since $H_{1}^{-}\equiv H_{1}^{+}$ one can set $H_{1} \equiv H_{1}^{-} \equiv H_{1}^{+}$ and similarily $H_{3} \equiv H_{3}^{-} \equiv H_{3}^{+}$. But $H_{2}^{-} \neq H_{2}^{+}$ and this difference is essential. A homeomorphism is of the type $A$ if and only if $f(H_{3}) \equiv H_{3}$ and $f(H_{1}) \equiv H_{2}^{-}$ or equivalently if and only if $f(H_{3}) \equiv H_{3}$  and $f(H_{2}^{+}) \equiv H_{1}$. And one has symetric results for the homeomorphisms of the type $B$ (see figure \ref{figure7}). 
 
 %%FIGURE%%
%%FIGURE%%
%%FIGURE%%
\begin{figure}[htpb]

\begin{picture}(45,110)(-110,-55)

\color{black}
\put(-40,0){\circle*{1}}
\put(0,0){\circle*{1}}
\put(40,0){\circle*{1}}
\put(-40,2){\makebox(0,0)[bc]{$p_{1}$}}
\put(0,-2){\makebox(0,0)[tc]{$p_{2}$}}
\put(40,2){\makebox(0,0)[bc]{$p_{3}$}}

\color{mygreen}

\put(-40,-50){\line(0,1){50}}
\put(-42,-50){\makebox(0,0)[br]{$H_{1}$}}

%\put(-40,0){\line(0,1){50}}
%\put(-41,50){\makebox(0,0)[tr]{$H_{1}^{+}$}}

\put(-0,0){\line(0,1){50}}
\put(-2,50){\makebox(0,0)[tr]{$H_{2}^{+}$}}

\put(40,-50){\line(0,1){50}}
\put(38,-50){\makebox(0,0)[br]{$H_{3}$}}

\color{black}

\put(-85,0){\makebox(0,0)[cr]{$f_{A}$}}

\put(-80, 0){\vector(0,-1){0}}

\put(0, 0){\qbezier[300](0,30)(-80,50)(-80,0)}
\put(0, 0){\qbezier[300](-80,0)(-80,-40)(-40,-40)}
\put(-40, -40){\vector(1,0){0}}

\put(-20,47){\makebox(0,0)[bc]{$f_{A}$}}

\end{picture}
\caption{An homeomorphism of type $A$ acting on $H_{1}$, $H_{2}^{-}$, $H_{2}^{+}$ and $H_{3}$.  \label{figure7}}
\end{figure}
%%FIGURE%%
%%FIGURE%%
%%FIGURE%%

The third point of view is to consider homotopy classes of arcs relating points in $X$ : $I_{1}$ is the homotopy class relatively to $X$ of the segment $[p_{1}; p_{2}]$, $I_{2}$ the homotopy class relative to $X$ of $[p_{2}; p_{3}]$, $C^{+}$ the homotopy class relative to $X$ of the half circle whose diameter is $[p_{1}; p_{3}]$ and that is contained in $\H^{+}$, and $C^{-} \equiv \sigma(C^{+})$. Then an homeomorphism $f$ that preserves the orientation is of the type $A$ if and only if $f(I_{1}) \equiv I_{1}$ and $f(I_{2}) \equiv C^{+}$ (see figure \ref{figure9}). 

 %%FIGURE%%
%%FIGURE%%
%%FIGURE%%
\begin{figure}[htpb]

\begin{picture}(45,90)(-110,-10)

\color{black}
\put(-40,70){\circle*{1}}
\put(0,70){\circle*{1}}
\put(40,70){\circle*{1}}
\put(-40,68){\makebox(0,0)[tc]{$p_{1}$}}
\put(0,68){\makebox(0,0)[tc]{$p_{2}$}}
\put(40,68){\makebox(0,0)[tc]{$p_{3}$}}

\put(-40,0){\circle*{1}}
\put(0,0){\circle*{1}}
\put(40,0){\circle*{1}}
\put(-40,-2){\makebox(0,0)[tc]{$p_{1}$}}
\put(0,-2){\makebox(0,0)[tc]{$p_{2}$}}
\put(40,-2){\makebox(0,0)[tc]{$p_{3}$}}

\color{mygreen}
\put(-40, 70){\line(1,0){40}}
\color{black}
\put(-20, 70){\vector(1,0){0}}
\put(20, 70){\vector(1,0){0}}

\put(-20,72){\makebox(0,0)[bc]{$I_{1}$}}
\put(20,72){\makebox(0,0)[bc]{$I_{2}$}}

\put(-20,2){\makebox(0,0)[bc]{$f(I_{1})$}}
\put(0, 42){\makebox(0,0)[bc]{$f(I_{2})$}}

\put(-50,0){\qbezier[300](0,60)(-30,35)(0,10)}

\put(-50,10){\vector(1,-1){0}}

\put(-82,35){\makebox(0,0)[bc]{$f_{A}$}}

\color{mygreen}
\put(-40, 0){\line(1,0){40}}
\color{black}
\put(-20, 0){\vector(-1,0){0}}
\put(0, 40){\vector(1,0){0}}

\color{red}
\put(0, 70){\line(1,0){40}}

%C1
%S=0.414 fois R; T = O.707 fois R
%R=40 SA = 16.56 ; T=28.28
\put(0,0){\qbezier[300](-40,0)(-40,16.56)(-28.28,28.28)}
\put(0,0){\qbezier[300](-28.28,28.28)(-16.56,40)(0,40)}
\put(0,0){\qbezier[300](0,40)(16.56,40)(28.28,28.28)}
\put(0,0){\qbezier[300](28.28,28.28)(40,16.56)(40,0)}

\end{picture}
\caption{An homeomorphism $f_{A}$ of type $A$ acting on $I_{1}$ and $I_{2}$.  \label{figure8}}
\end{figure}
%%FIGURE%%
%%FIGURE%%
%%FIGURE%%

\end{remarks}

\section{The spinning skeleton of a curve.\label{redressement}}

The theory of the topological snail can be founded on a general description of a curve relatively to a finite linear set $X$. This explicit process produces, from a given curve $\Gamma$,  a new simplified curve called its spinning skeleton $\Psi(\Gamma)$, obtained by replacing parts of the initial curve with possibly very complicated behaviour, by segments or arcs of circle, keeping the hotomopy type with fixed ends relative to $X$, to construct a final curve by the junction of those segments and arcs of circle, isotopic to the initial curve with fixed ends relatively to $X$.

The simplest non trivial spinning skeleton appears as a topological snail (see sections \ref{pictures}, \ref{simple}, \ref{sectiondouble}) in the case of a set $X$ with three points. It is is important for it allows one to locate the fixed points of a homeomorphism $f$ through the use of an isotopic one $F$. More explicitely, given a homeomorphism $f$, a segment $I$ with a set $X\subset I$, one considers the curve $\Gamma= f(I)$, its spinning skeleton $\Psi(\Gamma)$, and a homeomorphism $F$, isotopic to $f$ relatively to $X$ such that $F(I) = \Psi(\Gamma)$. Many fixed points of $F$ appear in the general case, and one obtain them by looking for some Smale's geometric configurations defined by the spinning skeleton $\Psi(\Gamma)$. One then shows that those fixed points belong to topologicaly stable or persistent Nielsen classes and that therefore they also belong to $f$ (their indicies are different from zero). A direct proof, reasoning on sub-arcs of $\Gamma$ associated to the elements of $\Psi(\Gamma)$, is also possible. One makes use of the an important topological fact, the proof of wich is out of the scope of the present paper. 

\begin{theo} \label{topo}
Let $X = \lbrace p_{0}; p_{1}; \cdots ; p_{n} \rbrace \subset \R^{2}$ be a subset of $n$ points on two curves $\Gamma_{1}= [a;b]_{\Gamma_{1}}$ and $\Gamma_{2}= [a;b]_{\Gamma_{2}}$ with for $i=1$ and $i=2$:
$$a= p_{0} <_{\Gamma_{i}}< p_{1} <_{\Gamma_{i}} \cdots <_{\Gamma_{i}} p_{n} =b$$

If for each $i\in \lbrace 1; \cdots ; n \rbrace$ the subarcs $\Gamma^{i}_{1}= [p_{i-1};p_{i}]_{\Gamma_{1}}$ and $\Gamma^{i}_{2}=[p_{i-1};p_{i}]_{\Gamma_{2}}$ are homotopic relatively to $X$, then $\Gamma_{1}$ and $\Gamma_{2}$ are isotopic relatively to $X$. Similarily, if $f$ and $g$ are homeomorphisms such that $\Gamma_{2} = f(\Gamma_{1})$, then $f$ is isotopic to the identity relatively to $X$, and if $X$ is an invariant set of a homeomorphism $f$ such that for some curve $\Gamma$ on has that  $f(\Gamma)= \Gamma_{1}$, then there exists a homeomorphism $F$ in the isotopy class of $f$ relative to $X$ such that $F(\Gamma) = \Gamma_{2}$.
\end{theo}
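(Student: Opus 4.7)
The plan is to reduce the theorem to the following basic planar fact: in $\R^2$, two simple arcs with common endpoints that are homotopic rel endpoints in the complement of a finite set of punctures are ambient isotopic, via an isotopy fixing the punctures and supported in an arbitrarily small neighborhood of the union of the two arcs. First I would treat each consecutive pair of subarcs $\Gamma_1^i$ and $\Gamma_2^i$ separately: by hypothesis they share the endpoints $p_{i-1}$ and $p_i$, lie in $\R^2 \setminus (X \setminus \{p_{i-1}, p_i\})$, and are homotopic there rel endpoints. The local step is thus to produce an ambient isotopy $(H_t^i)_{t\in [0;1]}$ of $\R^2$ that fixes $X$ pointwise, satisfies $H_0^i = \id$ and $H_1^i(\Gamma_1^i) = \Gamma_2^i$, and is supported in a small neighborhood of $\Gamma_1^i \cup \Gamma_2^i$.

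For this local fact, the standard route is a bigon-reduction argument anchored on the Jordan and Schoenflies theorems. After a preliminary ambient perturbation rel $X$ putting the two arcs in general position, if $\Gamma_1^i$ and $\Gamma_2^i$ meet only at $\{p_{i-1}, p_i\}$ then their union is a Jordan curve bounding by Schoenflies a topological disk; the homotopy rel $X$ forces this disk to contain no other point of $X$, and one slides $\Gamma_1^i$ across it onto $\Gamma_2^i$ through an ambient isotopy supported in a collar of the disk, hence fixing $X$. When extra intersections remain in the interior, one picks an innermost bigon disjoint from $X$, pushes across it to strictly reduce the intersection count, and iterates. This is precisely the plane-topology input the author flags as being out of the scope of the paper, and I expect it to be the main technical obstacle: a fully rigorous treatment in the purely topological (non-smooth, non-PL) setting forces one to set up general position and innermost-disk arguments from scratch, which is why the underlying plane-topology project the author alludes to in Section 1 is needed.

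With each local isotopy $(H_t^i)$ in hand, choosing the supporting neighborhoods small enough so that they overlap only near the shared vertices $p_i$ (which are fixed throughout) permits their composition into a single ambient isotopy $(H_t)_{t\in [0;1]}$ of $\R^2$ with $H_0 = \id$, $H_t|_X = \id$, and $H_1(\Gamma_1) = \Gamma_2$, giving the first claim. For the second, given a homeomorphism $f$ with $f(p_i) = p_i$ and $\Gamma_2 = f(\Gamma_1)$, the map $H_1^{-1} \circ f$ fixes $X$ and sends $\Gamma_1$ to itself; I would first isotope it rel $X$ along $\Gamma_1$ to make it fix $\Gamma_1$ pointwise, then contract it to the identity in the complement of $\Gamma_1$ by an Alexander-type trick, concluding that $f$ is isotopic to $\id$ rel $X$. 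For the third, given $f$ with $f(X)=X$ and $f(\Gamma) = \Gamma_1$, the composition $F = H_1 \circ f$ satisfies $F(\Gamma) = \Gamma_2$, and the path $t \mapsto H_t \circ f$ realizes the required isotopy from $f$ to $F$ rel $X$.
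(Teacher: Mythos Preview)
The paper does not prove this theorem: immediately before the statement the author writes that it is ``an important topological fact, the proof of which is out of the scope of the present paper,'' and no proof environment follows. So there is nothing to compare your argument against; your sketch already goes further than the paper does.

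Your outline is the standard route and is sound in spirit. One point deserves more care: the step where you compose the local isotopies $(H^i_t)$. Saying the supports ``overlap only near the shared vertices $p_i$'' is not quite enough, because near $p_i$ both $H^i$ and $H^{i+1}$ act nontrivially and there is no reason they should commute or patch coherently. The clean fix is to apply them sequentially rather than simultaneously: after $H^1$ has carried $\Gamma_1^1$ onto $\Gamma_2^1$, choose the next isotopy $H^2$ so that it fixes not only $X$ but the already-matched arc $\Gamma_2^1$ pointwise (this is possible because the bigon/Schoenflies pushes for the second pair can be taken with support disjoint from that arc except at its endpoint $p_1$, which is fixed). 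Iterating gives a single ambient isotopy rel $X$ taking $\Gamma_1$ to $\Gamma_2$. The remaining two claims then follow exactly as you wrote, via the Alexander trick and composition with $H_1$.
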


Let us now define the spinning skeleton $\Psi(\Gamma)$ of a curve $\Gamma$. Consider $a, b \in \Delta$, with a point $\gamma(t)$ moving continuously and injectively for $t\in [0;1]$, from the starting point $a=\gamma(0)$ to the ending point $b=\gamma(1)$, through the running points $\gamma(t)$ for $t\in ]0;1[$ and describing the simple curve 
$$\Gamma = \Big\{ \gamma(t)\in \R^{2};~~t\in [0;1] \Big\}\,.$$ 
For any $m \in \Gamma$, $m'\in \Gamma$, write $[m;m']_{\Gamma} \subset \Gamma$ for the subarc of $\Gamma$ between $m$ and $m'$. Also write $m<_{\Gamma} m'$ if the respective parameters $t$ and $t'$ of the points $m$ and $m'$ satisfy $t<t'$. Similarily, write $[m;m']_{\Delta}$ for the segment between $m$ and $m'$ on the line $\Delta$. Since $\gamma$ is uniformly continuous on $[0;1]$, for any $\varepsilon>0$ there is a positive $\delta>0$ such that if $m,m' \in \Gamma$ and $mm'<\delta$ then the diameter of the subarc $[m;m']_{\Gamma}$ is strictly less than $\varepsilon$.

If $[m;n]_{\Gamma} \cap \Delta = \big\{ m; n \big\}$, say that $[m;n]_{\Gamma}$ is a doubly branched sub-arc of $\Gamma$ on $\Delta$ between $m$ and $n$. In this case, define its side $S$ to be the positive $S=+1$ if the running points of $\Gamma$ all belong to $\H^{+}$, and else the negative $S=-1$, when $]m;n[_{\Gamma} \subset \H^{-}$. If $m <_{\Gamma} n$, call $m$ and $n$ respectively a leaving and a landing point of $\Gamma$ on the side $S$. In a similar manner, denote $[m;n]^{S=+1}_{\mathcal{C}}$ to be the half circle from $m$ to $n$ on the diameter $[m;n]$ inside $\H^{+}$, and $[m;n]^{S=-1}_{C}$  the symetric half-circle $\sigma([m;n]^{S=+1}_{\mathcal{C}})$. If $[m;m']_{\Gamma}$ is a doubly branched sub-arc of $\Gamma$ on $\Delta$ between $m$ and $n$ and if $]m;m'[_{\Delta} \cap X \neq \emptyset$ call $[m;m']_{\Gamma}$ a separating link or sub-arc of $\Gamma$ relatively to $X$ between $m$ and $m'$ outside $\Delta$. 

Let $X\subset \Delta$ be a finite subset of $\Delta$ such that $a\in X$ and $b\in X$ and $X$ contains at least three points. For any $m\in \Delta\cap \Gamma$, define $n_{X}(m)$ to be the next point of $X$ after $m$ on $\Gamma$~: $n_{X}(m)$ is the only point $k \in X$ such that $]m;k]_{\Gamma} \cap X = \Big\lbrace k \big\}$. For any $m\in \Delta$, let also $\Omega_{X}(m)$ be the set of points of $\Delta$ that are not separeted by $X$ in $\Delta$~:
$$\Omega_{X}(m) =  \big\{ m'\in \Delta~~|~~]m;m'[_{\Delta} \cap X = \emptyset \big\}\,.$$

If $m\in \Delta \setminus X$, then either $\Omega_{X}(m)$ is the closure of the connected component of $m$ in $\R^{2} \setminus X$, or else $m\in X$ and $\Omega_{X}(m)$ is the closure of the union of its two adjacent components in $\R^{2} \setminus X$.

Suppose that $m\in \Gamma \cap \Delta$ and let $H_{m}$ be the set of points $k\in [m; n_{X}(m)]_{\Gamma}$ such that $[m;k]_{\Gamma}$ is homotopic with fixed end to $[m;k]_{\Delta}$. 

If $n_{X}(m) \in H_{m}$, then $[m; n_{X}(m)]_{\Gamma}$ is homotopic with fixed ends to $[m;n_{X}(m)]_{\Delta}$ in $\R^{2} \setminus X$, and I define $\Psi_{\Gamma}(m)=n_{X}(m)$.

Else, since $\gamma$ is uniformly continuous, there is a $\delta>0$ such that for any point $l\in X$ and $m\in \Gamma \cap X$ such that $ml < \delta$, the diameter of the sub-arc $[m;l]_{\Gamma}$ is strictly less that the mutual distances of points in $X$, and in this case $[m;l]_{\Gamma}$ is necessarily homotopic with fixed ends to the segment $[m;l]_{\Delta}$. If $n_{X}(m) \notin H_{m}$ therefore the distance of a point of $H_{m}$ to $X$ must be at least $\delta$. By uniform continuity there is also a $\delta'>0$ such that if $mn< \delta'$ then the diameter of $[m;n]_{\Gamma}$ is less than $\frac{\delta}{2}$ : in this case, if $m$ and $n$ are points of $H_{m}$, then $[m;n]_{\Gamma}$ is homotopic with fixed ends to $[m;n]_{\Delta}$ in $\R^{2} \setminus X$. Any convergent sequence of points of $H_{m}$ will therefore converge in $H_{m}$. In this case the set $\gamma^{-1}(H_{m})$ of parameters $t\in [0;1]$ such that $\gamma(t)\in H_{m}$ has a unique maximum $t'$, defining a point $\gamma(t')=m' \notin X$, that is nessarily the leaving point of a separating doubly branched sub-arc $[m'; p]_{\Gamma}^{S=\pm1}$, located on a given side $S=\pm1$ of $\Delta$, for some uniquely determined landing point $p$. 

Since $p\notin \Omega_{X}(m)$, the sub-arc $[m;m']_{\Gamma}$ is homotopic with fixed ends to $[m;m']_{\Delta}$ in $\R^{2} \setminus X$, and $[m';p]_{\Gamma}^{S=\pm1}$ is homotopic to the corresponding half-circle $[m';p]_{\mathcal{C}}^{S}$ in $\R^{2} \setminus \Delta$. The hole sub-arc $[m;p]_{\Gamma}$ is therefore homotopic in $\R^{2} \setminus X$ and with fixed ends to the half-circle $[m;p]_{\mathcal{C}}^{S}$, and the set $K_{H}$ of points $p\in [m; n_{X}(m)]_{\Gamma}$ such that $[m;p]_{\Gamma}$ is homotopic with fixed ends to a separating half-circle $[m;k]_{\mathcal{C}}^{S}$ on $\Delta$ is not empty. As before with the set $H_{m}$ and for the same reason, either $K_{M}$ contains $n_{X}(m)$ and one defines $\Psi_{\Gamma}(m) = n_{X}(m)$, or it is compact in $\R^{2} \setminus X$ and has a unique last element $m"$. In this case one defines $\Psi_{\Gamma}(m)=m"$, and obtains for $\Psi_{\Gamma}(m)$ a leaving point on a separating arc $[m"; P"]_{\Gamma}$, located on the opposite side $-S$ of $\Delta$ (otherwise the maximality of $m"$ wouldn't hold).

 Consider now the sequence of points defined by $m_{0} = a$ and $m_{i+1} = \Psi_{\Gamma}(m_{i})$ while $m_{i} \neq b$. For any $i\geqslant 0$ either $m_{i}\in X$ or $m_{i}$ is the leaving point of a separating link outside $\Delta$, at a distance uniformly bigger than $\Delta$ from $X$ (otherwise $m_{i}$ would be in $X$). This implies that the sequence of points $m_{i}$ is finished and that the preceeding process ends for some $i=N$ with $b= m_{N}$. One therefore obtains the following spinning skeleton theorem~:

\begin{theo} \label{spinningskeleton}
Let $\Gamma= [a;b]_{\Gamma}$ be a simple curve with both ends one a line $\Delta$ and successive points 
$$a= p_{0} <_{\Gamma}< p_{1} <_{\Gamma} \cdots <_{\Gamma} p_{n} =b$$
 on a finite subset $X\subset \Delta$. Then there is unique sequence of non zero natural numbers $(n_{j})_{1 \leqslant j\leqslant n}$,  with points $(m_{i}^{j})_{0\leqslant i \leqslant n_{j}}$ on $\Gamma \cap \Delta$ such that 

 $$p_{j-1}= m^{j}_{0} <_{\Gamma}< m^{j}_{1} <_{\Gamma} \cdots <_{\Gamma} m^{j}_{n_{j}} =p_{j} $$
and such that for any $i$ and $j$ such that $1 \leqslant i < n_{j}$ there is a defined half-circle $[m^{j}_{i}; m^{j}_{i+1}]_{\mathcal{C}}^{s_{i;j}}$ with diameter $[m^{j}_{i}; m^{j}_{i+1}]_{\Delta}$, on a given side $s_{i} = \pm1$ of the line $\Delta$, and such that if 
$$X'= X \bigcup \Big\{ m^{j}_{i} ,~~~1\leqslant j \leqslant n;~~~~~0\leqslant i \leqslant n_{j} \Big\}$$
 then $[m^{j}_{i}; m^{j}_{i+1}]_{\Gamma}$ and $[m^{j}_{i}; m^{j}_{i+1}]_{\mathcal{C}}^{s_{i;j}}$ are homotopic with fixed ends in $\R^{2} \setminus X'$. Futhermore the sides $s_{i}^{j}$ alternate when for some fixed $j$ the index $i$ describes $\big\{ 0; \cdots ; n_{j} \big\}$.

%%FIGURE%%
%%FIGURE%%
%%FIGURE%%
%ESCARGOT-TOPOLOGIQUE

\begin{figure}
\begin{picture}(100, 90)(-120,-30)

%Un demi-cercle
%S=0.414 fois R; T = O.707 fois T
%\put(0,0){\qbezier[300](RX,0)(RX,SX)(TX,TX)}
%\put(0,0){\qbezier[300](TX,TX),(SX,RX),(0,RX)}

%\put(0,0){\qbezier[300](0,RX),(-SX,RX)(-TX,TX)}
%\put(0,0){\qbezier[300](-TX,TX)(-RX,SX)(-RX,0)}

%\put(0,0){\qbezier[300](-RX,0)(-RX,-SX)(-TX,-TX)}
%\put(0,0){\qbezier[300](-TX,-TX),(-SX,-RX),(0,-RX)}

%\put(0,0){\qbezier[300](0,-RX),(SX,-RX)(TX,-TX)}
%\put(0,0){\qbezier[300](TX,-TX)(RX,-SX)(RX,0)}

\color{black}
\put(-125,0){\line(1,0){230}}
\put(105,0){\vector(1,0){0}}

\put(-115,-2){\makebox(0,0)[tr]{$\Delta$}}

\put(-80,0){\circle*{1}}
\put(-80,-2){\makebox(0,0)[tc]{$p_{0} = a$}}
\put(-40,0){\circle*{1}}
\put(-41,-2){\makebox(0,0)[tr]{$p_{1}$}}
\put(-20,0){\circle*{1}}
\put(-21,-2){\makebox(0,0)[tl]{$p_{2}$}}
\put(0,0){\circle*{1}}
\put(0,-2){\makebox(0,0)[tc]{$q_{1}$}}
\put(40,0){\circle*{1}}
\put(40,2){\makebox(0,0)[bc]{$p_{4} = b$}}
\put(80,0){\circle*{1}}
\put(80,-2){\makebox(0,0)[tc]{$q_{2}$}}
\put(100,0){\circle*{1}}
\put(100,2){\makebox(0,0)[bc]{$p_{3}$}}

\color{mygreen}

% ARC1
%R=100 S=0.414*R T=0.707R
%S=0.414 fois R; T = O.707 fois T
%R=50  S=20.7 ; Ta=7.07 Ra=31.8
\put(-30,0){\qbezier[300](-50,0)(-50,20.7)(-35.35,35.35)}
\put(-30,0){\qbezier[300](-35.35,35.35),(-20.7,50),(0,50)}
\put(-30,0){\qbezier[300](0,50),(20.7,50)(35.35,35.35)}
\put(-30,0){\qbezier[300](35.35,35.35)(50,20.7)(50,0)}

\put(-65.35, 35.35){\vector(1,1){0}}

\put(20,0){\circle*{1}}

\put(21,-1){\makebox(0,0)[tl]{$m_{1}^{1}$}}

% ARC2
%RA=30 SA=0.414*R TA=0.707R
%S=0.414 fois R; TA = O.707 fois T
%R=30 SA = 12.42 ; Ta=21.21
\put(-10,0){\qbezier[300](-30,0)(-30,-12.42)(-21.21,-21.21)}
\put(-10,0){\qbezier[300](-21.21,-21.21)(-12.42,-30)(0,-30)}
\put(-10,0){\qbezier[300](0,-30)(12.42,-30)(21.21,-21.21)}
\put(-10,0){\qbezier[300](21.21,-21.21)(30,-12.42)(30,0)}

\put(11.21,-21.21){\vector(-1,-1){0}}

%SEGMENT

\put(-40,0){\line(1,0){20}}
\put(-30,0){\vector(1,0){0}}

% ARC3
%R=20 S=0.414*R T=0.707R
%S=0.414 fois R; T = O.707 fois T
%R=20 ;45 S=8.28 ; Ta=7.07 Ra=31.8

\put(-40,0){\qbezier[300](-20,0)(-20,8.28)(-14.14,14.14)}
\put(-40,0){\qbezier[300](-14.14,14.14)(-8.28,20)(0,20)}
\put(-40,0){\qbezier[300](0,20)(8.28,20)(14.14,14.14)}
\put(-40,0){\qbezier[300](14.14,14.14)(20,8.28)(20,0)}

\put(-54.14,14.14){\vector(-1,-1){0}}

\put(-60,0){\circle*{1}}

\put(-61,1){\makebox(0,0)[br]{$m_{1}^{3}$}}

% ARC4
%R=20 S=0.414*R T=0.707R
%S=0.414 fois R; T = O.707 fois T
%R=10 ;45 S=3.82 ; Ta=7.07 Ra=31.8
\put(-80,0){\qbezier[300](-20,0)(-20,-8.28)(-14.14,-14.14)}
\put(-80,0){\qbezier[300](-14.14,-14.14)(-8.28,-20)(0,-20)}
\put(-80,0){\qbezier[300](0,-20)(8.28,-20)(14.14,-14.14)}
\put(-80,0){\qbezier[300](14.14,-14.14)(20,-7,64)(20,0)}

\put(-94.14,-14.14){\vector(-1,1){0}}

\put(-100,0){\circle*{1}}
\put(-101,1){\makebox(0,0)[br]{$m_{2}^{3}$}}

% ARC5
%Ck2

%S=0.414 fois R; TA = O.707 fois T
%R=80 SA = 33.12 ; Ta=28.28
\put(-20,0){\qbezier[300](-80,0)(-80,33.12)(-56.56,56.56)}
\put(-20,0){\qbezier[300](-56.56,56.56)(-33.12,80)(0,80)}
\put(-20,0){\qbezier[300](0,80)(33.12,80)(56.56,56.56)}
\put(-20,0){\qbezier[300](56.56,56.56)(80,33.12)(80,0)}

\put(36.56,56.56){\vector(1,-1){0}}

\put(60,0){\circle*{1}}

\put(59,-1){\makebox(0,0)[tr]{$m_{3}^{3}$}}

% ARC6
%R=20 S=0.414*R T=0.707R
%S=0.414 fois R; T = O.707 fois T
%R=10 ;45 S=3.82 ; Ta=7.07 Ra=31.8

\put(80,0){\qbezier[300](-20,0)(-20,-8.28)(-14.14,-14.14)}
\put(80,0){\qbezier[300](-14.14,-14.14)(-8.28,-20)(0,-20)}
\put(80,0){\qbezier[300](0,-20)(8.28,-20)(14.14,-14.14)}
\put(80,0){\qbezier[300](14.14,-14.14)(20,-7,64)(20,0)}

\put(94.14,-14.14){\vector(1,1){0}}

% ARC7
%RA=30 SA=0.414*R TA=0.707R
%S=0.414 fois R; TA = O.707 fois T
%R=30 SA = 12.42 ; Ta=21.21
\put(70,0){\qbezier[300](-30,0)(-30,-12.42)(-21.21,-21.21)}
\put(70,0){\qbezier[300](-21.21,-21.21)(-12.42,-30)(0,-30)}
\put(70,0){\qbezier[300](0,-30)(12.42,-30)(21.21,-21.21)}
\put(70,0){\qbezier[300](21.21,-21.21)(30,-12.42)(30,0)}

\put(48.79,-21.21){\vector(-1,1){0}}

\color{black}

\color{blue}

%\color{mygreen}

\end{picture}
\caption{A typical spinning skeleton on the set $X= \Big\{ p_{0}; p_{1}; p_{2}; p_{3} ; p_{4}; q_{1}; q_{2} \Big\}$ on $\Delta$.\label{figure9}}

\end{figure}
%%FIGURE%%
%%FIGURE%%
%%FIGURE%%

\end{theo}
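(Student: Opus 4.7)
The proof essentially unpacks the iterative construction of $\Psi_\Gamma$ developed in the paragraphs preceding the statement. The plan is to start from $m_0 = a$, iterate $m_{k+1} = \Psi_\Gamma(m_k)$ while $m_k \neq b$, and then group the resulting finite chain by the partition of $\Gamma$ induced by the successive points of $X$, giving the arrays $(m^{j}_{i})_{0 \leqslant i \leqslant n_j}$ with $m^{j}_{0} = p_{j-1}$ and $m^{j}_{n_j} = p_j$. The fact that $\Psi_\Gamma$ visits the points of $X \cap \Gamma$ in the order they appear on $\Gamma$ yields the claimed grouping immediately.

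First I would re-verify termination using the uniform continuity argument already sketched before the theorem: every intermediate value $m_k \notin X$ is a leaving point of a separating arc, so it lies at distance at least some fixed $\delta > 0$ from $X$; a strictly increasing infinite parameter sequence would converge to a parameter whose image would be simultaneously at distance $\geqslant \delta$ from $X$ and equal to a point of $X$ (by the uniform continuity bound on the tail), an absurdity. Next I would check the homotopy of each cell $[m^{j}_{i}; m^{j}_{i+1}]_\Gamma$. The definition of $\Psi_\Gamma$ presents exactly two cases: either $[m^{j}_{i}; m^{j}_{i+1}]_\Gamma$ is homotopic with fixed ends to the segment $[m^{j}_{i}; m^{j}_{i+1}]_\Delta$ in $\R^2 \setminus X$, or $m^{j}_{i+1}$ is the leaving point of a separating sub-arc on a definite side $S$. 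In the latter case one first homotopes the portion of $[m^{j}_{i}; m^{j}_{i+1}]_\Gamma$ lying in $\Omega_X(m^{j}_{i})$ to a segment (by the definition of $H_{m^{j}_{i}}$) and then homotopes the separating tail to the half-circle on the same side $S$ in $\R^2 \setminus \Delta$. In both cases $[m^{j}_{i}; m^{j}_{i+1}]_\Gamma$ is homotopic with fixed ends in $\R^2 \setminus X'$ to the announced half-circle $[m^{j}_{i}; m^{j}_{i+1}]^{s_{i;j}}_{\mathcal{C}}$. Alternation of the $s_{i;j}$ is then automatic: if two consecutive separating arcs $[m^{j}_{i}; m^{j}_{i+1}]_\Gamma$ and $[m^{j}_{i+1}; m^{j}_{i+2}]_\Gamma$ sat on the same side $S$, then at step $i$ the set $K_H$ would admit $m^{j}_{i+2}$ as an element, contradicting the maximality defining $\Psi_\Gamma(m^{j}_{i}) = m^{j}_{i+1}$.

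The main obstacle I anticipate is uniqueness. I would argue by induction on $i$, comparing a hypothetical competing sequence $(\widetilde m^{j}_{i})$ with $(m^{j}_{i})$ cell by cell. To match $\widetilde m^{j}_{1}$ with $m^{j}_{1} = \Psi_\Gamma(p_{j-1})$, I would invoke Theorem \ref{topo} to convert the equality of the first cells up to homotopy with fixed ends into equality of the leaving points of the first separating arc of $[p_{j-1}; n_X(p_{j-1})]_\Gamma$. The delicate point is to show that the maximal leaving point produced by $\Psi_\Gamma$ is an intrinsic invariant of the homotopy class of any such decomposition and not an artefact of the chosen parametrization; this should follow from the fact that a further separating arc on the opposite side cannot be collapsed by any homotopy that fixes its endpoints and avoids $X'$, together with the alternation already established. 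Propagating the argument cell by cell then forces $(\widetilde m^{j}_{i}) = (m^{j}_{i})$ throughout each $j$.
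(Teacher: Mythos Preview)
Your proposal is correct and follows essentially the same route as the paper: the paper's ``proof'' is precisely the construction of $\Psi_\Gamma$ given in the paragraphs preceding the theorem statement, together with the uniform-continuity termination argument and the observation that consecutive sides must alternate by maximality of $m''$ in $K_H$ --- all of which you reproduce faithfully.

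One remark: the paper does not argue uniqueness separately at all; it is content to read ``unique'' as ``the deterministic construction $m_{k+1}=\Psi_\Gamma(m_k)$ produces one well-defined sequence.'' Your inductive cell-by-cell comparison against a hypothetical competing decomposition $(\widetilde m^{j}_{i})$ is therefore more than the paper supplies, and is the right thing to do if one reads the theorem as asserting uniqueness among all decompositions with the stated homotopy and alternation properties. That said, your sketch of this step leans on Theorem~\ref{topo} in a way that is not quite what that theorem says (it concerns isotopy of curves through $X$, not recovery of leaving points from homotopy data), so if you pursue this stronger uniqueness you would need a more direct argument: show that any admissible first cell $[\,p_{j-1};\widetilde m^{j}_{1}\,]_\Gamma$ homotopic to a half-circle forces $\widetilde m^{j}_{1}$ to lie in the set $K_H$ built at $p_{j-1}$, and then that alternation at $\widetilde m^{j}_{1}$ forces it to be the maximal element of $K_H$, i.e.\ $\Psi_\Gamma(p_{j-1})$.
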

  
~\\
~
  
\begin{remarks}
\item One can also give a very natural interpretation of the function $\Psi_{\Gamma}$ in the universal cover of $\R^{2} \setminus X$. If $X$ contains $n\geqslant 2$ points, a realisation of the universal cover of $\R^{2}\setminus X$ can be classicaly constructed. Define  $X= \big\{ x_{1}; x_{2}; \cdots ; x_{n} \big\}$ to be the points of $X \cap \Delta$, suppose that $x_{1} < x_{2} < \cdots < x_{n}$ and define $x_{0}= x_{n+1} = \infty$. Using a conformal uniformization of the half plane on the left of $\Delta$ to the interior of a given regular $n+1$-gone $p$ with associated vertices in circular order $s_{0},s_{1}, \ldots, s_{n+1} = s_{0}$, with $s_{i} = e^{\frac{2i\pi}{n+1}}$, lift this half plane to the interior of $p$. Extend this lift to a homeomorphism between the universal cover of $\R^{2} \setminus X$ and the hole disc, using the respective hyperbolic symetries $\sigma_{i}$ around the geodesics $]s_{i}; s_{i+1}[$ to progressively glue together the successive images of $p$ obtained by the crossing of the intervals $]x_{i}; x_{i+1}[$ by a classe of curve $\gamma$. The covering group $G$ is the free group generated by the transformations of the form $g_{i; j} = \sigma_{i} \circ \sigma_{j}$. Any simple curve $\Gamma \subset \R^{2}$ lifts to a disjoint family of simple curves, obtained from a single one of them by taking its images by the elements of the covering group $G$. If the initial curve $\Gamma$ begins and ends on points of $X$, then one of those curves can be taken to begin in some of the vertices of  $p$: this curve is either homotopic to one of the geodesic $[s_{i}; s_{i+1}]$, or a geodesic of the form $[s_{i}; s_{k}]$ for $k \notin \big\{ i-1; i+1 \big\}$  or a geodesic to an image $g(s_{k})$ of $s_{k}$ by an element $g$ of $G$. This geodesic crosses unique images of the sides of $p$ by elements of $G$, with on each of them a corresponding last point of the lift of the curve $\Gamma$ that corresponds to an image of $\Psi(\Gamma)$.

\item The spinning skeleton $\Psi(\Gamma)$ doesn't depend continuously on the deformation of the curve $\Gamma$ but  in the class of curves that contain a subset of $X$ and are made of the junction of a finite collection of half circles doubly branched on $\Delta$ and avoiding the other points of $X$, the skeletons of two isotopic curves relatively to $X$ are always also isotopic. The unique representant of $\Psi(\Gamma)$ with uniformly positionned points $m_{i}^{j}$ in the intervals of $\R^{2} \setminus X$ (see figure \ref{figure9}) defines a unique representant of the hole class of curve isotopic to $\Gamma$ relatively to $X$. Being constant on isotopy classes, it also depends continuously on $\Gamma$.   

\item A simple curve $\Gamma$, obtained by joining a finite serie of segments on a line $\Delta$ and a finite serie of half-circles with their diameters on this line, and such that a finite set $X\subset \Delta$ contains points on each diameter of those circles, all the extremities of thoses segments, and no point inside them, then this curve is its proper skeleton in both directions, which means that one has $\Psi(\Gamma) = \Gamma$ (see figure \ref{figure9}).

\item On can apply this procedure to a simple closed curve $\Omega$, for example if it contains at least two points on $X$ (see theorem \ref{canonicalform} and its proof).

\end{remarks}

\section{The simplest non trivial spinning skeleton.\label{snail}}

In the case of an invariant set $X= \Big\{ p_{1}; p_{2} ; p_{3} \Big\}$ made of three points on the line $\Delta$, the spinning skeleton $\Psi(\Gamma)$ of a curve $\Gamma$ doubly branched on two points of $X$ and which does not contain the third, can be easily and completely described as a simple topological snail. 

First, observe that if $\Psi(\Gamma)$ contains a segment, then its extremities are those of $\Gamma$ and this segment is necessarily $[p_{1}; p_{2}]_{\Delta}$ or $[p_{2}; p_{3}]_{\Delta}$. The spinning skeleton $\Psi(\Gamma)$ is therefore reduced to this segment, and $\Gamma$ itself is isotopic, with fixed ends, either to $[p_{1}; p_{2}]_{\Delta}$ or to $[p_{2}; p_{3}]_{\Delta}$ in $\R^{2} \setminus X$. Else, $\Psi(\Gamma)$ can be a unique half circle $[p_{1} ; p_{3}]_{\mathcal{C}^{S}}^{S=\pm 1}$, or obtained by the junction of several half circles on successive opposite sides of $\Delta$ (see theorem \ref{spinningskeleton} in section \ref{redressement}).

Since $\Psi(\Gamma) \cap \Delta$ is a compact set, it has a minimum $m$ and a maximum $n$ on $\Delta$, caracterized by $m<_{\Delta} n$ and 
$$ \Psi(\Gamma) \cap \Delta \subset [m; n ]_{\Delta}\, .$$
If $p_{1} <_{\Delta} m$, then the hole skeleton $\Psi(\Gamma)$ is located inside the convex open set formed by the points of $\R^{2}$ whose abscissa is strictly greater than the abscissa $x_{1}$ of $p_{1}$ and therefore $\Gamma$ is homotopic to $[p_{2}; p_{3}]$, an excluded case. So necessarily one has $m \leqslant p_{1}$ and similarily $p_{3} \leqslant n$. Let us show that in this case, the part $[m; n]_{\Psi(\Gamma)}$ of the skeleton of $\Gamma$ between the points $m$ and $n$ is necessarily reduced to a unique half circle on a given side $S = \pm1$ of $\Delta$~:

\begin{prop} If $m, n \in \Psi(\Gamma)$ are such that $m<_{\Delta} n$ and $ \Psi(\Gamma) \cap \Delta \subset [m; n ]_{\Delta}$, then the part $[m; n]_{\Psi(\Gamma)}$ of the spinning skeleton of $\Gamma$ between the points $m$ and $n$ is necessarily reduced to a unique half circle on a given side $S(\Gamma) = \pm1$ of $\Delta$. \label{proposition1}
\end{prop}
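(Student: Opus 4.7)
My plan is to decompose the sub-arc $S := [m;n]_{\Psi(\Gamma)}$ as a chain of half-circles $C_1, \ldots, C_k$ on alternating sides of $\Delta$, with successive $\Delta$-vertices $m = r_0, r_1, \ldots, r_k = n$, all lying in $[m;n]_\Delta$ by the assumption on $m,n$; in particular $r_1 > m$ and $r_{k-1} < n$ strictly. I assume $k \geq 2$ for contradiction.

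The central structural ingredient is that simplicity of $\Psi(\Gamma)$ as an arc forces its same-side half-circles to have diameters forming a laminar family of sub-intervals of $[m;n]_\Delta$: two same-side diameters that interleave strictly would produce crossing half-circles in that half-plane. When $k$ is odd, $C_1$ and $C_k$ lie on the same side; the extremality of $m, n$ and distinctness of all vertices rule out any nesting of the diameters $[m;r_1]$ and $[r_{k-1};n]$, so laminarity forces them to be disjoint, i.e. $r_1 < r_{k-1}$. When $k$ is even, the subcase $m = p_1$ and $n = p_3$ is excluded by a short parity argument (the vertices alternate across $p_2$, which forces $k$ odd in that subcase), so at least one of $m, n$ is intermediate in $\Psi(\Gamma)$ and carries an adjacent half-circle of $\Psi(\Gamma)$ outside $S$, on the side opposite to $C_1$ or $C_k$; applying the laminar comparison to such an external half-circle gives the analogous disjointness.

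To close the argument I combine this with the separating property, namely that each open diameter of a skeleton half-circle contains a point of $X = \{p_1, p_2, p_3\}$. In the cleanest subcase $m = p_1, n = p_3$, the only interior $X$-point of $[m;n]_\Delta$ is $p_2$, so every diameter of $S$ must contain $p_2$; the disjointness $r_1 < r_{k-1}$ then contradicts that both $[m;r_1]$ and $[r_{k-1};n]$ contain $p_2$, so $k = 1$. In the remaining subcases ($m < p_1$ or $n > p_3$) I propagate the laminar-and-separating constraints inward from both extremes of $S$: each $C_i$'s diameter sandwiches some element of $X$, but the laminar condition on both sides of $\Delta$ squeezes the intermediate diameters into subintervals of $[m;n]_\Delta$ that eventually fail to contain any of the three points $p_1, p_2, p_3$, contradicting separation.

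The hardest step is this last case analysis, whose book-keeping branches on the relative positions of $m, n$ versus $\{p_1, p_2, p_3\}$ and hinges on $|X| = 3$. An attractive alternative I would also pursue is to invoke the uniqueness part of Theorem~\ref{spinningskeleton}: show that replacing the hypothetical multi-arc $S$ by the single half-circle $[m;n]^{s}_{\mathcal{C}}$, on an appropriate side $s$ determined by $\Gamma$, yields another valid spinning skeleton of $\Gamma$ relative to $X$, so that uniqueness of the skeleton structure forces $k = 1$ directly.
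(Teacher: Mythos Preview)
Your proposal identifies the two correct structural ingredients --- the laminar (non-crossing) property of same-side half-circles, and the separating property that each half-circle diameter must contain a point of $X$ --- and your treatment of the subcase $m=p_1$, $n=p_3$ is in fact cleaner than the paper's, which dispatches it in one line. Your parity argument there is valid: every diameter is forced to straddle $p_2$, so the $r_i$ alternate across $p_2$, giving $k$ odd; and for odd $k\geq 3$ the laminar comparison of $C_1$ and $C_k$ produces interleaving diameters, a contradiction.

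The genuine gap is everything after that. In the cases $m<p_1$ or $n>p_3$ you write only that you will ``propagate the laminar-and-separating constraints inward'' until some diameter ``eventually fails to contain any of the three points''. This is not a proof; it is a hope, and the difficulty is real. Once $m<p_1$, the diameters of the $C_i$ are no longer forced to straddle a single fixed point of $X$: some may contain $p_1$, others $p_2$, others $p_3$, and the laminar condition on one side of $\Delta$ does not by itself prevent a long chain of half-circles from nesting and un-nesting while always catching some $p_j$. Your even-$k$ paragraph has the same problem: you invoke an external half-circle at $m$ or $n$ and assert ``the analogous disjointness'', but you never say what inequality you actually obtain or how it closes the argument. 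The alternative route via the uniqueness clause of Theorem~\ref{spinningskeleton} is also only a suggestion: to use it you would have to verify that the replacement curve is genuinely a spinning skeleton of $\Gamma$, which amounts to a homotopy statement you have not established.

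The paper's proof does exactly the case analysis you defer. Rather than arguing abstractly with laminarity, it follows the skeleton step by step from $m$ (or from $n$), and at each branching uses the key observation that once the skeleton is trapped in a region bounded by previously drawn arcs, it must ``finish spinning'' into one of the $p_i$; this repeatedly forces equalities like $m=p_1$ or $n_1=p_2$ that contradict the standing assumptions. If you want to complete your approach, you will need to carry out a comparable exhaustion of the positions of $m_1,m_2,n_1,n_2$ relative to $p_1,p_2,p_3$; the laminar/separating language may streamline the bookkeeping, but it does not eliminate it.
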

~

\begin{proof} If $m=p_{1}$ and $n=p_{3}$, then $\Psi(\Gamma) = [p_{1}; p_{3}]_{\mathcal{C}}^{S}$ is reduced to a half-circle and the proposition is statisfied.

Else let us first suppose that $m=p_{1}$ and consider $[m; p]_{\mathcal{C}}^{S}$ the half-circle of $\Psi(\Gamma)$ that begins with $p_{1}=m$. If $p\neq n$, then $p\in ]p_{3}; +\infty[$ is impossible for in this case necessarily $n$ is a running point of $\Psi(\Gamma)$, it is therefore a junction point for two half-circles of $\Psi(\Gamma)$ one of wich, say $[n;k]_{\mathcal{C}}^{S}$, is on the same side of $\Delta$ as $[m; p]_{\mathcal{C}}^{S}$, and therefore necessarily $k<_{\Delta} m$ which contradicts the definition of $m$ (see figure \ref{figure10}).

%%FIGURE%%
%%FIGURE%%
%%FIGURE%%
\begin{figure}[htpb]

\begin{picture}(45,85)(-110,-15)

\color{black}

\put(-40,0){\circle*{1}}
\put(0,0){\circle*{1}}
\put(-80,2){\makebox(0,0)[bc]{$ \Delta$}}
\put(0,0){\circle*{1}}
\put(40,0){\circle*{1}}
\put(-80, 0){\line(1,0){160}}
\put(80,0){\vector(1,0){0}}

\put(-40,-2){\makebox(0,0)[tc]{$m=p_{1}$}}
\put(-0,-2){\makebox(0,0)[tc]{$p_{2}$}}
\put(40,-2){\makebox(0,0)[tc]{$p_{3}$}}

\put(66,0){\circle*{1}}

\put(60,-2){\makebox(0,0)[tc]{$\scriptstyle  p$}}
\put(66,-2){\makebox(0,0)[tc]{$\scriptstyle  n$}}
\put(-54,-2){\makebox(0,0)[tc]{$\scriptstyle  k$}}

\color{blue}

%Ck1
%RA=50 SA=0.382*R TA=0.707R
%S=0.414 fois R; TA = O.707 fois T
%R=50 SA = 20.7 ; Ta=35.35
\put(10,0){\qbezier[100](-50,0)(-50,20.7)(-35.35,35.35)}
\put(10,0){\qbezier[100](-35.35,35.35)(-20.7,50)(0,50)}
\put(10,0){\qbezier[100](0,50)(20.7,50)(35.35,35.35)}
\put(10,0){\qbezier[100](35.35,35.35)(50,20.7)(50,0)}

\put(60,0){\vector(0,-1){0}}

\put(48,30){\makebox(0,0)[tr]{$ [m;p]_{\mathcal{C}}^{S}$}}

%Ck2
%RA=60 SA=0.382*R TA=0.707R
%S=0.414 fois R; TA = O.707 fois T
%R=60 SA = 24.852 ; Ta=42.4242.42
\put(6,0){\qbezier[100](-60,0)(-60,24.85)(-42.42,42.42)}
\put(6,0){\qbezier[100](-42.42,42.42)(-24.85,60)(0,60)}
\put(6,0){\qbezier[100](0,60)(24.85,60)(42.42,42.42)}
\put(6,0){\qbezier[100](42.42,42.42)(60,24.85)(60,0)}

\put(-54,0){\vector(0,-1){0}}

\put(-48,30){\makebox(0,0)[br]{$ [n;k]_{\mathcal{C}}^{S}$}}

\end{picture}
\caption{If $m=p_{1}$ then $ p_{3}<_{\Delta}P$ is impossible.  \label{figure10}}
\end{figure}
%%FIGURE%%
%%FIGURE%%
%%FIGURE%%

Similarily, $p=p_{3}$ is impossible for then the hole $\Psi(\Gamma)$ would be $[m; p]_{\mathcal{C}}^{S}$ and $n=p=p_{3}$ would hold. 

The case $p\in ]p_{1}; p_{2}[$ is also impossible by the construction of $\Psi(\Gamma)$ (see section \ref{spinningskeleton}), and finally one must show that $p\in ]p_{2}; p_{3}[$ is impossible. In this case holds, the next half circle of $\Psi(\Gamma)$ after $[m; p]_{\mathcal{C}}^{S}$ is some $[p;q]_{\mathcal{C}}^{-S}$ for $Q\in \Delta$ (see figure \ref{figure11}). One cannot have $q\in ]p_{1}; p_{2}[$ for in this case the skeleton must directly finish spinning to $p_{2}$, which implies that $\Psi(\Gamma) =  [p_{1}; p_{2}]_{\Delta}$, an excluded case.

%%FIGURE%%
%%FIGURE%%
%%FIGURE%%
\begin{figure}[htpb]
\setlength{\unitlength}{0.7mm}
\begin{picture}(45,65)(-110,-25)

\color{black}
\put(-40,0){\circle*{1}}
\put(0,0){\circle*{1}}
\put(-80,2){\makebox(0,0)[bc]{$ \Delta$}}
\put(0,0){\circle*{1}}
\put(40,0){\circle*{1}}
\put(-80, 0){\line(1,0){160}}
\put(80,0){\vector(1,0){0}}

\put(-40,-2){\makebox(0,0)[tc]{$m=p_{1}$}}
\put(-0,-2){\makebox(0,0)[tc]{$p_{2}$}}
\put(40,-2){\makebox(0,0)[tc]{$p_{3}$}}

\put(16,27){\makebox(0,0)[tr]{$ [m;p]_{\mathcal{C}}^{S}$}}
\put(11, 1){\makebox(0,0)[bl]{$\scriptstyle  p$}}
\put(-11, -1){\makebox(0,0)[tr]{$\scriptstyle  q$}}

\put(10,-12){\makebox(0,0)[tr]{$ [p;q]_{\mathcal{C}}^{-S}$}}

\color{blue}

%Ck1
%RA=50 SA=0.414*R TA=0.707R
%S=0.414 fois R; TA = O.707 fois T
%R=25 SA = 10.35 ; Ta=17.67
\put(-15,0){\qbezier[100](-25,0)(-25,10.35)(-17.67,17.67)}
\put(-15,0){\qbezier[100](-17.67,17.67)(-10.35,25)(0,25)}
\put(-15,0){\qbezier[100](0,25)(10.35,25)(17.67,17.67)}
\put(-15,0){\qbezier[100](17.67,17.67)(25,10.35)(25,0)}

\put(10,0){\vector(0,-1){0}}

%Ck2
%RA=10
%S=0.414 fois R; T = O.707 fois R
%R=10 SA = 4.14 ; T=7.07
\put(0,0){\qbezier[100](-10,0)(-10,-4.14)(-7.07,-7.07)}
\put(0,0){\qbezier[100](-7.07,-7.07)(-4.14,-10)(0,-10)}
\put(0,0){\qbezier[100](0,-10)(4.14,-10)(7.07,-7.07)}
\put(0,0){\qbezier[100](7.07,-7.07)(10,-4.14)(10,0)}

\put(-10,0){\vector(0,1){0}}

%Ck2
%RA=10
%S=0.414 fois R; T = O.707 fois R
%R=10 SA = 4.14 ; T=7.07
\put(0,0){\qbezier[100](-10,0)(-10,-4.14)(-7.07,-7.07)}
\put(0,0){\qbezier[100](-7.07,-7.07)(-4.14,-10)(0,-10)}
\put(0,0){\qbezier[100](0,-10)(4.14,-10)(7.07,-7.07)}
\put(0,0){\qbezier[100](7.07,-7.07)(10,-4.14)(10,0)}

\put(-10,0){\vector(0,1){0}}

\color{mygreen}

%Ck3
%RA=5
%S=0.414 fois R; T = O.707 fois R
%R=8 SA = 3.31 ; T=5.65
\put(-2,0){\qbezier[10](-8,0)(-8,3.31)(-5.65,5.65)}
\put(-2,0){\qbezier[10](-5.65,5.65)(-3.31,8)(0,8)}
\put(-2,0){\qbezier[10](0,8)(3.31,8)(5.65,5.65)}
\put(-2,0){\qbezier[10](5.65,5.65)(8,3.31)(8,0)}

\put(6,0){\vector(0,-1){0}}

\end{picture}
\caption{If $m=p_{1}$ and $p_{2} <_{\Delta} p <_{\Delta} p_{3}$ then $ p_{1}<_{\Delta} q <_{\Delta} p_{2}$ is impossible. \label{figure11}}
\end{figure}
%%FIGURE%%
%%FIGURE%%
%%FIGURE%%

If $q\in [p_{3}; +\infty[$ then for the same reason the skeleton must finish spinning to $p_{3}$ which means that $p=p_{3}$ and $\Psi(\Gamma) = [p_{1}; p_{3}]_{\Psi(\Gamma)}^{S}$ (see figure \ref{figure12}).

%%FIGURE%%
%%FIGURE%%
%%FIGURE%%
\begin{figure}[htpb]
\begin{picture}(45,65)(-110,-25)

\color{black}

\put(-40,0){\circle*{1}}
\put(0,0){\circle*{1}}
\put(-80,2){\makebox(0,0)[bc]{$ \Delta$}}
\put(0,0){\circle*{1}}
\put(40,0){\circle*{1}}
\put(-80, 0){\line(1,0){160}}
\put(80,0){\vector(1,0){0}}

\put(-40,-2){\makebox(0,0)[tc]{$ m=p_{1}$}}
\put(-0,-2){\makebox(0,0)[tc]{$p_{2}$}}
\put(40,-2){\makebox(0,0)[tc]{$ p_{3}$}}

\put(5,22){\makebox(0,0)[tl]{$ [m;p]_{\mathcal{C}}^{S}$}}
\put(11, 1){\makebox(0,0)[bl]{$\scriptstyle p$}}

\put(46,-12){\makebox(0,0)[tl]{$ [p;q]_{\mathcal{C}}^{-S}$}}
\put(50, 1){\makebox(0,0)[bl]{$\scriptstyle q$}}

\color{blue}
%Ck1
%RA=50 SA=0.414*R TA=0.707R
%S=0.414 fois R; TA = O.707 fois T
%R=25 SA = 10.35 ; Ta=17.67
\put(-15,0){\qbezier[100](-25,0)(-25,10.35)(-17.67,17.67)}
\put(-15,0){\qbezier[100](-17.67,17.67)(-10.35,25)(0,25)}
\put(-15,0){\qbezier[100](0,25)(10.35,25)(17.67,17.67)}
\put(-15,0){\qbezier[100](17.67,17.67)(25,10.35)(25,0)}
\put(10,0){\vector(0,-1){0}}

%Ck2
%RA=30
%S=0.414 fois R; T = O.707 fois R
%R=20 SA = 8.28 ; T=14.14
\put(30,0){\qbezier[100](-20,0)(-20,-8.28)(-14.14,-14.14)}
\put(30,0){\qbezier[100](-14.14,-14.14)(-8.28,-20)(0,-20)}
\put(30,0){\qbezier[100](0,-20)(8.28,-20)(14.14,-14.14)}
\put(30,0){\qbezier[100](14.14,-14.14)(20,-8.28)(20,0)}
\put(50,0){\vector(0,1){0}}

\color{mygreen}
%Ck3
%RA=10
%S=0.414 fois R; T = O.707 fois R
%R=10 SA = 4.14 ; T=7.07
\put(40,0){\qbezier[50](-10,0)(-10,4.14)(-7.07,7.07)}
\put(40,0){\qbezier[50](-7.07,7.07)(-4.14,10)(0,10)}
\put(40,0){\qbezier[50](0,10)(4.14,10)(7.07,7.07)}
\put(40,0){\qbezier[50](7.07,7.07)(10,4.14)(10,0)}

\put(30,0){\vector(0,-1){0}}

%Ck3
%S=0.414 fois R; T = O.707 fois R
%R=8 SA = 3.31 ; T=5.65
\put(38,0){\qbezier[10](-8,0)(-8,-3.31)(-5.65,-5.65)}
\put(38,0){\qbezier[10](-5.65,-5.65)(-3.31,-8)(0,-8)}
\put(38,0){\qbezier[10](0,-8)(3.31,-8)(5.65,-5.65)}
\put(38,0){\qbezier[10](5.65,-5.65)(8,-3.31)(8,0)}

\put(46,0){\vector(0,1){0}}

\end{picture}
\caption{If $m=p_{1}$ and $p_{2} <_{\Delta} p <_{\Delta} p_{3}$ then $ p_{3}<_{\Delta} q $ is impossible. \label{figure12}}
\end{figure}
%%FIGURE%%
%%FIGURE%%
%%FIGURE%%

The only possible left case is therefore $p=n$, wich proves the proposition in this case. And this proposition naturally also holds under the symetric hypothesis $n=p_{3}$. 

Now suppose that $m<_{\Delta} p_{1}$ and $p_{3} <_{\Delta} n$ : $m$ and $n$ are both junction points of two half-circles in the spinning skeleton $\Psi(\Gamma)$. Let us write those arcs $[m; m_{1}]_{\mathcal{C}}^{S}$; $[m; m_{2}]_{\mathcal{C}}^{-S}$; $[n_{1}; n]_{\mathcal{C}}^{S}$; $[n; n_{2}]_{\mathcal{C}}^{-S}$, with $S=+1$. Let us then suppose that that none of them connects $m$ and $n$. 

Il $m_{1} \geqslant_{\Delta} p_{3}$, since $m_{1} \leqslant_{\Delta} m$ and $m\neq m_{1}$, one has either the impossible $n_{1} <_{\Delta} m$ or the impossible $n_{2}<_{\Delta} m$ (in the manner of figure \ref{figure10}). Similarily, if both $p_{1} <_{\Delta} m_{1}  <_{\Delta} p_{2}$ and $p_{1} <_{\Delta} m_{2}  <_{\Delta} p_{2}$ then $\Psi(\Gamma)$ must directly finish spinning to $p_{1}$ from $m_{1}$ or $m_{2}$, according to the fact that $m_{1}<_{\Delta} m_{2}$ or  $m_{2}<_{\Delta} m_{1}$ respectively (see figure \ref{figure13}). But this implies that  the half circle $[m_{1};p_{1}]_{\mathcal{\C}}^{S}$ and the arc $[m;p_{1}]_{\Psi(\Gamma)}$ are isotopic with fixed ends in $\R^{2} \setminus \Big\{ p_{1}; p_{2}; p_{3} \Big\}$ and this contradicts the construction of $\Psi(\Gamma)$ and the hypothesis that $m<_{\Delta} p_{1}$ .

%%FIGURE%%
%%FIGURE%%
%%FIGURE%%
\begin{figure}[htpb]
\begin{picture}(45,65)(-110,-25)
\color{black}
\put(-40,0){\circle*{1}}
\put(0,0){\circle*{1}}
\put(-80,2){\makebox(0,0)[bc]{$ \Delta$}}
\put(0,0){\circle*{1}}
\put(40,0){\circle*{1}}
\put(-80, 0){\line(1,0){160}}
\put(80,0){\vector(1,0){0}}

\put(-40,-2){\makebox(0,0)[tc]{$p_{1}$}}
\put(-0,-2){\makebox(0,0)[tc]{$p_{2}$}}
\put(40,-2){\makebox(0,0)[tc]{$p_{3}$}}

\put(10,16){\makebox(0,0)[br]{$ [m;m_{1}]_{\mathcal{C}}^{S}$}}
\put(-55,1){\makebox(0,0)[br]{$\scriptstyle m$}}
\put(-6,1){\makebox(0,0)[br]{$\scriptstyle m_{1}$}}

\put(-16, -1){\makebox(0,0)[tr]{$\scriptstyle m_{2}$}}
\put(-20,-12){\makebox(0,0)[tl]{$ [m;m_{2}]_{\mathcal{C}}^{-S}$}}
\put(-20,-12){\makebox(0,0)[tl]{$ [m;m_{2}]_{\mathcal{C}}^{-S}$}}

\color{blue}

%Ck1
%RA=50 SA=0.414*R TA=0.707R
%S=0.414 fois R; TA = O.707 fois T
%R=25 SA = 10.35 ; Ta=17.67
\put(-30,0){\qbezier[100](-25,0)(-25,10.35)(-17.67,17.67)}
\put(-30,0){\qbezier[100](-17.67,17.67)(-10.35,25)(0,25)}
\put(-30,0){\qbezier[100](0,25)(10.35,25)(17.67,17.67)}
\put(-30,0){\qbezier[100](17.67,17.67)(25,10.35)(25,0)}

\put(-5,0){\vector(0,-1){0}}

%Ck2
%RA=30
%S=0.414 fois R; T = O.707 fois R
%R=20 SA = 8.28 ; T=14.14
\put(-35,0){\qbezier[100](-20,0)(-20,-8.28)(-14.14,-14.14)}
\put(-35,0){\qbezier[100](-14.14,-14.14)(-8.28,-20)(0,-20)}
\put(-35,0){\qbezier[100](0,-20)(8.28,-20)(14.14,-14.14)}
\put(-35,0){\qbezier[100](14.14,-14.14)(20,-8.28)(20,0)}
\put(-15,0){\vector(0,1){0}}

\color{mygreen}
%Ck3
%RA=16
%S=0.414 fois R; T = O.707 fois R
%R=16 SA = 6.62 ; T=11.31
\put(-31,0){\qbezier[30](-16,0)(-16,6.62)(-11.31,11.31)}
\put(-31,0){\qbezier[30](-11.31,11.31)(-6.62,16)(0,16)}
\put(-31,0){\qbezier[30](0,16)(6.62,16)(11.31,11.31)}
\put(-31,0){\qbezier[30](11.31,11.31)(16,6.62)(16,0)}
\put(-47,0){\vector(0,-1){0}}

%Ck4
%RA=10
%S=0.414 fois R; T = O.707 fois R
%R=10 SA = 4.14 ; T=7.07
\put(-37,0){\qbezier[15](-10,0)(-10,-4.14)(-7.07,-7.07)}
\put(-37,0){\qbezier[15](-7.07,-7.07)(-4.14,-10)(0,-10)}
\put(-37,0){\qbezier[15](0,-10)(4.14,-10)(7.07,-7.07)}
\put(-37,0){\qbezier[15](7.07,-7.07)(10,-4.14)(10,0)}
\put(-27,0){\vector(0,1){0}}

\end{picture}
\caption{If $m <_{\Delta} p_{1}$  then $p_{1} <_{\Delta} m_{1}  <_{\Delta} p_{2}$ and $p_{1} <_{\Delta} m_{2}  <_{\Delta} p_{2}$ is impossible. \label{figure13}}
\end{figure}
%%FIGURE%%
%%FIGURE%%
%%FIGURE%%

Now if both $p_{2} <_{\Delta} m_{1}  <_{\Delta} p_{3}$ and $p_{2} <_{\Delta} m_{2}  <_{\Delta} p_{3}$ then both $p_{2} <_{\Delta} n_{1}  <_{\Delta} p_{3}$ and $p_{2} <_{\Delta} n_{2}  <_{\Delta} p_{3}$ and this case is symetric from the previous one (see figure \ref{figure13}) and also impossible. So necessarily one has : $p_{1} <_{\Delta} m_{1}  <_{\Delta} p_{2}$;  $p_{2} <_{\Delta} m_{2}  <_{\Delta} p_{3}$, $p_{1} <_{\Delta} m_{1}  <_{\Delta} p_{2}$ and  $p_{2} <_{\Delta} m_{2}  <_{\Delta} p_{3}$. As in the previous cases, the landing point of half circle following $[n_{1}; n]_{\Psi(\Gamma)}$ in $\Psi(\Gamma)$ cannot be in $]p_{3}; n[$; neither can the landing point of the half circle following $[m; m_{2}]_{\Psi(\Gamma)}$ in $\Psi(\Gamma)$ be in $]m; p_{1}[$. Those points must therefore respectively be in $]n_{2}; p_{2}[_{\Delta}$ and $]p_{2}; m_{1}[_{\Delta}$. But in this case the four subarcs of $\Psi(\Gamma)$ following respectively $[n; n_{1}]_{\mathcal{C}}^{S}$; $[n; n_{2}]_{\mathcal{C}}^{S}$; $[m; m_{1}]_{\mathcal{C}}^{S}$ and $[m; m_{2}]_{\mathcal{C}}^{S}$ must converge to $p_{2}$, an impossible situation (see figure \ref{figure14}).

%%FIGURE%%
%%FIGURE%%
%%FIGURE%%
\begin{figure}[htpb]

\begin{picture}(45,75)(-110,-35)
\color{black}
\put(-40,0){\circle*{1}}
\put(0,0){\circle*{1}}
\put(-80,2){\makebox(0,0)[bc]{$ \Delta$}}
\put(0,0){\circle*{1}}
\put(40,0){\circle*{1}}
\put(-80, 0){\line(1,0){160}}
\put(80,0){\vector(1,0){0}}

\put(-40,-2){\makebox(0,0)[tc]{$ p_{1}$}}
\put(-0,-2){\makebox(0,0)[tc]{$ p_{2}$}}
\put(40,-2){\makebox(0,0)[tc]{$ p_{3}$}}

\put(-45,25){\makebox(0,0)[br]{$ [m;m_{1}]_{\mathcal{C}}^{S}$}}
\put(-56,1){\makebox(0,0)[br]{$ m$}}
\put(-55,0){\circle*{1}}
\put(15,-1){\makebox(0,0)[tc]{$\scriptstyle m_{1}$}}

\put(-16, 1){\makebox(0,0)[br]{$\scriptstyle m_{2}$}}
\put(-45,-18){\makebox(0,0)[tr]{$ [m;m_{2}]_{\mathcal{C}}^{-S}$}}

\put(-9, 1){\makebox(0,0)[bc]{$\scriptstyle n_{2}$}}
\put(55,-18){\makebox(0,0)[tl]{$ [n;n_{2}]_{\mathcal{C}}^{-S}$}}

\put(45,19){\makebox(0,0)[bl]{$ [n;n_{1}]_{\mathcal{C}}^{S}$}}
\put(60,0){\circle*{1}}
\put(61,1){\makebox(0,0)[bl]{$\scriptstyle n$}}
\put(21,-1){\makebox(0,0)[tl]{$\scriptstyle n_{1}$}}

\color{blue}

%Ck1
%RA=50 SA=0.414*R TA=0.707R
%S=0.414 fois R; TA = O.707 fois T
%R=35 SA = 14.5 ; Ta=24.75
\put(-20,0){\qbezier[100](-35,0)(-35,14.5)(-24.75,24.75)}
\put(-20,0){\qbezier[100](-24.75,24.75)(-14.5,35)(0,35)}
\put(-20,0){\qbezier[100](0,35)(14.5,35)(24.75,24.75)}
\put(-20,0){\qbezier[100](24.75,24.75)(35,14.5)(35,0)}

\put(15,0){\vector(0,-1){0}}

%Ck2
%RA=30
%S=0.414 fois R; T = O.707 fois R
%R=20 SA = 8.28 ; T=14.14
\put(-35,0){\qbezier[100](-20,0)(-20,-8.28)(-14.14,-14.14)}
\put(-35,0){\qbezier[100](-14.14,-14.14)(-8.28,-20)(0,-20)}
\put(-35,0){\qbezier[100](0,-20)(8.28,-20)(14.14,-14.14)}
\put(-35,0){\qbezier[100](14.14,-14.14)(20,-8.28)(20,0)}
\put(-15,0){\vector(0,1){0}}

%Ck3
%RA=50 SA=0.414*R TA=0.707R
%S=0.414 fois R; TA = O.707 fois T
%R=35 SA = 14.5 ; Ta=24.75
\put(25,0){\qbezier[100](-35,0)(-35,-14.5)(-24.75,-24.75)}
\put(25,0){\qbezier[100](-24.75,-24.75)(-14.5,-35)(0,-35)}
\put(25,0){\qbezier[100](0,-35)(14.5,-35)(24.75,-24.75)}
\put(25,0){\qbezier[100](24.75,-24.75)(35,-14.5)(35,0)}

\put(-10,0){\vector(0,1){0}}

%Ck4
%RA=30
%S=0.414 fois R; T = O.707 fois R
%R=20 SA = 8.28 ; T=14.14
\put(40,0){\qbezier[100](-20,0)(-20,8.28)(-14.14,14.14)}
\put(40,0){\qbezier[100](-14.14,14.14)(-8.28,20)(0,20)}
\put(40,0){\qbezier[100](0,20)(8.28,20)(14.14,14.14)}
\put(40,0){\qbezier[100](14.14,14.14)(20,8.28)(20,0)}
\put(20,0){\vector(0,-1){0}}

\color{mygreen}

%Ck5
%RA=12
%S=0.414 fois R; T = O.707 fois R
%R=12 SA = 4.97 ; T=8.48
\put(8,0){\qbezier[30](-12,0)(-12,-4.97)(-8.48,-8.48)}
\put(8,0){\qbezier[30](-8.48,-8.48)(-4.97,-12)(0,-12)}
\put(8,0){\qbezier[30](0,-12)(4.97,-12)(8.48,-8.48)}
\put(8,0){\qbezier[30](8.48,-8.48)(12,-4.97)(12,0)}
\put(-4,0){\vector(0,1){0}}

%Ck4
%RA=10
%S=0.414 fois R; T = O.707 fois R
%R=10 SA = 4.14 ; T=7.07
\put(-5,0){\qbezier[40](-10,0)(-10,4.14)(-7.07,7.07)}
\put(-5,0){\qbezier[40](-7.07,7.07)(-4.14,10)(0,10)}
\put(-5,0){\qbezier[40](0,10)(4.14,10)(7.07,7.07)}
\put(-5,0){\qbezier[40](7.07,7.07)(10,4.14)(10,0)}
\put(5,0){\vector(0,-1){0}}

\end{picture}
\caption{If $m <_{\Delta} p_{1}$  then $p_{1} <_{\Delta} m_{1}  <_{\Delta} p_{2}$ and $p_{1} <_{\Delta} m_{2}  <_{\Delta} p_{2}$ is impossible. \label{figure14}}
\end{figure}
%%FIGURE%%
%%FIGURE%%
%%FIGURE%%

\end{proof}

According to the proposition \ref{proposition1}, let us say that the curve $\Gamma$ is in top position if $S=+1$ and in bottom position if $S=-1$. Since obviously $\Psi\left(\sigma(\Gamma)\right) = \sigma\left(\Psi(\Gamma)\right)$, the symetry $\sigma$ relative to $\Delta$ changes the position of a curve $\Gamma$ to its opposite. Let us call {\texteit visible emerging arc} of $\Psi(\Gamma)$ the half circle $[m;n]_{C}^{S}$ in $\Psi(\Gamma)$ for a given side $S(\Gamma)$, and similarily let us define the {\texteit emerging arcs} of $\Psi(\Gamma)$ to be the half circles of $\Psi(\Gamma)$ that are on the side $S$ of the visible emerging arc, as opposed to the {\texteit sustaining arcs} of $\Psi(\Gamma)$ that are on the opposite side. Let us define the correponding sub-arcs $[m;m_{1}]_{\Psi(\Gamma)}$ and $[n;n_{1}]_{\Psi(\Gamma)}$ of $\Psi(\Gamma)$ to be respectively the {\texteit left visible sustaining arc} and the {\texteit right visible sustaining arc} of $\Psi(\Gamma)$. If $m=p_{1}$, then let us consider that $m_{1} = p_{1}=m$ and say that $[m;m_{1}]_{\Psi(\Gamma)}$ is reduced to a point ~: in this case that the left sustaining arc of $\Psi(\Gamma)$ is trivial. Else, let us call $m_{1}$ the {\textit second extremity of the left visible sustaining arc} of $\Psi(\Gamma)$, and let us speak similarily of $n_{1}$. If both sustaining arcs of $\Psi(\Gamma)$ are trivial, then $\Psi(\Gamma)$ is reduced to $[p_{1}; p_{2}]_{\mathcal{C}}^{S}$. Else, let us suppose that $S(\Gamma)=+1$ and show the following~:

\begin{prop}
Suppose that $\Psi(\Gamma)$ contains more than one segment or half circle and let $m_{1}$ and $n_{1}$ be the second extremities respectively  of the left and of the right visible sustaining arc of $\Psi(\Gamma)$. Then either 
$$ p_{1} \leqslant_{\Delta} m_{1} <_{\Delta} n_{1} \leqslant p_{2} $$
or 
$$ p_{2} \leqslant_{\Delta} m_{1} <_{\Delta} n_{1} \leqslant p_{3} $$

Furthermore, the extremities of each sustaining arc of $\Psi(\Gamma)$ are all either separated by $p_{1}$ or separated by $p_{3}$ exclusively, and the extremities of each emerging arc of $\Psi(\Gamma)$ are all separated by $p_{2}$.
\label{proposition2}
\end{prop}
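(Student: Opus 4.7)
The plan is to prove the two assertions of the proposition in sequence, with the crux in eliminating a single ``mixed'' configuration in the first part.

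Under the generic assumption $m < p_1$ and $p_3 < n$ (the edge cases being reduced by the stated triviality convention), I would begin with two preliminary bounds. The separating-link property built into the definition of $\Psi$ forces the open diameter of every non-trivial arc to contain a point of $X$. Applied to the diameter $(m, m_1)$ of the left visible sustaining arc and using $m < p_1$, this gives $p_1 \in (m, m_1)$, hence $m_1 > p_1$; symmetrically $n_1 < p_3$. Next, the simplicity of $\Psi(\Gamma)$ as an embedded curve requires the two visible sustaining half-circles on side $-S$, whose diameters are both contained in $(m, n)$, to form a laminar pair on $\Delta$. Nesting is excluded because $m_1, n_1$ both lie strictly inside $(m, n)$, so the two diameters are disjoint and $m_1 < n_1$ strictly.

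The crucial step is to rule out the mixed configuration $p_1 < m_1 < p_2 < n_1 < p_3$. For this I would examine the two arcs $A'$ and $A''$ of $\Psi(\Gamma)$ on side $+S$ adjacent to the visible sustaining arcs at the endpoints $m_1$ and $n_1$; each must be a separating half-circle whose diameter is nested inside $[m, n]$. A case analysis on the positions of their second endpoints $m_2$ and $n_2$ relative to $p_1, p_2, p_3$ shows that, in the mixed configuration, the diameters of $A'$ and $A''$ either overlap without nesting — yielding an immediate crossing above $\Delta$ that contradicts simplicity — or nest in a way that merely propagates the same mixed orientation one layer inward. In the latter situation I would iterate: since the alternating-side structure of $\Psi(\Gamma)$ carries the mixed configuration inward, and since the skeleton has only finitely many arcs terminating at points of $X$, the iteration must eventually reach an innermost layer at which the diameter endpoints are constrained to lie in $X$; at that layer the case analysis no longer admits a nested configuration, and the non-nested overlap — together with the resulting crossing — becomes unavoidable.

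For the refinement that every sustaining arc is separated by a point of $\{p_1, p_3\}$ only and every emerging arc by $p_2$, I would proceed by induction on the number of arcs of $\Psi(\Gamma)$. The first assertion, applied to the outermost layer, handles the visible emerging and two visible sustaining arcs; peeling these off leaves two residual pieces of the skeleton, each itself the spinning skeleton of a sub-curve of $\Gamma$ relative to a suitably augmented subset of $\Delta$, and the inductive hypothesis classifies the separators of the remaining arcs. The principal difficulty throughout is the mixed-case elimination above, since the non-nested crossing is not always forced at the outermost level and one must track the propagation of the mixed orientation through successive alternating nested layers until the terminal constraint in $X$ closes the contradiction.
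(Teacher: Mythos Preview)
Your proposal sketches a plausible-sounding strategy, but it lacks the key mechanism that actually drives the paper's argument, and without it neither your iteration in the first part nor your induction in the second part closes.

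The central tool in the paper's proof is what one might call the \emph{finish-spinning} phenomenon: once an arc of $\Psi(\Gamma)$ lands at a point trapped between an existing arc of the skeleton and a point of $X$, the maximality built into the construction of $\Psi_\Gamma$ forces the skeleton to spiral inward and terminate at that point of $X$. This is what turns a positional constraint (``$m_2$ lies in $[m;p_1]$'') into an equality (``so the skeleton ends at $p_1$, hence $m_1=p_1$''), and it is this forced equality that produces the contradiction in each case. Your case analysis on $m_2, n_2$ invokes only crossings from simplicity, but several of the configurations the paper handles do not produce an immediate crossing; rather, they force the curve to terminate at the wrong point of $X$. Your fallback---that the nested cases ``propagate the mixed configuration inward'' until the endpoints are ``constrained to lie in $X$''---is not justified: the skeleton has only two endpoints in $X$, all other junction points lie in $\Delta\setminus X$, and there is no a priori reason the inward layers should become more constrained rather than simply continuing indefinitely within the finitely many arcs available.

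For the second assertion, your induction rests on the claim that peeling off the outermost three arcs leaves ``two residual pieces, each itself the spinning skeleton of a sub-curve of $\Gamma$ relative to a suitably augmented subset of $\Delta$.'' This is not established anywhere and is far from obvious: the spinning skeleton is defined by a specific maximality procedure relative to a fixed $X$, and removing arcs does not automatically yield the skeleton of anything. The paper instead argues directly: it takes an arbitrary sustaining arc whose diameter fails to straddle $p_1$ or $p_3$, follows $\Psi(\Gamma)$ from each endpoint, and uses the finish-spinning mechanism repeatedly to force contradictions with the positions already established. You should either supply the finish-spinning lemma explicitly and redo the case analysis with it, or abandon the iteration/induction framework in favor of the paper's direct follow-the-arc approach.
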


\begin{proof} It is sufficient to prove that $p_{2} \notin ]m_{1}; n_{1}[_{\Delta}$, and there are three possibilities for this to happen : the two similar cases  $\left( m_{1} = m; n_{1} \neq n\right) $ or 
 $\left( m_{1} \neq m; n_{1} = n \right)$; and the case $\left( m_{1} \neq m ; n_{1} \neq n \right)$. 

Let us first suppose that $p_{2} \in ]m_{1}; n_{1}[_{\Delta}$ with the condition $m_{1} = m=p_{1}$ and $n_{1} \neq n$. Then, following $n_{1}$ from $n$ the spinning skeleton $\Psi(\Gamma)$ must take on its right to the right of $p_{3}$ or on its left to the left of $p_{2}$. In the first case it must finish spinning directly to $p_{3}$, which implies that $n_{1}= p_{3}$ and then $n_{1}=n$, and excluded situation. In the second case it must  finish spinning directly to $p_{2}$, and similarily one has $n_{1}=p_{2}$ (see figure \ref{figure15}).

 %%FIGURE%%
%%FIGURE%%
%%FIGURE%%
\begin{figure}[htpb]

\begin{picture}(45,85)(-110,-20)

\color{black}

\put(-40,0){\circle*{1}}
\put(0,0){\circle*{1}}
\put(-80,2){\makebox(0,0)[bc]{$ \Delta$}}
\put(0,0){\circle*{1}}
\put(40,0){\circle*{1}}
\put(-80, 0){\line(1,0){160}}
\put(80,0){\vector(1,0){0}}

\put(-40,-2){\makebox(0,0)[tc]{$m=p_{1}=m_{1}$}}
\put(-0,-2){\makebox(0,0)[tc]{$p_{2}$}}
\put(40,-2){\makebox(0,0)[tc]{$p_{3}$}}

\put(55,30){\makebox(0,0)[tl]{$ [m;n]_{\mathcal{C}}^{S=+1}$}}

\put(62,1){\makebox(0,0)[bl]{$\scriptstyle  n$}}
\put(19,-1){\makebox(0,0)[tr]{$\scriptstyle n_{1}$}}

\put(52,-23){\makebox(0,0)[bl]{$ [n;n_{1}]_{\mathcal{C}}^{S}$}}

\color{blue}
%Ck1
%RA=50 SA=0.382*R TA=0.707R
%S=0.414 fois R; TA = O.707 fois T
%R=50 SA = 20.7 ; Ta=35.35
\put(10,0){\qbezier[100](-50,0)(-50,20.7)(-35.35,35.35)}
\put(10,0){\qbezier[100](-35.35,35.35)(-20.7,50)(0,50)}
\put(10,0){\qbezier[100](0,50)(20.7,50)(35.35,35.35)}
\put(10,0){\qbezier[100](35.35,35.35)(50,20.7)(50,0)}

\put(60,0){\vector(0,-1){0}}

%Ck4
%RA=30
%S=0.414 fois R; T = O.707 fois R
%R=20 SA = 8.28 ; T=14.14
\put(40,0){\qbezier[100](-20,0)(-20,-8.28)(-14.14,-14.14)}
\put(40,0){\qbezier[100](-14.14,-14.14)(-8.28,-20)(0,-20)}
\put(40,0){\qbezier[100](0,-20)(8.28,-20)(14.14,-14.14)}
\put(40,0){\qbezier[100](14.14,-14.14)(20,-8.28)(20,0)}
\put(20,0){\vector(0,1){0}}

\put(60,0){\circle*{1}}

\color{mygreen}

%Ck5
%RA=12
%S=0.414 fois R; T = O.707 fois R
%R=12 SA = 4.97 ; T=8.48
\put(32,0){\qbezier[50](-12,0)(-12,4.97)(-8.48,8.48)}
\put(32,0){\qbezier[50](-8.48,8.48)(-4.97,12)(0,12)}
\put(32,0){\qbezier[50](0,12)(4.97,12)(8.48,8.48)}
\put(32,0){\qbezier[50](8.48,8.48)(12,4.97)(12,0)}
\put(44,0){\vector(0,-1){0}}

%Ck4
%RA=30
%S=0.414 fois R; T = O.707 fois R
%R=20 SA = 8.28 ; T=14.14
\put(0,0){\qbezier[100](-20,0)(-20,8.28)(-14.14,14.14)}
\put(0,0){\qbezier[100](-14.14,14.14)(-8.28,20)(0,20)}
\put(0,0){\qbezier[100](0,20)(8.28,20)(14.14,14.14)}
\put(0,0){\qbezier[100](14.14,14.14)(20,8.28)(20,0)}
\put(-20,0){\vector(0,-1){0}}

\end{picture}
\caption{If $m=p_{1}$ and $n\neq p_{3}$, then $p_{2}<_{\Delta} n_{1}<_{\Delta} <p_{3}$ is impossible.  \label{figure15}}
\end{figure}
%%FIGURE%%
%%FIGURE%%
%%FIGURE%%

Now suppose that $p_{2} \in ]m_{1}; n_{1}[_{\Delta}$ with $m_{1} \neq m$ and $n_{1} \neq n$. Then following the spinning skeleton $\Psi(\Gamma)$ from $m$ after $m_{1}$, there are four possible intervals for the landing point $m_{2}$ of the arc $[m_{1};m_{2}]_{\mathcal{C}}^{S=+1}$ following $m_{1}$ (see figure \ref{figure16}). If $m_{2} \in [m;p_{1}]_{\Delta}$ then $\Psi(\Gamma)$ must finish spinning to $p_{1}$ and necessarily $m=p_{1}=m_{1}$ which is excluded. If  $m_{2} \in [p_{2};n_{1}]_{\Delta}$ then $\Psi(\Gamma)$ must finish spinning to $p_{2}$ and necessarily $m_{1}=p_{2}$ which is excluded. Now if $m_{2} \in [n_{1};p_{3}]_{\Delta}$ or $m_{2} \in [p_{3};n]_{\Delta}$, then one can follow $\Psi(\Gamma)$ from $n$ after $n_{1}$ and consider the landing point $n_{2}$ of the arc $[n_{1};n_{2}]_{\Psi(\Gamma)}^{S=+1}$ : as in figure \ref{figure16} it must necessarily finish spinning to $p_{2}$ or $p_{3}$ which is excluded. This proves the first part of the proposition.

 %%FIGURE%%
%%FIGURE%%
%%FIGURE%%
\begin{figure}[htpb]

\begin{picture}(45,85)(-110,-20)

\color{black}

\put(-40,0){\circle*{1}}
\put(0,0){\circle*{1}}
\put(-80,2){\makebox(0,0)[bc]{$ \Delta$}}
\put(0,0){\circle*{1}}
\put(40,0){\circle*{1}}
\put(-80, 0){\line(1,0){160}}
\put(80,0){\vector(1,0){0}}

\put(-40,-2){\makebox(0,0)[tc]{$p_{1}$}}
\put(-0,-2){\makebox(0,0)[tc]{$p_{2}$}}
\put(40,-2){\makebox(0,0)[tc]{$p_{3}$}}

\put(66,0){\circle*{1}}

\put(67,1){\makebox(0,0)[bl]{$\scriptstyle n$}}
\put(-57,1){\makebox(0,0)[bc]{$\scriptstyle m$}}

\put(-48,-16){\makebox(0,0)[tr]{$ [m;m_{1}]_{\mathcal{C}}^{S=-1}$}}
\put(-13,-1){\makebox(0,0)[tl]{$\scriptstyle m_{1}$}}
\put(-14,0){\makebox(0,0)[tl]{\vector(0,1){0}}}

\put(58,-16){\makebox(0,0)[tl]{$ [n;n_{1}]_{\mathcal{C}}^{S=-1}$}}
\put(25,-1){\makebox(0,0)[tr]{$\scriptstyle n_{1}$}}

\color{blue}

%RA=30
%S=0.414 fois R; T = O.707 fois R
%R=20 SA = 8.28 ; T=14.14
\put(-34,0){\qbezier[100](-20,0)(-20,-8.28)(-14.14,-14.14)}
\put(-34,0){\qbezier[100](-14.14,-14.14)(-8.28,-20)(0,-20)}
\put(-34,0){\qbezier[100](0,-20)(8.28,-20)(14.14,-14.14)}
\put(-34,0){\qbezier[100](14.14,-14.14)(20,-8.28)(20,0)}

%S=0.414 fois R; T = O.707 fois R
%R=20 SA = 8.28 ; T=14.14
\put(46,0){\qbezier[100](-20,0)(-20,-8.28)(-14.14,-14.14)}
\put(46,0){\qbezier[100](-14.14,-14.14)(-8.28,-20)(0,-20)}
\put(46,0){\qbezier[100](0,-20)(8.28,-20)(14.14,-14.14)}
\put(46,0){\qbezier[100](14.14,-14.14)(20,-8.28)(20,0)}

\put(26,0){\vector(0,1){0}}

%Ck2
%S=0.414 fois R; TA = O.707 fois T
%R=60 SA = 24.852 ; Ta=42.4242.42
\put(6,0){\qbezier[150](-60,0)(-60,24.85)(-42.42,42.42)}
\put(6,0){\qbezier[150](-42.42,42.42)(-24.85,60)(0,60)}
\put(6,0){\qbezier[150](0,60)(24.85,60)(42.42,42.42)}
\put(6,0){\qbezier[150](42.42,42.42)(60,24.85)(60,0)}

\put(-54,0){\circle*{1}}

\put(-48,30){\makebox(0,0)[br]{$ [m;n]_{\mathcal{C}}^{S=+1}$}}

\color{mygreen}

%Ck5
%RA=12
%S=0.414 fois R; T = O.707 fois R
%R=12 SA = 4.97 ; T=8.48
\put(-2,0){\qbezier[50](-12,0)(-12,4.97)(-8.48,8.48)}
\put(-2,0){\qbezier[50](-8.48,8.48)(-4.97,12)(0,12)}
\put(-2,0){\qbezier[50](0,12)(4.97,12)(8.48,8.48)}
\put(-2,0){\qbezier[50](8.48,8.48)(12,4.97)(12,0)}
\put(10,0){\vector(0,-1){0}}

%Ck4
%RA=30
%S=0.414 fois R; T = O.707 fois R
%R=15 SA = 6.21 ; T=10.6
\put(-29,0){\qbezier[100](-15,0)(-15,6.21)(-10.6,10.6)}
\put(-29,0){\qbezier[100](-10.6,10.6)(-6.21,15)(0,15)}
\put(-29,0){\qbezier[100](0,15)(6.21,15)(10.6,10.6)}
\put(-29,0){\qbezier[100](10.6,10.6)(15,6.21)(15,0)}
\put(-44,0){\vector(0,-1){0}}

%Ck1
%RA=50 SA=0.414*R TA=0.707R
%S=0.414 fois R; TA = O.707 fois T
%R=35 SA = 14.5 ; Ta=24.75
\put(21,0){\qbezier[100](-35,0)(-35,14.5)(-24.75,24.75)}
\put(21,0){\qbezier[100](-24.75,24.75)(-14.5,35)(0,35)}
\put(21,0){\qbezier[100](0,35)(14.5,35)(24.75,24.75)}
\put(21,0){\qbezier[100](24.75,24.75)(35,14.5)(35,0)}

\put(56,0){\vector(0,-1){0}}

%Ck1
%S=0.414 fois R; TA = O.707 fois T
%R=25 SA = 10.35 ; Ta=17.67
\put(11,0){\qbezier[100](-25,0)(-25,10.35)(-17.67,17.67)}
\put(11,0){\qbezier[100](-17.67,17.67)(-10.35,25)(0,25)}
\put(11,0){\qbezier[100](0,25)(10.35,25)(17.67,17.67)}
\put(11,0){\qbezier[100](17.67,17.67)(25,10.35)(25,0)}

\put(36,0){\vector(0,-1){0}}

\end{picture}
\caption{If $m\neq p_{1}$ and $n\neq p_{3}$ then $ m_{1}<_{\Delta} p_{2} < n_{1}$ is impossible.  \label{figure16}}
\end{figure}
%%FIGURE%%
%%FIGURE%%
%%FIGURE%%

To prove the second part of this proposition, let us first consider that one of the two symetric possibilities given by the first part holds : for example that 
$$ p_{1} \leqslant_{\Delta} m_{1} <_{\Delta} n_{1} \leqslant p_{2}\, . $$

%%FIGURE%%
%%FIGURE%%
%%FIGURE%%
\begin{figure}[htpb]

\begin{picture}(45,90)(-110,-30)

\color{black}

\put(-40,0){\circle*{1}}
\put(-80,2){\makebox(0,0)[bc]{$\Delta$}}
\put(9,0){\circle*{1}}
\put(41,0){\circle*{1}}
\put(-80, 0){\line(1,0){160}}
\put(80,0){\vector(1,0){0}}

\put(-40,-1){\makebox(0,0)[tc]{$p_{1}$}}
\put(9,-1){\makebox(0,0)[tc]{$p_{2}$}}
\put(41,-1){\makebox(0,0)[tc]{$p_{3}$}}

\put(60,0){\circle*{1}}
\put(-60,0){\circle*{1}}

\put(61,1){\makebox(0,0)[bl]{$\scriptstyle n$}}
\put(-61,1){\makebox(0,0)[br]{$\scriptstyle m$}}

\put(-52,-16){\makebox(0,0)[tr]{$ [n;n_{1}]_{\mathcal{C}}^{S=-1}$}}
\put(-20,1){\makebox(0,0)[bc]{$\scriptstyle m_{1}$}}
\put(-20,0){\makebox(0,0)[tl]{\vector(0,1){0}}}

\put(58,-16){\makebox(0,0)[tl]{$ [n;n_{1}]_{\mathcal{C}}^{S=-1}$}}
\put(-8,1){\makebox(0,0)[bc]{$\scriptstyle n_{1}$}}

\put(-54,30){\makebox(0,0)[br]{$ [m;n]_{\mathcal{C}}^{S=+1}$}}
\color{blue}

%C1
%S=0.414 fois R; T = O.707 fois R
%R=20 SA = 8.28 ; T=14.14
\put(-40,0){\qbezier[100](-20,0)(-20,-8.28)(-14.14,-14.14)}
\put(-40,0){\qbezier[100](-14.14,-14.14)(-8.28,-20)(0,-20)}
\put(-40,0){\qbezier[100](0,-20)(8.28,-20)(14.14,-14.14)}
\put(-40,0){\qbezier[100](14.14,-14.14)(20,-8.28)(20,0)}

%C2
%S=0.414 fois R; T = O.707 fois R
%R=34 SA = 14.08 ; T=24.04
\put(26,0){\qbezier[100](-34,0)(-34,-14.08)(-24.04,-24.04)}
\put(26,0){\qbezier[100](-24.04,-24.04)(-14.08,-34)(0,-34)}
\put(26,0){\qbezier[100](0,-34)(14.08,-34)(24.04,-24.04)}
\put(26,0){\qbezier[100](24.04,-24.04)(34,-14.08)(34,0)}

\put(-8,0){\vector(0,1){0}}

%C3
%RA=60 SA=0.382*R TA=0.707R
%S=0.414 fois R; TA = O.707 fois T
%R=60 SA = 24.852 ; Ta=42.4242.42
\put(0,0){\qbezier[150](-60,0)(-60,24.85)(-42.42,42.42)}
\put(0,0){\qbezier[150](-42.42,42.42)(-24.85,60)(0,60)}
\put(0,0){\qbezier[150](0,60)(24.85,60)(42.42,42.42)}
\put(0,0){\qbezier[150](42.42,42.42)(60,24.85)(60,0)}

%Ck5
%S=0.414 fois R; T = O.707 fois R
%R=12 SA = 4.97 ; T=8.48
\put(12,0){\qbezier[50](-12,0)(-12,-4.97)(-8.48,-8.48)}
\put(12,0){\qbezier[50](-8.48,-8.48)(-4.97,-12)(0,-12)}
\put(12,0){\qbezier[50](0,-12)(4.97,-12)(8.48,-8.48)}
\put(12,0){\qbezier[50](8.48,-8.48)(12,-4.97)(12,0)}
\put(24,0){\vector(0,1){0}}

\put(0,0){\vector(0,1){0}}

\color{black}
\put(12,-13){\makebox(0,0)[tc]{$\Lambda$}}
\put(1,-1){\makebox(0,0)[tl]{$\scriptstyle q_{1}$}}
\put(25,-1){\makebox(0,0)[tl]{$\scriptstyle r_{1}$}}

\color{mygreen}

%C4
%S=0.414 fois R; TA = O.707 fois T
%R=16 SA = 6.62 ; Ta=11.3
\put(16,0){\qbezier[100](-16,0)(-16,6.62)(-11.3,11.3)}
\put(16,0){\qbezier[100](-11.3,11.3)(-6.62,16)(0,16)}
\put(16,0){\qbezier[100](0,16)(6.62,16)(11.3,11.3)}
\put(16,0){\qbezier[100](11.3,11.3)(16,6.62)(16,0)}

\put(32,0){\vector(0,-1){0}}

%S=0.414 fois R; T = O.707 fois R
%R=10 SA = 4.14 ; T=7.07

\put(10,0){\qbezier[50](-10,0)(-10,4.14)(-7.07,7.07)}
\put(10,0){\qbezier[50](-7.07,7.07)(-4.14,10)(0,10)}
\put(10,0){\qbezier[50](0,10)(4.14,10)(7.07,7.07)}
\put(10,0){\qbezier[50](7.07,7.07)(10,4.14)(10,0)}
\put(20,0){\vector(0,-1){0}}

%C5
%S=0.414 fois R; TA = O.707 fois T
%R=27 SA = 11.18 ; Ta=19.09
\put(-27,0){\qbezier[100](-27,0)(-27,11.18)(-19.09,19.09)}
\put(-27,0){\qbezier[100](-19.09,19.09)(-11.18,27)(0,27)}
\put(-27,0){\qbezier[100](0,27)(11.18,27)(19.09,19.09)}
\put(-27,0){\qbezier[100](19.09,19.09)(27,11.18)(27,0)}

\put(-54,0){\vector(0,-1){0}}

\end{picture}
\caption{The extremities of any sustaining arc must be separated either by $ p_{1}$ or by $ p_{3}$ (I).  \label{figure17}}
\end{figure}
%%FIGURE%%
%%FIGURE%%
%%FIGURE%%

The segment $]m_{1};n_{1}[_{\Delta}$ doesn't contain any point of $\Psi(\Gamma)$, for such a point would be an extremity of a sustaining arc whose second end would be either before $m$ or after $n$ on $\Delta$, contrary to the definition of $m$ and $n$. The sustaining arcs of $\Psi(\Gamma)$ are therefore of two types~: the left ones, whose extremities both belong to $[m; m_{1}]_{\Delta}$, and the right ones, whose extremities both belong to  $[n_{1}; n]_{\Delta}$. Obviously the extremities of the left ones are separated by $p_{1}$ according to the spinning skeleton theorem (theorem \ref{spinningskeleton} page \pageref{spinningskeleton}). 

%%FIGURE%%
%%FIGURE%%
%%FIGURE%%
\begin{figure}[htpb]

\begin{picture}(45,90)(-110,-30)

\color{black}

\put(-40,0){\circle*{1}}
\put(-80,2){\makebox(0,0)[bc]{$\Delta$}}
\put(9,0){\circle*{1}}
\put(41,0){\circle*{1}}
\put(-80, 0){\line(1,0){160}}
\put(80,0){\vector(1,0){0}}

\put(-40,-1){\makebox(0,0)[tc]{$p_{1}$}}
\put(9,-1){\makebox(0,0)[tc]{$p_{2}$}}
\put(41,-1){\makebox(0,0)[tc]{$p_{3}$}}

\put(60,0){\circle*{1}}
\put(-60,0){\circle*{1}}

\put(61,1){\makebox(0,0)[bl]{$\scriptstyle n$}}
\put(-61,1){\makebox(0,0)[br]{$\scriptstyle m$}}

\put(-52,-16){\makebox(0,0)[tr]{$ [m;m_{1}]_{\mathcal{C}}^{S=-1}$}}
\put(-21,-1){\makebox(0,0)[tr]{$\scriptstyle m_{1}$}}

\put(58,-16){\makebox(0,0)[tl]{$ [n;n_{1}]_{\mathcal{C}}^{S=-1}$}}
\put(-9,-1){\makebox(0,0)[tr]{$\scriptstyle n_{1}$}}

\color{blue}
%C1
%S=0.414 fois R; T = O.707 fois R
%R=20 SA = 8.28 ; T=14.14
\put(-40,0){\qbezier[100](-20,0)(-20,-8.28)(-14.14,-14.14)}
\put(-40,0){\qbezier[100](-14.14,-14.14)(-8.28,-20)(0,-20)}
\put(-40,0){\qbezier[100](0,-20)(8.28,-20)(14.14,-14.14)}
\put(-40,0){\qbezier[100](14.14,-14.14)(20,-8.28)(20,0)}

\put(-20,0){\makebox(0,0)[tl]{\vector(0,1){0}}}

%C2
%S=0.414 fois R; T = O.707 fois R
%R=34 SA = 14.08 ; T=24.04
\put(26,0){\qbezier[100](-34,0)(-34,-14.08)(-24.04,-24.04)}
\put(26,0){\qbezier[100](-24.04,-24.04)(-14.08,-34)(0,-34)}
\put(26,0){\qbezier[100](0,-34)(14.08,-34)(24.04,-24.04)}
\put(26,0){\qbezier[100](24.04,-24.04)(34,-14.08)(34,0)}

\put(-8,0){\vector(0,1){0}}

%C3
%RA=60 SA=0.382*R TA=0.707R
%S=0.414 fois R; TA = O.707 fois T
%R=60 SA = 24.852 ; Ta=42.4242.42
\put(0,0){\qbezier[150](-60,0)(-60,24.85)(-42.42,42.42)}
\put(0,0){\qbezier[150](-42.42,42.42)(-24.85,60)(0,60)}
\put(0,0){\qbezier[150](0,60)(24.85,60)(42.42,42.42)}
\put(0,0){\qbezier[150](42.42,42.42)(60,24.85)(60,0)}

\color{black}
\put(-54,30){\makebox(0,0)[br]{$ [m;n]_{\mathcal{C}}^{S=+1}$}}

%Ck5
%S=0.414 fois R; T = O.707 fois R
%R=12 SA = 4.97 ; T=8.48
\put(12,0){\qbezier[50](-12,0)(-12,-4.97)(-8.48,-8.48)}
\put(12,0){\qbezier[50](-8.48,-8.48)(-4.97,-12)(0,-12)}
\put(12,0){\qbezier[50](0,-12)(4.97,-12)(8.48,-8.48)}
\put(12,0){\qbezier[50](8.48,-8.48)(12,-4.97)(12,0)}
\put(24,0){\vector(0,1){0}}

\put(0,0){\vector(0,1){0}}
\put(12,-11){\makebox(0,0)[bc]{$\scriptstyle  \Lambda$}}
\put(1,-1){\makebox(0,0)[tl]{$\scriptstyle  q_{1}$}}
\put(25,-1){\makebox(0,0)[tl]{$\scriptstyle r_{1}$}}

%C5
%S=0.414 fois R; TA = O.707 fois T
%R=27 SA = 11.18 ; Ta=19.09
\put(27,0){\qbezier[100](-27,0)(-27,11.18)(-19.09,19.09)}
\put(27,0){\qbezier[100](-19.09,19.09)(-11.18,27)(0,27)}
\put(27,0){\qbezier[100](0,27)(11.18,27)(19.09,19.09)}
\put(27,0){\qbezier[100](19.09,19.09)(27,11.18)(27,0)}

\put(54,0){\vector(0,-1){0}}

\put(54,-1){\makebox(0,0)[tl]{$\scriptstyle q_{2}$}}

\color{mygreen}

%Ck5
%S=0.414 fois R; T = O.707 fois R
%R=12 SA = 4.97 ; T=8.48
\put(36,0){\qbezier[50](-12,0)(-12,4.97)(-8.48,8.48)}
\put(36,0){\qbezier[50](-8.48,8.48)(-4.97,12)(0,12)}
\put(36,0){\qbezier[50](0,12)(4.97,12)(8.48,8.48)}
\put(36,0){\qbezier[50](8.48,8.48)(12,4.97)(12,0)}
\put(48,0){\vector(0,-1){0}}

%S=0.414 fois R; T = O.707 fois R
%R=10 SA = 4.14 ; T=7.07

\put(14,0){\qbezier[50](-10,0)(-10,4.14)(-7.07,7.07)}
\put(14,0){\qbezier[50](-7.07,7.07)(-4.14,10)(0,10)}
\put(14,0){\qbezier[50](0,10)(4.14,10)(7.07,7.07)}
\put(14,0){\qbezier[50](7.07,7.07)(10,4.14)(10,0)}
\put(4,0){\vector(0,-1){0}}

%C5
%S=0.414 fois R; TA = O.707 fois T
%R=29 SA = 12 ; Ta=20.5
\put(25,0){\qbezier[100](-29,0)(-29,-12)(-20.5,-20.5)}
\put(25,0){\qbezier[100](-20.5,-20.5)(-12,-29)(0,-29)}
\put(25,0){\qbezier[100](0,-29)(12,-29)(20.5,-20.5)}
\put(25,0){\qbezier[100](20.5,-20.5)(29,-12)(29,0)}

\put(-4,0){\vector(0,1){0}}

\end{picture}
\caption{The extremities of any sustaining arc must be separated either by $p_{1}$ or by $p_{3}$ (II).  \label{figure18}}
\end{figure}
%%FIGURE%%
%%FIGURE%%
%%FIGURE%%

Let us now consider a right sustaining arc $\Lambda=[q_{1}; r_{1}]_{\Gamma}$ whose extremities $q_{1}$ and $r_{1}$ belong to $[n_{1}; n]_{\Delta}$ and are such that $p_{3} \notin ]p_{1}; r_{1}[$. Let us follow $\Psi(\Gamma)$ from $r_{1}$ after $q_{1}$ and consider the half circle $[q_{1}; q_{2}]_{S=+1}$ of $\Psi(\Gamma)$. If $q_{2} \in ]m;p_{1}]$, then the skeleton $\Psi(\Gamma)$ can be both continued after $m_{1}$ and $n_{1}$ for they would both finish spining to $p_{1}$, an impossible configuration (see figure \ref{figure12}). Now of course the point $q_{2}$ cannot belong to $]p_{1}; p_{2}[$, for this interval doesn't contain any point of $X= \Big\{ p_{1}; p_{2}; p_{3} \Big\}$ (see the spinning skeleton, theorem \ref{spinningskeleton}). If $q_{2} \in ]p_{2}; r_{1}]_{\Delta}$ then $\Psi(\Gamma)$ must finish spinning to $p_{2}$ after $q_{2}$ and this imply that $q_{1} = p_{2}$, contrary to the hypothesis that $q_{1}$ and $r_{1}$ are separated by $p_{2}$. If $q_{2} \in ]r_{1}; p_{3}]_{\Delta}$ (see figure ref{figure12}), one obtains a similar conclusion following $\Psi(\Gamma)$ after $r_{1}$ from $q_{1}$. In the case $q_{2}= p_{3}$, then $\Lambda$ must finish spinning to $p_{2}$ after $r_{1}$ from $q_{1}$ and this not possible.

Suppose now that $q_{2} \in ]p_{3}; n]_{\Delta}$ (see figure \ref{figure18}). Then following $\Lambda$ from $q_{1}$ after $r_{1}$, the point $r_{2}$ must belong to $]q_{1}; p_{2}]$ and in this case is impossible for $r_{1} = p_{2}$ would hold, or the point $r_{2}$ must belong to $[p_{3}; n[$. But in this case the point $q_{3}$ following $q_{2}$ must belong to $]n_{1}; q_{1}]$ (see figure \ref{figure18}), and then it has to go on cirling around for it can never join the left-hand side (see figure \ref{figure17}). The same reasoning shows that $p_{2}$ must separate the extremities of any emerging arc. In particular, if $p_{2}$ is an extremity of a non trivial $\Psi(\Gamma)$, it must belong to a sustaining arc, on the side of the visible sustaining arc that contains two of the points $p_{1}$; $p_{2}$; $p_{3}$.\end{proof}

 \section{ \label{pictures} Pictures of the simple topological snail}

The propositions \ref{proposition1} and \ref{proposition2} allows one to give a general description of the simplest spinning skeleton as a particular topological snail (figure \ref{figure19}).  On each of the three vertical half-lines $H_{1}$; $H_{2}^{+}$ and $H_{3}$, with origins $p_{1}$; $p_{2}$ and $p_{3}$ (figure \ref{figure7} page \pageref{figure7} and figure \ref{figure20}), let us consider the respective numbers $n_{1}$; $n_{2}$ and $n_{3}$ of extremities of sub-arcs of $\Psi(\Gamma)$. According to proposition $\ref{proposition2}$, any point of $\Psi(\Gamma) \cup \Delta^{+}$ belongs a unique sub-arc of $\Psi(\Gamma)$ doubly branched of $H_{1} \cup H_{2} \cup H_{3}$, with an extremity in $H_{1} \cup H_{2}$ and the other in $H_{3}$. Of course this correspondance is bijective and one therefore has $n_{1} + n_{3} = n_{2}$, a number that is equal to the cardinal of $\Psi(\Gamma) \cup \Delta^{+}$. In a similar fashion, $n_{1}$ is the number of extremities of the left sustaining arcs, the trival arc $p_{1}$ being taken into account if necessary as a trivial arc reduced to a single extremity, $n_{3}$ is the number of extremities of the left sustaining arcs, and $n_{2}$ the number of extremities of the emerging arcs of $\Psi(\Gamma)$ (figure \ref{figure19}). One can also see that $n_{1}$ is the number of intersection between $\Psi(\Gamma)$ and the homotopy class of $\Delta_{1}$ and $n_{3}$ the number of intersection between $\Psi(\Gamma)$ and the homotopy class of $\Delta_{2}$ (see figure \ref{figure6} page \pageref{figure6} and figure \ref{figure22}). The number $n_{1}$ is also the number of extremities of branched sub-arcs of $\Psi(\Gamma)$ on $[p_{2}; p_{3}]_{\Delta}$, while $n_{2}$ is the number of extremities of  branched sub-arcs of $\Psi(\Gamma)$ on $[p_{1};p_{2}]_{\Delta}$. To see this, one must simply take into account that $p_{3}$ is both green and red, and it must be considered a connection point both of a red arc and of a green one.

%%FIGURE%%
%%FIGURE%%
%%FIGURE%%
%ESCARGOT-TOPOLOGIQUE
\setlength{\unitlength}{0.5mm}
\begin{figure}
\begin{picture}(100, 90)(-155,-45)

\put(-90,0){\line(1,0){180}}
\put(-90,-2){\makebox(0,0)[tc]{$\Delta$}}

\color{mygreen}

%Ci1
%R=10 S=0.382*R T=0.707R
%S=0.414 fois R; T = O.707 fois T
%R=20 ;45 S=3.82 ; Ta=7.07 Ra=31.8
\put(40,0){\qbezier[300](-10,0)(-10,-4.14)(-7.07,-7.07)}
\put(40,0){\qbezier[300](-7.07,-7.07),(-4.14,-10),(0,-10)}
\put(40,0){\qbezier[300](0,-10),(4.14,-10)(7.07,-7.07)}
\put(40,0){\qbezier[300](7.07,-7.07)(10,-4.14)(10,0)}

%\put(19.4,-10.6){\vector(-1,1){0}}
%Ci2
%RA=30 SA=0.382*R TA=0.707R
%S=0.414 fois R; TA = O.707 fois T
%R=30 SA = 12.42 ; Ta=21.21
\put(40,0){\qbezier[300](-30,0)(-30,-12.42)(-21.21,-21.21)}
\put(40,0){\qbezier[300](-21.21,-21.21)(-12.42,-30)(0,-30)}
\put(40,0){\qbezier[300](0,-30)(12.42,-30)(21.21,-21.21)}
\put(40,0){\qbezier[300](21.21,-21.21)(30,-12.42)(30,0)}

\color{red}

%Cj2
%R=20 S=0.382*R T=0.707R
%S=0.414 fois R; T = O.707 fois T
%R=10 ;45 S= ; Ta=7.07 Ra=31.8
\put(-30,0){\qbezier[300](-20,0)(-20,-8.28)(-14.14,-14.14)}
\put(-30,0){\qbezier[300](-14.14,-14.14)(-8.28,-20)(0,-20)}
\put(-30,0){\qbezier[300](0,-20)(8.28,-20)(14.14,-14.14)}
\put(-30,0){\qbezier[300](14.14,-14.14)(20,-8.28)(20,0)}

\color{blue}

%Ck2
%S=0.414 fois R; T = O.707 fois T
%R=20 ;45 S=8.28 ; Ra=14.14
\put(10,0){\qbezier[300](-20,0)(-20,8.28)(-14.14,14.14)}
\put(10,0){\qbezier[300](-14.14,14.14)(-8.28,20)(0,20)}
\put(10,0){\qbezier[300](0,20)(8.28,20)(14.14,14.14)}
\put(10,0){\qbezier[300](14.14,14.14)(20,7,64)(20,0)}

%Ck2
%RA=40 SA=0.382*R TA=0.707R
%S=0.414 fois R; TA = O.707 fois T
%R=40 SA = 16.56 ; Ta=28.28
\put(10,0){\qbezier[300](-40,0)(-40,16.56)(-28.28,28.28)}
\put(10,0){\qbezier[300](-28.28,28.28)(-16.56,40)(0,40)}
\put(10,0){\qbezier[300](0,40)(16.56,40)(28.28,28.28)}
\put(10,0){\qbezier[300](28.28,28.28)(40,16.56)(40,0)}

%Ck3
%RA=60 SA=0.382*R TA=0.707R
%S=0.414 fois R; TA = O.707 fois T
%R=60 SA = 24.84 ; Ta=42.4242.42
\put(10,0){\qbezier[300](-60,0)(-60,24.84)(-42.42,42.42)}
\put(10,0){\qbezier[300](-42.42,42.42)(-24.84,60)(0,60)}
\put(10,0){\qbezier[300](0,60)(24.84,60)(42.42,42.42)}
\put(10,0){\qbezier[300](42.42,42.42)(60,24.84)(60,0)}

\color{black}

\put(40,0){\circle*{1.5}}
\put(40,-2){\makebox(0,0)[tc]{$ p_{3}$}}

\put(50,0){\vector(0,1){0}}
\put(50,0){\vector(0,-1){0}}

\put(70,0){\vector(0,1){0}}
\put(70,0){\vector(0,-1){0}}

\put(30,0){\vector(0,1){0}}
\put(30,0){\vector(0,-1){0}}

\put(10,0){\circle*{1.5}}
\put(10,2){\makebox(0,0)[bc]{$ p_{2}$}}

\put(-10,0){\vector(0,1){0}}
\put(-10,0){\vector(0,-1){0}}

\put(-50,0){\vector(0,1){0}}
\put(-50,0){\vector(0,-1){0}}

\put(-30,0){\circle*{1.5}}
\put(-30,-2){\makebox(0,0)[tc]{$ p_{1}$}}

\put(-30,0){\vector(0,-1){0}}
\put(10,0){\vector(0,1){0}}

\end{picture}
\caption{The topological snail $\sn(3;4)$ as a puzzle of half-circles branched on $\Delta$.\label{figure19}}
\setlength{\unitlength}{0.7mm}
\end{figure}
%%FIGURE%%
%%FIGURE%%
%%FIGURE%%

 %%FIGURE%%
%%FIGURE%%
%%FIGURE%%
\setlength{\unitlength}{0.5mm}
\begin{figure}
\begin{picture}(100, 120)(-155,-35)

\color{mygreen}
\put(40,-35){\line(0,1){35}}

\put(-30,-35){\line(0,1){35}}

\color{red}
\put(10,0){\line(0,1){65}}

\color{black}
\put(-31,-37){\makebox(0,0)[br]{$\scriptstyle H_{1}$}}

\put(41,-37){\makebox(0,0)[bl]{$\scriptstyle H_{3}$}}

\put(9,67){\makebox(0,0)[tr]{$\scriptstyle H_{2}$}}

\put(40,0){\circle*{1.5}}
\put(40,2){\makebox(0,0)[bc]{$p_{3}$}}

\put(40,-10){\vector(1,0){0}}
\put(40,-10){\vector(-1,0){0}}

\put(40,-30){\vector(1,0){0}}
\put(40,-30){\vector(-1,0){0}}

\put(10,0){\circle*{1.5}}
\put(9,0){\makebox(0,0)[rc]{$ p_{2}$}}

\put(10,0){\vector(0,1){0}}

\put(10,20){\vector(1,0){0}}
\put(10,20){\vector(-1,0){0}}

\put(10,40){\vector(1,0){0}}
\put(10,40){\vector(-1,0){0}}

\put(10,60){\vector(1,0){0}}
\put(10,60){\vector(-1,0){0}}

\put(-30,0){\circle*{1.5}}
\put(-31,0){\makebox(0,0)[rc]{$p_{1}$}}

\put(-30,0){\vector(0,-1){0}}
\put(-30,-20){\vector(1,0){0}}
\put(-30,-20){\vector(-1,0){0}}

\color{blue}

%Ci1
%R=10 S=0.382*R T=0.707R
%S=0.414 fois R; T = O.707 fois T
%R=20 ;45 S=3.82 ; Ta=7.07 Ra=31.8
\put(40,0){\qbezier[300](-10,0)(-10,-4.14)(-7.07,-7.07)}
\put(40,0){\qbezier[300](-7.07,-7.07),(-4.14,-10),(0,-10)}
\put(40,0){\qbezier[300](0,-10),(4.14,-10)(7.07,-7.07)}
\put(40,0){\qbezier[300](7.07,-7.07)(10,-4.14)(10,0)}

%\put(19.4,-10.6){\vector(-1,1){0}}
%Ci2
%RA=30 SA=0.382*R TA=0.707R
%S=0.414 fois R; TA = O.707 fois T
%R=30 SA = 12.42 ; Ta=21.21
\put(40,0){\qbezier[300](-30,0)(-30,-12.42)(-21.21,-21.21)}
\put(40,0){\qbezier[300](-21.21,-21.21)(-12.42,-30)(0,-30)}
\put(40,0){\qbezier[300](0,-30)(12.42,-30)(21.21,-21.21)}
\put(40,0){\qbezier[300](21.21,-21.21)(30,-12.42)(30,0)}

%Cj2
%R=20 S=0.382*R T=0.707R
%S=0.414 fois R; T = O.707 fois T
%R=10 ;45 S= ; Ta=7.07 Ra=31.8
\put(-30,0){\qbezier[300](-20,0)(-20,-8.28)(-14.14,-14.14)}
\put(-30,0){\qbezier[300](-14.14,-14.14)(-8.28,-20)(0,-20)}
\put(-30,0){\qbezier[300](0,-20)(8.28,-20)(14.14,-14.14)}
\put(-30,0){\qbezier[300](14.14,-14.14)(20,-8.28)(20,0)}

%Ck2
%S=0.414 fois R; T = O.707 fois T
%R=20 ;45 S=8.28 ; Ra=14.14
\put(10,0){\qbezier[300](-20,0)(-20,8.28)(-14.14,14.14)}
\put(10,0){\qbezier[300](-14.14,14.14)(-8.28,20)(0,20)}
\put(10,0){\qbezier[300](0,20)(8.28,20)(14.14,14.14)}
\put(10,0){\qbezier[300](14.14,14.14)(20,7,64)(20,0)}

%Ck2
%RA=40 SA=0.382*R TA=0.707R
%S=0.414 fois R; TA = O.707 fois T
%R=40 SA = 16.56 ; Ta=28.28
\put(10,0){\qbezier[300](-40,0)(-40,16.56)(-28.28,28.28)}
\put(10,0){\qbezier[300](-28.28,28.28)(-16.56,40)(0,40)}
\put(10,0){\qbezier[300](0,40)(16.56,40)(28.28,28.28)}
\put(10,0){\qbezier[300](28.28,28.28)(40,16.56)(40,0)}

%Ck3
%RA=60 SA=0.382*R TA=0.707R
%S=0.414 fois R; TA = O.707 fois T
%R=60 SA = 24.84 ; Ta=42.4242.42
\put(10,0){\qbezier[300](-60,0)(-60,24.84)(-42.42,42.42)}
\put(10,0){\qbezier[300](-42.42,42.42)(-24.84,60)(0,60)}
\put(10,0){\qbezier[300](0,60)(24.84,60)(42.42,42.42)}
\put(10,0){\qbezier[300](42.42,42.42)(60,24.84)(60,0)}

\end{picture}
\caption{The topological snail $\sn(3;4)$ as branched arcs on $H_{1} \cup H_{2} \cup H_{3}$.\label{figure20}}
\setlength{\unitlength}{0.7mm}
\end{figure}
%%FIGURE%%
%%FIGURE%%
%%FIGURE%%

 %%FIGURE%%
%%FIGURE%%
%%FIGURE%%
%ESCARGOT-TOPOLOGIQUE
\setlength{\unitlength}{0.5mm}
\begin{figure}
%\begin{picture}(100, 90)(-140,-30)
\begin{picture}(100,100)(-155,-35)
%Un demi-cercle
%S=0.382 fois R; T = O.707 fois T
%\put(0,0){\qbezier[300](RX,0)(RX,SX)(TX,TX)}
%\put(0,0){\qbezier[300](TX,TX),(SX,RX),(0,RX)}

%\put(0,0){\qbezier[300](0,RX),(-SX,RX)(-TX,TX)}
%\put(0,0){\qbezier[300](-TX,TX)(-RX,SX)(-RX,0)}

%\put(0,0){\qbezier[300](-RX,0)(-RX,-SX)(-TX,-TX)}
%\put(0,0){\qbezier[300](-TX,-TX),(-SX,-RX),(0,-RX)}

%\put(0,0){\qbezier[300](0,-RX),(SX,-RX)(TX,-TX)}
%\put(0,0){\qbezier[300](TX,-TX)(RX,-SX)(RX,0)}

\color{red}

\put(40,0){\line(-1,0){30}}

\color{mygreen}
\put(10,0){\line(-1,0){40}}

\color{black}

\put(10,0){\line(0,1){65}}
\put(9,63){\makebox(0,0)[br]{$\scriptstyle H_{2}$}}

\put(10,20){\vector(1,0){0}}
\put(10,20){\vector(-1,0){0}}

\put(10,40){\vector(1,0){0}}
\put(10,40){\vector(-1,0){0}}

\put(10,60){\vector(1,0){0}}
\put(10,60){\vector(-1,0){0}}

\put(40,0){\circle*{1.5}}
\put(40,2){\makebox(0,0)[bc]{$p_{3}$}}

\put(30,0){\vector(0,1){0}}

\put(30,0){\vector(0,-1){0}}

\put(10,0){\vector(0,1){0}}

\put(10,0){\circle*{1.5}}
\put(9,-1){\makebox(0,0)[tr]{$p_{2}$}}

\put(-30,0){\circle*{1}}
\put(-30,-2){\makebox(0,0)[tc]{$p_{1}$}}

\put(-10,0){\vector(0,-1){0}}
\put(-10,0){\vector(0,1){0}}

\put(-30,0){\vector(0,-1){0}}
\color{blue}

%Ci1
%R=10 S=0.382*R T=0.707R
%S=0.414 fois R; T = O.707 fois T
%R=20 ;45 S=3.82 ; Ta=7.07 Ra=31.8
\put(40,0){\qbezier[300](-10,0)(-10,-4.14)(-7.07,-7.07)}
\put(40,0){\qbezier[300](-7.07,-7.07),(-4.14,-10),(0,-10)}
\put(40,0){\qbezier[300](0,-10),(4.14,-10)(7.07,-7.07)}
\put(40,0){\qbezier[300](7.07,-7.07)(10,-4.14)(10,0)}

%\put(19.4,-10.6){\vector(-1,1){0}}
%Ci2
%RA=30 SA=0.382*R TA=0.707R
%S=0.414 fois R; TA = O.707 fois T
%R=30 SA = 12.42 ; Ta=21.21
\put(40,0){\qbezier[300](-30,0)(-30,-12.42)(-21.21,-21.21)}
\put(40,0){\qbezier[300](-21.21,-21.21)(-12.42,-30)(0,-30)}
\put(40,0){\qbezier[300](0,-30)(12.42,-30)(21.21,-21.21)}
\put(40,0){\qbezier[300](21.21,-21.21)(30,-12.42)(30,0)}

%Cj2
%R=20 S=0.382*R T=0.707R
%S=0.414 fois R; T = O.707 fois T
%R=10 ;45 S= ; Ta=7.07 Ra=31.8
\put(-30,0){\qbezier[300](-20,0)(-20,-8.28)(-14.14,-14.14)}
\put(-30,0){\qbezier[300](-14.14,-14.14)(-8.28,-20)(0,-20)}
\put(-30,0){\qbezier[300](0,-20)(8.28,-20)(14.14,-14.14)}
\put(-30,0){\qbezier[300](14.14,-14.14)(20,-8.28)(20,0)}

%\color{mygreen}

%Ck2
%S=0.414 fois R; T = O.707 fois T
%R=20 ;45 S=8.28 ; Ra=14.14
\put(10,0){\qbezier[300](-20,0)(-20,8.28)(-14.14,14.14)}
\put(10,0){\qbezier[300](-14.14,14.14)(-8.28,20)(0,20)}
\put(10,0){\qbezier[300](0,20)(8.28,20)(14.14,14.14)}
\put(10,0){\qbezier[300](14.14,14.14)(20,7,64)(20,0)}

%Ck2
%RA=40 SA=0.382*R TA=0.707R
%S=0.414 fois R; TA = O.707 fois T
%R=40 SA = 16.56 ; Ta=28.28
\put(10,0){\qbezier[300](-40,0)(-40,16.56)(-28.28,28.28)}
\put(10,0){\qbezier[300](-28.28,28.28)(-16.56,40)(0,40)}
\put(10,0){\qbezier[300](0,40)(16.56,40)(28.28,28.28)}
\put(10,0){\qbezier[300](28.28,28.28)(40,16.56)(40,0)}

%Ck3
%RA=60 SA=0.382*R TA=0.707R
%S=0.414 fois R; TA = O.707 fois T
%R=60 SA = 24.84 ; Ta=42.4242.42
\put(10,0){\qbezier[300](-60,0)(-60,24.84)(-42.42,42.42)}
\put(10,0){\qbezier[300](-42.42,42.42)(-24.84,60)(0,60)}
\put(10,0){\qbezier[300](0,60)(24.84,60)(42.42,42.42)}
\put(10,0){\qbezier[300](42.42,42.42)(60,24.84)(60,0)}

\end{picture}
\caption{The topological snail $\sn(3;4)$ as branched arcs on $[p_{1};p_{2}]_{\Delta} \cup [p_{2};p_{3}]_{\Delta} \cup H_{3}$.\label{figure21}}
\setlength{\unitlength}{0.7mm}
\end{figure}
%%FIGURE%%
%%FIGURE%%
%%FIGURE%%

 %%FIGURE%%
%%FIGURE%%
%%FIGURE%%
%ESCARGOT-TOPOLOGIQUE
\setlength{\unitlength}{0.5mm}
\begin{figure}
%\begin{picture}(100, 90)(-140,-30)
\begin{picture}(100,110)(-155,-35)
%Un demi-cercle
%S=0.382 fois R; T = O.707 fois T
%\put(0,0){\qbezier[300](RX,0)(RX,SX)(TX,TX)}
%\put(0,0){\qbezier[300](TX,TX),(SX,RX),(0,RX)}

%\put(0,0){\qbezier[300](0,RX),(-SX,RX)(-TX,TX)}
%\put(0,0){\qbezier[300](-TX,TX)(-RX,SX)(-RX,0)}

%\put(0,0){\qbezier[300](-RX,0)(-RX,-SX)(-TX,-TX)}
%\put(0,0){\qbezier[300](-TX,-TX),(-SX,-RX),(0,-RX)}

%\put(0,0){\qbezier[300](0,-RX),(SX,-RX)(TX,-TX)}
%\put(0,0){\qbezier[300](TX,-TX)(RX,-SX)(RX,0)}

\put(-90,0){\line(1,0){180}}
\put(-90,-2){\makebox(0,0)[tc]{$\Delta$}}

\color{red}

%\put(30,0){\line(1,0){50}}
%\put(-90,0){\line(1,0){180}}\put(80,-2){\makebox(0,0)[tc]{$H_{3}$}}

\color{black}

\put(40,0){\circle*{1.5}}
\put(40,-2){\makebox(0,0)[tc]{$p_{3}$}}

\put(10,0){\circle*{1.5}}
\put(10,2){\makebox(0,0)[bc]{$p_{2}$}}

\put(-30,0){\circle*{1.5}}
\put(-30,-2){\makebox(0,0)[tc]{$p_{1}$}}

\put(0,0){\qbezier[400](40,20)(50,55)(55,70)}
\put(0,0){\qbezier[400](40,20)(30,-20)(10,-20)}
\put(0,0){\qbezier[400](10,-20)(-10,-20)(-25,20)}
\put(0,0){\qbezier[400](-25,20)(-40,60)(-45,70)}

\color{blue}

%RAYON10
\put(40,0){\qbezier[300](-10,0)(-10,-4.14)(-7.07,-7.07)}
\put(40,0){\qbezier[300](-7.07,-7.07),(-4.14,-10),(0,-10)}
\put(40,0){\qbezier[300](0,-10),(4.14,-10)(7.07,-7.07)}
\put(40,0){\qbezier[300](7.07,-7.07)(10,-4.14)(10,0)}

%RAYON30
\put(40,0){\qbezier[300](-30,0)(-30,-12.42)(-21.21,-21.21)}
\put(40,0){\qbezier[300](-21.21,-21.21)(-12.42,-30)(0,-30)}
\put(40,0){\qbezier[300](0,-30)(12.42,-30)(21.21,-21.21)}
\put(40,0){\qbezier[300](21.21,-21.21)(30,-12.42)(30,0)}

%RAYON20
\put(-30,0){\qbezier[300](-20,0)(-20,-8.28)(-14.14,-14.14)}
\put(-30,0){\qbezier[300](-14.14,-14.14)(-8.28,-20)(0,-20)}
\put(-30,0){\qbezier[300](0,-20)(8.28,-20)(14.14,-14.14)}
\put(-30,0){\qbezier[300](14.14,-14.14)(20,-8.28)(20,0)}

%RAYON20
\put(10,0){\qbezier[300](-20,0)(-20,8.28)(-14.14,14.14)}
\put(10,0){\qbezier[300](-14.14,14.14)(-8.28,20)(0,20)}
\put(10,0){\qbezier[300](0,20)(8.28,20)(14.14,14.14)}
\put(10,0){\qbezier[300](14.14,14.14)(20,7,64)(20,0)}

%RAYON40
\put(10,0){\qbezier[300](-40,0)(-40,16.56)(-28.28,28.28)}
\put(10,0){\qbezier[300](-28.28,28.28)(-16.56,40)(0,40)}
\put(10,0){\qbezier[300](0,40)(16.56,40)(28.28,28.28)}
\put(10,0){\qbezier[300](28.28,28.28)(40,16.56)(40,0)}

%RAYON60
\put(10,0){\qbezier[300](-60,0)(-60,24.84)(-42.42,42.42)}
\put(10,0){\qbezier[300](-42.42,42.42)(-24.84,60)(0,60)}
\put(10,0){\qbezier[300](0,60)(24.84,60)(42.42,42.42)}
\put(10,0){\qbezier[300](42.42,42.42)(60,24.84)(60,0)}

\color{mygreen}
\put(47.8,46.4){\circle*{2}}
\put(41.3,24.8){\circle*{2}}
\put(31,-4.4){\circle*{2}}
\put(16.7,-18.9){\circle*{2}}

\color{red}
\put(-11.1,-7){\circle*{2}}
\put(-24.85,19.85){\circle*{2}}
\put(-33.2,41.5){\circle*{2}}

\end{picture}
\caption{A first intersection property of the topological snail $\sn(3;4)$.\label{figure22}}

\end{figure}

\setlength{\unitlength}{0.7mm}
%%FIGURE%%
%%FIGURE%%
%%FIGURE%%

Of course, the topological snail can be conceived for any two non zero integers $n$ and $p$. Simply consider $n+p$ distinct points on the line $\Delta^{+}$, with a first serie $x_{1} < x_{2} < \cdots < x_{n}$ on the left of $x'_{1} < x'_{2} < \cdots < x'_{p}$. Connect the points $x_{1}, x_{2}, \ldots, x_{n}$ by  half-circles in $\H^{-}$ of diameter $[x_{i}; x_{n-i}]_{\Delta}$, and the points $x'_{1}, x'_{2}, \ldots, x'_{p}$ by half-circles in $\H^{-}$ of diameter $[x'_{i}; x'_{p-i}]_{\Delta}$. Then connect the hole serie $x_{1}, \ldots , x_{n}, x'_{1}, \ldots, x_{p}$ according to the same principle by half-circles in $\H^{+}$. This  union of a finite number of half circles, organized in a simple curve and a serie of closed curves gives a picture of the positive topological snail $\sn(n;p)$. The positive topological snail is emerging in the top half plane $\H^{+}$ and sustained in $\H^{-}$. 

Let us give a more precise realization of $\sn(n;p)$. Denote $E(x)$ the integer part of any positive real number $x$. 

First fix the affix of $p_{1}$ to be $\displaystyle z_{1}= -\frac{n}{2}$. Then for an integer $i$ such that $0 \leqslant i \leqslant E\left(\frac{n -1}{2}\right)$ define the negative left sustaining half-circle in $\H^{-}$, by~:
$$\mathcal{C}_{i} = \Big\{ z \in \H^{-}~~|~~\vert z - z_{1} \vert = E\left(\frac{n}{2}\right) - \frac{n}{2}+ \frac{1}{2} + i \Big\}  $$  

Next take $z_{3} =\displaystyle  \frac{p}{2}$ to be the affix of $p_{3}$. Then for an integer $j$ such that $0 \leqslant j \leqslant E\left(\frac{p -1}{2}\right)$, define the negative right sustaining half-circle, inside $\H^{-}$ by~:
$$\mathcal{K}_{j} = \Big\{ z \in \H^{-}~~|~~\vert z - z_{3} \vert = E\left(\frac{p}{2}\right) - \frac{p}{2}+ \frac{1}{2} + j \Big\}  $$  

Finally take $z_{2} = \displaystyle \frac{-n+p}{2}$, the affix of $p_{2}$. Then for an integer  $k$ such that $ 0 \leqslant k \leqslant E\left(\frac{n+p -1}{2}\right) $, define the positive half-circle, inside $\H^{+}$, by~:
$$\mathcal{Q}_{k} = \Big\{ z \in \H^{+}~~|~~\vert z - z_{2} \vert = E\left(\frac{n+p}{2}\right) - \frac{n+p}{2}+ \frac{1}{2} + k \Big\}  $$  

One obtains a puzzle of circles as in figure \ref{figure19}, where I draw a picture of the topological snail $\sn(3;4)$ with parameters $n=3$ and $p = 4$. In the following $\sn(n;p)$ will denote both the concrete topological snail as a geometric subset of $\R^{2}$ with the set of marked points $X= \lbrace p_{1}; p_{2}; p_{3} \rbrace$ and the isotopy class of this concrete curve relatively to $X$ moving on the line $\Delta^{+}$. If $H$ and $K$ are subsets of $\R^{2}$ and if tthere is an isotopy $(f_{t})_{t\in [0;1]}$ with $f_{0} = \id$; $f_{1} = f$, $K=f(X)$, $H= f\left(\sn(n;p)\right)$ and $f_{t}(X) \subset \Delta^{+}$ for all $t\in [0;1]$, then I will say that $H$ has the type of $\sn(n;p)$ relatively to $X$ and $\Delta$, and simply write $H \equiv \sn(n;p)$ is there is no possible confusion relative to $X$.

Among the three points $p_{1}$; $p_{2}$; $p_{3}$, there are two extremities of $\sn(n;p)$ that I will call the base points, and a third point that I will call the free point in $\sn(n;p)$. The free point is the center of a half-disk with a half-unit radius oriented from the entrance to the interior of the half-disk that I call liberty's chamber. There is a very natural path from $\infty$ into liberty's chamber that I call the central path and note $\cp^{+}(n;p)$. I admit here its precise analytical definition, very close from that of $\sn(n;p)$ (take $O$, $p-1$ integers on the left, $n-1$ on the right, and connect suitably by arcs of circle and a half-line to obtain $\cp^{+}(n;p)$). The origin $O$ is another important point that I call the entrance of the snail $\sn(n;p)$ (see figure \ref{figure23}, with the central access to liberty's chamber in green color). Similarily, I will define the shell of $\sn(n;p)$ to be the union of the visible emerging arc and left and right sustaining arcs.

\section{The simple topological snail \label{simple}}

I will now consider two positive integer $n$, $p$, and the topological snail $\sn(n;p)$ defined at the previous section \ref{pictures} : see figure \ref{figure23}. The topological snail appears as a labyrinth with an entrance at the origine 0, and it seems very natural to explore it from the entrance, following the successive half-circles of the central access to liberty's chamber $\cp^{+}(n;p)$. It is useful for this purpose to undestand the sequence of the succesive extremities of the diameters of those circles. If for $x\in \R$ let us define the respective symetries around the points $p_{1}$, $p_{2}$ and $p_{3}$ by $\varsigma_{1}(x) = -n-x $, $\varsigma_{2}(x) = -n+p-x $, and $\varsigma_{3}(x) = p-x $, then the path comming from the origin $O$ successively visit the points $\varsigma_{2}(0)$, $\varsigma_{1} \circ \varsigma_{2} (0)$, $\varsigma_{2} \circ \varsigma_{1} \circ \varsigma_{2} (0)$,  $\varsigma_{3} \circ \varsigma_{2} \circ \varsigma_{1} \circ \varsigma_{2} (0)$, \ldots. If $n<p$, then the first move  $O$ is on the right and I will say that the weak side of $\sn(n;p)$ is on the left, and its strong side on the right. I will generally considers snails in this canonical position. If one considers the visitor's path to liberty's chamber,

Naturally the translations $\tau_{1}(x) = \varsigma_{1} \circ \varsigma_{2}(x) = x-p$ and $\tau_{2} = \varsigma_{2} \circ \varsigma_{2}(x) = x+n$ respectively act on $[0;p]$ and $[-n;0]$. One obtains a classical interval exchange map, and in this case it is particularily simple~: both transformations $\tau_{1}$ and $\tau_{2}$ come from a single rotation of an angle $n \equiv -p~~~[n+p]$  on the circle $\R/(n+p)\Z$. This transformation is periodic and its period is the least common multiple $n \vee p$ of $n$ and $p$. The greatest common divisor $n \wedge p$ is also the size of the largest open interval with exatcly $n \vee p$ disjoints images. To find such a generating interval, on can use the euclidean algorithm. It has a very nice interpretation in the topological snail and a great importance for the study of turbulent homeomorphisms. This is why I will give some technical precisions about this algorithm and its formulation. The notations defined in what follows are usual and appear in many different contexts, a particularily important one being the theory of continued fractions. From my perspective, I will call the sequence of the $q_{i}'$ the sequence of rests, the sequence of $u_{i}'$ the sequence of orders, and the sequence of $a_{i}'$ the sequence of the coefficients. Note that if you take that the orders for the rests, the sequence of the coefficients appears in reverse order (see figure \ref{figure25}).

Suppose that $q_{0}=p$ and $q_{1}=n$ are two strictly positive and different integers. Then there is a unique maximal integer $a_{0}\geqslant 0$ and a unique strictly positive integer $q_{2}$ such that 
$$q_{0} = a_{0} q_{1} + q_{2}~~~\mathrm{with}~~~q_{1} \geqslant q_{2} >0 $$

If $q_{0}<q_{1}$, then necessarily $a_{0} = 0$, $q_{2} = q_{0}$ and $q_{1}>q_{2}$. If $q_{0} = k  q_{1}$ is a multiple of $q_{1}$ with multiplicator $k>1$, then one has  $a_{0} = k-1$ and $q_{2} = q_{1}$. Iterating the process successively defines sequences of strictly positive integers $a_{0}; a_{1}; \ldots ; a_{K}$ and $q_{0}; q_{1}; \ldots ; q_{K}$ with for all $i$ :
\begin{eqnarray} q_{i} = a_{i} q_{i+1} + q_{i+2}~~~\mathrm{with}~~~q_{i+1} \geqslant q_{i+2} >0 \end{eqnarray}
This process must stop for some $i=K\geqslant 0$, when precisely $q_{i+1} = q_{i+2}$ for the first time. In this case $q_{K+1} = q_{K+2}$ is the greatest common divisor of any two of the numbers $q_{0}; q_{1}; \ldots ; q_{K}$. Fix $d = q_{0}\wedge q_{1}$. Then write for $0 \leqslant i \leqslant K$
\begin{eqnarray} \left[\begin{array}{c} q_{i} \\ q_{i+1} \end{array}\right] = \left[ \begin{array}{cc} a_{i} & 1 \\ 1 & 0 \end{array} \right]  \left[\begin{array}{c} q_{i+1} \\ q_{i+2} \end{array}\right] \end{eqnarray} 
and for all $0\leqslant i \leqslant K$~:
\begin{eqnarray}  \left[\begin{array}{c} q_{i} \\ q_{i+1} \end{array}\right] = \left[ \begin{array}{cc} a_{i} & 1 \\ 1 & 0 \end{array} \right]  \left[ \begin{array}{cc} a_{i+1} & 1 \\ 1 & 0 \end{array} \right]  \cdots \left[ \begin{array}{cc} a_{K} & 1 \\ 1 & 0 \end{array} \right]  \left[\begin{array}{c} d \\ d \end{array}\right] \end{eqnarray} 

On has in particular 

 \begin{eqnarray} \left[\begin{array}{c} q_{0} \\ q_{1} \end{array}\right] = \left[ \begin{array}{cc} a_{0} & 1 \\ 1 & 0 \end{array} \right]  \left[ \begin{array}{cc} a_{1} & 1 \\ 1 & 0 \end{array} \right]  \cdots \left[ \begin{array}{cc} a_{K} & 1 \\ 1 & 0 \end{array} \right]  \left[\begin{array}{c} d \\ d \end{array}\right]  \label{restes} \end{eqnarray}

Note that in this formula the length $N$ and the sequence of coefficients $(a_{0}; a_{1}; \cdots ; a_{K})$ are uniquely determined by $q_{0} = p$ and $q_{1} = n$, with the condition that all the coefficient $a_{i}$ are positive and strictly positive for $i\geqslant 1$. The fact that $a_{0}=0$ is simply equivalent to a permutation of $n$ and $p$ when $n>p$. One in particular has 
$$q_{0}<q_{1} \Longleftrightarrow a_{0} > 0\,. $$

One can also remark that 
\begin{eqnarray} \left[\begin{array}{c} d \\ d \end{array}\right] = \left[ \begin{array}{cc} 0 & 1 \\ 1 & 0 \end{array} \right] \left[\begin{array}{c} d \\ d \end{array}\right]  \label{restes} \end{eqnarray}
This implies that if $K = 2N$ is even, one can still set $a_{K+1}= a_{2N+1} = 0$ to obtain a unique completely symetric writing of $(n;p)$ under the equation 
 \begin{eqnarray} \left[\begin{array}{c} p \\ n \end{array}\right] =  \left[\begin{array}{c} q_{0} \\ q_{1} \end{array}\right] = \left[ \begin{array}{cc} a_{0} & 1 \\ 1 & 0 \end{array} \right]  \left[ \begin{array}{cc} a_{1} & 1 \\ 1 & 0 \end{array} \right]  \cdots \left[ \begin{array}{cc} a_{2N+1} & 1 \\ 1 & 0 \end{array} \right]  \left[\begin{array}{c} d \\ d \end{array}\right]  \label{restes} \end{eqnarray}
with all the $a_{i}'s$ strictly positive except possibly for $a_{0}=0$ and $a_{2N+1}=0$.

For any number $l\in \Z$, let us define the translation $\tau_{l}$  in $\Z/ (n+p)\Z$ by
$$ \tau_{l}(x) \equiv x+l~~~[n+k] $$ 
Let us fix $f= \tau_{k} = \tau_{-p}$, with $q_{0}= n$ and $q_{1}=p$; set $u_{1} = u_{0}=1$ and for $0 \leqslant i \leqslant 2N+1$, define
$$u_{i+2} = a_{i} u_{i+1} + u_{i} $$   
then the iterate $f^{u_{K}}$ is equal to $\tau_{(-1)^{N}u_{K}}$ (see figure \ref{figure25} and figure \ref{figure26}).
But one can also write 

 \begin{eqnarray}\left[\begin{array}{cc} u_{i+2} & u_{i+1} \end{array}\right] =  \left[\begin{array}{cc} u_{i+1} & u_{i} \end{array}\right] \left[ \begin{array}{cc} a_{i} & 1 \\ 1 & 0 \end{array} \right] \end{eqnarray}

and

 \begin{eqnarray}\left[\begin{array}{cc} u_{2N+3} & u_{2N+2} \end{array}\right] & = & \left[\begin{array}{cc} u_{0} & u_{1} \end{array}\right] \left[ \begin{array}{cc} a_{0} & 1 \\ 1 & 0 \end{array} \right]  \left[ \begin{array}{cc} a_{1} & 1 \\ 1 & 0 \end{array} \right]  \cdots \left[ \begin{array}{cc} a_{2N+1} & 1 \\ 1 & 0 \end{array} \right] \\
& = &\left[\begin{array}{cc} ~1~ & 1~ \end{array}\right] \left[ \begin{array}{cc} a_{0} & 1 \\ 1 & 0 \end{array} \right]  \left[ \begin{array}{cc} a_{1} & 1 \\ 1 & 0 \end{array} \right]  \cdots \left[ \begin{array}{cc} a_{2N+1} & 1 \\ 1 & 0 \end{array} \right]  \label{ordres}\end{eqnarray}

Now consider the matrix 
 \begin{eqnarray} \left[ \begin{array}{cc} r_{2} & g_{2} \\ r_{1} & g_{1} \end{array} \right] = \left[ \begin{array}{cc} a_{0} & 1 \\ 1 & 0 \end{array} \right]  \left[ \begin{array}{cc} a_{1} & 1 \\ 1 & 0 \end{array} \right]  \cdots  \left[ \begin{array}{cc} a_{2N+1} & 1 \\ 1 & 0 \end{array} \right]\end{eqnarray}

%%FIGURE%%
%%FIGURE%%
%%FIGURE%%
%ESCARGOT-TOPOLOGIQUE

\begin{figure}
%\begin{picture}(100, 90)(-140,-30)
\begin{picture}(100,90)(-105,-45)
%Un demi-cercle
%S=0.382 fois R; T = O.707 fois T
%\put(0,0){\qbezier[300](RX,0)(RX,SX)(TX,TX)}
%\put(0,0){\qbezier[300](TX,TX),(SX,RX),(0,RX)}

%\put(0,0){\qbezier[300](0,RX),(-SX,RX)(-TX,TX)}
%\put(0,0){\qbezier[300](-TX,TX)(-RX,SX)(-RX,0)}

%\put(0,0){\qbezier[300](-RX,0)(-RX,-SX)(-TX,-TX)}
%\put(0,0){\qbezier[300](-TX,-TX),(-SX,-RX),(0,-RX)}

%\put(0,0){\qbezier[300](0,-RX),(SX,-RX)(TX,-TX)}
%\put(0,0){\qbezier[300](TX,-TX)(RX,-SX)(RX,0)}

\color{mygreen}

\put(0,-30){\line(0,1){30}}

\put(0,-15){\vector(0,1){0}}
%Ci1
%R=10 S=0.382*R T=0.707R
%S=0.414 fois R; T = O.707 fois T
%R=20 ;45 S=3.82 ; Ta=7.07 Ra=31.8
\put(10,0){\qbezier[300](-10,0)(-10,4.14)(-7.07,7.07)}
\put(10,0){\qbezier[300](-7.07,7.07),(-4.14,10),(0,10)}
\put(10,0){\qbezier[300](0,10),(4.14,10)(7.07,7.07)}
\put(10,0){\qbezier[300](7.07,7.07)(10,4.14)(10,0)}

%Cj2
%R=20 S=0.382*R T=0.707R
%S=0.414 fois R; T = O.707 fois T
%R=10 ;45 S= ; Ta=7.07 Ra=31.8
\put(40,0){\qbezier[300](-20,0)(-20,-8.28)(-14.14,-14.14)}
\put(40,0){\qbezier[300](-14.14,-14.14)(-8.28,-20)(0,-20)}
\put(40,0){\qbezier[300](0,-20)(8.28,-20)(14.14,-14.14)}
\put(40,0){\qbezier[300](14.14,-14.14)(20,-8.28)(20,0)}

%%%%%%%%%
%RAYON****50%
%%%%%%%%%
\put(10,0){\qbezier[300](-50,0)(-50,20.7)(-35.35,35.35)}
\put(10,0){\qbezier[300](-35.35,35.35),(-20.7,50),(0,50)}
\put(10,0){\qbezier[300](0,50),(20.7,50)(35.35,35.35)}
\put(10,0){\qbezier[300](35.35,35.35)(50,20.7)(50,0)}

%%%%%%%%%
%RAYON****10%
%%%%%%%%%
\put(-30,0){\qbezier[300](-10,0)(-10,-4.14)(-7.07,-7.07)}
\put(-30,0){\qbezier[300](-7.07,-7.07),(-4.14,-10),(0,-10)}
\put(-30,0){\qbezier[300](0,-10),(4.14,-10)(7.07,-7.07)}
\put(-30,0){\qbezier[300](7.07,-7.07)(10,-4.14)(10,0)}

%%%%%%%%%
%RAYON****30%
%%%%%%%%%
\put(10,0){\qbezier[300](-30,0)(-30,12.42)(-21.21,21.21)}
\put(10,0){\qbezier[300](-21.21,21.21)(-12.42,30)(0,30)}
\put(10,0){\qbezier[300](0,30)(12.42,30)(21.21,21.21)}
\put(10,0){\qbezier[300](21.21,21.21)(30,12.42)(30,0)}

\put(40,0){\vector(0,-1){0}}

\put(0,-30){\line(0,1){30}}

\put(0,-15){\vector(0,1){0}}

\color{blue}

%Ci1
%R=10 S=0.382*R T=0.707R
%S=0.414 fois R; T = O.707 fois T
%R=20 ;45 S=3.82 ; Ta=7.07 Ra=31.8
\put(40,0){\qbezier[300](-10,0)(-10,-4.14)(-7.07,-7.07)}
\put(40,0){\qbezier[300](-7.07,-7.07),(-4.14,-10),(0,-10)}
\put(40,0){\qbezier[300](0,-10),(4.14,-10)(7.07,-7.07)}
\put(40,0){\qbezier[300](7.07,-7.07)(10,-4.14)(10,0)}

%RAYON30
\put(40,0){\qbezier[300](-30,0)(-30,-12.42)(-21.21,-21.21)}
\put(40,0){\qbezier[300](-21.21,-21.21)(-12.42,-30)(0,-30)}
\put(40,0){\qbezier[300](0,-30)(12.42,-30)(21.21,-21.21)}
\put(40,0){\qbezier[300](21.21,-21.21)(30,-12.42)(30,0)}

%RAYON20
\put(-30,0){\qbezier[300](-20,0)(-20,-8.28)(-14.14,-14.14)}
\put(-30,0){\qbezier[300](-14.14,-14.14)(-8.28,-20)(0,-20)}
\put(-30,0){\qbezier[300](0,-20)(8.28,-20)(14.14,-14.14)}
\put(-30,0){\qbezier[300](14.14,-14.14)(20,-8.28)(20,0)}

%RAYON20
\put(10,0){\qbezier[300](-20,0)(-20,8.28)(-14.14,14.14)}
\put(10,0){\qbezier[300](-14.14,14.14)(-8.28,20)(0,20)}
\put(10,0){\qbezier[300](0,20)(8.28,20)(14.14,14.14)}
\put(10,0){\qbezier[300](14.14,14.14)(20,7,64)(20,0)}

%Ck2
%RA=40 SA=0.382*R TA=0.707R
%S=0.414 fois R; TA = O.707 fois T
%R=40 SA = 16.56 ; Ta=28.28
\put(10,0){\qbezier[300](-40,0)(-40,16.56)(-28.28,28.28)}
\put(10,0){\qbezier[300](-28.28,28.28)(-16.56,40)(0,40)}
\put(10,0){\qbezier[300](0,40)(16.56,40)(28.28,28.28)}
\put(10,0){\qbezier[300](28.28,28.28)(40,16.56)(40,0)}

%Ck3
%RA=60 SA=0.382*R TA=0.707R
%S=0.414 fois R; TA = O.707 fois T
%R=60 SA = 24.84 ; Ta=42.4242.42
\put(10,0){\qbezier[300](-60,0)(-60,24.84)(-42.42,42.42)}
\put(10,0){\qbezier[300](-42.42,42.42)(-24.84,60)(0,60)}
\put(10,0){\qbezier[300](0,60)(24.84,60)(42.42,42.42)}
\put(10,0){\qbezier[300](42.42,42.42)(60,24.84)(60,0)}

\color{black}

\put(40,0){\circle*{1.5}}

\put(40,-2){\makebox(0,0)[tc]{$\frac{p}{2}$}}

\put(10,0){\circle*{1.5}}
\put(10,2){\makebox(0,0)[bc]{$\frac{-n+p}{2}$}}

\put(-30,0){\circle*{1.5}}
\put(-30,-2){\makebox(0,0)[tc]{$\frac{-n}{2}$}}

\put(0,0){\circle*{1}}
\put(-1,1){\makebox(0,0)[br]{$0$}}

\put(-90,0){\line(1,0){180}}
\put(-90,-2){\makebox(0,0)[tc]{$\Delta$}}

\put(-40,45){\makebox(0,0)[tc]{$\Gamma$}}

\end{picture}
\caption{The topological snail $\sn(n;p)$ with $\cp^{+}(n;p)$ in green color for $n=3$ and $p=4$.\label{figure23}}

\end{figure}
%%FIGURE%%
%%FIGURE%%
%%FIGURE%%

%%FIGURE%%
%%FIGURE%%
%%FIGURE%%
%ESCARGOT-TOPOLOGIQUE

\begin{figure}
%\begin{picture}(100, 90)(-140,-30)
\begin{picture}(100,100)(-105,-55)

\put(-90,0){\line(1,0){180}}
\put(90,0){\vector(1,0){0}}

\put(-90,-2){\makebox(0,0)[tc]{$\Delta$}}
\put(0,2){\makebox(0,0)[bc]{$\ 0$}}

\color{black}

\put(60,20){\vector(-1,0){80}}

\put(20,22){\makebox(0,0)[bc]{$-p$}}

\put(60,0){\circle*{1}}
\put(-50,0){\circle*{1}}

\put(10,-20){\qbezier[25](0,0)(0,10)(0,20)}
\put(-50,-20){\qbezier[25](0,0)(0,10)(0,20)}

\put(60,0){\qbezier[25](0,0)(0,10)(0,20)}
\put(-20,0){\qbezier[25](0,0)(0,10)(0,20)}

\put(-50,-20){\vector(1,0){60}}

\put(-20,-18){\makebox(0,0)[bc]{$\ +n$}}

\put(80,-50){\qbezier[50](0,0)(0,25)(0,50)}
\put(0,-50){\qbezier[50](0,0)(0,25)(0,50)}
\put(-60,-50){\qbezier[50](0,0)(0,25)(0,50)}

\put(79,-50){\line(-1,0){78}}
\put(80,-50){\vector(1,0){0}}
\put(40,-48){\makebox(0,0)[bc]{$\ p$}}
\put(1,-50){\vector(-1,0){0}}
\put(-1,-50){\vector(1,0){0}}
\put(-59,-50){\vector(-1,0){0}}
\put(-30,-48){\makebox(0,0)[bc]{$\ n$}}
\put(-59,-50){\line(1,0){58}}

\color{mygreen}

%%%%%%%%%
%RAYON****50%
%%%%%%%%%
\put(10,0){\qbezier[300](-50,0)(-50,20.7)(-35.35,35.35)}
\put(10,0){\qbezier[300](-35.35,35.35),(-20.7,50),(0,50)}
\put(10,0){\qbezier[300](0,50),(20.7,50)(35.35,35.35)}
\put(10,0){\qbezier[300](35.35,35.35)(50,20.7)(50,0)}

%%%%%%%%%
%RAYON****10%
%%%%%%%%%
\put(-30,0){\qbezier[300](-10,0)(-10,-4.14)(-7.07,-7.07)}
\put(-30,0){\qbezier[300](-7.07,-7.07),(-4.14,-10),(0,-10)}
\put(-30,0){\qbezier[300](0,-10),(4.14,-10)(7.07,-7.07)}
\put(-30,0){\qbezier[300](7.07,-7.07)(10,-4.14)(10,0)}

\put(-20,0){\vector(0,1){0}}

\color{blue}

%\put(19.4,-10.6){\vector(-1,1){0}}
%Ci2
%RA=30 SA=0.382*R TA=0.707R
%S=0.414 fois R; TA = O.707 fois T
%R=30 SA = 12.42 ; Ta=21.21
\put(40,0){\qbezier[100](-30,0)(-30,-12.42)(-21.21,-21.21)}
\put(40,0){\qbezier[100](-21.21,-21.21)(-12.42,-30)(0,-30)}
\put(40,0){\qbezier[100](0,-30)(12.42,-30)(21.21,-21.21)}
\put(40,0){\qbezier[100](21.21,-21.21)(30,-12.42)(30,0)}

\put(10,0){\vector(0,1){0}}

%Ck3
%RA=60 SA=0.382*R TA=0.707R
%S=0.414 fois R; TA = O.707 fois T
%R=60 SA = 24.84 ; Ta=42.4242.42
\put(10,0){\qbezier[300](-60,0)(-60,24.84)(-42.42,42.42)}
\put(10,0){\qbezier[300](-42.42,42.42)(-24.84,60)(0,60)}
\put(10,0){\qbezier[300](0,60)(24.84,60)(42.42,42.42)}
\put(10,0){\qbezier[300](42.42,42.42)(60,24.84)(60,0)}

\color{black}

%%%%%%%%%
%RAYON****40%
%%%%%%%%%
\put(40,0){\qbezier[300](-40,0)(-40,-16.56)(-28.28,-28.28)}
\put(40,0){\qbezier[300](-28.28,-28.28)(-16.56,-40)(0,-40)}
\put(40,0){\qbezier[300](0,-40)(16.56,-40)(28.28,-28.28)}
\put(40,0){\qbezier[300](28.28,-28.28)(40,-16.5)(40,0)}

%%%%%%%%%
%RAYON****30%
%%%%%%%%%
\put(-30,0){\qbezier[300](-30,0)(-30,-12.42)(-21.21,-21.21)}
\put(-30,0){\qbezier[300](-21.21,-21.21)(-12.42,-30)(0,-30)}
\put(-30,0){\qbezier[300](0,-30)(12.42,-30)(21.21,-21.21)}
\put(-30,0){\qbezier[300](21.21,-21.21)(30,-12.42)(30,0)}

%%%%%%%%%
%RAYON****70%
%%%%%%%%%

\put(10,0){\qbezier[300](-70,0)(-70,28.98)(-49.49,49.49)}
\put(10,0){\qbezier[300](-49.49,49.49)(-28.98,70)(0,70)}
\put(10,0){\qbezier[300](0,70)(28.98,70)(49.49,49.49)}
\put(10,0){\qbezier[300](49.49,49.49)(70,28.98)(70,0)}

\end{picture}
\caption{Suspension of an interval exchange map as the dynamical structure of the snail.\label{figure24}}

\end{figure}
%%FIGURE%%
%%FIGURE%%
%%FIGURE%%

%%FIGURE%%
%%FIGURE%%
%%FIGURE%%
\begin{figure}
%\begin{picture}(100, 90)(-140,-30)
\begin{picture}(100,80)(-60,-40)

\put(0,-42){\line(0,1){108}}
\put(-42,-42){\line(0,1){108}}
\put(156,-42){\line(0,1){108}}

%%LIGNE1
\put(0,60){\vector(-1,0){42}}
\put(-21,61){\makebox(0,0)[bc]{$\scriptstyle 21$}}
\put(-21,59){\makebox(0,0)[tc]{$\scriptstyle f$}}

\put(0,60){\vector(1,0){156}}
\put(78,61){\makebox(0,0)[bc]{$\scriptstyle 78$}}
\put(78,59){\makebox(0,0)[tc]{$\scriptstyle f$}}

\put(-42,54){\line(1,0){198}}
\put(-42,66){\line(1,0){198}}

%%LIGNE2
%%
\put(156,48){\vector(-1,0){42}}
\put(135,49){\makebox(0,0)[bc]{$\scriptstyle 21$}}
\put(135,47){\makebox(0,0)[tc]{$\scriptstyle f$}}

\put(114,48){\vector(-1,0){42}}
\put(93,49){\makebox(0,0)[bc]{$\scriptstyle 21$}}
\put(93,47){\makebox(0,0)[tc]{$\scriptstyle f$}}

\put(51,49){\makebox(0,0)[bc]{$\scriptstyle 21$}}
\put(51,47){\makebox(0,0)[tc]{$\scriptstyle f$}}
\put(72,48){\vector(-1,0){42}}

\put(0,48){\vector(1,0){30}}
\put(15,47){\makebox(0,0)[tc]{$\scriptstyle f^{4}$}}
\put(15,49){\makebox(0,0)[bc]{$\scriptstyle 15$}}

\put(0,48){\vector(-1,0){42}}
\put(-21,49){\makebox(0,0)[bc]{$\scriptstyle 21$}}
\put(-21,47){\makebox(0,0)[tc]{$\scriptstyle f$}}

\put(-42,42){\line(1,0){198}}

\put(30,42){\line(0,1){12}}
\put(72,42){\line(0,1){12}}
\put(114,42){\line(0,1){12}}

%%LIGNE3
%%

\put(0,36){\vector(1,0){30}}
\put(16,35){\makebox(0,0)[tc]{$\scriptstyle f^{4}$}}
\put(16,37){\makebox(0,0)[bc]{$\scriptstyle 15$}}

\put(0,36){\vector(-1,0){42}}
\put(-21,37){\makebox(0,0)[bc]{$\scriptstyle 21$}}
\put(-21,35){\makebox(0,0)[tc]{$\scriptstyle f$}}

\put(35,36){\makebox(0,0)[l]{$78 = 3 \times 21 + 15 $}}

\put(85,36){\makebox(0,0)[l]{$4 = 3 \times 1 + 1 $}}

\put(135,36){\makebox(0,0)[l]{$a_{0}=3$}}

\put(30,30){\line(0,1){12}}

\put(-42,30){\line(1,0){72}}

%\put(93,36){\makebox(0,0)[cc]{$\scriptstyle a_{0} = 3$}}

%%LIGNE4
%%

\put(35,24){\makebox(0,0)[l]{$21 = 1 \times 15 + 6 $}}

\put(85,24){\makebox(0,0)[l]{$5 = 1 \times 4 + 1 $}}

\put(135,24){\makebox(0,0)[l]{$a_{1}=1$}}

\put(-42,24){\vector(1,0){30}}
\put(-25,23){\makebox(0,0)[tc]{$\scriptstyle f^{4}$}}
\put(-25,25){\makebox(0,0)[bc]{$\scriptstyle 15$}}

\put(0,24){\vector(-1,0){12}}
\put(-6,23){\makebox(0,0)[tc]{$\scriptstyle f^{5}$}}
\put(-6,25){\makebox(0,0)[bc]{$\scriptstyle 6$}}

\put(-12,18){\line(0,1){12}}

\put(0,24){\vector(1,0){30}}
\put(15,23){\makebox(0,0)[tc]{$\scriptstyle f^{4}$}}
\put(15,25){\makebox(0,0)[bc]{$\scriptstyle 15$}}
\put(30,18){\line(0,1){12}}

\put(-42,18){\line(1,0){72}}

%%LIGNE5
%%

\put(35,12){\makebox(0,0)[l]{$15 = 2 \times 6 + 3 $}}

\put(85,12){\makebox(0,0)[l]{$14 = 2 \times 5 + 4 $}}

\put(135,12){\makebox(0,0)[l]{$a_{2}=2$}}

%\put(-27,12){\makebox(0,0)[cc]{$\scriptstyle a_{1} = 1$}}

\put(0,12){\vector(1,0){30}}
\put(15,11){\makebox(0,0)[tc]{$\scriptstyle f^{4}$}}
\put(15,13){\makebox(0,0)[bc]{$\scriptstyle 15$}}

\put(0,12){\vector(-1,0){12}}
\put(-6,11){\makebox(0,0)[tc]{$\scriptstyle f^{5}$}}
\put(-6,13){\makebox(0,0)[bc]{$\scriptstyle 6$}}

\put(-12,6){\line(1,0){42}}

\put(-12,6){\line(0,1){12}}
\put(30,6){\line(0,1){12}}

%%LIGNE6
%%

\put(35,0){\makebox(0,0)[l]{$6 = 1 \times 3 + 3 $}}

\put(85,0){\makebox(0,0)[l]{$19 = 1 \times 14 + 5 $}}

\put(135,0){\makebox(0,0)[l]{$a_{3}=1$}}

\put(0,0){\vector(-1,0){12}}
\put(-6,-1){\makebox(0,0)[tc]{$\scriptstyle f^{5}$}}
\put(-6,1){\makebox(0,0)[bc]{$\scriptstyle 6$}}
\put(-12,-6){\line(0,1){12}}

\put(0,0){\vector(1,0){6}}
\put(3,-1){\makebox(0,0)[tc]{$\scriptstyle f^{14}$}}
\put(3,1){\makebox(0,0)[bc]{$\scriptstyle 3$}}
\put(6,-6){\line(0,1){12}}

\put(30,0){\vector(-1,0){12}}
\put(24,-1){\makebox(0,0)[tc]{$\scriptstyle f^{5}$}}
\put(24,1){\makebox(0,0)[bc]{$\scriptstyle 6$}}
\put(18,-6){\line(0,1){12}}

\put(-12,-6){\line(1,0){42}}

\put(30,-6){\line(0,1){12}}

\put(18,0){\vector(-1,0){12}}
\put(12,-1){\makebox(0,0)[tc]{$\scriptstyle f^{5}$}}
\put(12,1){\makebox(0,0)[bc]{$\scriptstyle 6$}}

%%LIGNE7
%%

%\put(18,-12){\makebox(0,0)[cc]{$\scriptstyle a_{2} = 2$}}

\put(0,-12){\vector(1,0){6}}
\put(3,-13){\makebox(0,0)[tc]{$\scriptstyle f^{14}$}}
\put(3,-11){\makebox(0,0)[bc]{$\scriptstyle 3$}}
\put(6,-18){\line(0,1){12}}

\put(0,-12){\vector(-1,0){12}}
\put(-6,-13){\makebox(0,0)[tc]{$\scriptstyle f^{5}$}}
\put(-6,-11){\makebox(0,0)[bc]{$\scriptstyle 6$}}
\put(-12,-18){\line(0,1){12}}

\put(-12,-18){\line(1,0){18}}

%%LIGNE8
%%

\put(0,-24){\vector(1,0){6}}
\put(3,-25){\makebox(0,0)[tc]{$\scriptstyle f^{14}$}}
\put(3,-23){\makebox(0,0)[bc]{$\scriptstyle 3$}}
\put(6,-30){\line(0,1){12}}

\put(-12,-24){\vector(1,0){6}}
\put(-9,-25){\makebox(0,0)[tc]{$\scriptstyle f^{14}$}}
\put(-9,-23){\makebox(0,0)[bc]{$\scriptstyle 3$}}
\put(-12,-30){\line(0,1){12}}

\put(0,-24){\vector(-1,0){6}}
\put(-3,-25){\makebox(0,0)[tc]{$\scriptstyle f^{19}$}}
\put(-3,-23){\makebox(0,0)[bc]{$\scriptstyle 3$}}
\put(-6,-30){\line(0,1){12}}

\put(-12,-30){\line(1,0){18}}

%%LIGNE9
%%

%\put(-21,-36){\makebox(0,0)[cc]{$\scriptstyle a_{3} = 1$}}

\put(0,-36){\vector(-1,0){6}}
\put(-3,-37){\makebox(0,0)[tc]{$\scriptstyle f^{14}$}}
\put(-3,-35){\makebox(0,0)[bc]{$\scriptstyle 3$}}
\put(-6,-42){\line(0,1){12}}

\put(0,-36){\vector(1,0){6}}
\put(3,-37){\makebox(0,0)[tc]{$\scriptstyle f^{19}$}}
\put(3,-35){\makebox(0,0)[bc]{$\scriptstyle 3$}}
\put(6,-42){\line(0,1){12}}

\put(-42,-42){\line(1,0){198}}

\color{blue}

\end{picture}
\caption{Euclidean algorithm modulo $q_{1}+ q_{0}=99$ for $q_{0}=21$ and $q_{1} = 78$ .\label{figure25}}

\end{figure}
%%FIGURE%%
%%FIGURE%%
%%FIGURE%%

From the equation (\ref{restes}) one has $(g_{1} + r_{1})  d = q_{0}$ and 
$(g_{2}+ r_{2})  d = q_{1}$, and from the equation (\ref{ordres}) it comes $g_{1}+ g_{2} = u_{2N+2}$ and $r_{1}+ r_{2} = u_{2N+3}$. So finally 
\begin{eqnarray} (u_{2N+3} + u_{2N+2}) d = q_{0} + q_{1} \end{eqnarray}

One can also remark that since $d= q_{0} \wedge q_{1} = q_{0} \wedge (q_{0}+q_{1})$, the least common multiple $q_{0} \vee (q_{0}+q_{1})$ of $q_{0}$ and $q_{0}+q_{1}$ is $\displaystyle \frac{q_{0}  (q_{0}+q_{1})}{d} = q_{0}  (u_{2N+3} + u_{2N+2})$.

 Hence the translation $\tau_{d}$ and $f= \tau_{q_{0}}= \tau_{-q_{1}}$ are both periodic of period $\frac{q_{0}+q_{1}}{d} = u_{2N+3} + u_{2N+2}$. Any interval of length $d$ is translated exactly to itself after $u_{2N+3} + u_{2N+2}$ iterations, coming back after having recovered the hole of $\R/(q_{0}+q_{1})\R$.

 In the topological snail $\sn(n;p)$ in weak position ($n<p$), it means that if I consider the interval $I$ located on the left of the entrance point $0$ and with size $n\wedge p$, then it will come back adjacent to itself after $u_{2N+2}$ iteration of the rotation $f$, on the right if and only if $a_{2N+1}$ is even (see figures \ref{figure23}, \ref{figure24} and \ref{figure25}), and on the left otherwise. Note that considering the topological snail as the suspension of a rotation (figure \ref{figure24}) leads to many intersesting considerations. The curves obtained by gluing together all the emerging and sustaining half circles split into three categories (see figure \ref{figure27}) : the regular curves, homeomorphic to a circle, and obtained by gluing together true arcs of circles, the connected component of the visible emerging half circle, that contains one singular circle located on the center of one of the three half-discs, and the connected component of the other two centers. In particular, the snail $\sn(n;p)$ is connected if and only if $n$ and $p$ are relatively prime. In this case $\sn(n;p)$ is a simple curve connecting the centers of the two odd diameters of the half-circles that compose $\sn(n;p)$ when $n$ and $p$ are relativey prime (see figure \ref{figure24} and \ref{figure27}). 
 
 But there is another important point of view that will prove itself very fructuous from a dynamical point of view, and gives a second justification to the particular formulation of the euclidean algorithm used here. The topological snail $\sn(n;p)$ is made of three half-disks : the left sustaining half-disk of diameter $n$, the right sustaining half-disk of diameter $p$, and the emerging half-disk of diameter $n+p$. And there are natural operations associated with this consideration : one can rotate, fold or unfold the snail according to simple geometric operations. Let us focus for the moment on the folding-unfolding operations : the top-unfolding operation of the type $A$ applies to a topological snail in top-position and consists in replacing the left sustaining disk of diameter $n$ by the emerging disk of diameter $n+p$. The analogous operation of type $B$ consists in replacing the right sustaining disk by the emerging one. Let $A$ and $B$ be the applications on numbers associated to the preceeding operation : $A(n;p) = (n+p; p)$ and $B(n;p) = (n;n+p)$. Of course, any couple $(n,p)$ is obtained from $(1;1)$ by a unique sequence of unfolding operations of the type $A$ and $B$, repeated as many times as possible and necessary. This leads to a new way of writing the operations succesively, that has a very nice interpretation for the study of homeomorphism of the plane, and is clear in itself (see figure \ref{figure28}). The first theorem is the following~:

 \begin{theo} \label{caracteristic}
 For any couple $(n;p)$ of relatively prime natural numbers, there is a unique natural number $N>0$ and two unique caracteristic sequences of positive natural numbers $(\alpha_{1},\alpha_{2}, \ldots , \alpha_{N})$ and  $(\beta_{1},\beta_{2}, \ldots , \beta_{N})$ such that : 

 \begin{enumerate}
 \item $\alpha_{i} >0$ except possibly for $i=1$ where $\alpha_{1}=0$ is possible.
 \item $\beta_{i} >0$ except possibly for $i=N$ where $\beta_{N}=0$ is possible.
 \item $\displaystyle (n;p) = B^{\, \beta_{N}} \circ A^{\alpha_{N}} \circ  \cdots  \circ B^{\, \beta_{1}} \circ A^{\alpha_{1}}(1;1)$ 
 \end{enumerate}
 
  \end{theo}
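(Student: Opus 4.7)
The plan is to prove existence and uniqueness simultaneously by strong induction on $n+p$, using the inverse operations $A^{-1}(n,p) = (n-p, p)$ and $B^{-1}(n,p) = (n, p-n)$. These are valid (i.e.\ produce a coprime pair of strictly positive integers) precisely when $n > p$ and $p > n$ respectively, since $\gcd(n-p, p) = \gcd(n, p) = 1$. The base case $(n,p) = (1,1)$ forces $N = 1$ with $\alpha_1 = \beta_1 = 0$: any strictly positive exponent in any $A$- or $B$-block strictly increases the sum $n+p$, and further blocks can only continue this strict increase, so no nontrivial sequence can return to $(1,1)$.

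The engine of both existence and uniqueness is a \emph{sign rule}: if $\alpha \geq 1$ then $A^{\alpha}(n,p) = (n + \alpha p, p)$ satisfies ``first coordinate $>$ second coordinate'', and symmetrically $B^{\beta}$ with $\beta \geq 1$ produces ``second $>$ first''. Reading the decomposition $B^{\beta_N} A^{\alpha_N} \cdots B^{\beta_1} A^{\alpha_1}(1,1)$ from the outside in, and using the boundary conditions on exponents, this rule forces, whenever $(n,p) \neq (1,1)$, the equivalence $\beta_N > 0 \Leftrightarrow p > n$ (and $\beta_N = 0 \Leftrightarrow n > p$). The sign of $n - p$ therefore unambiguously determines which inverse operation to peel off first.

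Assume $n > p$; the case $p > n$ is strictly symmetric under the $A \leftrightarrow B$ swap. The sign rule forces $\beta_N = 0$, hence $(n,p) = A^{\alpha_N}(n', p)$ with $n' = n - \alpha_N p$ and $\alpha_N \geq 1$. The correct value of $\alpha_N$ is the unique one for which $(n', p)$ is either $(1,1)$ --- which happens exactly when $p = 1$, gives $\alpha_N = n - 1$, and completes the decomposition with $N = 1$ --- or else satisfies $0 < n' < p$, in which case $\alpha_N$ is the Euclidean quotient, uniquely pinned down by the requirement (at the next stage of the recursion) that the outermost $\beta$-exponent of $(n', p)$ be strictly positive. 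In the second branch, $\gcd(n', p) = 1$ and $n' + p < n + p$, so the inductive hypothesis yields a unique decomposition $(n', p) = B^{\tilde\beta_{\tilde N}} A^{\tilde\alpha_{\tilde N}} \cdots$, necessarily with $\tilde\beta_{\tilde N} \geq 1$ (the sign rule applied to $(n', p)$, since $p > n'$). Prepending $A^{\alpha_N}$ and padding $\beta_N = 0$ produces the unique decomposition of $(n,p)$: all interior constraints $\alpha_i > 0$ for $i \geq 2$ and $\beta_i > 0$ for $i \leq N-1$ are preserved, the new value $\beta_{N-1} = \tilde\beta_{\tilde N}$ is strictly positive, and only $\alpha_1 = \tilde\alpha_1$ and $\beta_N$ are allowed to vanish.

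The main subtlety lies in the boundary bookkeeping: the exceptions $\alpha_1 \geq 0$ and $\beta_N \geq 0$ (versus strict positivity of all interior exponents) exactly encode whether the journey from $(1,1)$ begins by first increasing the second coordinate (a $B$-block, so $\alpha_1 = 0$) and whether it ends by last increasing the first coordinate (an $A$-block, so $\beta_N = 0$). Ensuring that these two boundary cases combine correctly with the terminating base case $p = 1$, without inflating or shrinking the length $N$ or producing a spurious trivial block in the middle, is the delicate part of the induction; the sign rule is precisely the tool that makes this bookkeeping deterministic, and it guarantees simultaneously the existence of the decomposition and the uniqueness of each of its blocks.
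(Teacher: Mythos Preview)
Your proof is correct and follows the same approach the paper intends: the paper does not give a formal proof but states in the first remark after the theorem that the base case $(n,p)=(1,1)$ (with $N=1$, $\alpha_1=\beta_1=0$) ``is the basis for a general and very self-evident direct proof'', relying implicitly on the Euclidean algorithm developed just before the statement. Your argument is precisely that direct proof written out in full, via strong induction on $n+p$ and the sign rule that determines whether to peel off an $A$-block or a $B$-block; in particular your careful handling of the boundary exponents $\alpha_1$ and $\beta_N$ and of the terminating case $p=1$ supplies exactly the bookkeeping the paper leaves to the reader.
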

  
\begin{remarks} 

\item If I say that $n$ and $p$ are relatively prime if and only if their only positive common divisor is one, then $(1;1)$ is admissible for the theorem. And in this case $N=1$ and $\alpha_{1} = \beta_{1}=0$ is convenient. Of course, this case is the basis for a general and very self-evident direct proof of the theorem.

\item Recall that $A$ and $B$ also denote the matrices
$$A=\left[\begin{array}{cc} 1 & 1 \\ 0 & 1 \end{array}\right]~~~~B=\left[\begin{array}{cc} 1 & 0 \\ 1 & 1 \end{array}\right]$$
and define

$$\mt(n;p) = B^{\, \beta_{N}} \circ A^{\alpha_{N}} \circ  \cdots  \circ B^{\, \beta_{1}} \circ A^{\alpha_{1}} $$
 
 Then 
 
\begin{eqnarray} \mt(n;p) & = & \left[\begin{array}{cc} 1 & 0 \\ \beta_{N} & 1 \end{array}\right]\left[\begin{array}{cc} 1 & \alpha_{N} \\ 0 & 1 \end{array}\right]\cdots \left[\begin{array}{cc} 1 & 0 \\ \beta_{1} & 1 \end{array}\right]\left[\begin{array}{cc} 1 & \alpha_{1} \\ 0 & 1 \end{array}\right] \\
& = &  \left[\begin{array}{cc} 0 & 1 \\ 1 & 0 \end{array}\right] \left[ \begin{array}{cc} a_{0}\, & 1 \\ 1 & 0 \end{array} \right]  \left[ \begin{array}{cc} a_{1}\, & 1\, \\ 1 & 0 \end{array} \right]  \cdots  \left[ \begin{array}{cc} a_{2N+1} & 1 \\ 1 & 0 \end{array} \right] \left[\begin{array}{cc} 0 & 1 \\ 1 & 0 \end{array}\right] \\
& = & \left[ \begin{array}{cc} g_{1} & r_{1} \\ g_{2} & r_{2} \end{array} \right]  \label{turbulence}
\end{eqnarray}

To understand this, just use the equality~:

\begin{eqnarray}  \left[\begin{array}{cc} 0 & 1 \\ 1 & 0 \end{array}\right]\left[\begin{array}{cc} 1 & 0 \\ \beta & 1 \end{array}\right]\left[\begin{array}{cc} 1 & \alpha \\ 0 & 1 \end{array}\right]\left[\begin{array}{cc} 0 & 1 \\ 1 & 0 \end{array}\right] & =& \left[\begin{array}{cc} \beta & 1 \\ 1 & 0 \end{array}\right] \left[\begin{array}{cc} \alpha & 1 \\ 1 & 0 \end{array}\right]
\end{eqnarray}

This relation is a particularily simple kind of zip-relation, see section \ref{canonicalsection} page \pageref{canonicalsection}.

 \item Note that of course one has $\beta_{N-k} = a_{2k}$, and  $\alpha_{N-k} = a_{2k+1}$ for $1\leqslant k \leqslant N$. But this order is much more convenient here for it coïncides with the composition both of homeomorphisms and unfoldings of topological snails (see figure \ref{figure29}). Note that the number $N$ can only be found at the end of the algorithm.
 
 \item Let $M=\left[\begin{array}{cc} a & b \\ c & d \end{array}\right]\in \mathrm{SL}_{2}(\Z)$ with positive integer coefficients ($ad-bc=1$). Then $ad=1+bc$ shows that $a>0$ and $d>0$. Further more 
 \begin{eqnarray} \left[\begin{array}{cc} 1 & -1 \\ 0 & 1 \end{array}\right]\left[\begin{array}{cc} a & b \\ c & d \end{array}\right] & =& \left[\begin{array}{cc} a-c & b-d \\ c & d \end{array}\right]\end{eqnarray}
 
 If $n=a+b$ and $p=c+d$ then $(a-c)d= 1 + c(b-d)$ and 
 \begin{eqnarray*} n>p & \iff & a-c > d-b \\
 & \iff & (a-c)d > (d-b)d \\
  & \iff & 1+ c(b-d) > (d-b)d \\
   & \iff & 1 > (d-b)(d+c) \\
   &\iff & \left\lbrace \begin{array}{rcl} b & \leqslant & d \\
   a-c & >&  0\end{array}\right. \\
      &\iff & \left\lbrace \begin{array}{rcl} b -d & \geqslant & 0 \\
   a-c & >&  0\end{array}\right.
\end{eqnarray*}

This shows that one has therefore $M = \mt(n;p)$.
 
\end{remarks}

 %%FIGURE%%
%%FIGURE%%
%%FIGURE%%
\begin{figure}
\begin{picture}(100,120)(-60,0)

\put(0,30){\line(0,1){40}}
\put(-42,30){\line(0,1){40}}
\put(156,30){\line(0,1){40}}

%%LIGNE1
\put(0,60){\vector(-1,0){42}}
\put(-21,62){\makebox(0,0)[bc]{$ q_{n+1}$}}
\put(-21,58){\makebox(0,0)[tc]{$ f^{u_{n+1}}$}}

\put(0,60){\vector(1,0){156}}
\put(78,62){\makebox(0,0)[bc]{$q_{n}$}}
\put(78,58){\makebox(0,0)[tc]{$ f^{u_{n}}$}}

\put(-42,50){\line(1,0){198}}
\put(-42,70){\line(1,0){198}}

%%LIGNE2
%%
\put(156,40){\vector(-1,0){42}}
\put(135,42){\makebox(0,0)[bc]{$q_{n+1}$}}
\put(135,38){\makebox(0,0)[tc]{$f^{u_{n+1}}$}}

\put(114,40){\vector(-1,0){42}}
\put(93,42){\makebox(0,0)[bc]{$q_{n+1}$}}
\put(93,38){\makebox(0,0)[tc]{$f^{u_{n+1}}$}}

\put(51,42){\makebox(0,0)[bc]{$q_{n+1}$}}
\put(51,38){\makebox(0,0)[tc]{$f^{u_{n+1}}$}}
\put(72,40){\vector(-1,0){42}}

\put(0,40){\vector(1,0){30}}
\put(15,38){\makebox(0,0)[tc]{$f^{u_{n+2}}$}}
\put(15,42){\makebox(0,0)[bc]{$q_{n+2}$}}

\put(0,40){\vector(-1,0){42}}
\put(-21,42){\makebox(0,0)[bc]{$q_{n+1}$}}
\put(-21,38){\makebox(0,0)[tc]{$f^{u_{n+1}}$}}

\put(-20,10){\makebox(0,0)[bl]{$\displaystyle q_{n}= a_{n}q_{n+1} + q_{n+2} $}}

\put(60,10){\makebox(0,0)[bl]{$\displaystyle u_{n+2}= a_{n}u_{n+1} + u_{n} $}}

\put(30,30){\line(0,1){20}}
\put(72,30){\line(0,1){20}}
\put(114,30){\line(0,1){20}}

\put(-42,30){\line(1,0){198}}

\color{blue}

\end{picture}
\caption{Euclidean algorithm modulo $q_{0}+q_{1}$ for finding a generating interval .\label{figure26}}

\end{figure}
%%FIGURE%%
%%FIGURE%%
%%FIGURE%%

%%FIGURE%%
%%FIGURE%%
%%FIGURE%%
\begin{figure}
%\begin{picture}(100, 90)(-140,-30)
\begin{picture}(100,100)(-105,-45)

\put(-90,0){\line(1,0){180}}
\put(90,0){\vector(1,0){0}}
\put(-90,-2){\makebox(0,0)[tc]{$\Delta$}}
\put(-1,1){\makebox(0,0)[br]{$ 0$}}

\color{red}

%\put(30,0){\line(1,0){50}}
%\put(-90,0){\line(1,0){180}}\put(80,-2){\makebox(0,0)[tc]{$H_{3}$}}

\color{black}

\put(40,0){\circle*{1}}

\put(40,-2){\makebox(0,0)[tc]{$\frac{p}{2}$}}

\put(10,0){\circle*{1}}
\put(10,2){\makebox(0,0)[bc]{$\frac{-n+p}{2}$}}

\put(-30,0){\circle*{1}}
\put(-30,-2){\makebox(0,0)[tc]{$\frac{-n}{2}$}}

\put(0,0){\circle*{1}}
%\put(-1,1){\makebox(0,0)[br]{$\scriptstyle 0$}}

\color{mygreen}

%%%%%%%%%
%RAYON****40%
%%%%%%%%%
\put(40,0){\qbezier[300](-40,0)(-40,-16.56)(-28.28,-28.28)}
\put(40,0){\qbezier[300](-28.28,-28.28)(-16.56,-40)(0,-40)}
\put(40,0){\qbezier[300](0,-40)(16.56,-40)(28.28,-28.28)}
\put(40,0){\qbezier[300](28.28,-28.28)(40,-16.5)(40,0)}

%%%%%%%%%
%RAYON****30%
%%%%%%%%%
\put(-30,0){\qbezier[300](-30,0)(-30,-12.42)(-21.21,-21.21)}
\put(-30,0){\qbezier[300](-21.21,-21.21)(-12.42,-30)(0,-30)}
\put(-30,0){\qbezier[300](0,-30)(12.42,-30)(21.21,-21.21)}
\put(-30,0){\qbezier[300](21.21,-21.21)(30,-12.42)(30,0)}

%%%%%%%%%
%RAYON****70%
%%%%%%%%%

\put(10,0){\qbezier[300](-70,0)(-70,28.98)(-49.49,49.49)}
\put(10,0){\qbezier[300](-49.49,49.49)(-28.98,70)(0,70)}
\put(10,0){\qbezier[300](0,70)(28.98,70)(49.49,49.49)}
\put(10,0){\qbezier[300](49.49,49.49)(70,28.98)(70,0)}
%Ci1
%R=10 S=0.382*R T=0.707R
%S=0.414 fois R; T = O.707 fois T
%R=20 ;45 S=3.82 ; Ta=7.07 Ra=31.8
\put(10,0){\qbezier[300](-10,0)(-10,4.14)(-7.07,7.07)}
\put(10,0){\qbezier[300](-7.07,7.07),(-4.14,10),(0,10)}
\put(10,0){\qbezier[300](0,10),(4.14,10)(7.07,7.07)}
\put(10,0){\qbezier[300](7.07,7.07)(10,4.14)(10,0)}

%Cj2
%R=20 S=0.382*R T=0.707R
%S=0.414 fois R; T = O.707 fois T
%R=10 ;45 S= ; Ta=7.07 Ra=31.8
\put(40,0){\qbezier[300](-20,0)(-20,-8.28)(-14.14,-14.14)}
\put(40,0){\qbezier[300](-14.14,-14.14)(-8.28,-20)(0,-20)}
\put(40,0){\qbezier[300](0,-20)(8.28,-20)(14.14,-14.14)}
\put(40,0){\qbezier[300](14.14,-14.14)(20,-8.28)(20,0)}

%%%%%%%%%
%RAYON****50%
%%%%%%%%%
\put(10,0){\qbezier[300](-50,0)(-50,20.7)(-35.35,35.35)}
\put(10,0){\qbezier[300](-35.35,35.35),(-20.7,50),(0,50)}
\put(10,0){\qbezier[300](0,50),(20.7,50)(35.35,35.35)}
\put(10,0){\qbezier[300](35.35,35.35)(50,20.7)(50,0)}

%%%%%%%%%
%RAYON****10%
%%%%%%%%%
\put(-30,0){\qbezier[300](-10,0)(-10,-4.14)(-7.07,-7.07)}
\put(-30,0){\qbezier[300](-7.07,-7.07),(-4.14,-10),(0,-10)}
\put(-30,0){\qbezier[300](0,-10),(4.14,-10)(7.07,-7.07)}
\put(-30,0){\qbezier[300](7.07,-7.07)(10,-4.14)(10,0)}

%%%%%%%%%
%RAYON****30%
%%%%%%%%%
\put(10,0){\qbezier[300](-30,0)(-30,12.42)(-21.21,21.21)}
\put(10,0){\qbezier[300](-21.21,21.21)(-12.42,30)(0,30)}
\put(10,0){\qbezier[300](0,30)(12.42,30)(21.21,21.21)}
\put(10,0){\qbezier[300](21.21,21.21)(30,12.42)(30,0)}

\color{blue}

%Ci1
%R=10 S=0.382*R T=0.707R
%S=0.414 fois R; T = O.707 fois T
%R=20 ;45 S=3.82 ; Ta=7.07 Ra=31.8
\put(40,0){\qbezier[300](-10,0)(-10,-4.14)(-7.07,-7.07)}
\put(40,0){\qbezier[300](-7.07,-7.07),(-4.14,-10),(0,-10)}
\put(40,0){\qbezier[300](0,-10),(4.14,-10)(7.07,-7.07)}
\put(40,0){\qbezier[300](7.07,-7.07)(10,-4.14)(10,0)}

%RAYON30
\put(40,0){\qbezier[300](-30,0)(-30,-12.42)(-21.21,-21.21)}
\put(40,0){\qbezier[300](-21.21,-21.21)(-12.42,-30)(0,-30)}
\put(40,0){\qbezier[300](0,-30)(12.42,-30)(21.21,-21.21)}
\put(40,0){\qbezier[300](21.21,-21.21)(30,-12.42)(30,0)}

%RAYON20
\put(-30,0){\qbezier[300](-20,0)(-20,-8.28)(-14.14,-14.14)}
\put(-30,0){\qbezier[300](-14.14,-14.14)(-8.28,-20)(0,-20)}
\put(-30,0){\qbezier[300](0,-20)(8.28,-20)(14.14,-14.14)}
\put(-30,0){\qbezier[300](14.14,-14.14)(20,-8.28)(20,0)}

%RAYON20
\put(10,0){\qbezier[300](-20,0)(-20,8.28)(-14.14,14.14)}
\put(10,0){\qbezier[300](-14.14,14.14)(-8.28,20)(0,20)}
\put(10,0){\qbezier[300](0,20)(8.28,20)(14.14,14.14)}
\put(10,0){\qbezier[300](14.14,14.14)(20,7,64)(20,0)}

%Ck2
%RA=40 SA=0.382*R TA=0.707R
%S=0.414 fois R; TA = O.707 fois T
%R=40 SA = 16.56 ; Ta=28.28
\put(10,0){\qbezier[300](-40,0)(-40,16.56)(-28.28,28.28)}
\put(10,0){\qbezier[300](-28.28,28.28)(-16.56,40)(0,40)}
\put(10,0){\qbezier[300](0,40)(16.56,40)(28.28,28.28)}
\put(10,0){\qbezier[300](28.28,28.28)(40,16.56)(40,0)}

%Ck3
%RA=60 SA=0.382*R TA=0.707R
%S=0.414 fois R; TA = O.707 fois T
%R=60 SA = 24.84 ; Ta=42.4242.42
\put(10,0){\qbezier[300](-60,0)(-60,24.84)(-42.42,42.42)}
\put(10,0){\qbezier[300](-42.42,42.42)(-24.84,60)(0,60)}
\put(10,0){\qbezier[300](0,60)(24.84,60)(42.42,42.42)}
\put(10,0){\qbezier[300](42.42,42.42)(60,24.84)(60,0)}

\end{picture}
\caption{Singular curves associated to the snail.\label{figure27}}

\end{figure}
%%FIGURE%%
%%FIGURE%%
%%FIGURE%%

 %%FIGURE%%
%%FIGURE%%
%%FIGURE%%
\begin{figure}
%\begin{picture}(100, 90)(-140,-30)
\begin{picture}(90,70)(-50,-20)

%%LIGNE1
\put(-5,60){\makebox(0,0)[br]{$ 7$}}
\put(5,60){\makebox(0,0)[bl]{$ 26$}}

%%LIGNE2
%%

\put(14,57){\qbezier[80](0,3)(4,0)(0,-3)}
\put(14,60){\vector(-1,1){0}}

\put(-10,57){\makebox(0,0)[br]{$\scriptstyle $}}
\put(18,57){\makebox(0,0)[cl]{$ B$}}

%%LIGNE3
%%

\put(-5,50){\makebox(0,0)[br]{$ 7$}}
\put(5,50){\makebox(0,0)[bl]{$ 19$}}

%%LIGNE4
%%
\put(14,47){\qbezier[80](0,3)(4,0)(0,-3)}
\put(14,50){\vector(-1,1){0}}

\put(-10,47){\makebox(0,0)[br]{$\scriptstyle $}}
\put(18,47){\makebox(0,0)[cl]{$ B$}}

%%LIGNE5
%%

\put(-5,40){\makebox(0,0)[br]{$ 7$}}
\put(5,40){\makebox(0,0)[bl]{$ 12$}}

%%LIGNE6
%%
\put(14,37){\qbezier[80](0,3)(4,0)(0,-3)}
\put(14,40){\vector(-1,1){0}}

\put(-10,37){\makebox(0,0)[br]{$\scriptstyle $}}
\put(18,37){\makebox(0,0)[cl]{$ B$}}

%%LIGNE7
%%

\put(-5,30){\makebox(0,0)[br]{$ 7$}}
\put(5,30){\makebox(0,0)[bl]{$ 5$}}

%%LIGNE6
%%
\put(-9,27){\qbezier[80](0,3)(-4,0)(0,-3)}
\put(-9,30){\vector(1,1){0}}

\put(-13,27){\makebox(0,0)[cr]{$A $}}
\put(18,27){\makebox(0,0)[cl]{$ $}}

%%LIGNE8
%%

\put(-5,20){\makebox(0,0)[br]{$ 2$}}
\put(5,20){\makebox(0,0)[bl]{$ 5$}}

%%LIGNE9
%%

\put(-5,10){\makebox(0,0)[br]{$ 2$}}
\put(5,10){\makebox(0,0)[bl]{$ 3$}}

%%LIGNE10
%%
\put(10,17){\qbezier[80](0,3)(4,0)(0,-3)}
\put(10,20){\vector(-1,1){0}}

\put(-10,17){\makebox(0,0)[br]{$\scriptstyle $}}
\put(14,17){\makebox(0,0)[cl]{$ B$}}

%%LIGNE11
%%

\put(-5,0){\makebox(0,0)[br]{$ 2$}}
\put(5,0){\makebox(0,0)[bl]{$ 1$}}

%%LIGNE12
%%
\put(10,7){\qbezier[80](0,3)(4,0)(0,-3)}
\put(10,10){\vector(-1,1){0}}

\put(-10,7){\makebox(0,0)[br]{$\scriptstyle $}}
\put(14,7){\makebox(0,0)[cl]{$ B$}}

%%LIGNE13
%%
\put(-10,-3){\qbezier[80](0,3)(-4,0)(0,-3)}
\put(-10,0){\vector(1,1){0}}

\put(-13,-3){\makebox(0,0)[cr]{$A $}}
\put(18,-3){\makebox(0,0)[cl]{$ $}}

%%LIGNE14
%%

\put(-5,-10){\makebox(0,0)[br]{$1$}}
\put(5,-10){\makebox(0,0)[bl]{$ 1$}}

\put(120,37){\makebox(0,0)[c]{$(7;26) = B^{3} \, A \, B^{2} \, A (1;1)$}}

\put(120,22){\makebox(0,0)[c]{$ \Longleftrightarrow $}}

\put(120,7){\makebox(0,0)[c]{$\sn^{+}(7;26) = f_{B}^{3} \circ f_{A} \circ f_{B}^{2} \circ f_{A} \left(\sn^{+}(1;1)\right)$}}

\put(29,12){\makebox(0,0)[cl]{$\beta_{1}=2$}}

\put(23,47){\qbezier[80](0,12)(5,0)(0,-12)}

\put(21,12){\qbezier[80](0,8)(3,0)(0,-8)}
\put(-9,30){\vector(1,1){0}}

\put(-18,26){\qbezier[80](0,4)(-3,0)(0,-4)}

\put(-22,26){\makebox(0,0)[rc]{$\alpha_{2}=1$}}

\put(-22,-4){\makebox(0,0)[rc]{$\alpha_{1}=1$}}

\put(-18,-4){\qbezier[80](0,4)(-3,0)(0,-4)}

\put(29,47){\makebox(0,0)[cl]{$\beta_{2}=3$}}

\color{blue}

\end{picture}
\caption{Unique decomposition of $\sn^{+}(7;26)$.\label{figure28}}

\end{figure}
%%FIGURE%%
%%FIGURE%%
%%FIGURE%%

One has the following result~:

 \begin{theo} \label{decomposition}
 Let $n$ and $p$ be two integers relatively prime, and  let $(\alpha_{1},\alpha_{2}, \ldots , \alpha_{N})$ and $(\beta_{1},\beta_{2}, \ldots , \beta_{N})$ be its associated caracteristic sequences (see theorem \ref{caracteristic}). Let $f_{A}$ and $f_{B}$ be two homeomorphisms of respective types $A$ and $B$ relative to $X = \lbrace p_{1}, p_{2} ; p_{3} \rbrace$ on $\Delta^{+}$, where $p_{1}$; $p_{2}$, $p_{3}$ are the centers of the topological snail $\sn(n;p)$. Then
 
  \begin{eqnarray} \sn(n;p) & = &f_{B}^{\, \beta_{N}} \circ f_{A}^{\alpha_{N}} \circ  \cdots  \circ f_{B}^{\, \beta_{1}} \circ f_{A}^{\alpha_{1}}\left( \sn(1;1) \right) \end{eqnarray}
 
Furthermore, $n$ is the minimum of the number of intersection points of $\sn(n;p)$ and the isotopy class of the perpendicular bissector $\Delta_{1}$ of $[p_{1}; p_{2}]_{\Delta}$, and  $p$ is the minimum of the number of intersection points of $\sn(n;p)$ and the isotopy class of the perpendicular bissector $\Delta_{2}$ of $[p_{2}; p_{3}]_{\Delta}$ (see figure \ref{figure22}).
\end{theo}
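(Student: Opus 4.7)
The strategy is induction on the total $\alpha_1+\beta_1+\cdots+\alpha_N+\beta_N$ of the characteristic sequences of $(n;p)$, reducing the first assertion to a single \emph{unfolding lemma}: for every coprime pair $(n;p)$,
\[
f_A\bigl(\sn(n;p)\bigr) \equiv \sn(n+p;p) \qquad \text{and} \qquad f_B\bigl(\sn(n;p)\bigr) \equiv \sn(n;n+p),
\]
where $\equiv$ denotes isotopy relative to $X$ with the three marked points kept on $\Delta^{+}$. The base case $(n;p)=(1;1)$ is trivial, since all exponents vanish and both sides reduce to the single top half-circle $\sn(1;1)$ from $p_1$ to $p_3$. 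Granted the lemma, iterating it along the unique decomposition supplied by Theorem \ref{caracteristic} yields $\sn(n;p)$ as the stated composition of powers of $f_A$ and $f_B$ applied to $\sn(1;1)$.

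To establish the unfolding lemma I would apply the isotopy criterion of Theorem \ref{topo}: it is enough to verify that the successive sub-arcs of $f_A\bigl(\sn(n;p)\bigr)$ between consecutive points of $X$ are homotopic with fixed ends in $\R^{2}\setminus X$ to the corresponding sub-arcs of the geometric model $\sn(n+p;p)$. Because all homeomorphisms of type $A$ lie in the same isotopy class relative to $X$, I am free to pick a convenient representative of $f_A$; the rotary homeomorphism $L_{\varphi}$ around $p_{1}$ from the examples preceding Definition \ref{definition1} is well suited, since it preserves the family of circles centred at $p_{1}$ and therefore acts explicitly on the left sustaining arcs of $\sn(n;p)$. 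An equivalent route is to compute $\Psi\bigl(f_A(\sn(n;p))\bigr)$ through the spinning skeleton theorem: Propositions \ref{proposition1} and \ref{proposition2} already force this skeleton to be a simple topological snail, so it only remains to identify its two integer parameters, which can be done through the intersection counts with $\Delta_{1}$ and $\Delta_{2}$ of the second assertion.

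For the intersection counts, I would invoke the discussion preceding Theorem \ref{caracteristic}, which shows that for every curve $\Gamma$ the integer $n_{1}$ equals the minimum number of intersections of $\Psi(\Gamma)$ with the isotopy class of $\Delta_{1}$, and $n_{3}$ the minimum number with that of $\Delta_{2}$. In the concrete model $\sn(n;p)$ one reads directly from the description as branched half-circles on $H_{1}\cup H_{2}\cup H_{3}$ that $n_{1}=n$ and $n_{3}=p$. Minimality follows because each intersection of $\sn(n;p)$ with $\Delta_{1}$ or $\Delta_{2}$ is transverse with a consistent sign, so the geometric intersection number coincides with the isotopy-invariant algebraic one. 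Equivalently, one can track the induced action of $f_A$ and $f_B$ on the pair $(n_{1},n_{3})$ --- realised by the matrices $A$ and $B$ of Theorem \ref{turbulent} --- and use the matrix identity for $\mt(n;p)$ given in the third remark after Theorem \ref{caracteristic} to iterate from the known counts $(1,1)$ of $\sn(1;1)$ up to those of $\sn(n;p)$.

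The main obstacle is the unfolding lemma itself. Making it rigorous amounts to tracing how the concentric family of left sustaining half-circles around $p_{1}$ is reorganised when $f_A$ swaps $p_{1}$ and $p_{2}$ from above: one outermost bottom arc gets displaced into a new top emerging arc of diameter $n+p$, while the remaining arcs are relabelled into the pattern prescribed by the defining formulas for $\sn(n+p;p)$. I expect the spinning skeleton approach to be the cleanest, since the two propositions of the previous section leave essentially no combinatorial ambiguity beyond the two integer parameters, which are then pinned down by the intersection counts established in the second part.
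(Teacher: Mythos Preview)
Your proposal follows essentially the same architecture as the paper's proof: reduce everything to the unfolding lemma $f_A(\sn(n;p))\equiv\sn(n+p;p)$ (and its $B$-symmetric analogue), then induct along the characteristic sequences; for the intersection counts, induct again using the action of $f_A^{-1}$ on the classes $\Delta_1,\Delta_2$ (your matrix tracking is exactly the paper's argument that $f_A^{-1}(\Delta_1)\equiv\Delta_1\cup\Delta_2$ and $f_A^{-1}(\Delta_2)\equiv\Delta_2$).

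The one place where the paper is substantially more concrete is the unfolding lemma itself. Rather than appealing to Theorem~\ref{topo} and a rotary model $L_\varphi$, the paper writes down an explicit $C^\infty$ vector field whose time-$\pi$ flow rotates the left sustaining and emerging half-discs rigidly while leaving the right sustaining half-disc essentially fixed inside a rectangular window, then composes with an affine horizontal isotopy to reposition the marked points; this produces a genuine type-$A$ homeomorphism sending the geometric model $\sn(n;p)$ onto the geometric model $\sn(n+p;p)$. Your sketch is correct in spirit, but be aware that the ``spinning skeleton route'' you propose for the unfolding lemma (identify the two integer parameters of $\Psi(f_A(\sn(n;p)))$ via the intersection counts of the second assertion) is circular if you then prove those intersection counts by the inductive matrix argument, since that induction already presupposes the unfolding lemma. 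You avoid the circularity only if you use your direct transversality argument for the counts, or if you prove the unfolding lemma by the explicit-isotopy route; the paper chooses the latter combination.
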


\begin{proof} 
Let us first prove that for any homeomorphism of the type $A$ relatively to $p_{1}$; $p_{2}$, $p_{3}$ on $\Delta^{+}$, one has
 \begin{eqnarray*} f_{A}(\sn(n;p)) & \equiv & \sn(n+p; p)
 \end{eqnarray*} 
 for any strictly positive integers $n$ and $p$. 
 
Let us consider the topological snail $\sn(n;p)$ defined in section \ref{pictures} and simply translate the coordinate system to take $p_{1}$ as new origin. 

In an associated complex coordinate system, let us define $ k\left(\frac{n+p}{2}; -\frac{\varepsilon}{2}\right)$; $z_{K} = \frac{n+p}{2}- \frac{\varepsilon}{2}\, i$ for $\displaystyle \varepsilon < \frac{1}{4}$, and let us consider three positive $\mathrm{C}^{\infty}$-fonctions, $\psi_{1}$, $\psi_{2}$ and $\psi_{3}$ :

\begin{enumerate}
\item The fonction $\displaystyle \displaystyle\psi_{1}~:~\R~\rightarrow [0;1]$ such that~:
\begin{itemize}
\item[$\bullet$] $\psi_{1}(x) = 1$ if $\displaystyle x\notin \left[\frac{n}{2}+\frac{1}{4} -\frac{\varepsilon}{2}; \frac{n}{2}+p-\frac{1}{4}+\frac{\varepsilon}{2}\right]$, 
\item[$\bullet$] $\psi_{1}(x) = 0$ if $ \displaystyle x\in  \left[\frac{n}{2}+\frac{1}{4}; \frac{n}{2}+p-\frac{1}{4} \right]$,
\item[$\bullet$] $\psi_{1}$ is strictly decreasing on $\displaystyle \left[\frac{n}{2}+\frac{1}{4} -\frac{\varepsilon}{2};\frac{n}{2}+\frac{1}{4} \right]$,
\item[$\bullet$] $\psi_{1}$ is strictly increasing on $ \displaystyle \left[\frac{n}{2}+p-\frac{1}{4} -\frac{\varepsilon}{2};\frac{n}{2}+p-\frac{1}{4} \right]$.
\end{itemize}

\item The fonction $\displaystyle \displaystyle\psi_{2}~:~\R~\rightarrow [0;1]$ such that~:
\begin{itemize}
\item[$\bullet$] $\psi_{2}(x) = 1$ if $\displaystyle x\notin \left[-\frac{p}{2}-\frac{1}{4} -\frac{\varepsilon}{2}; 0\right]$, 
\item[$\bullet$] $\psi_{2}(x) = 0$ if $\displaystyle x\in \left[-\frac{p}{2}-\frac{1}{4} ; -\frac{\varepsilon}{2}\right]$, 
\item[$\bullet$] $\psi_{2}$ is strictly decreasing on $\displaystyle\left[-\frac{p}{2}-\frac{1}{4} -\frac{\varepsilon}{2}; -\frac{p}{2}-\frac{1}{4}\right]$.
\item[$\bullet$] $\psi_{2}$ is strictly increasing on $\displaystyle \left[-\frac{\varepsilon}{2}; 0\right]$.
\end{itemize}

\item The fonction $\displaystyle \psi_{3}~:~[0,+\infty[~\rightarrow [0;1]$ such that~:
\begin{itemize}
\item[$\bullet$] $\psi_{3}(r) = 0$ if $\displaystyle r \in \left[0;\frac{\varepsilon}{2}\right]$, 
\item[$\bullet$] $\psi_{3}(r) = 1$ if  $\displaystyle r \in \left[\varepsilon, +\infty \right[$,
\item[$\bullet$] $\psi_{3}$ is strictly increasing on $\displaystyle \left[\frac{\varepsilon}{2}; \varepsilon\right]$.
\end{itemize}

\end{enumerate}

 %%FIGURE%%
%%FIGURE%%
%%FIGURE%%
\input{figure29.tex}
%%FIGURE%%
%%FIGURE%%
%%FIGURE%%

 %%FIGURE%%
%%FIGURE%%
%%FIGURE%%
%ESCARGOT-TOPOLOGIQUE

\begin{figure}
%\begin{picture}(100, 90)(-140,-30)
\begin{picture}(100,150)(-115,-75)

\put(-90,0){\line(1,0){200}}
\put(110,0){\vector(1,0){0}}
\put(-60,-30){\line(1,1){120}}
\put(-90,-2){\makebox(0,0)[tc]{$\Delta$}}
\put(-1,1){\makebox(0,0)[br]{$\scriptstyle \frac{n}{2}$}}
\put(0,0){\circle*{1}}
\put(40,-2.5){\circle*{1}}
\put(40,-11){\makebox(0,0)[tc]{$\scriptstyle K(\frac{n+p}{2}; -\frac{\varepsilon}{2})$}}
\put(-1.72,28.28){\circle*{1}}

\put(10,0){\circle*{1}}
\put(11,1){\makebox(0,0)[bl]{$\scriptstyle \frac{n+1}{2}$}}
\put(80,0){\circle*{1}}
\put(81,1){\makebox(0,0)[bl]{$\scriptstyle \frac{n}{2}+p$}}

\put(-30,0){\circle*{1}}
\put(-30,-2){\makebox(0,0)[tc]{$\scriptstyle  0$}}
\put(-10,0){\circle*{1}}
\put(-10,-2){\makebox(0,0)[tl]{$\scriptstyle  1$}}

%\put(6,1){\makebox(0,0)[bc]{$\scriptstyle  \varepsilon$}}
%\put(74,1){\makebox(0,0)[bc]{$\scriptstyle  \varepsilon$}}
%\put(55.5,-62){\makebox(0,0)[br]{$\scriptstyle  \varepsilon$}}
%\put(0,-21.2){\makebox(0,0)[tr]{$\scriptstyle  \varepsilon$}}
%\put(88,-1){\makebox(0,0)[tc]{$\scriptstyle  \theta_{\varepsilon}$}}
%\put(70,-64){\makebox(0,0)[tc]{$\scriptstyle  \theta_{\varepsilon}$}}
%\put(70,-64){\qbezier[30](-1.7,2.2)(-2,0)(-4,-3)}
%\put(83,0){\qbezier[30](0.5,0)(0.5,-1.5)(0,-4)}

%\put(0,0){\circle*{1}}
%\put(-1,1){\makebox(0,0)[br]{$\scriptstyle 0$}}

\color{mygreen}

%%%%%%%%%
%RAYON****50%
%%%%%%%%%
\put(-1.72,28.28){\qbezier[300](-50,0)(-50,20.7)(-35.35,35.35)}
\put(-1.72,28.28){\qbezier[300](-35.35,35.35),(-20.7,50),(0,50)}
\put(-1.72,28.28){\qbezier[300](0,50),(20.7,50)(35.35,35.35)}
\put(-1.72,28.28){\qbezier[300](-35.35,-35.35)(-50,-20.7)(-50,0)}

%\put(19.4,-10.6){\vector(-1,1){0}}
%Ci2
%RA=30 SA=0.382*R TA=0.707R
%S=0.414 fois R; TA = O.707 fois T
%R=30 SA = 12.42 ; Ta=21.21
\put(40,0){\qbezier[100](-30,0)(-30,-12.42)(-21.21,-21.21)}
\put(40,0){\qbezier[100](-21.21,-21.21)(-12.42,-30)(0,-30)}
\put(40,0){\qbezier[100](0,-30)(12.42,-30)(21.21,-21.21)}
\put(40,0){\qbezier[100](21.21,-21.21)(30,-12.42)(30,0)}

\color{mygreen}

%RAYON10
\put(40,0){\qbezier[300](-10,0)(-10,-4.14)(-7.07,-7.07)}
\put(40,0){\qbezier[300](-7.07,-7.07),(-4.14,-10),(0,-10)}
\put(40,0){\qbezier[300](0,-10),(4.14,-10)(7.07,-7.07)}
\put(40,0){\qbezier[300](7.07,-7.07)(10,-4.14)(10,0)}

%RAYON30
\put(40,0){\qbezier[300](-30,0)(-30,-12.42)(-21.21,-21.21)}
\put(40,0){\qbezier[300](-21.21,-21.21)(-12.42,-30)(0,-30)}
\put(40,0){\qbezier[300](0,-30)(12.42,-30)(21.21,-21.21)}
\put(40,0){\qbezier[300](21.21,-21.21)(30,-12.42)(30,0)}

%RAYON20
\put(-30,0){\qbezier[300](20,0)(20,8.28)(14.14,14.14)}
\put(-30,0){\qbezier[300](-14.14,-14.14)(-8.28,-20)(0,-20)}
\put(-30,0){\qbezier[300](0,-20)(8.28,-20)(14.14,-14.14)}
\put(-30,0){\qbezier[300](14.14,-14.14)(20,-8.28)(20,0)}

%RAYON20
\put(-1.72,28.28){\qbezier[300](-20,0)(-20,8.28)(-14.14,14.14)}
\put(-1.72,28.28){\qbezier[300](-14.14,14.14)(-8.28,20)(0,20)}
\put(-1.72,28.28){\qbezier[300](0,20)(8.28,20)(14.14,14.14)}
\put(-1.72,28.28){\qbezier[300](-14.14,-14.14)(-20,-8.28)(-20,0)}

%RAYON40
\put(-1.72,28.28){\qbezier[300](-40,0)(-40,16.56)(-28.28,28.28)}
\put(-1.72,28.28){\qbezier[300](-28.28,28.28)(-16.56,40)(0,40)}
\put(-1.72,28.28){\qbezier[300](0,40)(16.56,40)(28.28,28.28)}
\put(-1.72,28.28){\qbezier[300](-28.28,-28.28)(-40,-16.56)(-40,0)}

%RAYON60
\put(-1.72,28.28){\qbezier[300](-60,0)(-60,24.84)(-42.42,42.42)}
\put(-1.72,28.28){\qbezier[300](-42.42,42.42)(-24.84,60)(0,60)}
\put(-1.72,28.28){\qbezier[300](0,60)(24.84,60)(42.42,42.42)}
\put(-1.72,28.28){\qbezier[300](-42.42,-42.42)(-60,-24.84)(-60,0)}

%R=10 S=0.382*R T=0.707R
%S=0.414 fois R; T = O.707 fois T
%R=20 ;45 S=3.82 ; Ta=7.07 Ra=31.8
\put(-1.72,28.28){\qbezier[300](-10,0)(-10,4.14)(-7.07,7.07)}
\put(-1.72,28.28){\qbezier[300](-7.07,7.07),(-4.14,10),(0,10)}
\put(-1.72,28.28){\qbezier[300](0,10),(4.14,10)(7.07,7.07)}
\put(-1.72,28.28){\qbezier[300](-7.07,-7.07)(-10,-4.14)(-10,0)}

%%%%%%%%%
%RAYON****20%
%%%%%%%%%

\put(40,0){\qbezier[300](-20,0)(-20,-8.28)(-14.14,-14.14)}
\put(40,0){\qbezier[300](-14.14,-14.14)(-8.28,-20)(0,-20)}
\put(40,0){\qbezier[300](0,-20)(8.28,-20)(14.14,-14.14)}
\put(40,0){\qbezier[300](14.14,-14.14)(20,-8.28)(20,0)}

%%%%%%%%%
%RAYON****50%
%%%%%%%%%
\put(-1.72,28.28){\qbezier[300](-50,0)(-50,20.7)(-35.35,35.35)}
\put(-1.72,28.28){\qbezier[300](-35.35,35.35),(-20.7,50),(0,50)}
\put(-1.72,28.28){\qbezier[300](0,50),(20.7,50)(35.35,35.35)}
\put(-1.72,28.28){\qbezier[300](-35.35,-35.35)(-50,-20.7)(-50,0)}

%%%%%%%%%
%RAYON****10%
%%%%%%%%%
\put(-30,0){\qbezier[300](10,0)(10,4.14)(7.07,7.07)}
\put(-30,0){\qbezier[300](-7.07,-7.07),(-4.14,-10),(0,-10)}
\put(-30,0){\qbezier[300](0,-10),(4.14,-10)(7.07,-7.07)}
\put(-30,0){\qbezier[300](7.07,-7.07)(10,-4.14)(10,0)}

%%%%%%%%%
%RAYON****30%
%%%%%%%%%
\put(-1.72,28.28){\qbezier[300](-30,0)(-30,12.42)(-21.21,21.21)}
\put(-1.72,28.28){\qbezier[300](-21.21,21.21)(-12.42,30)(0,30)}
\put(-1.72,28.28){\qbezier[300](0,30)(12.42,30)(21.21,21.21)}
\put(-1.72,28.28){\qbezier[300](-21.21,-21.21)(-30,-12.42)(-30,0)}

%%%%%%%%%
%RAYON****40%
%%%%%%%%%
\put(-30,0){\qbezier[300](28.28,28.28)(40,16.5)(40,0)}
%%%%%%%%%
%RAYON****70%
%%%%%%%%%
\put(-30,0){\qbezier[300](70,0)(70,28.98)(49.49,49.49)}
%%%%%%%%%
%RAYON****80%
%%%%%%%%%
\put(-30,0){\qbezier[300](56.56,56.56)(80,33)(80,0)}

\put(-30,0){\qbezier[300](50,0),(50,20.7)(35.35,35.35)}

\put(-30,0){\qbezier[300](60,0)(60,24.84)(42.42,42.42)}
\put(-30,0){\qbezier[300](70.7,70.7)(100,41.41)(100,0)}
\put(-30,0){\qbezier[300](63.63,63.63)(90,37.25)(90,0)}

\color{black}

\put(-30,0){\qbezier[300](77.77,77.77)(110,45.54)(110,0)}

%%%%%%%%%
%RAYON****40%
%%%%%%%%%
\put(40,0){\qbezier[300](-40,0)(-40,-16.56)(-28.28,-28.28)}
\put(40,0){\qbezier[300](-28.28,-28.28)(-16.56,-40)(0,-40)}
\put(40,0){\qbezier[300](0,-40)(16.56,-40)(28.28,-28.28)}
\put(40,0){\qbezier[300](28.28,-28.28)(40,-16.5)(40,0)}

%%%%%%%%%
%RAYON****30%
%%%%%%%%%
\put(-30,0){\qbezier[300](30,0)(30,12.42)(21.21,21.21)}
\put(-30,0){\qbezier[300](-21.21,-21.21)(-12.42,-30)(0,-30)}
\put(-30,0){\qbezier[300](0,-30)(12.42,-30)(21.21,-21.21)}
\put(-30,0){\qbezier[300](21.21,-21.21)(30,-12.42)(30,0)}

%%%%%%%%%
%RAYON****70%
%%%%%%%%%

\put(-1.72,28.28){\qbezier[300](-70,0)(-70,28.98)(-49.49,49.49)}
\put(-1.72,28.28){\qbezier[300](-49.49,49.49)(-28.98,70)(0,70)}
\put(-1.72,28.28){\qbezier[300](0,70)(28.98,70)(49.49,49.49)}
\put(-1.72,28.28){\qbezier[300](-49.49,-49.49)(-70,-28.98)(-70,0)}

\color{blue}

\put(40,-2.5){\circle{10}}
\put(40,-2.5){\circle{5}}

\put(5,-2.5){\line(1,0){70}}
\put(5,-42.5){\line(1,0){70}}
\put(2.5,0){\line(1,0){75}}
\put(2.5,-45){\line(1,0){75}}

\put(5,-2.5){\line(0,-1){40}}
\put(2.5,0){\line(0,-1){45}}

\put(75,-2.5){\line(0,-1){40}}
\put(77.5,0){\line(0,-1){45}}

\put(5,-42.5){\circle*{1}}
\put(6,-41.5){\makebox(0,0)[bl]{$\scriptstyle  \alpha'$}}

\put(75,-42.5){\circle*{1}}
\put(74,-41.5){\makebox(0,0)[br]{$\scriptstyle  \beta'$}}
\put(75,-2.5){\circle*{1}}
\put(74,-3.5){\makebox(0,0)[tr]{$\scriptstyle  \gamma'$}}
\put(5,-2.5){\circle*{1}}
\put(6,-3.5){\makebox(0,0)[tl]{$\scriptstyle  \delta'$}}

\put(2.5,-45){\circle*{1}}
\put(2.5,-46){\makebox(0,0)[tc]{$\scriptstyle  \alpha$}}
\put(77.5,-45){\circle*{1}}
\put(77.5,-46){\makebox(0,0)[tc]{$\scriptstyle  \beta$}}
\put(77.5,0){\circle*{1}}
\put(77.5,1){\makebox(0,0)[bc]{$\scriptstyle  \gamma$}}
\put(2.5,0){\circle*{1}}
\put(2.5,1){\makebox(0,0)[bc]{$\scriptstyle  \delta$}}

\end{picture}
\caption{Image of $\varphi_{\frac{\pi}{4}}\left(\sn(3;4)\right)$ and $\varphi_{\frac{\pi}{4}}\left(\cp(3;4)\right)$ \label{figure30}}

\end{figure}
%%FIGURE%%
%%FIGURE%%
%%FIGURE%%

Let (see figure \ref{figure30})~:
$$\begin{array}{lcllcl}
\alpha & = & \left(\frac{n}{2} + \frac{1}{4} - \frac{\varepsilon}{2}\right) +i \left( -\frac{p}{2}-\frac{1}{4} -\frac{\varepsilon}{2}\right) 
&\alpha' & = & \left(\frac{n}{2} + \frac{1}{4}\right) +i \left( -\frac{p}{2}-\frac{1}{4}\right)

\\
\beta & = & \left(\frac{n}{2} +p- \frac{1}{4} + \frac{\varepsilon}{2}\right) +i \left( -\frac{p}{2}-\frac{1}{4} -\frac{\varepsilon}{2}\right) 
&\beta' & = & \left(\frac{n}{2} +p- \frac{1}{4} \right) +i \left( -\frac{p}{2}-\frac{1}{4} \right) 

\\
\gamma & = & \left(\frac{n}{2} +p- \frac{1}{4} + \frac{\varepsilon}{2}\right)
& \gamma' & = & \left(\frac{n}{2} +p- \frac{1}{4} \right)- i \left(\frac{\varepsilon}{2}\right)  \\
\delta & = & \left(\frac{n}{2} + \frac{1}{4} - \frac{\varepsilon}{2} \right)
& \delta' & = & \left(\frac{n}{2} + \frac{1}{4}\right)- i \left(\frac{\varepsilon}{2}\right)
\end{array}$$

Define $\Omega_{1}$ to be the union of the rectangle $\alpha\beta\gamma\delta$ and the disk of center $K$ and radius $\varepsilon$ (see figure \ref{figure30}), and $\Omega_{0}$ to be the union of the rectangle $\alpha'\beta'\gamma'\delta'$ and the disk of center $K$ and radius $\frac{\varepsilon}{2}$. Then $\psi(z)=0 \iff z\in \Omega_{0}$ and $\psi(z) = 1 \iff z\notin \Omega_{1}$. Let us now define the function $\psi: \C \rightarrow [0;+\infty[$, for $z= \vert z \vert e^{i\, \theta}$ by 
$$\psi(x+iy) = \psi_{1}(x) \times \psi_{2}(y) \times\psi_{3}(\vert z - z_{K} \vert)\, , $$
and the vector field $\ov{X}(z) = i z \psi(z)$. It is a $\mathrm{C}^{\infty}$ vector field so by Cauchy-Lipschitz' theorem there exists a unique flow $\varphi_{t}(z)$, solution of the differential equation $\frac{d}{dt} \varphi_{t}(z) = \ov{X}(\varphi_{t})$. 

For $z_{0} \notin \Omega_{1}$, while $e^{it}z_{0}\notin \Omega_{1}$, one has $\varphi_{t}(z_{0})= e^{it}z_{0}$. Of course, if $z_{0} \in \Omega_{0}$ or $z_{0} = 0$, then for all $t\in \R$, $\varphi_{t}(z_{0})= z_{0}$.

Let $D_{1}$ be the left sustaining half-disk, of center $p_{1}(0;0)$ and radius $\frac{n}{2}$ in $\H^{-}$, $D_{2}$ the emerging half-disk, of center $p_{2}(\frac{p}{2};0)$ and radius $\frac{n+p}{2}$ in $\H^{+}$ and $D_{3}$ the right sustaing half-disk of center $\frac{n+p}{2}$ and radius $\frac{p}{2}$ in $\H^{-}$. For $t\in [0;\pi]$, $ \varphi_{t}$ coïncides on $D_{1} \cup D_{2}$ with the rotation of angle $t$, and with the identity map on $\Omega_{0}\cap D_{3}$. Note that $\varphi_{t}$ is a rotary $\mathrm{C}^{\infty}$ homeomorphism, and that each subsarc of $\sn(n;p)$ that is located in $D_{3}$ is fixed, except for its section doubly branched between the half-lines defined by $y=0$ and $y=-\varepsilon$. Those sections are extended to curves doubly branched on $y=-\frac{\varepsilon}{2}$ and $\theta=t$, respecting the foliation by concentric circles of center $p_{1}$, and therefore homeomorphic to the junction of an arc of circle doubly branched on $\theta=0$ and $\theta=t$ and a vertical segment doubly branched between the half-lines defined by $y=0$ and $y=-\varepsilon$. (see figure \ref{figure30}). 

Let $Q_{1}(-\frac{p}{2};0)$, $Q_{2}=p_{1}(0;0)$ and $Q_{3}  = p_{3}(\frac{n+p}{2};0)$ and $h_{t}(x;y)$ defined for $t\in [0;1]$ by 
$$h_{t}(x;y) = \left(x+ t\frac{p}{2};y\right)~~\mathrm{if}~~x\leqslant 0 ~~\mathrm{and}~~h_{t}(x;y) = \left(\frac{n+(1-t)p}{n+p}x+ t\frac{p}{2};y\right)~~\mathrm{if}~~x\geqslant 0  $$
One has an isotopy defined for $t\in [0;1]$ by $h_{t} = \id + t (h- \id)$ with $h_{0} = \id$ and $h(Q_{1}) = p_{1}$, $h(Q_{2})=p_{2}$ and $h(Q_{3}) = p_{3}$.
Therefore if one defines $f_{t}(x;y) = \varphi_{2t}(x;y)$ for $t\in [0;\frac{1}{2}]$ and $f_{t} = h_{2t-1}$ for $t\in [\frac{1}{2};1]$ then one obtains an isotopy from identity to a type $A$ homeomorphism relatively to $\Delta$ and $X$, transforming $\sn(n;p)$ in $\sn(n+p;p)$.   

Of course, a similar symetric proof shows that for any homeomorphism $f_{B}$ of the type $B$ relatively to $p_{1}$; $p_{2}$, $p_{3}$ on $\Delta^{+}$, then $f_{B}(\sn(n;p)) = \sn(n;n+p)$ for any positive integers $n$ and $p$.
And the first part of the theorem follows immediatly by recurrence.

The second part of the theorem can also be shown by recurrence. Let $\Delta_{1}$ be the isotopy class relatively to $X= \lbrace p_{1}; p_{2}; p_{3} \rbrace$ of the perpendicular bissector $\Delta_{1}$ of $[p_{1}; p_{2}]_{\Delta}$ and $\Delta_{2}$ the isotopy class relatively to $X= \lbrace p_{1}; p_{2}; p_{3} \rbrace$ of the perpendicular bissector $\Delta_{2}$ of $[p_{2}; p_{3}]_{\Delta}$. Since $\sn(1;1)$ connects $p_{1}$ to $p_{3}$, and since each of $\Delta_{1}$ and $\Delta_{2}$ separates those points, Jordan's theorem implie that $n=1$ is also the minimum of the number of intersection points between $\sn(1;1)$ and the isotopy class of $\Delta_{1}$, and $p=1$ the minimum of the number of intersection points between $\sn(1;1)$ and the isotopy class of $\Delta_{2}$.

Now suppose that this property holds for some $(n;p)$ and consider $(n+p;p)$. Since for an homeomorphism of the type $A$ on has $f^{-1}(\Delta_{1}) = \Delta_{1} \cup \Delta_{2}$ (see figure \ref{figure6}), then the minimum of the number of intersection points between $f^{-1}(\Delta_{1}) = \Delta_{1} \cup \Delta_{2}$ and $\sn(n;p)$ is $n+p$, and since $f_{A}$ is bijective it is the minimum of the number of intersection points between   $\Delta_{1}) = \Delta_{1}$ with $f_{A}(\sn(n;p))$. Similarily the minimum of the number of intersection points between $f^{-1}(\Delta_{2}) = \Delta_{2}$ and $\sn(n;p)$ is $p$ and it is the minimum of the number of intersection points between   $\Delta_{2}) $ with $f_{A}(\sn(n;p))$. Of course, this property has its symetric analogous for homeomorphisms of the type $B$ and implies the second part of the theorem by a classical recurrence from $n=1$ and $p=1$.

\end{proof}

\section{The double topological snail \label{sectiondouble}}

The topological type relatively to a set $X$ on a line $\Delta^{+}$ of plane simple curve branched on two points of this set and emerging in the top half-plane $\H^{+}$ is a topological snail $\sn(n;p)$. Furthemore, if $(n;p)$ has caracteristic sequences of positive natural numbers $(\alpha_{1},\alpha_{2}, \ldots , \alpha_{N})$ and  $(\beta_{1},\beta_{2}, \ldots , \beta_{N})$, and if one set (see equation \ref{turbulence} page \pageref{turbulence})

\begin{eqnarray} \mt(n;p) & = & B^{\, \beta_{N}} \circ A^{\alpha_{N}} \circ  \cdots  \circ B^{\, \beta_{1}} \circ A^{\alpha_{1}} \\
& = & \left[\begin{array}{cc} 1 & 0 \\ \beta_{N} & 1 \end{array}\right]\left[\begin{array}{cc} 1 & \alpha_{N} \\ 0 & 1 \end{array}\right]\cdots \left[\begin{array}{cc} 1 & 0 \\ \beta_{1} & 1 \end{array}\right]\left[\begin{array}{cc} 1 & \alpha_{1} \\ 0 & 1 \end{array}\right] \\
& = & \left[ \begin{array}{cc} g_{1} & r_{1} \\ g_{2} & r_{2} \end{array} \right]\, ,  \label{turbulence}
\end{eqnarray}
and 
$$f = f_{B}^{\, \beta_{K}} \circ f_{A}^{\alpha_{K}} \circ  \cdots  \circ f_{B}^{\, \beta_{1}} \circ f_{A}^{\alpha_{1}}$$
then one has both 
$$\sn(n;p) \equiv f(\sn(1;1))$$
 and 
$$ \left[ \begin{array}{c}  n \\ p \end{array} \right]=  \left[ \begin{array}{cc} g_{1} & r_{1} \\ g_{2} & r_{2} \end{array} \right]\left[ \begin{array}{c}  1 \\ 1 \end{array} \right]= \mt(f) \left[ \begin{array}{c}  1 \\ 1 \end{array} \right]$$

Let us define the set $\mathcal{T}^{+}$ of positively turbulent homeomorphisms relative to $X$ on $\Delta^{+}$ to be the free monoïd generated by $f_{A}$ and $f_{B}$ in the group of the homeomorphims of the plane, and associate to any $f\in \mathcal{T}^{+}$ its matrix of turbulence 
$$\Phi(f) =  \mt(n;p)\, ,$$ 
for $\sn(n;p) = f(\sn(1;1))$ Then $\Phi$ is an isomorphism between $\mathcal{T}^{+}$ and the set of matrices of $\mathrm{SL_{2}}(\Z)$ with positive coefficients. And it can be extended in a very natural fashion to an isomophism between the hole mapping class group of $\R^{2} \setminus X$ and $\mathrm{PSL_{2}}(\Z)$.

To see this, let us deepen our knowledge of the topological snail by slightly extending its signification and adding some colors on it. Let us firts define $\sn(1;0)$ to be the topological type of the segment $[P_{1}; P_{2}]_{\Delta}$ relative to $X$ and  $\sn(0;1)$ the topological type of the segment $[P_{1}; P_{2}]_{\Delta}$ relative to $X$. Let us associate to $\sn(1;0) \equiv \sn(1;0)$ the vector $\left[ \begin{array}{c}  1 \\ 0 \end{array} \right]$ and the green color, and to $\sn(0;1) \equiv \sn(0;1)$ the vector $\left[ \begin{array}{c}  0 \\ 1 \end{array} \right]$ and the red color. On has the following very beautiful result~:

\begin{theo}
Let $f\in \mathcal{T}^{+}$ be a positively turbulent homeomorphism and 
$$\Phi(f) =\mt(f)= \left[ \begin{array}{cc} g_{1} & r_{1} \\ g_{2} & r_{2} \end{array} \right]$$ 
its turbulence matrix. Then $f(\sn(1;0)) = \sn(g_{1}; g_{2})$ and $f(\sn(0;1))=\sn(r_{1}; r_{2})$.
\end{theo}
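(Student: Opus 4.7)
The plan is to proceed by induction on the length of a word decomposition $f = f_k \circ f_{k-1} \circ \cdots \circ f_1$, with each $f_i$ of type $A$ or $B$, matching the corresponding product decomposition of $\Phi(f)$ as elementary matrices $A$, $B$ (left-composition of the homeomorphism mirrors left-multiplication of the matrix). The base case $f = \mathrm{id}$ gives $\Phi(f) = I$, so $(g_1; g_2) = (1;0)$ and $(r_1; r_2) = (0;1)$, and the conclusions $f(\sn(1;0)) \equiv \sn(1;0)$, $f(\sn(0;1)) \equiv \sn(0;1)$ hold tautologically.

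The inductive step needs a slight extension of the first assertion of Theorem \ref{decomposition}: for every pair of non-negative integers $(n;p)$ with $n+p \geq 1$,
$$f_A(\sn(n;p)) \equiv \sn(n+p;p) \quad \text{and} \quad f_B(\sn(n;p)) \equiv \sn(n;n+p).$$
When $n$ and $p$ are both strictly positive this is exactly Theorem \ref{decomposition}. For the degenerate pairs $(1;0)$ and $(0;1)$ the curves $\sn(1;0)$ and $\sn(0;1)$ coincide with the isotopy classes $I_1$ and $I_2$ of the segments $[p_1;p_2]_\Delta$ and $[p_2;p_3]_\Delta$, and the graphical characterization established in Section 3 gives $f_A(I_1) \equiv I_1$, $f_A(I_2) \equiv C^+$, $f_B(I_1) \equiv C^+$, $f_B(I_2) \equiv I_2$. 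Since $C^+ \equiv \sn(1;1)$ is the visible emerging arc of the simplest non-trivial snail, these identities match precisely the actions of $A$ and $B$ on the column vectors $\left[\begin{array}{c} 1 \\ 0 \end{array}\right]$ and $\left[\begin{array}{c} 0 \\ 1 \end{array}\right]$. (One may also observe that, since $\det \Phi(F) = 1$ at every intermediate stage, the only degenerate columns that can arise during the induction are precisely $(1;0)$ and $(0;1)$, so no further boundary cases need to be considered.)

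Granted the extended lemma, the inductive step is routine. Assuming the result for $F = f_{k-1} \circ \cdots \circ f_1$ with turbulence matrix $M$, its two columns encode $F(\sn(1;0))$ and $F(\sn(0;1))$. Applying $f_k$ corresponds exactly to left-multiplying each column by $X_k \in \{A,B\}$, since
$$A \left[\begin{array}{c} n \\ p \end{array}\right] = \left[\begin{array}{c} n+p \\ p \end{array}\right], \qquad B \left[\begin{array}{c} n \\ p \end{array}\right] = \left[\begin{array}{c} n \\ n+p \end{array}\right].$$
Thus the two columns of $\Phi(f) = X_k M$ describe $f(\sn(1;0))$ and $f(\sn(0;1))$, closing the induction.

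The only non-routine point is the extension of Theorem \ref{decomposition}'s lemma to the degenerate pairs $(1;0)$ and $(0;1)$; this is settled by directly reading off the action of $f_A$ and $f_B$ on $I_1$ and $I_2$ from the graphical characterization of type $A$ and type $B$ homeomorphisms. Once that is granted, the theorem is a direct translation between the composition of type $A$/type $B$ unfoldings on topological snails and left-multiplication in $\mathrm{SL}_2(\Z)$.
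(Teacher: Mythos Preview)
Your proof is correct and takes essentially the same approach as the paper: both observe that the unfolding rules $f_A(\sn(n;p)) \equiv \sn(n+p;p)$ and $f_B(\sn(n;p)) \equiv \sn(n;n+p)$ extend to the degenerate cases $(1;0)$ and $(0;1)$ via the graphical characterization of type $A$ and $B$ homeomorphisms on $I_1$ and $I_2$, and then conclude by induction on the word length. Your version simply spells out the induction more explicitly than the paper's two-sentence sketch.
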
 
\begin{proof}
 One has obvisouly (see section \ref{simple} and figure \ref{figure8}) 
 $$f_{A}(\sn(1;0)) \equiv \sn(1;0)~~\mathrm{and}~~f_{A}(\sn(0;1)) \equiv \sn(1;1)$$
 and 
 $$f_{B}(\sn(1;0)) \equiv \sn(1;0)~~\mathrm{and}~~ f_{B}(\sn(1;0)) \equiv \sn(1;1)$$

This means that the rules $f_{A}(\sn(n;p)) = \sn(n+p, p)$ and $f_{B}(\sn(n;p)) = \sn(n, n+p)$ can be extended to the cases $(n;p)=(1;0)$ and $(n;p)=(0;1)$ to imply this generalised theorem and this deeper result.
\end{proof}

%%FIGURE%%
%%FIGURE%%
%%FIGURE%%
\setlength{\unitlength}{0.5mm}
\begin{figure}
%\begin{picture}(100, 90)(-140,-30)
\begin{picture}(100,120)(-150,-65)

\color{black}
\put(-100,0){\line(1,0){60}}
\color{mygreen}
\put(-40,0){\line(1,0){65}}
\color{red}
\put(25,0){\line(1,0){40}}
\color{black}
\put(65,0){\line(1,0){85}}

\put(65,0){\circle*{1.5}}
\put(65,-1){\makebox(0,0)[tc]{$\scriptstyle p'_{3}$}}

\put(25,0){\circle*{1.5}}
\put(25,1){\makebox(0,0)[bc]{$\scriptstyle p'_{2}$}}

\put(-40,0){\circle*{1.5}}
\put(-40,-1){\makebox(0,0)[tc]{$\scriptstyle p'_{1}$}}

\put(-120,10){\makebox(0,0)[br]{$p'_{1}=f(p_{2})$}}

\put(-120,0){\makebox(0,0)[br]{$p'_{2}= f(p_{1})$}}

\put(-120,-10){\makebox(0,0)[br]{$p'_{3}=f(p_{3})$}}

\put(-65,-32){\makebox(0,0)[tr]{$g_{1} = 5$}}
\put(-65,-40){\makebox(0,0)[tr]{$r_{1} = 3$}}
\put(120,-32){\makebox(0,0)[tl]{$ g_{2} = 8$}}
\put(120,-40){\makebox(0,0)[tl]{$r_{2} = 5$}}

\put(-70,70){\makebox(0,0)[tr]{$\Phi(f) =M_{T}(f) = \left[\begin{array}{cc} 5 & 3 \\ 8 & 5 \end{array}\right]$}}

\put(120,70){\makebox(0,0)[tl]{$\begin{array}{cl}  \left[\begin{array}{cc} g_{1} & r_{1} \\ g_{2} & r_{2} \end{array}\right] & \hspace{-0.5cm}\begin{array}{l} \vspace{0.05cm}\mathrm{L} \\  \vspace{-0.05cm}\mathrm{R} \end{array} \\
\vspace{-0.1cm}\mathrm{G~~~~~R~} & \end{array}$}}

%%%%%%%%%
%RAYON****15%
%%%%%6.21%%%%
\color{mygreen}
\put(-40,0){\qbezier[300](-15,0)(-15,-6.21)(-10.59,-10.59)}
\put(-40,0){\qbezier[300](-10.59,-10.59),(-6.21,-15),(0,-15)}
\put(-40,0){\qbezier[300](0,-15),(6.21,-15)(10.59,-10.59)}
\put(-40,0){\qbezier[300](10.59,-10.59)(15,-6.21)(15,0)}

%%%%%%%%%
%RAYON****25%
%%%%%%%%%

\color{red}
\put(-40,0){\qbezier[300](-25,0)(-25,-10.35)(-17.65,-17.65)}
\put(-40,0){\qbezier[300](-17.65,-17.65),(-10.35,-25),(0,-25)}
\put(-40,0){\qbezier[300](0,-25),(10.35,-25)(17.65,-17.65)}
\put(-40,0){\qbezier[300](17.65,-17.65)(25,-10.35)(25,0)}

%%%%%%%%%
%RAYON****35%
%%%%%%%%%
\color{mygreen}
\put(-40,0){\qbezier[100](-35,0)(-35,-14.49)(-24.75,-24.75)}
\put(-40,0){\qbezier[100](-24.75,-24.75)(-14.49,-35)(0,-35)}
\put(-40,0){\qbezier[100](0,-35)(14.49,-35)(24.75,-24.75)}
\put(-40,0){\qbezier[100](24.75,-24.75)(35,-14.49)(35,0)}

%%POINT P3

%%%%%%%%%
%RAYON****10%
%%%%%%%%%
\color{mygreen}

\put(65,0){\qbezier[300](-10,0)(-10,-4.14)(-7.07,-7.07)}
\put(65,0){\qbezier[300](-7.07,-7.07),(-4.14,-10),(0,-10)}
\put(65,0){\qbezier[300](0,-10),(4.14,-10)(7.07,-7.07)}
\put(65,0){\qbezier[300](7.07,-7.07)(10,-4.14)(10,0)}

\color{mygreen}
%RAYON20
\put(65,0){\qbezier[300](-20,0)(-20,-8.28)(-14.14,-14.14)}
\put(65,0){\qbezier[300](-14.14,-14.14)(-8.28,-20)(0,-20)}
\put(65,0){\qbezier[300](0,-20)(8.28,-20)(14.14,-14.14)}
\put(65,0){\qbezier[300](14.14,-14.14)(20,-8.28)(20,0)}

%%%%%%%%%
%RAYON****30%
%%%%%%%%%
\color{red}

\put(65,0){\qbezier[300](-30,0)(-30,-12.42)(-21.21,-21.21)}
\put(65,0){\qbezier[300](-21.21,-21.21)(-12.42,-30)(0,-30)}
\put(65,0){\qbezier[300](0,-30)(12.42,-30)(21.21,-21.21)}
\put(65,0){\qbezier[300](21.21,-21.21)(30,-12.42)(30,0)}

%%%%%%%%%
%RAYON****40%
%%%%%%%%%
\color{mygreen}

\put(65,0){\qbezier[300](-40,0)(-40,-16.56)(-28.28,-28.28)}
\put(65,0){\qbezier[300](-28.28,-28.28)(-16.56,-40)(0,-40)}
\put(65,0){\qbezier[300](0,-40)(16.56,-40)(28.28,-28.28)}
\put(65,0){\qbezier[300](28.28,-28.28)(40,-16.5)(40,0)}

%%%%%%%%%
%RAYON****50%
%%%%%%%%%
\color{red}

\put(65,0){\qbezier[300](-50,0)(-50,-20.7)(-35.35,-35.35)}
\put(65,0){\qbezier[300](-35.35,-35.35),(-20.7,-50),(0,-50)}
\put(65,0){\qbezier[300](0,-50),(20.7,-50)(35.35,-35.35)}
\put(65,0){\qbezier[300](35.35,-35.35)(50,-20.7)(50,0)}

%%%%%%%%%
%RAYON****60%
%%%%%%%%%
\color{mygreen}
\put(65,0){\qbezier[300](-60,0)(-60,-24.84)(-42.42,-42.42)}
\put(65,0){\qbezier[300](-42.42,-42.42)(-24.84,-60)(0,-60)}
\put(65,0){\qbezier[300](0,-60)(24.84,-60)(42.42,-42.42)}
\put(65,0){\qbezier[300](42.42,-42.42)(60,-24.84)(60,0)}

%%POINTP2
%%%%%%%%%
%RAYON****10%
%%%%%%%%%
\color{red}

\put(25,0){\qbezier[300](-10,0)(-10,4.14)(-7.07,7.07)}
\put(25,0){\qbezier[300](-7.07,7.07),(-4.14,10),(0,10)}
\put(25,0){\qbezier[300](0,10),(4.14,10)(7.07,7.07)}
\put(25,0){\qbezier[300](7.07,7.07)(10,4.14)(10,0)}

%%%%%%%%%
%RAYON****20%
%%%%%%%%%
\color{mygreen}
\put(25,0){\qbezier[300](-20,0)(-20,8.28)(-14.14,14.14)}
\put(25,0){\qbezier[300](-14.14,14.14)(-8.28,20)(0,20)}
\put(25,0){\qbezier[300](0,20)(8.28,20)(14.14,14.14)}
\put(25,0){\qbezier[300](14.14,14.14)(20,8.28)(20,0)}

%%%%%%%%%
%RAYON****30%
%%%%%%%%%
\color{mygreen}
\put(25,0){\qbezier[300](-30,0)(-30,12.42)(-21.21,21.21)}
\put(25,0){\qbezier[300](-21.21,21.21)(-12.42,30)(0,30)}
\put(25,0){\qbezier[300](0,30)(12.42,30)(21.21,21.21)}
\put(25,0){\qbezier[300](21.21,21.21)(30,12.42)(30,0)}

%%%%%%%%%
%RAYON****40%
%%%%%%%%%
\color{red}
\put(25,0){\qbezier[300](-40,0)(-40,16.56)(-28.28,28.28)}
\put(25,0){\qbezier[300](-28.28,28.28)(-16.56,40)(0,40)}
\put(25,0){\qbezier[300](0,40)(16.56,40)(28.28,28.28)}
\put(25,0){\qbezier[300](28.28,28.28)(40,16.56)(40,0)}

%%%%%%%%%
%RAYON****50%
%%%%%%%%%
\color{mygreen}
\put(25,0){\qbezier[300](-50,0)(-50,20.7)(-35.35,35.35)}
\put(25,0){\qbezier[300](-35.35,35.35),(-20.7,50),(0,50)}
\put(25,0){\qbezier[300](0,50),(20.7,50)(35.35,35.35)}
\put(25,0){\qbezier[300](35.35,35.35)(50,20.7)(50,0)}

%%%%%%%%%
%RAYON****60
%%%%%%%%%

\color{mygreen}
\put(25,0){\qbezier[300](-65,0)(-60,24.84)(-42.42,42.42)}
\put(25,0){\qbezier[300](-42.42,42.42)(-24.84,60)(0,60)}
\put(25,0){\qbezier[300](0,60)(24.84,60)(42.42,42.42)}
\put(25,0){\qbezier[300](42.42,42.42)(60,24.84)(60,0)}

%%%%%%%%%
%RAYON****70%
%%%%%%%%%
\color{red}
\put(25,0){\qbezier[300](-65,0)(-70,28.98)(-49.49,49.49)}
\put(25,0){\qbezier[300](-49.49,49.49)(-28.98,70)(0,70)}
\put(25,0){\qbezier[300](0,70)(28.98,70)(49.49,49.49)}
\put(25,0){\qbezier[300](49.49,49.49)(70,28.98)(70,0)}

%RAYON****80%
%%%%%%%%%
\color{mygreen}

\put(25,0){\qbezier[300](-80,0)(-80,31.12)(-56.56,56.56)}
\put(25,0){\qbezier[300](-56.56,56.56)(-31.12,80)(0,80)}
\put(25,0){\qbezier[300](0,80)(31.12,80)(56.56,56.56)}
\put(25,0){\qbezier[300](56.56,56.56)(80,31.12)(80,0)}

%%%%%%%%%
%RAYON****90%
%%%%%%%%%
\color{red}

\put(25,0){\qbezier[300](-90,0)(-90,37.26)(-63.63,63.63)}
\put(25,0){\qbezier[300](-63.63,63.63)(-37.26,90)(0,90)}
\put(25,0){\qbezier[300](0,90)(37.26,90)(63.63,63.63)}
\put(25,0){\qbezier[300](63.63,63.63)(90,37.26)(90,0)}

%%%%%%%%%
%RAYON****100%
%%%%%%%%%
\color{mygreen}
\put(25,0){\qbezier[300](-100,0)(-100,41.41)(-70.7,70.7)}
\put(25,0){\qbezier[300](-70.7,70.7),(-41.4,100),(0,100)}
\put(25,0){\qbezier[300](0,100),(41.4,100)(70.7,70.7)}
\put(25,0){\qbezier[300](70.7,70.7)(100,41.41)(100,0)}
%%%%%%%%%

\color{black}

\put(65,-40){\vector(1,0){0}}
\put(25,80){\vector(-1,0){0}}
\put(-40,-15){\vector(1,0){0}}
\put(25,50){\vector(1,0){0}}
\put(65,-10){\vector(-1,0){0}}
\put(25,30){\vector(-1,0){0}}

\put(-40,-35){\vector(-1,0){0}}
\put(25,80){\vector(-1,0){0}}
\put(25,100){\vector(1,0){0}}
\put(25,50){\vector(1,0){0}}
\put(65,-60){\vector(-1,0){0}}
\put(25,20){\vector(1,0){0}}

\put(65,-20){\vector(1,0){0}}
\put(25,60){\vector(-1,0){0}}
\put(25,70){\vector(1,0){0}}
\put(65,-30){\vector(-1,0){0}}
\put(25,10){\vector(-1,0){0}}
\put(65,-50){\vector(1,0){0}}

\put(25,90){\vector(-1,0){0}}
\put(-40,-25){\vector(1,0){0}}
\put(25,40){\vector(1,0){0}}

\put(25,90){\vector(-1,0){0}}

\end{picture}
\caption{Natural interpretation of $\sn^{+}(8,13)$ as $\sn^{+}(5; 8) \sqcup \sn^{+}(3;5)$ .\label{figure31}}

\end{figure}
\setlength{\unitlength}{0.7mm}
%%FIGURE%%
%%FIGURE%%
%%FIGURE%%

\begin{remarks}

\item There is a natural way to split the topological snail $\sn(n;p)$ into its green and red parts, using a very general and important topological consideration. The snail $\sn(1;1)$ can be represented as a curve $\gamma$ very close to  $\sn(1;0) \cup \sn(0;1)$ although not containing the point $P_{2}$, and one can add a little vertical arc from $P_{2}$ to a given point on $\sn(n;p)$, and separate $\sn(1;1)$ into colors green and red, inducing a positive orientation on this little arc from $P_{2}$, colouring the left part of $\gamma$ in green and the right in red. During the successive applications of the type $A$ and type $B$ homeomorphisms, one can always keep this little arc just in front of the free point on the snail, as a mediator segment in the liberty chamber. The splitting of $\sn(n;p)$ into its green part $\sn(g_{1}; g_{2})$ and its red part $\sn(r_{1};r_{2})$ is therefore obtained directly from a splitting of the frontal arc in the chamber, naturally oriented from the free point inside the liberty chamber, taking the green arc on the left and the red on the right.The numbers $g_{1}$, $g_{2}$, $r_{1}$, $r_{2}$ can be found directly from the colouring of the snail as in figures \ref{figure19} to \ref{figure21}. It can also be taken directy at the center of the only half-disc of $\sn(n;p)$ that has an even radius, oriented from the center to the interior of the half-disk. One obtain's a direct result collapsing the hole liberty chamber to a point (see figure \ref{figure31}), just remembering the induced orientation on the snail.

\item This is also a direct mean to determine the images by $f$ of $P_{1}$ and $P_{3}$ : the only extremity on the green side is the image of $P_{1}$, and the only extremity on the red side the image of $P_{2}$. At the end of the transformation, one can contract the segment to $P_{3}$, obtaining the splitting  $f(\sn(1;0) \cup \sn(0;1)) = f(\sn(1;0)) \sqcup f(\sn(0;1))$. This leads to a new interpretation of $\sn(n;p)$, as the $f\left(\sn(1;0) \sqcup\sn(0;1)\right)$, for an homeomorphism $f$ such that $f(\sn(1;1))\equiv \sn(n;p)$.

\item The splitting of the topological snail produces a direct interpretation of the turbulence matrix $\mt(f)$ for $f\in \mathcal{T}^{+}$. In particular, one obtains graphically Bezout couples for $(n;p)$~: one always has $g_{1} p - g_{2}n = r_{2} n - r_{1}p = 1$.  For example, in figure \ref{figure19}~: $$5 \times 8 - 3 \times 13= 5\times 13 - 8 \times 8 = 1\,.$$ 

\item Of course, it is natural to consider the colored topological snail as the image by a positively turbulent homeomorphism $f\in \mathcal{T}^{+}$ of the segment $[P_{1}; P_{3}]_{\Delta}$, with $[P_{1}; P_{2}]_{\Delta}$ colored in green and $[P_{2}; P_{3}]_{\Delta}$ colored in red. Note that the entrance of the snail in green means that the last top-displacement of the middle point composing $f$ is an homeomorphism of the type $B$, and the color green for the shell means that the first top-displacement of the middle point composing $f$ is an homeomorphism of the type $B$. This is the case in figure \ref{figure31}, where $f\equiv f_{B} \circ f_{A} \circ f_{B} \circ f_{A} \circ f_{B}$.  

\item There is a natural orientation on the line $\Delta^{+}$ that coincides with the orientation from the green to the red, the classical orientation from left to right. When $f\in \mathcal{T}^{+}$ is a positively turbulent homeomorphism, then $f$ induces an orientation on the emerging visible arc of $\sn(n;p)$ that is also directed from the left to the right : this an important fact that can be deduced from the proof of theorem \ref{decomposition} (see figure \ref{figure31}).

\item The homeomorphisms of the types $A$ and $B$ can be understood in another very important manner using the colored arcs $\sn(1;0)$ and $\sn(0;1)$~: 

\begin{itemize}
\item Put a thin rectangular box all around $\sn(0;1)$, and imagine an isotopy that takes the middle point $P_{2}$ to $P_{3}$, on its left and reversing the arc $\sn(0;1)$ on itself, keeping the image of $\sn(0;1)$ inside the thin box. This is of course an homeomorphism of the type $B$, and its action on $\sn(1;0) \sqcup \sn(0;1)$ can be thought of as a very concrete operation on curves : cut the curve $\sn(1;0) \sqcup \sn(0;1)$ to separate $\sn(1;0)$ and $\sn(0;1)$, take the former common extremity $P_{2}$ of $\sn(1;0)$ to the other former free extremity of $\sn(0;1)$, without changing $\sn(0;1)$ and passing on its left for the positive orientation of $\Delta$. Then reverse the orientation on $\sn(0;1)$ to obtain a coherent one with that of $\sn(1;0)$. 

\item Equivalently cut the curve $\sn(1;0) \sqcup \sn(0;1)$ to separate $\sn(1;0)$ and $\sn(0;1)$, duplicate $\sn(0;1)$ into close two copies of itself with a parallel orientation, connect the green $\sn(0;1)$ to the left red curve, changing its color to green and keeping its orientation, and connect the new red extremity obtained to the corresponding end of the second copy of $\sn(0;1)$, keeping its red color and reversing its orientation. 

\item On can also describe this operation in the following manner : cut $\sn(1;0) \sqcup \sn(0;1)$ to $\sn(1;0)$ and $\sn(0;1)$, put on the left of $\sn(0;1)$ a green copy of itself and reverse its own orientation orientation. Connect the obtained three curves according to their orientations.

\end{itemize}
%%FIGURE%%
%%FIGURE%%
%%FIGURE%%
\setlength{\unitlength}{0.5mm}
\begin{figure}

\begin{picture}(100,400)(-140,-330)

%\put(-30,-260){\line(1,0){50}}
%\put(-90,0){\line(1,0){180}}\put(80,-2){\makebox(0,0)[tc]{$H_{3}$}}

%%%%%%%%%
%LIGNE1
%%%%%%%%%

\put(-90,20){\line(1,0){60}}
\put(40,20){\line(1,0){50}}
\put(-90,18){\makebox(0,0)[tc]{$\Delta$}}

\color{mygreen}
\put(-30,20){\line(1,0){40}}
\color{red}
\put(10,20){\line(1,0){30}}

\color{black}
\put(40,20){\circle*{1}}
\put(10,20){\circle*{1}}
\put(-30,20){\circle*{1}}

\color{blue}

\color{red}
%RAYON10
%\put(40,0){\qbezier[300](-10,0)(-10,-4.14)(-7.07,-7.07)}
%\put(40,0){\qbezier[300](-7.07,-7.07),(-4.14,-10),(0,-10)}
%\put(40,0){\qbezier[300](0,-10),(4.14,-10)(7.07,-7.07)}
%\put(40,0){\qbezier[300](7.07,-7.07)(10,-4.14)(10,0)}

%RAYON30
\color{red}
\put(40,20){\qbezier[300](-30,0)(-30,-12.42)(-21.21,-21.21)}
\put(40,20){\qbezier[300](-21.21,-21.21)(-12.42,-30)(0,-30)}
\put(40,20){\qbezier[300](0,-30)(12.42,-30)(21.21,-21.21)}
\put(40,20){\qbezier[300](21.21,-21.21)(30,-12.42)(30,0)}

%RAYON20
\color{red}
\put(-30,20){\qbezier[300](-20,0)(-20,-8.28)(-14.14,-14.14)}
\put(-30,20){\qbezier[300](-14.14,-14.14)(-8.28,-20)(0,-20)}
\put(-30,20){\qbezier[300](0,-20)(8.28,-20)(14.14,-14.14)}
\put(-30,20){\qbezier[300](14.14,-14.14)(20,-8.28)(20,0)}

%RAYON20
\color{red}
\put(10,20){\qbezier[300](-20,0)(-20,8.28)(-14.14,14.14)}
\put(10,20){\qbezier[300](-14.14,14.14)(-8.28,20)(0,20)}
\put(10,20){\qbezier[300](0,20)(8.28,20)(14.14,14.14)}
\put(10,20){\qbezier[300](14.14,14.14)(20,7,64)(30,0)}

%RAYON40
\color{mygreen}
\put(10,20){\qbezier[300](-40,0)(-40,16.56)(-28.28,28.28)}
\put(10,20){\qbezier[300](-28.28,28.28)(-16.56,40)(0,40)}
\put(10,20){\qbezier[300](0,40)(16.56,40)(28.28,28.28)}
\put(10,20){\qbezier[300](28.28,28.28)(40,16.56)(30,0)}

%RAYON60
\color{red}
\put(10,20){\qbezier[300](-60,0)(-60,24.84)(-42.42,42.42)}
\put(10,20){\qbezier[300](-42.42,42.42)(-24.84,60)(0,60)}
\put(10,20){\qbezier[300](0,60)(24.84,60)(42.42,42.42)}
\put(10,20){\qbezier[300](42.42,42.42)(60,24.84)(60,0)}

\color{black}

\put(10,80){\vector(1,0){0}}
\put(10,60){\vector(1,0){0}}
\put(10,40){\vector(-1,0){0}}

\put(40,-10){\vector(-1,0){0}}
\put(-30,0){\vector(-1,0){0}}
%\put(10,20){\vector(-1,0){0}}

%%%%%%%%%
%LIGNE2
%%%%%%%%%

\put(-90,-120){\line(1,0){60}}
\put(40,-120){\line(1,0){50}}
\put(-90,-122){\makebox(0,0)[tc]{$\Delta$}}

\color{mygreen}
\put(-30,-120){\line(1,0){40}}
\color{red}
\put(10,-120){\line(1,0){30}}

\put(40,-120){\circle*{1}}
\put(10,-120){\circle*{1}}
\put(-30,-120){\circle*{1}}

\color{blue}

%%%%%%%%
%RAYON10
%%%%%%%%
\color{red}
\put(10,-120){\qbezier[300](-10,0)(-10,4.14)(-7.07,7.07)}
\put(10,-120){\qbezier[300](-7.07,7.07),(-4.14,10),(0,10)}
\put(10,-120){\qbezier[300](0,10),(4.14,10)(7.07,7.07)}
\put(10,-120){\qbezier[300](7.07,7.07)(10,4.14)(15,0)}

%%%%%%%%
%RAYON30
%%%%%%%%
\color{red}
\put(-30,-120){\qbezier[300](-30,0)(-30,-12.42)(-21.21,-21.21)}
\put(-30,-120){\qbezier[300](-21.21,-21.21)(-12.42,-30)(0,-30)}
\put(-30,-120){\qbezier[300](0,-30)(12.42,-30)(21.21,-21.21)}
\put(-30,-120){\qbezier[300](21.21,-21.21)(30,-12.42)(30,0)}

%%%%%%%%%
%RAYON****70%
%%%%%%%%%
\color{red}
\put(10,-120){\qbezier[300](-70,0)(-70,28.98)(-49.49,49.49)}
\put(10,-120){\qbezier[300](-49.49,49.49)(-28.98,70)(0,70)}
\put(10,-120){\qbezier[300](0,70)(28.98,70)(49.49,49.49)}
\put(10,-120){\qbezier[300](49.49,49.49)(70,28.98)(70,0)}

%RAYON40
\color{red}

\put(40,-120){\qbezier[300](-35,-5)(-40,-16.56)(-28.28,-28.28)}
\put(40,-120){\qbezier[300](-28.28,-28.28)(-16.56,-40)(0,-40)}
\put(40,-120){\qbezier[300](0,-40)(16.56,-40)(28.28,-28.28)}
\put(40,-120){\qbezier[300](28.28,-28.28)(40,-16.56)(40,0)}

%RAYON30
\color{red}
\put(40,-120){\qbezier[300](-30,0)(-30,-12.42)(-21.21,-21.21)}
\put(40,-120){\qbezier[300](-21.21,-21.21)(-12.42,-30)(0,-30)}
\put(40,-120){\qbezier[300](0,-30)(12.42,-30)(21.21,-21.21)}
\put(40,-120){\qbezier[300](21.21,-21.21)(30,-12.42)(30,0)}

%RAYON20
\color{red}
\put(-30,-120){\qbezier[300](-20,0)(-20,-8.28)(-14.14,-14.14)}
\put(-30,-120){\qbezier[300](-14.14,-14.14)(-8.28,-20)(0,-20)}
\put(-30,-120){\qbezier[300](0,-20)(8.28,-20)(14.14,-14.14)}
\put(-30,-120){\qbezier[300](14.14,-14.14)(20,-8.28)(20,0)}

%RAYON20
\color{red}
\put(10,-120){\qbezier[300](-20,0)(-20,8.28)(-14.14,14.14)}
\put(10,-120){\qbezier[300](-14.14,14.14)(-8.28,20)(0,20)}
\put(10,-120){\qbezier[300](0,20)(8.28,20)(14.14,14.14)}
\put(10,-120){\qbezier[300](14.14,14.14)(20,7,64)(30,0)}

%RAYON40
\color{mygreen}
\put(10,-120){\qbezier[300](-40,0)(-40,16.56)(-28.28,28.28)}
\put(10,-120){\qbezier[300](-28.28,28.28)(-16.56,40)(0,40)}
\put(10,-120){\qbezier[300](0,40)(16.56,40)(28.28,28.28)}
\put(10,-120){\qbezier[300](28.28,28.28)(40,16.56)(40,0)}

\put(40,-120){\qbezier[300](0,-10),(4.14,-10)(7.07,-7.07)}
\put(40,-120){\qbezier[300](7.07,-7.07)(10,-4.14)(10,0)}

%\put(40,-120){\qbezier[300](0,-10),(-4.14,-10)(-7.07,-7.07)}
%\put(40,-120){\qbezier[300](-7.07,-7.07)(-10,-4.14)(-15,0)}

%RAYON60
\color{red}
\put(10,-120){\qbezier[300](-60,0)(-60,24.84)(-42.42,42.42)}
\put(10,-120){\qbezier[300](-42.42,42.42)(-24.84,60)(0,60)}
\put(10,-120){\qbezier[300](0,60)(24.84,60)(42.42,42.42)}
\put(10,-120){\qbezier[300](42.42,42.42)(60,24.84)(60,0)}

\color{black}
\put(40,-130){\vector(-1,0){0}}
\put(10,-110){\vector(-1,0){0}}
\put(-30,-150){\vector(-1,0){0}}
\put(10,-50){\vector(1,0){0}}
\put(40,-160){\vector(-1,0){0}}

\put(10,-60){\vector(1,0){0}}
\put(10,-80){\vector(1,0){0}}
\put(10,-100){\vector(-1,0){0}}

\put(5,-125){\vector(1,3){0}}

\put(40,-150){\vector(-1,0){0}}
\put(-30,-140){\vector(-1,0){0}}
%\put(10,20){\vector(-1,0){0}}

\color{black}

%%%%%%%%%
%LIGNE3
%%%%%%%%%

\color{black}
\put(-90,-260){\line(1,0){60}}
\put(40,-260){\line(1,0){50}}
\put(-90,-262){\makebox(0,0)[tc]{$\Delta$}}

\color{mygreen}
\put(-30,-260){\line(1,0){40}}
\color{red}
\put(10,-260){\line(1,0){30}}

\put(40,-260){\circle*{1}}
\put(10,-260){\circle*{1}}
\put(-30,-260){\circle*{1}}

%RAYON10
%\put(40,0){\qbezier[300](-10,0)(-10,-4.14)(-7.07,-7.07)}
%\put(40,0){\qbezier[300](-7.07,-7.07),(-4.14,-10),(0,-10)}
%\put(40,0){\qbezier[300](0,-10),(4.14,-10)(7.07,-7.07)}
%\put(40,0){\qbezier[300](7.07,-7.07)(10,-4.14)(10,0)}

%%%%%%%%
%RAYON10
%%%%%%%%
\color{mygreen}
\put(10,-260){\qbezier[300](-10,0)(-10,4.14)(-7.07,7.07)}
\put(10,-260){\qbezier[300](-7.07,7.07),(-4.14,10),(0,10)}
\put(10,-260){\qbezier[300](0,10),(4.14,10)(7.07,7.07)}
\put(10,-260){\qbezier[300](7.07,7.07)(10,4.14)(15,0)}

%%%%%%%%
%RAYON30
%%%%%%%%
\color{mygreen}
\put(-30,-260){\qbezier[300](-30,0)(-30,-12.42)(-21.21,-21.21)}
\put(-30,-260){\qbezier[300](-21.21,-21.21)(-12.42,-30)(0,-30)}
\put(-30,-260){\qbezier[300](0,-30)(12.42,-30)(21.21,-21.21)}
\put(-30,-260){\qbezier[300](21.21,-21.21)(30,-12.42)(30,0)}

%%%%%%%%%
%RAYON****70%
%%%%%%%%%
\color{mygreen}
\put(10,-260){\qbezier[300](-70,0)(-70,28.98)(-49.49,49.49)}
\put(10,-260){\qbezier[300](-49.49,49.49)(-28.98,70)(0,70)}
\put(10,-260){\qbezier[300](0,70)(28.98,70)(49.49,49.49)}
\put(10,-260){\qbezier[300](49.49,49.49)(70,28.98)(70,0)}

%RAYON40
\color{mygreen}

\put(40,-260){\qbezier[300](-30,0)(-40,-16.56)(-28.28,-28.28)}
\put(40,-260){\qbezier[300](-28.28,-28.28)(-16.56,-40)(0,-40)}
\put(40,-260){\qbezier[300](0,-40)(16.56,-40)(28.28,-28.28)}
\put(40,-260){\qbezier[300](28.28,-28.28)(40,-16.56)(40,0)}

%RAYON30
\color{red}
\put(40,-260){\qbezier[300](-30,0)(-30,-12.42)(-21.21,-21.21)}
\put(40,-260){\qbezier[300](-21.21,-21.21)(-12.42,-30)(0,-30)}
\put(40,-260){\qbezier[300](0,-30)(12.42,-30)(21.21,-21.21)}
\put(40,-260){\qbezier[300](21.21,-21.21)(30,-12.42)(30,0)}

%RAYON20
\color{red}
\put(-30,-260){\qbezier[300](-20,0)(-20,-8.28)(-14.14,-14.14)}
\put(-30,-260){\qbezier[300](-14.14,-14.14)(-8.28,-20)(0,-20)}
\put(-30,-260){\qbezier[300](0,-20)(8.28,-20)(14.14,-14.14)}
\put(-30,-260){\qbezier[300](14.14,-14.14)(20,-8.28)(20,0)}

%RAYON20
\color{red}
\put(10,-260){\qbezier[300](-20,0)(-20,8.28)(-14.14,14.14)}
\put(10,-260){\qbezier[300](-14.14,14.14)(-8.28,20)(0,20)}
\put(10,-260){\qbezier[300](0,20)(8.28,20)(14.14,14.14)}
\put(10,-260){\qbezier[300](14.14,14.14)(20,7,64)(30,0)}

%RAYON40
\color{mygreen}
\put(10,-260){\qbezier[300](-40,0)(-40,16.56)(-28.28,28.28)}
\put(10,-260){\qbezier[300](-28.28,28.28)(-16.56,40)(0,40)}
\put(10,-260){\qbezier[300](0,40)(16.56,40)(28.28,28.28)}
\put(10,-260){\qbezier[300](28.28,28.28)(40,16.56)(40,0)}

\put(40,-260){\qbezier[300](0,-10),(4.14,-10)(7.07,-7.07)}
\put(40,-260){\qbezier[300](7.07,-7.07)(10,-4.14)(10,0)}

\put(40,-260){\qbezier[300](0,-10),(-4.14,-10)(-7.07,-7.07)}
\put(40,-260){\qbezier[300](-7.07,-7.07)(-10,-4.14)(-15,0)}

\color{black}
\put(40,-130){\vector(-1,0){0}}
\put(10,-110){\vector(-1,0){0}}
\put(-30,-150){\vector(-1,0){0}}
\put(10,-50){\vector(1,0){0}}
\put(40,-160){\vector(-1,0){0}}

%RAYON60
\color{red}
\put(10,-260){\qbezier[300](-60,0)(-60,24.84)(-42.42,42.42)}
\put(10,-260){\qbezier[300](-42.42,42.42)(-24.84,60)(0,60)}
\put(10,-260){\qbezier[300](0,60)(24.84,60)(42.42,42.42)}
\put(10,-260){\qbezier[300](42.42,42.42)(60,24.84)(60,0)}

\color{black}

%\put(10,-190){\vector(1,0){0}}
%\put(10,-200){\vector(1,0){0}}
%\put(10,-220){\vector(-1,0){0}}
%\put(40,-150){\vector(-1,0){0}}
%\put(-30,-140){\vector(-1,0){0}}
%\put(10,20){\vector(-1,0){0}}

\color{black}

\put(10,-190){\vector(1,0){0}}
\put(10,-220){\vector(1,0){0}}
\put(10,-200){\vector(-1,0){0}}
\put(10,-240){\vector(1,0){0}}

\put(-30,-280){\vector(1,0){0}}
\put(-30,-290){\vector(-1,0){0}}

\put(40,-130){\vector(-1,0){0}}
\put(40,-270){\vector(-1,0){0}}

\put(10,-250){\vector(-1,0){0}}
\put(40,-290){\vector(1,0){0}}
\put(40,-300){\vector(-1,0){0}}

\color{black}

\color{black}

\color{blue}

\end{picture}
\caption{Green inner growth of $\sn(3;4)$ to $\sn(5;7)$.\label{figure32}}

\end{figure}

\setlength{\unitlength}{0.7mm}
%%FIGURE%%
%%FIGURE%%
%%FIGURE%%

This operation can be extended to the topological snail using a positively turbulent homeomorphism $f\in \mathcal{T}^{+}$ : the reciprocal image by $f$ of a tubular neighborhood of the red $\sn(r_{1}, r_{2})$ inside $\sn(n;p) = f(\sn(1;0) \sqcup \sn(0;1))$ will contain such a thin rectangular box, and the preceeding operation will be possible to do directly in the snail. I call this description the green inner growth of $f$ or $\sn(n;p)$ (see figure \ref{figure32}). It is called green inner growth for two good reasons. Firstly, it happens inside the snail and it appears as an inner growth of the green snail $\sn(g_{1}, g_{2})$ to $\sn(g_{1}+r_{1}, g_{2}+r_{2})$, keeping the red snail $\sn(r_{1}, r_{2})$ almost unchanged (see figure \ref{figure32}). And secondly, it corresponds to the composition on the right, inside $f$, of the generating homeomorphism $f \in \mathcal{T}^{+}$ associated to $\sn(n;p)$: the green inner growth corresponds to the equation~:
\begin{eqnarray*}
\Phi(f \circ f_{B}) & = & \Phi(f) \times \Phi(f_{B}) \\
& = & \left[ \begin{array}{cc} g_{1} & r_{1} \\ g_{2} & r_{2} \end{array} \right] \times \left[ \begin{array}{cc} 1 & 0 \\ 1 & 1 \end{array} \right] \\
& = & \left[ \begin{array}{cc} g_{1} +r_{1}& r_{1} \\ g_{2}+r_{2} & r_{2} \end{array} \right] 
\end{eqnarray*}

Of course, the red inner growth corresponds to a composition inside $f$ by $f_{A}$. Note that a new visible emerging arc appears at each inner growth, with the color of this inner growth, but that the color of the entrance of the snail remains the same. It is the exact opposite, from this point of view, of the unfolding, and this is not suprising since the inner growth of $f$ is the outer growth of $f^{-1}$ (see next section). A new proof of the theorem \ref{decomposition} appears, since the color of the visible emerging arcs gives the information to reverse the inner growth and obtain an algorithm to reduce $f$ from the inside. Note that the inner growth is a fundamental tool to understand the topological structure of the set of Nielsen classes of fixed points of $f$.

\end{remarks}

\section{The inner universal tree}

The preceeding reflexion allows one to understand the distribution of the colors, green or red, and the orientation upward or downward, of the points on $\sn(n;p) \cap \Delta^{+}$, by constructing a beautiful universal two-coloured inner tree. The snail associated to $f \in \mathcal{T}^{+}$ contains a sequence of vertical arrows, green or red, in top or bottom direction, that I will call its horizontal arrow distribution. It can be obtained directly from a very simple principle. The green operation traduces the application of a type B homeomorphism inside a homeomorphism $f\in \mathcal{T}^{+}$ : if the distribution of arrows associated to $f$ is given, it consists in keeping unchanged the green arrows and replace each red one by a green one in the same position with a additional red one on its right, in opposite direction. The red operation is exactly symetric, associated to  the application of a type A homeomorphism inside a homeomorphism $f$. One therefore only has to remember $\GT|\RB$ both as the initial data and the figure to place according to the red or the right arrow and its direction. One can also consider this data as a word in the letter $G^{+}$, $G^{-}$, $R^{+}$, $R_{-}$. The initial data will be $G^{+}|R^{-}$, and the associated substitutions~:

$$\begin{array}{l} B~: \left\lbrace \begin{array}{l} R^{-} \mapsto R^{+}G^{-} \\R^{+} \mapsto G^{+}R^{-}\end{array}\right. \\ A~: \left\lbrace \begin{array}{l} G^{-} \mapsto G^{+}R^{-} \\ G^{+} \mapsto R^{+}G^{-} \end{array}\right. \end{array}$$

For example, $\sn(3;10)$ is of the type $B^{3}A^{2}$, and its horizontal distribution will be obtained applying successively the corresponding operations in the reading order : 
$$\begin{array}{rrcl} 
 &\GT & | & \RB \\ 
 B~:&\GT  & | & \boxed{\RT \GB} \\
B^{2}~:&\GT  & | & \boxed{\GT \RB} \GB \\
B^{3}~:&\GT  & | & \GT \boxed{\RT \GB} \GB \\
B^{3}A~:&\boxed{\RT \GB}  & | & \boxed{\RT \GB} \RT \boxed{\GT \RB} \boxed{\GT \RB} \\
B^{3} A^{2}~:& \RT \boxed{\GT \RB}  & | & \RT \boxed{\GT \RB} \RT \boxed{\RT \GB}\RB \boxed{\RT \GB} \RB \\
& & & \\
B^{3} A^{2}~:& \RT~\GT~\RB  & | & \RT~\GT~\RB~\RT~\RT~\GB~\RB~\RT~\GB~\RB 
\end{array}$$

In order to check this process, one just has to draw the coloured $\sn(3;10)$ and put the positive orientation on every point of $\sn(3;10) \cap \Delta^{+}$, according to figure \ref{figure33}.

%%FIGURE%%
%%FIGURE%%
%%FIGURE%%
\setlength{\unitlength}{0.6mm}
\begin{figure}
%\begin{picture}(100, 90)(-140,-30)
\begin{picture}(100,120)(-90,-85)

\color{black}
\put(-50,0){\line(1,0){35}}
\put(-50,-2){\makebox(0,0)[tr]{$\Delta$}}
\color{mygreen}
\put(-15,0){\line(1,0){50}}
\color{red}
\put(35,0){\line(1,0){15}}
\color{black}
\put(45,0){\line(1,0){65}}
\put(50,0){\circle*{1}}
\put(35,0){\circle*{1}}

\color{red}
\put(-25,0){\vector(0,1){8}}
\put(-25,0){\circle*{1}}
\put(-5,0){\vector(0,-1){8}}
\put(-5,0){\circle*{1}}
\put(5,0){\vector(0,1){8}}
\put(5,0){\circle*{1}}
\put(25,0){\vector(0,-1){8}}
\put(25,0){\circle*{1}}
\put(35,0){\vector(0,1){8}}

\put(45,0){\vector(0,1){8}}
\put(45,0){\circle*{1}}
\put(65,0){\vector(0,-1){8}}
\put(65,0){\circle*{1}}
\put(75,0){\vector(0,1){8}}
\put(75,0){\circle*{1}}
\put(95,0){\vector(0,-1){8}}
\put(95,0){\circle*{1}}
\color{mygreen}
\put(55,0){\vector(0,-1){8}}
\put(55,0){\circle*{1}}
\put(85,0){\vector(0,-1){8}}
\put(85,0){\circle*{1}}
\put(-15,0){\vector(0,1){8}}

\put(15,0){\vector(0,1){8}}
\put(15,0){\circle*{1}}

\color{black}
\put(0,-5){\line(0,1){10}}
\put(50,0){\vector(0,-1){5}}
\put(0,-5){\line(0,1){10}}

%%%%%
%%REPRISE
%%%%%%%

\color{red}
\put(-25,-60){\vector(0,1){8}}
\put(-25,-60){\circle*{1}}

\put(-5,-53){\vector(0,-1){8}}
\put(-5,-53){\circle*{1}}

\put(5,-60){\vector(0,1){8}}
\put(5,-60){\circle*{1}}

\put(25,-53){\vector(0,-1){8}}
\put(25,-53){\circle*{1}}

\put(35,-60){\vector(0,1){8}}
\put(35,-60){\circle*{1}}

\put(45,-60){\vector(0,1){8}}
\put(45,-60){\circle*{1}}

\put(65,-53){\vector(0,-1){8}}
\put(65,-53){\circle*{1}}

\put(75,-60){\vector(0,1){8}}
\put(75,-60){\circle*{1}}

\put(95,-53){\vector(0,-1){8}}
\put(95,-53){\circle*{1}}

\color{mygreen}

\put(55,-53){\vector(0,-1){8}}
\put(55,-53){\circle*{1}}

\put(85,-53){\vector(0,-1){8}}
\put(85,-53){\circle*{1}}

\put(-15,-60){\vector(0,1){8}}
\put(-15,-60){\circle*{1}}
\put(15,-60){\vector(0,1){8}}
\put(15,-60){\circle*{1}}

\color{black}
\put(0,-62){\line(0,1){11}}

\put(-15,0){\circle*{1}}

\put(-25,-50){\qbezier[50](0,0)(0,25)(0,50)}
\put(-15,-50){\qbezier[50](0,0)(0,25)(0,50)}
\put(-5,-50){\qbezier[50](0,0)(0,25)(0,50)}
\put(5,-50){\qbezier[50](0,0)(0,25)(0,50)}
\put(15,-50){\qbezier[50](0,0)(0,25)(0,50)}
\put(25,-50){\qbezier[50](0,0)(0,25)(0,50)}
\put(35,-50){\qbezier[50](0,0)(0,25)(0,50)}
\put(45,-50){\qbezier[50](0,0)(0,25)(0,50)}
\put(55,-50){\qbezier[50](0,0)(0,25)(0,50)}
\put(65,-50){\qbezier[50](0,0)(0,25)(0,50)}
\put(75,-50){\qbezier[50](0,0)(0,25)(0,50)}
\put(85,-50){\qbezier[50](0,0)(0,25)(0,50)}

%POINTP1
%%%%%%%%%
%RAYON****10%
%%%%%%%%%

\color{red}
\put(-15,0){\qbezier[300](-10,0)(-10,-4.14)(-7.07,-7.07)}
\put(-15,0){\qbezier[300](-7.07,-7.07),(-4.14,-10),(0,-10)}
\put(-15,0){\qbezier[300](0,-10),(4.14,-10)(7.07,-7.07)}
\put(-15,0){\qbezier[300](7.07,-7.07)(10,-4.14)(10,0)}

%POINTP3

%%%%%%%%%
%RAYON****5%
%%%%%%%%%
\color{red}
\put(50,0){\qbezier[300](-5,0)(-5,-2.07)(-3.53,-3.53)}
\put(50,0){\qbezier[300](-3.53,-3.53),(-2.07,-5),(0,-5)}

\color{mygreen}
\put(50,0){\qbezier[300](0,-5),(2.07,-5)(3.53,-3.53)}
\put(50,0){\qbezier[300](3.53,-3.53)(5,-2.07)(5,0)}

%%%%%%%%%
%RAYON****15%
%%%%%6.21%%%%
\color{red}
\put(50,0){\qbezier[300](-15,0)(-15,-6.21)(-10.59,-10.59)}
\put(50,0){\qbezier[300](-10.59,-10.59),(-6.21,-15),(0,-15)}
\put(50,0){\qbezier[300](0,-15),(6.21,-15)(10.59,-10.59)}
\put(50,0){\qbezier[300](10.59,-10.59)(15,-6.21)(15,0)}

%%%%%%%%%
%RAYON****25%
%%%%%%%%%

\color{red}
\put(50,0){\qbezier[300](-25,0)(-25,-10.35)(-17.65,-17.65)}
\put(50,0){\qbezier[300](-17.65,-17.65),(-10.35,-25),(0,-25)}
\put(50,0){\qbezier[300](0,-25),(10.35,-25)(17.65,-17.65)}
\put(50,0){\qbezier[300](17.65,-17.65)(25,-10.35)(25,0)}

%%%%%%%%%
%RAYON****35%
%%%%%%%%%
\color{mygreen}
\put(50,0){\qbezier[100](-35,0)(-35,-14.49)(-24.75,-24.75)}
\put(50,0){\qbezier[100](-24.75,-24.75)(-14.49,-35)(0,-35)}
\put(50,0){\qbezier[100](0,-35)(14.49,-35)(24.75,-24.75)}
\put(50,0){\qbezier[100](24.75,-24.75)(35,-14.49)(35,0)}

%%%%%%%%%
%RAYON****45%
%%%%%%%%%
\color{red}
\put(50,0){\qbezier[300](-45,0)(-45,-18.63)(-31.81,-31.81)}
\put(50,0){\qbezier[300](-31.81,-31.81),(-18.63,-45),(0,-45)}
\put(50,0){\qbezier[300](0,-45),(18.63,-45)(31.81,-31.81)}
\put(50,0){\qbezier[300](31.81,-31.81)(45,-18.63)(45,0)}

%%POINTP2
%%%%%%%%%
%RAYON****10%
%%%%%%%%%
\color{red}

\put(35,0){\qbezier[300](-10,0)(-10,4.14)(-7.07,7.07)}
\put(35,0){\qbezier[300](-7.07,7.07),(-4.14,10),(0,10)}
\put(35,0){\qbezier[300](0,10),(4.14,10)(7.07,7.07)}
\put(35,0){\qbezier[300](7.07,7.07)(10,4.14)(10,0)}

%%%%%%%%%
%RAYON****20%
%%%%%%%%%
\color{mygreen}
\put(35,0){\qbezier[300](-20,0)(-20,8.28)(-14.14,14.14)}
\put(35,0){\qbezier[300](-14.14,14.14)(-8.28,20)(0,20)}
\put(35,0){\qbezier[300](0,20)(8.28,20)(14.14,14.14)}
\put(35,0){\qbezier[300](14.14,14.14)(20,8.28)(20,0)}

%%%%%%%%%
%RAYON****30%
%%%%%%%%%
\color{red}
\put(35,0){\qbezier[300](-30,0)(-30,12.42)(-21.21,21.21)}
\put(35,0){\qbezier[300](-21.21,21.21)(-12.42,30)(0,30)}
\put(35,0){\qbezier[300](0,30)(12.42,30)(21.21,21.21)}
\put(35,0){\qbezier[300](21.21,21.21)(30,12.42)(30,0)}

%%%%%%%%%
%RAYON****40%
%%%%%%%%%
\color{red}
\put(35,0){\qbezier[300](-40,0)(-40,16.56)(-28.28,28.28)}
\put(35,0){\qbezier[300](-28.28,28.28)(-16.56,40)(0,40)}
\put(35,0){\qbezier[300](0,40)(16.56,40)(28.28,28.28)}
\put(35,0){\qbezier[300](28.28,28.28)(40,16.56)(40,0)}

%%%%%%%%%
%RAYON****50%
%%%%%%%%%
\color{mygreen}
\put(35,0){\qbezier[300](-50,0)(-50,20.7)(-35.35,35.35)}
\put(35,0){\qbezier[300](-35.35,35.35),(-20.7,50),(0,50)}
\put(35,0){\qbezier[300](0,50),(20.7,50)(35.35,35.35)}
\put(35,0){\qbezier[300](35.35,35.35)(50,20.7)(50,0)}

%%%%%%%%%
%RAYON****60
%%%%%%%%%

\color{red}
\put(35,0){\qbezier[300](-60,0)(-60,24.84)(-42.42,42.42)}
\put(35,0){\qbezier[300](-42.42,42.42)(-24.84,60)(0,60)}
\put(35,0){\qbezier[300](0,60)(24.84,60)(42.42,42.42)}
\put(35,0){\qbezier[300](42.42,42.42)(60,24.84)(60,0)}

\color{black}

\end{picture}
\caption{Distribution of coloured arrows on $\sn^{+}(3;10)$.\label{figure33}}

\end{figure}
\setlength{\unitlength}{0.7mm}
%%FIGURE%%
%%FIGURE%%
%%FIGURE%%

But there is another graphical mean for finding the distribution of coloured arrows on $\sn(n;p)$ by applying a specific algorithm traducing the eucidean algorithm in terms of coloured trees. The principe is very simple. The tree begins with two branches~: the green on left, oriented upward; and the  red on right, oriented donward. For each homeomorphism of the type $A$, one draws a red horizontal line, and for each of the type $B$ one draws a green horizontal line, according to the code defining the type of the homeomorphism, the reading order corresponding to the upward direction. For example on figure \ref{figure34}, I have drawn the inner tree $\tn(3;10)$. It corresponds to the type $B^{3}A^{2}$ since $\sn(3;10)= \Phi(f)$ is $f$ is of this type. So from the botttom to the top there are three green and two red horizontal lines. The branches of a given color are not affected by the horizontal lines of their own color. But branches of a given colour change direction and orientation passing thrue an horizontal line of a different colour, giving birth to additional branches with the initial orientation and direction but the opposite colour (see figure \ref{figure31}). A natural and universal recursive structure develops, closely linked with the structure of the topological snail, revealing what happens in the inside of the snail. Branching points have colors according to the color of the line in which thay arise, each end of the tree is connected to a unique branching point in the tree, and one obtains the following new theorem~:

\begin{theo}
Let $(n;p)$ be two positive integers, $f\in \mathcal{T}^{+}$ such that $\sn(n;p) = \Phi(f)$ and $\tn(n;p)$ the inner two-colored tree associated. Let $L_{g}$ be the number of green branching points on the left branch of $\tn(n;p)$, and respectively $L_{r}$ for the red branching points on its left branch and $R_{g}$, $R_{r}$ for the branching points of colors green and red on the right branch of $\tn(n;p)$. Then 
$$\mt(f) = \left[\begin{array}{cc} L_{g}(f) & L_{r}(f) \\ R_{g}(f) & R_{r}(f) \end{array}\right] $$
Furthermore, if $f$ and $g$ are positive positively turbulent homeomorphisms, the tree $\tn(f\circ g)$ associated to $f\circ g$ is obtained by substituting to each green end of $\tn(f)$ the left (green) branch of $\tn(g)$ or its symetric figure according to the direction of this end, and to each red end of $\tn(f)$ the right branch of $\tn(g)$ or its symetric figure according to the same principle, respecting the rule~:

$$\mt(f \circ g) =  \mt(f) \times \mt(g)$$

\end{theo}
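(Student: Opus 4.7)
The approach is induction on the minimal word length of $f$ in the generators $f_A, f_B$, establishing simultaneously the matrix identity and the substitution property. The base case $f = \id$ corresponds to the tree with only the two initial branches, a green arrow upward on the left and a red arrow downward on the right, which gives $(L_g, L_r, R_g, R_r) = (1, 0, 0, 1)$ and matches the identity matrix. For the generators one checks directly that $\tn(f_A)$ consists of a single red horizontal line crossing the initial green left branch, producing the counts $(1, 1, 0, 1)$ that match $A$, and symmetrically $\tn(f_B)$ yields $(1, 0, 1, 1) = B$.

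For the inductive step I would use the construction rule: composing on the inside with $f_A$ (respectively $f_B$) amounts to drawing a red (respectively green) horizontal line at the bottom of $\tn(f)$. Using the rewriting rules $G^{\pm} \mapsto R^{\pm} G^{\mp}$ for $A$ and $R^{\pm} \mapsto G^{\pm} R^{\mp}$ for $B$, I would track how the arrow distribution and the tallies $L_g, L_r, R_g, R_r$ evolve. A new red line interacts only with green arrows, so it preserves $L_g$ and $R_g$ while adding to $L_r$ and $R_r$ precisely one new red branching for each green segment it encounters on each side. This matches the right-multiplication
$$\mt(f)\cdot A = \left[\begin{array}{cc} L_g & L_r \\ R_g & R_r \end{array}\right]\left[\begin{array}{cc} 1 & 1 \\ 0 & 1 \end{array}\right] = \left[\begin{array}{cc} L_g & L_g + L_r \\ R_g & R_g + R_r \end{array}\right],$$
confirming the matrix formula for $f \circ f_A$; the case of $f_B$ is symmetric.

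With the matrix formula in hand, the substitution property for $\tn(f \circ g)$ follows from the stacking structure of the construction: the horizontal lines of $g$ occupy the innermost positions and those of $f$ lie above them, so that each colored end of $\tn(f)$ is effectively replaced by the corresponding (left or right) branch of $\tn(g)$, with the direction of the end dictating whether the substitution is direct or reflected. The equality $\mt(f\circ g)=\mt(f)\mt(g)$ is already established from the definition as a product of $A$'s and $B$'s, so the only additional content is the combinatorial identity
$$L_g(f \circ g) = L_g(f)\,L_g(g) + L_r(f)\,R_g(g),$$
together with its three analogues, which follows by summing contributions over the ends of $\tn(f)$. The main obstacle I anticipate is the careful bookkeeping in the inductive step: precisely tracking the upward/downward direction of each branching end and confirming that the \emph{symmetric figure} prescription of the theorem is exactly what matches matrix multiplication. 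Once this combinatorial correspondence between the rewriting rules and the $2 \times 2$ linear algebra is pinned down, the theorem follows rigidly from the induction.
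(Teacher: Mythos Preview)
The paper states this theorem without a formal proof: it is presented as an immediate consequence of the recursive construction of the coloured tree, and the subsequent remarks give only a worked example and some observations. Your inductive approach is therefore exactly the argument the paper leaves to the reader, and it is correct in substance.

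One point of convention to clean up: in the paper the word $B^{\beta_N}A^{\alpha_N}\cdots B^{\beta_1}A^{\alpha_1}$ is read left to right when building the tree, so the leftmost (outermost) letters correspond to the lines drawn first, at the \emph{bottom}, and the rightmost (innermost) letters are drawn last, at the \emph{top} where the ends are. Thus composing on the inside, $f\mapsto f\circ f_A$, adds a red line at the top, not at the bottom. Your matrix computation $\mt(f)\cdot A$ is the correct one for right (inner) composition, so the mathematics is right; only the word ``bottom'' in your prose is reversed relative to the paper's orientation. The invariant that makes the induction go through cleanly is that, at every stage and on each branch, the number of ends of a given colour equals the number of branching points of that colour (each substitution $G^{\pm}\mapsto R^{\pm}G^{\mp}$ preserves the green-end count while adding one red branching point and one red end per green end, and symmetrically for $B$); you use this implicitly and it is worth making explicit.

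The second part of your argument, reading the substitution property for $\tn(f\circ g)$ from the stacking of horizontal lines and deducing the bilinear identities $L_g(f\circ g)=L_g(f)L_g(g)+L_r(f)R_g(g)$ etc.\ by summing over the ends of $\tn(f)$, is again the intended reasoning and is sound.
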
 

\begin{remarks}
   
\item If one considers $\sn(3;10)$, associated to $f = f_{B}^{3}f_{A}^{2}$, one can has 
$$\mt(f_{B}^{3}) = \left[\begin{array}{cc} 1 & 0 \\ 3 & 1 \end{array}\right]~~\mathrm{and}~~\mt(f_{A}^{2}) = \left[\begin{array}{cc} 1 & 2 \\ 0 & 1 \end{array}\right]$$
and
$$\mt(f) =  \left[\begin{array}{cc} 1 & 0 \\ 3 & 1 \end{array}\right] \left[\begin{array}{cc} 1 & 2 \\ 0 & 1 \end{array}\right]= \left[\begin{array}{cc} 1 & 2 \\ 3 & 7 \end{array}\right]$$
One can check and immediatly understand the signification of theorem on figure \ref{figure34}. Note that this figure appears as a geometric application of the euclidean algorithm. 

\item The fixed points of a positively turbulent homeorphism $f\in \mathcal{T}^{+}$ will be found using Steve Smale's method and associated to green branching points of the left (green) branch and red branching points of the right (red) branch. One will even be able to compute the topological type of those fixed points using the structure of this tree.

\item One can either construct the tree from the inside, in a natural fashion, or add symetric copies of hole red or green branches on the opposite side of the tree and connect them at the bottom. It corresponds to the composition on the left (the outside), the composition on the right being traduced be the natural growth of the tree by its ends.

\item The structure of the topological snail obeys to identical rules relative to the composition of homeomorphisms.

%%FIGURE%%
%%FIGURE%%
%%FIGURE%%
\setlength{\unitlength}{0.6mm}
\begin{figure}
%\begin{picture}(100, 90)(-140,-30)
\begin{picture}(100,300)(-90,-20)

\color{black}
\put(-30,290){\line(1,0){130}}

\color{red}
\put(-25,290){\vector(0,1){8}}
\put(-25,290){\circle*{1}}
\put(-5,290){\vector(0,-1){8}}
\put(-5,290){\circle*{1}}
\put(5,290){\vector(0,1){8}}
\put(5,290){\circle*{1}}
\put(25,290){\vector(0,-1){8}}
\put(25,290){\circle*{1}}
\put(35,290){\vector(0,1){8}}
\put(35,290){\circle*{1}}
\put(45,290){\vector(0,1){8}}
\put(45,290){\circle*{1}}
\put(65,290){\vector(0,-1){8}}
\put(65,290){\circle*{1}}
\put(75,290){\vector(0,1){8}}
\put(75,290){\circle*{1}}
\put(95,290){\vector(0,-1){8}}
\put(95,290){\circle*{1}}

\color{mygreen}
\put(55,290){\vector(0,-1){8}}
\put(55,290){\circle*{1}}
\put(85,290){\vector(0,-1){8}}
\put(85,290){\circle*{1}}
\put(-15,290){\vector(0,1){8}}

\put(15,290){\vector(0,1){8}}
\put(15,290){\circle*{1}}
\put(-15,290){\circle*{1}}

\color{black}

\put(-30,0){\line(1,0){130}}
\put(-30,0){\line(0,1){280}}
\put(100,0){\line(0,1){280}}
\put(-30,0){\line(1,0){130}}
\put(0,0){\line(0,1){280}}
\put(0,0){\circle*{2}}

\put(70,100){\line(0,1){180}}
\put(30,170){\line(0,1){110}}
\put(40,210){\line(0,1){70}}

\put(-20,240){\line(0,1){40}}
\put(10,240){\line(0,1){40}}
\put(60,240){\line(0,1){40}}
\put(90,240){\line(0,1){40}}

\put(-10,260){\line(0,1){20}}
\put(20,260){\line(0,1){20}}
\put(50,260){\line(0,1){20}}
\put(80,260){\line(0,1){20}}

\put(-30,280){\line(1,0){130}}

%%%%%
%%CADRE
%%%%%%%
\color{mygreen}

\put(0,0){\qbezier[1600](-3,0)(-20,0)(-20,240)}

\put(70,100){\qbezier[1200](0,0)(20,0)(20,140)}
\put(30,170){\qbezier[900](0,0)(-20,0)(-20,70)}
\put(40,210){\qbezier[900](0,0)(20,0)(20,30)}
\put(-20,240){\qbezier[300](0,0)(10,0)(10,20)}
\put(10,240){\qbezier[300](0,0)(10,0)(10,20)}
\put(60,240){\qbezier[300](0,0)(-10,0)(-10,20)}

\put(90,240){\qbezier[300](0,0)(-10,0)(-10,20)}
\put(-10,260){\qbezier[300](0,0)(-5,0)(-5,20)}
\put(20,260){\qbezier[300](0,0)(-5,0)(-5,20)}
\put(50,260){\qbezier[300](0,0)(5,0)(5,20)}
\put(80,260){\qbezier[300](0,0)(5,0)(5,20)}

\color{red}
\put(10,240){\qbezier[300](0,0)(-5,0)(-5,40)}
\put(-20,240){\qbezier[300](0,0)(-5,0)(-5,40)}
\put(90,240){\qbezier[300](0,0)(5,0)(5,40)}
\put(60,240){\qbezier[300](0,0)(5,0)(5,40)}

%%FLECHES
%%GREEN
\color{mygreen}
\put(-9.6,11){\vector(-1,3){0}}
\put(79.6,110.8){\vector(-1,-3){0}}
\put(-18.1,111){\vector(0,1){0}}
\put(-19.7,200){\vector(0,1){0}}
\put(89.75,200){\vector(0,-1){0}}
\put(12.3,200){\vector(-1,4){0}}

\put(59.2,230){\vector(-1,-4){0}}
\put(-10.1,254){\vector(0,-1){0}}
\put(19.9,254){\vector(0,-1){0}}
\put(50.1,256){\vector(0,1){0}}
\put(80.1,256){\vector(0,1){0}}

\put(80.1,256){\vector(0,1){0}}

\put(54.9,275){\vector(0,-1){0}}
\put(84.9,275){\vector(0,-1){0}}

\put(15,276){\vector(0,1){0}}
\put(-15,276){\vector(0,1){0}}

%%RED
\color{red}
\put(45,276){\vector(0,1){0}}
\put(75,276){\vector(0,1){0}}

\put(-24.9,275){\vector(0,1){0}}
\put(-5,275){\vector(0,-1){0}}
\put(5,275){\vector(0,1){0}}
\put(25,275){\vector(0,-1){0}}
\put(65,275){\vector(0,-1){0}}
\put(95.1,275){\vector(0,-1){0}}

\put(94.3,254){\vector(0,-1){0}}
\put(64.3,254){\vector(0,-1){0}}
\put(5.8,255){\vector(0,1){0}}
\put(-24.2,255){\vector(0,1){0}}

\put(36.1,232){\vector(0,1){0}}

\put(39.9,200){\vector(0,-1){0}}

\put(30.2,160){\vector(0,1){0}}
\put(44.7,111){\vector(-1,1){0}}

\put(69.9,90){\vector(0,-1){0}}
\put(40,10.75){\vector(-4,-3){0}}

\put(3,0){\circle*{3}}

\put(0,0){\qbezier[1200](3,0)(70,0)(70,100)}
\put(70,100){\qbezier[600](0,0)(-40,0)(-40,70)}
\put(30,170){\qbezier[600](0,0)(10,0)(10,40)}
\put(40,210){\qbezier[900](0,0)(-5,0)(-5,70)}

\put(-10,260){\qbezier[300](0,0)(5,0)(5,20)}
\put(20,260){\qbezier[300](0,0)(5,0)(5,20)}
\put(50,260){\qbezier[300](0,0)(-5,0)(-5,20)}
\put(80,260){\qbezier[300](0,0)(-5,0)(-5,20)}

\color{mygreen}
\put(0,100){\line(1,0){100}}
\put(0,170){\line(1,0){70}}
\put(30,210){\line(1,0){40}}
\put(70,100){\circle*{3}}
\put(30,170){\circle*{3}}
\put(40,210){\circle*{3}}
\put(-3,0){\circle*{3}}

\color{red}

\put(-30,240){\line(1,0){60}}
\put(40,240){\line(1,0){60}}

\put(-20,260){\line(1,0){20}}
\put(10,260){\line(1,0){20}}
\put(40,260){\line(1,0){20}}
\put(70,260){\line(1,0){20}}

\put(-20,240){\circle*{3}}
\put(10,240){\circle*{3}}
\put(60,240){\circle*{3}}
\put(90,240){\circle*{3}}

\put(-10,260){\circle*{3}}
\put(20,260){\circle*{3}}
\put(50,260){\circle*{3}}
\put(80,260){\circle*{3}}

%\put(35,0){\qbezier[300](42.42,42.42)(60,24.84)(60,0)}

\end{picture}
\caption{Inner universal two-coloured tree $\tn(3;10)$ associated to $\sn(3;10)$ (see figure \ref{figure33}).\label{figure34}}

\end{figure}
\setlength{\unitlength}{0.7mm}
%%FIGURE%%
%%FIGURE%%
%%FIGURE%%

\end{remarks}

\section{The closed topological snail and the zip relations\label{canonicalsection}}

As I have shown in the preceeding section, the consideration of the topological snail $\sn(n;p)$ associated to a positively turbulent homeomorphims $f\in \mathcal{T}^{+}$ leads  to define a turbulence matrix $\mt(f)$ (theorem \ref{decomposition} page \pageref{decomposition}) and a morphism of free monoid between the set of isotopy classes of positively turbulent homeomorphism relative to a given line $\Delta^{+}$ and matrices in $\mathrm{SL}_{2}(\Z)$ with positive coefficients. But this method can also be extended to any $X$-presering homeomorphism of the plane $f$, to define an isomorphism between the hole mapping-class group of $\R^{2} \setminus X$ and  $\mathrm{PSL}_{2}(\Z)$. This is the object of the present section.

At first sight, the notion of a positively turbulent homeomorphism is not intrisic since it depens on the choice of the oriented line $\Delta^{+}$. For example, if one changes the orientation of $\Delta^{+}$, then the positively turbulent homeomorphisms change. But of course the new turbulent homeomorphisms have identical properties and nothing distinguishes them from the preceeding ones since the attribution of an orientation to $\Delta^{+}$ is purely conventionnal. The notation $\mathcal{T}^{+}$ is therefore convenient ~: let us call those homeomorphisms positively turbulent, as opposed to the negatively turbulent $\mathcal{T}^{-}$ that are turbulent relative to $X$ and $\Delta^{-}$. Let us also define the turbulent homeomorphism as those that are conjugated in $\hom$ to some positively turbulent homeomorphism.

Let us supppose that $X = \lbrace -1; 0 ; 1 \rbrace$ and $\Delta^{+}$ is the real line with positive orientation, and consider the symetry $\varsigma(x;y) = (-x;-y)$. It preserves the orientation or $\R^{2}$ and the line $\Delta^{+}$, but its restriction to $\Delta^{+}$ returns its orientation.  And since the inverse of a top displacement to the left is a bottom displacement to the left, it is also a top displacement to the right according to the reversed orientation. A negative positively turbulent homeomorphism is therefore simply the inverse of the positive  positively turbulent one, and its isotopy classe is $\varsigma$-conjugated to the isotopy class of its inverse. More precisely, if one denotes by $\varphi_{A}$ the isotopy class of a homeomorphism of the type $A$, and $f_{B}$  the isotopy class of a homeomorphism of the type $B$, one has 
$$\varsigma\, f_{A} \, \varsigma \equiv f_{B}^{-1}~~\iff \varsigma\, f_{A}=f_{B}^{-1} \varsigma  ~~~~\mathrm{and}~~\varsigma\, f_{B} \, \varsigma \equiv f_{A}^{-1} $$

%%%%%%%%
%%%%%%%%%%
%%%%%%%%%%
\setlength{\unitlength}{0.7mm}
\begin{figure}
%\begin{picture}(100, 70)(-140,-30)
\begin{picture}(100,260)(-65,-150)

%%LIGNE1

\color{mygreen}
\put(-30,100){\line(1,0){20}}
\put(110,100){\line(1,0){20}}

\color{red}
\put(-10,100){\line(1,0){20}}
\put(90,100){\line(1,0){20}}

\color{black}
\put(0,100){\vector(1,0){0}}
\put(-19,100){\vector(1,0){0}}

\put(10,100){\circle*{1.5}}
\put(-10,100){\circle*{1.5}}
\put(-30,100){\circle*{1.5}}

\put(-10,100){\qbezier[25](-20,-4)(-20,-8.28)(-14.14,-14.14)}
\put(-10,100){\qbezier[25](-14.14,-14.14)(-8.28,-20)(0,-20)}
\put(-10,100){\qbezier[25](0,-20)(8.28,-20)(14.14,-14.14)}
\put(-10,100){\qbezier[25](14.14,-14.14)(20,-8.28)(19.9,-4)}

\put(-10,100){\qbezier[25](20,4)(20,8.28)(14.14,14.14)}
\put(-10,100){\qbezier[25](14.14,14.14)(8.28,20)(0,20)}
\put(-10,100){\qbezier[25](0,20)(-8.28,20)(-14.14,14.14)}
\put(-10,100){\qbezier[25](-14.14,14.14)(-20,8.28)(-20,4)}

\put(-30,104){\vector(-1,-4){0}}
\put(10,96){\vector(1,4){0}}

\put(35,100){\vector(1,0){30}}
\put(50,102){\makebox(0,0)[bc]{$\varsigma$}}
\put(-10,122){\makebox(0,0)[bc]{$\varsigma$}}

\put(120,100){\vector(-1,0){0}}
\put(101,100){\vector(-1,0){0}}
\put(90,100){\circle*{1.5}}
\put(110,100){\circle*{1.5}}
\put(130,100){\circle*{1.5}}

%%LIGNE2

\put(-10,122){\makebox(0,0)[bc]{$\varsigma$}}
\color{mygreen}
\put(-30,50){\line(1,0){20}}

\put(110,50){\qbezier[100](20,0)(20,-8.28)(14.14,-14.14)}
\put(110,50){\qbezier[100](14.14,-14.14)(8.28,-20)(0,-20)}
\put(110,50){\qbezier[100](0,-20)(-8.28,-20)(-14.14,-14.14)}
\put(110,50){\qbezier[100](-14.14,-14.14)(-20,-8.28)(-20,0)}

\color{red}
\put(-10,50){\line(1,0){20}}
\put(110,50){\line(1,0){20}}

\color{black}
\put(-0,50){\vector(1,0){0}}
\put(-19,50){\vector(1,0){0}}

\put(10,50){\circle*{1.5}}
\put(-10,50){\circle*{1.5}}
\put(-30,50){\circle*{1.5}}

\put(119,50){\vector(-1,0){0}}
\put(110,30){\vector(1,0){0}}

\put(90,50){\circle*{1.5}}
\put(110,50){\circle*{1.5}}
\put(130,50){\circle*{1.5}}

%%%%%%%%%
%RAYON****10%
%%%%%%%%%
\put(0,50){\qbezier[10](-10,-2)(-10,-4.14)(-7.07,-7.07)}
\put(0,50){\qbezier[10](-7.07,-7.07),(-4.14,-10),(0,-10)}
\put(0,50){\qbezier[10](0,-10),(4.14,-10)(7.07,-7.07)}
\put(0,50){\qbezier[10](7.07,-7.07)(10,-4.14)(10,-2)}
\put(-10,52){\vector(-1,-4){0}}

\put(0,50){\qbezier[10](-10,2)(-10,4.14)(-7.07,7.07)}
\put(0,50){\qbezier[10](-7.07,7.07),(-4.14,10),(0,10)}
\put(0,50){\qbezier[10](0,10),(4.14,10)(7.07,7.07)}
\put(0,50){\qbezier[10](7.07,7.07)(10,4.14)(10,2)}
\put(10,48){\vector(1,4){0}}

\put(100,50){\qbezier[10](-10,-2)(-10,-4.14)(-7.07,-7.07)}
\put(100,50){\qbezier[10](-7.07,-7.07),(-4.14,-10),(0,-10)}
\put(100,50){\qbezier[10](0,-10),(4.14,-10)(7.07,-7.07)}
\put(100,50){\qbezier[10](7.07,-7.07)(10,-4.14)(10,-2)}
\put(90,52){\vector(-1,-4){0}}

\put(100,50){\qbezier[10](-10,2)(-10,4.14)(-7.07,7.07)}
\put(100,50){\qbezier[10](-7.07,7.07),(-4.14,10),(0,10)}
\put(100,50){\qbezier[10](0,10),(4.14,10)(7.07,7.07)}
\put(100,50){\qbezier[10](7.07,7.07)(10,4.14)(10,2)}
\put(110,48){\vector(1,4){0}}

\put(0,62){\makebox(0,0)[bc]{$f_{B}^{-1}$}}

\put(100,62){\makebox(0,0)[bc]{$f_{A}$}}

\put(110,20){\vector(-1,0){0}}

\put(50,52){\makebox(0,0)[bc]{$f_{B}^{-1}$}}

\put(35,50){\vector(1,0){30}}

%LIGNE3
\color{mygreen}
\put(-10,0){\line(1,0){20}}
\put(110,0){\line(1,0){20}}

\color{red}
\put(-30,0){\line(1,0){20}}

\put(110,0){\qbezier[100](20,0)(20,8.28)(14.14,14.14)}
\put(110,0){\qbezier[100](14.14,14.14)(8.28,20)(0,20)}
\put(110,0){\qbezier[100](0,20)(-8.28,20)(-14.14,14.14)}
\put(110,0){\qbezier[100](-14.14,14.14)(-20,8.28)(-20,0)}

\color{black}
\put(0,0){\vector(-1,0){0}}
\put(-19,0){\vector(-1,0){0}}

\put(120,-12){\makebox(0,0)[tc]{$f_{B}^{-1}$}}

\put(90,0){\circle*{1.5}}
\put(110,0){\circle*{1.5}}
\put(130,0){\circle*{1.5}}

\put(-30,0){\circle*{1.5}}
\put(-10,0){\circle*{1.5}}
\put(10,0){\circle*{1.5}}

\put(120,0){\vector(1,0){0}}

\put(120,0){\qbezier[10](-10,-2)(-10,-4.14)(-7.07,-7.07)}
\put(120,0){\qbezier[10](-7.07,-7.07),(-4.14,-10),(0,-10)}
\put(120,0){\qbezier[10](0,-10),(4.14,-10)(7.07,-7.07)}
\put(120,0){\qbezier[10](7.07,-7.07)(10,-4.14)(10,-2)}
\put(110,2){\vector(-1,-4){0}}

\put(120,0){\qbezier[10](-10,2)(-10,4.14)(-7.07,7.07)}
\put(120,0){\qbezier[10](-7.07,7.07),(-4.14,10),(0,10)}
\put(120,0){\qbezier[10](0,10),(4.14,10)(7.07,7.07)}
\put(120,0){\qbezier[10](7.07,7.07)(10,4.14)(10,2)}
\put(130,-2){\vector(1,4){0}}

\put(50,2){\makebox(0,0)[bc]{$f_{B}^{-1}$}}
\put(65,0){\vector(-1,0){30}}

\put(140,25){\qbezier[100](0,20)(20,0)(0,-20)}
\put(153,25){\makebox(0,0)[cl]{$f_{A}$}}
\put(140,5){\vector(-1,-1){0}}

\put(-40,25){\qbezier[100](0,20)(-20,0)(0,-20)}
\put(-53,25){\makebox(0,0)[cr]{$\varsigma$}}
\put(-40,5){\vector(1,-1){0}}

%%LIGNE4
\color{red}
\put(-10,-50){\line(1,0){20}}
\put(110,-50){\qbezier[100](20,0)(20,-8.28)(14.14,-14.14)}
\put(110,-50){\qbezier[100](14.14,-14.14)(8.28,-20)(0,-20)}
\put(110,-50){\qbezier[100](0,-20)(-8.28,-20)(-14.14,-14.14)}
\put(110,-50){\qbezier[100](-14.14,-14.14)(-20,-8.28)(-20,0)}

\color{mygreen}
\put(-30,-50){\line(1,0){20}}
\put(90,-50){\line(1,0){20}}

\color{black}

\put(50,-48){\makebox(0,0)[bc]{$f_{A}^{-1}$}}
\put(35,-50){\vector(1,0){30}}

\put(0,-50){\vector(1,0){0}}
\put(-19,-50){\vector(1,0){0}}

\put(99,-50){\vector(-1,0){0}}
%\put(20,-20){\vector(1,0){0}}
\put(10,-50){\circle*{1.5}}
\put(-10,-50){\circle*{1.5}}
\put(-30,-50){\circle*{1.5}}

\put(90,-50){\circle*{1.5}}
\put(110,-50){\circle*{1.5}}
\put(130,-50){\circle*{1.5}}

%%%%%%%%%
%RAYON****10%
%%%%%%%%%
\put(-20,-50){\qbezier[10](-10,-2)(-10,-4.14)(-7.07,-7.07)}
\put(-20,-50){\qbezier[10](-7.07,-7.07),(-4.14,-10),(0,-10)}
\put(-20,-50){\qbezier[10](0,-10),(4.14,-10)(7.07,-7.07)}
\put(-20,-50){\qbezier[10](7.07,-7.07)(10,-4.14)(10,-2)}
\put(-30,-52){\vector(-1,4){0}}

\put(-20,-50){\qbezier[10](-10,2)(-10,4.14)(-7.07,7.07)}
\put(-20,-50){\qbezier[10](-7.07,7.07),(-4.14,10),(0,10)}
\put(-20,-50){\qbezier[10](0,10),(4.14,10)(7.07,7.07)}
\put(-20,-50){\qbezier[10](7.07,7.07)(10,4.14)(10,2)}
\put(-10,-48){\vector(1,-4){0}}

\put(120,-50){\qbezier[10](-10,-2)(-10,-4.14)(-7.07,-7.07)}
\put(120,-50){\qbezier[10](-7.07,-7.07),(-4.14,-10),(0,-10)}
\put(120,-50){\qbezier[10](0,-10),(4.14,-10)(7.07,-7.07)}
\put(120,-50){\qbezier[10](7.07,-7.07)(10,-4.14)(10,-2)}
\put(130,-48){\vector(1,-4){0}}

\put(120,-50){\qbezier[10](-10,2)(-10,4.14)(-7.07,7.07)}
\put(120,-50){\qbezier[10](-7.07,7.07),(-4.14,10),(0,10)}
\put(120,-50){\qbezier[10](0,10),(4.14,10)(7.07,7.07)}
\put(120,-50){\qbezier[10](7.07,7.07)(10,4.14)(10,2)}
\put(110,-52){\vector(-1,4){0}}

\put(120,-38){\makebox(0,0)[bc]{$f_{B}$}}
\put(-20,-38){\makebox(0,0)[bc]{$f_{A}^{-1}$}}

\put(140,-75){\qbezier[100](0,20)(20,0)(0,-20)}
\put(153,-75){\makebox(0,0)[cl]{$f_{B}$}}
\put(140,-95){\vector(-1,-1){0}}

\put(-40,-75){\qbezier[100](0,20)(-20,0)(0,-20)}
\put(-53,-75){\makebox(0,0)[cr]{$\varsigma$}}
\put(-40,-95){\vector(1,-1){0}}

%%LIGNE5

\color{red}

\put(-30,-100){\line(1,0){20}}
\put(90,-100){\line(1,0){20}}

\color{mygreen}
\put(-10,-100){\line(1,0){20}}

\put(110,-100){\qbezier[100](20,0)(20,8.28)(14.14,14.14)}
\put(110,-100){\qbezier[100](14.14,14.14)(8.28,20)(0,20)}
\put(110,-100){\qbezier[100](0,20)(-8.28,20)(-14.14,14.14)}
\put(110,-100){\qbezier[100](-14.14,14.14)(-20,8.28)(-20,0)}

\color{black}

\put(100,-100){\vector(1,0){0}}

\put(90,-100){\circle*{1.5}}
\put(110,-100){\circle*{1.5}}
\put(130,-100){\circle*{1.5}}

\put(-30,-100){\circle*{1.5}}
\put(-10,-100){\circle*{1.5}}
\put(10,-100){\circle*{1.5}}

\put(50,-98){\makebox(0,0)[bc]{$f_{A}^{-1}$}}
\put(65,-100){\vector(-1,0){30}}

\put(100,-100){\qbezier[10](-10,-2)(-10,-4.14)(-7.07,-7.07)}
\put(100,-100){\qbezier[10](-7.07,-7.07),(-4.14,-10),(0,-10)}
\put(100,-100){\qbezier[10](0,-10),(4.14,-10)(7.07,-7.07)}
\put(100,-100){\qbezier[10](7.07,-7.07)(10,-4.14)(10,-2)}
\put(90,-98){\vector(-1,4){0}}

\put(100,-100){\qbezier[10](-10,2)(-10,4.14)(-7.07,7.07)}
\put(100,-100){\qbezier[10](-7.07,7.07),(-4.14,10),(0,10)}
\put(100,-100){\qbezier[10](0,10),(4.14,10)(7.07,7.07)}
\put(100,-100){\qbezier[10](7.07,7.07)(10,4.14)(10,2)}
\put(110,-102){\vector(1,-4){0}}

\put(100,-112){\makebox(0,0)[tc]{$f_{A}^{-1}$}}

\put(0,-100){\vector(-1,0){0}}
\put(-19,-100){\vector(-1,0){0}}

\put(110,-80){\vector(-1,0){0}}

\end{picture}
\caption{Topological illustration of the relation $\varsigma = f_{B}^{-1} \circ f_{A} \circ f_{B}^{-1} = f_{A}^{-1} \circ f_{B} \circ f_{A}^{-1}$. \label{figure35}}

\end{figure}
\setlength{\unitlength}{0.7mm}
%%%%%%%%%
%%%%%%%%%
%%%%%%%%%

Of course the homeomorphism $\varsigma$ is not a turbulent homeomorphism since for example $\varsigma^{2}=\id$ and no non trivial turbulent homeomorphism can be periodic. But one can precise its relation to turbulent homeomorphisms. One can for example look at figure \ref{figure35} or compute particular isotopies to satisfy and prove general expressions for $\varsigma$~:

\begin{eqnarray*} \varsigma & = & f_{A}^{-1} \circ f_{B} \circ f_{A}^{-1} \\  
				& = & f_{B}^{-1} \circ f_{A} \circ f_{B}^{-1} \\
				& = & f_{A} \circ f_{B}^{-1} \circ f_{A} \\
				& = & f_{B} \circ f_{A}^{-1} \circ f_{B} \end{eqnarray*}

An important consequence is the equality 

\begin{eqnarray*} f_{A}^{-1} \circ f_{B} \circ f_{A}^{-1} = f_{B}^{-1} \circ f_{A} \circ f_{B}^{-1} 
& \iff & (f_{A} \circ f_{B}^{-1})^{3}= \id \\ 
& \iff & (f_{B}^{-1} \circ f_{A})^{3}= \id \end{eqnarray*}

It means that $\rho = f_{A} \circ f_{B}^{-1}$ is another non turbulent homeomorphism, and element of order $3$ satisfying the relation 
$$\varsigma \circ \rho \circ \varsigma = \left(f_{A}^{-1} \circ f_{B} \circ f_{A}^{-1}) \circ  \left(f_{A} \circ f_{B}^{-1}\right) \circ  \left(f_{B} \circ f_{A}^{-1} \circ f_{B} \right) = (f_{A} \circ f_{B}^{-1})^{2} = \rho^{2}  \right. $$

To extend on the hole set of isotopy classes of homeomorphims relative to $X$ the morphism 
$$\begin{array}{lccl} \Phi: & \mathcal{T}^{+} & \longrightarrow &\mathrm{SL}_{2}(\Z) \\ & f & \longmapsto & \mt(f) \end{array}$$  one has naturally to set 
\begin{eqnarray*} \mt(f_{A}^{-1}) = \left(\mt (f_{A}) \right)^{-1} = \left[\begin{array}{cc} 1 & 1 \\ 0 & 1 \end{array} \right]^{-1} & =& \left[\begin{array}{cc} 1 & -1 \\ 0 & 1 \end{array} \right] \\
\mt(f_{B}^{-1}) = \left(\mt (f_{B}) \right)^{-1} = \left[\begin{array}{cc} 1 & 0 \\ 1 & 1 \end{array} \right]^{-1} & =& \left[\begin{array}{cc} 1 & 0 \\ -1 & 1 \end{array} \right] \end{eqnarray*}

But this imply that 

\begin{eqnarray*} \mt(\varsigma)  & = &  A^{-1} \times B \times A^{-1} = \left[\begin{array}{cc} 1 & -1 \\ 0 & 1 \end{array} \right]\left[\begin{array}{cc} 1 & 0 \\ 1 & 1 \end{array} \right]\left[\begin{array}{cc} 1 & -1 \\ 0 & 1 \end{array} \right] = \left[\begin{array}{cc} 0 & -1 \\ 1 & 0 \end{array} \right]  \\
 & = &  B^{-1} \times A \times B^{-1} = \left[\begin{array}{cc} 1 & 0 \\ -1 & 1 \end{array} \right]\left[\begin{array}{cc} 1 & 1 \\ 0 & 1 \end{array} \right]\left[\begin{array}{cc} 1 & 0 \\ -1 & 1 \end{array} \right] = \left[\begin{array}{cc} 0 & 1 \\ -1 & 0 \end{array} \right]
\end{eqnarray*}

Similarily, since $\rho = f_{A}\circ f_{B}^{-1}$ and $\rho^{3} = \id$ one must have 

 \begin{eqnarray*}  \mt(\rho)  =  A\times B^{-1} = \left[\begin{array}{cc} 1 & 1 \\ 0 & 1 \end{array} \right]\left[\begin{array}{cc} 1 & 0 \\ -1 & 1 \end{array} \right] & = & \left[\begin{array}{cc} 0 & 1 \\ -1 & 1 \end{array} \right]\, , \\
  \mt(\rho^{3})  =  (A\times B^{-1})^{3} = \left[\begin{array}{cc} 0 & 1 \\ -1 & 1 \end{array} \right]^{3} = \left[\begin{array}{cc} -1 & 0 \\ 0 & -1 \end{array} \right] & =&  \left[\begin{array}{cc} ~1 & 0 \\ 0 & ~1 \end{array} \right] \, .
 \end{eqnarray*}
 
The matrice $\mt(f) \in \mathrm{SL}_{2}(\Z)$ with positive coefficients associated to a homeomorphism $f\in \mathcal{T}^{+}$, or equivalently to its topological snail, must in fact be considered as representative of its class modulo the multiplication by $-\id$ in $\mathrm{SL}_{2}(\Z)$. In other words, the map $\Phi$ must not be defined as a map into $\mathrm{SL}_{2}(\Z)$ but into $\mathrm{PSL}_{2}(\Z)$. When we look at a turbulence matrice $M_{T}(f)$, we must simultaneaously associate its opposite $-M_{T}(f)$. In a similar fashion, the homeomorphism $f \in \homplus$ has a natural conjugated one $f\circ \sigma \in \hom$.

Since $Z=\mt(\varsigma)$, for any $f\in \mathcal{T}^{+}$ one has therefore to set~:
\begin{eqnarray*} \Phi(\varsigma \circ f) =  Z \times \mt(f) = \left[\begin{array}{cc} 0 & 1 \\ -1 & 0 \end{array} \right] \left[\begin{array}{cc} g_{1} & r_{1} \\ g_{2} & r_{2} \end{array} \right] & = & \left[\begin{array}{cc} g_{2} & r_{2} \\ -g_{1} & -r_{1} \end{array} \right] \\
\Phi(f \circ \varsigma)  =  \mt(f) \times Z = \left[\begin{array}{cc} g_{1} & r_{1} \\ g_{2} & r_{2} \end{array} \right] \left[\begin{array}{cc} 0 & 1 \\ -1 & 0 \end{array} \right] & = & \left[\begin{array}{cc} -r_{1} & \,~g_{1}~ \\ -r_{2} & \,~g_{2}~ \end{array} \right] \\
 \Phi(\varsigma \circ f \circ \varsigma) = Z  \times \mt(f) \times Z=   \left[\begin{array}{cc} 0 & 1 \\ -1 & 0 \end{array} \right] \left[\begin{array}{cc} g_{1} & r_{1} \\ g_{2} & r_{2} \end{array} \right] \left[\begin{array}{cc} 0 & 1 \\ -1 & 0 \end{array} \right] & = & \left[\begin{array}{cc} r_{2} & -g_{2} \\ -r_{1} & g_{1} \end{array} \right] 
\end{eqnarray*}

Since $\Phi(\varsigma) = Z$ and $\sigma(\sn(n,p)) = \sigma(\sn^{-}(n;p)$, we are now conducted to define the general topological snail for $n, p \in \Z$ are relatively prime, or $n=\pm 1$ with $p=0$;  $p=\pm1$ with $n=0$, by
$$\begin{array}{rcccrcl} \sn(n;p)  & = & \sn(n;p) & \iff & np& \geqslant &0 \\
 \sn(n;p)& = &\sigma\left( \sn(n;p) \right) & \iff  & np &\leqslant &0 
 \end{array}$$

One has in general $\sn(-n;p) = \sigma\left(\sn(n;p)\right)$: the action of $\sigma$ on $\sn(n;p)$ reduces to the multiplication of 
$\left[\begin{array}{c} n \\ p \end{array} \right]$ by $Y$. One must therefore finally set 

$$\Phi(\sigma) = \left[\begin{array}{cc} -1 & 0 \\ 0 & 1 \end{array} \right] =Y
$$

On has then the following canonical form for a homeomorphism $f\in \hom$~:

\begin{theo} \label{canonicalform}
Let $\Delta^{+}\subset \C$ be the real line, $X = \lbrace -1; 0 ; 1 \rbrace \subset \Delta^{+}$,  $\sigma$ be the orthogonal symetry about $\Delta^{+}$ defined by $\sigma(z) = \overline{z}$, $\varsigma$ the central symetry about $0$ defined by $\varsigma(z) = -z$. Then for any homeomorphism $h$ of the complex plane $\C$ such that $h(X)=X$, there is a unique $f\in \mathcal{T}^{+}$, a positively turbulent homeomorphism $f$ relatively to $X$ and $\Delta^{+}$ such that one has either and exclusively one of the following canonical forms~: 
$$\begin{array}{rclcrclcrclcrcl}
h & = & f &~~ & h & = & f \circ \varsigma & ~~& h & = & \varsigma \circ f &~~ &h  & =& \varsigma \circ f \circ \varsigma \\
h & =&  f \circ \sigma & & h & =&  f \circ \varsigma \circ \sigma & & h & =&  \varsigma \circ f \circ \sigma & & h & =& \varsigma \circ f \circ \varsigma \circ \sigma
\end{array}$$
Furthermore, if one defines $\Phi$ as a group homomorphism according to the preceding equalities and using 
\begin{eqnarray*}
 \Phi(f) & = & \mt(f) =\left[\begin{array}{cc} g_{1} & r_{1} \\ g_{2} & r_{2} \end{array} \right] = M   \\
\Phi(\varsigma) & = &  \mt(\varsigma) = \left[\begin{array}{cc} 0 & 1 \\ -1 & 0 \end{array} \right]   =  Z \\
 \Phi(\sigma) & = & \mt(\varsigma)  =\left[\begin{array}{cc} -1 & 0 \\ 0 & 1 \end{array} \right]  = Y
\end{eqnarray*}
then one obtains an isomorphism between the mapping class group of $\R^{2}\setminus X$ and $\mathrm{PSL}_{2}(\Z)$. Similarily if one systematically identifies $(n;p)$ with $(-n;-p)$ and generaly defines and writes $\mt(f)= \Phi(f)$, then for any $(n;p) \in \Z^{2}$, one has~:
\begin{eqnarray*}
\sn(n';p') \equiv f\left(\sn(n;p)\right) & \iff & \left[\begin{array}{c} n' \\ p' \end{array}\right] \equiv \mt(f) \left[\begin{array}{c} n \\ p \end{array}\right]  
\end{eqnarray*} 
 \end{theo}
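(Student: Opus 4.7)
The plan is to reduce the classification to the orientation-preserving case, use the spinning skeleton to produce an explicit canonical form, and then verify that the assembled map $\Phi$ is in fact an isomorphism with $\mathrm{PSL}_{2}(\Z)$.

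First, if $h$ reverses orientation I replace it by $h\circ\sigma$, which is orientation-preserving; any canonical form for $h\circ\sigma$ among the four orientation-preserving types then yields one for $h$ in the $\sigma$-composed list, compatibly with the prospective homomorphism via $\Phi(h)=\Phi(h\sigma)\,Y^{-1}$. For orientation-preserving $h$ with $h(X)=X$, I examine the simple curve $\Gamma=h([p_{1};p_{3}]_{\Delta})$, whose isotopy class rel $X$ is captured by its spinning skeleton $\Psi(\Gamma)$ via theorem \ref{spinningskeleton}. Propositions \ref{proposition1} and \ref{proposition2} force $\Psi(\Gamma)$ to be, up to the action of $\sigma$ (which flips top and bottom position) and of $\varsigma$ (which swaps the two base points), a simple topological snail $\sn(n;p)$ with $n,p\geqslant 0$ relatively prime, including the degenerate cases $\sn(1;0)$ and $\sn(0;1)$ treated in section \ref{sectiondouble}. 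Theorem \ref{decomposition} then produces the unique $f\in\mathcal{T}^{+}$ with $f(\sn(1;1))\equiv\sn(n;p)$. After absorbing the $\varsigma$'s on the left and/or right, and $\sigma$ on the right if applicable, the resulting composite and $h$ agree on $\sn(1;1)$ up to isotopy rel $X$; theorem \ref{topo} then upgrades this to an isotopy of the homeomorphisms themselves, yielding the canonical form.

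Uniqueness among the eight forms is established by a direct matrix computation using the formulas displayed in the theorem: the four orientation-preserving matrices $\Phi(f)$, $\Phi(\varsigma f)$, $\Phi(f\varsigma)$, $\Phi(\varsigma f\varsigma)$ land in four distinct sign patterns for the entries, and left-multiplication by $Y=\Phi(\sigma)$ toggles the first row, so the eight classes are pairwise distinct in $\mathrm{PSL}_{2}(\Z)$. For the isomorphism part, $\Phi$ is a group homomorphism by theorem \ref{decomposition} and the displayed relations for $\varsigma$ and $\sigma$; surjectivity onto $\mathrm{PSL}_{2}(\Z)$ follows from existence of the canonical form together with the classical fact that $\mathrm{PSL}_{2}(\Z)$ is generated by $A$, $B$ and $Z$ (with $Y$ adjoined for the full mapping class group); injectivity reduces, via the homomorphism property, to showing that $\Phi(h)=\pm\id$ implies $h\equiv\id$ rel $X$, which follows because such an $h$ must act as the identity on the isotopy class of $\sn(1;1)$ rel $X$, and theorem \ref{topo} then provides the required isotopy.

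The main obstacle is the identity $(f_{A}\circ f_{B}^{-1})^{3}\equiv\id$ in the mapping class group, corresponding to the matrix relation $(AB^{-1})^{3}=-I$ in $\mathrm{SL}_{2}(\Z)$; this is what forces $\Phi$ to descend from a prospective map into $\mathrm{SL}_{2}(\Z)$ to an isomorphism with $\mathrm{PSL}_{2}(\Z)$, and it must be proved by exhibiting an explicit isotopy between $(f_{A}\circ f_{B}^{-1})^{3}$ and the identity. This, together with the companion relations $\varsigma^{2}\equiv\id$, $\sigma^{2}\equiv\id$, $\sigma\varsigma\equiv\varsigma\sigma$, and the expressions $\varsigma\equiv f_{A}^{-1}f_{B}f_{A}^{-1}\equiv f_{B}^{-1}f_{A}f_{B}^{-1}$ displayed just before the theorem, are the \emph{zip relations} alluded to in the section title \ref{canonicalsection}, and verifying them geometrically, rather than by matrix algebra, is the essentially nontrivial work behind the classification.
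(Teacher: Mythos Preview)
Your argument conflates two different test curves and this creates a real gap. You begin with $\Gamma=h([p_{1};p_{3}]_{\Delta})$, the image of the \emph{segment}, which does pass through $h(p_{2})$; but Propositions \ref{proposition1} and \ref{proposition2} are proved only for arcs branched on two points of $X$ and \emph{not} containing the third, so they do not apply to $\Gamma$ as stated. Later you switch to $\sn(1;1)$, the top \emph{half-circle}, to which the snail analysis does apply; but then your appeal to theorem \ref{topo} fails, because $\sn(1;1)$ avoids $p_{2}$, so knowing $f^{-1}h(\sn(1;1))\equiv\sn(1;1)$ only yields an isotopy rel $\{p_{1},p_{3}\}$, not rel $X$. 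Concretely, there are homeomorphisms (the subgroup fixing $\sn(1;1)$ described in the paper's proof) that preserve $\sn(1;1)$ and are not isotopic to the identity rel $X$. Your injectivity argument has the same defect.

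The paper closes exactly this gap by tracking not one arc but the \emph{closed snail} $h(\sn(1;1))\sqcup h(\sn(1;-1))$, the image of the full circle. The two pieces carry two independent invariants, the emerging side (top or bottom) and the dominant colour (green or red), and the key lemma you are missing is that the dominant colour is \emph{invariant} under the action of $\mathcal{T}^{+}$ on snails in top position. This is what forces $f^{-1}h$ to preserve $\sn(1;-1)$ once it preserves $\sn(1;1)$, and hence to be the identity. Your recognition of the zip relations as the essential geometric input is correct, and your handling of the orientation-reversing case and of the sign-pattern uniqueness is fine; but the existence half of the canonical form, and the injectivity of $\Phi$, both require the two-arc (closed snail) argument or an equivalent device tracking the full segment $[p_{1};p_{3}]_{\Delta}=\sn(1;0)\sqcup\sn(0;1)$ split at $p_{2}$, not just the single arc $\sn(1;1)$.
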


\begin{proof}
Let $X= \lbrace -1; 0 1 \rbrace$ and $h$ be an homeomorphism such that $h(X)=X$.

Let us first define the fondamental circle and write $\mathrm{C}$ the circle of center 0 and radius 1. It is a junction of two simple arcs doubly branched on each other,  
$$C = \Gamma_{g} \sqcup \Gamma_{r}\, , $$
where $\Gamma_{g} = \mathrm{C} \cap \H^{+}$ and $\Gamma_{r} = \mathrm{C} \cap \H^{-}$. Let us all $\Gamma_{g}$ the green fundamental curve and $\Gamma_{r}$ the red fundamental curve. 

The green fundamental curve $\Gamma_{g} =  \sn(1,1) = \sn(1;1)$ is a half-circle in the top half plane doubly branched on $\big\{ p_{1}; p_{3} \big\}$.  Its image by the symetry $\sigma$ is the red fundamental curve $\Gamma_{r} =  \sigma(\Gamma_{g}) = \sigma(\sn(1,1)) = \sn(1;-1)$. The image $h(C)$ of the fundamental curve splits in two disjoint curves $h(\Gamma_{g})$ and $h(\Gamma_{r})$, doubly branched on $\lbrace  h(p_{1}); h(p_{3}) \rbrace$. Their spinning skeletons $\Psi\left(h(\Gamma_{1})\right)$ and $\Psi\left(h(\Gamma_{2})\right)$  define disjoint topological snails that are doubly branched on $\big\lbrace  h(p_{1}); h(p_{3}) \big\rbrace$ and only depend on the mapping class of $h$ relative to $X$. Call those snails respectively the green and the red snail, and their junction the closed snail of $h$~:
$$\sn(h)\equiv \sn(\Gamma_{g}) \sqcup \sn(\Gamma_{r}) \equiv h(\sn(1;1)) \sqcup h(\sn(1;-1))$$

Suppose first that both the red and the green snail have their emerging arc on different sides of $\Delta^{+}$. Then those arcs must be both doubly branched on each others and on $\lbrace p_{1}; p_{3} \rbrace$, which corresponds to only four possible isotopy classes~: 
\begin{itemize}
\item The identity $\id$ and $\varsigma$, the symetry around $p_{2}=0$. They preserve the orientation. 
\item The symetry $\sigma$ around the horizontal axis $\Delta^{+}$ and the symetry $\sigma \circ \varsigma = \varsigma \circ \sigma$ around the vertical axis whose equation is $x=0$. They reverse the orientation. 
\end{itemize}
Furthermore, those classes are uniquely determined by their action on the orientation and on the red and the green arc. Any homeorphism that preserves both arcs $\sn(1;1)$ and $\sn(1;-1)$ oriented according to $\Delta^{+}$ is effectively homotopic to the identity. It is also very natural to consider the subgroup formed by the isotopy classes in $\hom$ that globaly preserve the fundamental circle $\sn(1;1) \sqcup \sn(1;-1)$. It contains of course the classes of $\id$, $\sigma$, $\varsigma$ and $\sigma\circ \varsigma$. But no other class. Another important sub-group is formed by the homeomorphisms $f$ that preserve $\sn(1;1)$ and possibly act non trivially on $\sn(1;-1)$. They are easy to identify simply analyzing the red snail from its ends $p_{1}$ and $p_{3}$. Suppose that $f$ is such an homeomorphism and let us call $\gamma_{1}$ the half circle in $f(\sn(1;-1))$ that has an end in $p_{1}$ and $\gamma_{2}$ the half  in $f(\sn(1;-1))$ that has an end in $p_{3}$. If one of them is emerging in $\H^{-}$, then necessarily both coincide with $\sn(1;-1)$ and $f\equiv \id$. Else they are both emerging in the top half-plane and necessarily on different sides of $\sn(1;1)$. It is then an exercise to check that $f(\sn(1;-1)) = \sn(2n+1;2n+3)$ or $f(\sn(1;-1)) = \sn(2n+3;2n+1)$ for some $n\geqslant 0$.

When the red and the green snail have their visible emerging arc on a common definite side of $\Delta^{+}$, which simply  defines the emerging side of the closed snail $\sn(h)$, there is a definite dominant snail, of a given color, surrounding the other one on this side of $\Delta^{+}$ and on the other one (see for example figure \ref{figure36}). Let us say that this colour is the dominant color of the snail.

For a positively turbulent  $f\in \mathcal{T}^{+}$ the rule (see figure \ref{figure30})
\begin{eqnarray*}
\sn(n';p') = f(\sn(n;p)) & \iff & \left[\begin{array}{c} n' \\ p' \end{array}\right] = \mt(f) \left[\begin{array}{c} n \\ p \end{array}\right]  
\end{eqnarray*} 
extends to $\sn(n;p)$ for $n\geqslant -1$ and $p\geqslant -1$, with an exactly similar proof (see theorem \ref{decomposition} and its proof). And this implies that the color of the closed snail remains the same from the first step and at each step of the recurrence: when the snail is in top position, its color is invariant by the action of positively turbulent homeomorphism. 

%%%%%%%%
%%%%%%%%%%
%%%%%%%%%%
\input{figure36}
%%%%%%%%%
%%%%%%%%%
%%%%%%%%%

Let us suppose that $\sn(h)$ is green and emerging in the top half-plane $\H^{+}$. Then 
$$h(\sn(1;1))\equiv \sn(n;p)$$
 for some integers $n\geqslant 1$ and $p\geqslant 1$. According to the theorem \ref{decomposition}, let $f\in \mathcal{T}^{+}$ be such that $f(\sn(1;1)) \equiv \sn(n;p) $. Then $f^{-1} \circ h (\sn(1;1))\equiv \sn(1;1)$ and if $f^{-1} \circ h (\sn(-1;1))\nequiv \sn(-1;1)$, then $\sn(f^{-1} \circ h)$ would be in top position and red, but this is impossible for in this case $\sn(f\circ f^{-1} \circ h)= \sn(h)$ would also be red, since the color of a snail's homeomorphism in top position is invariant by the action of positively turbulent homeomorphisms. Therefore $f^{-1} \circ h \left(\sn(1;-1)\right) = \sn(1;-1)$ and $f^{-1} \circ h\equiv \id$ in $\R^{2} \setminus X$ or equivalenly $f\equiv h$ and $h\in \mathcal{T}^{+}$.  

This means that the positively turbulent homeomorphisms are the homeomorphisms that preserve the orientation and admit a closed snail in top position of the green color. But since $\varsigma$ and $\sigma$ permute $\Gamma_{g}$ and $\Gamma_{r}$, the snails  $\sn(h)$ and $\sn(h \circ \sigma)$  always have opposite colors, as have $\sn(h)$ and $\sn (h \circ \varsigma)$. And the snails $\sn(h)$ and $\sn(h \circ \sigma)$ have also opposite positions relative to $\Delta^{+}$, as have $\sn(h)$ and $\sn(\varsigma \circ h)$. Since $\sigma \circ \varsigma = \varsigma \circ \sigma$ (this map is a symetry around the vertical axis whose equation is $x=0$) and $\sigma$ reverses the orientation, one obtains the first part of the theorem and the definition of $\Phi(h)$. The color of the closed snail $\sn(h)$, red or green ; its position, top or bottom; and the orientation of $h$, positive or negative uniquely determine one of the eight possible forms for $h$~: $f$, $f\circ \varsigma$, $\varsigma \circ f$, $\varsigma \circ f \circ \varsigma$,  $f\circ \sigma$, $f\circ \varsigma\circ \sigma$, $\varsigma \circ f \circ \sigma$, $\varsigma \circ f \circ \varsigma\circ \sigma$, with a unique positively turbulent homeomorphism $f\in \mathcal{T}^{+}$. Let us call this map the positively turbulent part of $h$ relatively to $\Delta$. Setting 
$$\Phi(\varsigma) =  \mt(\varsigma) = \left[\begin{array}{cc} 0 & 1 \\ -1 & 0 \end{array} \right]   =  Z ~~\mathrm{and}~~\Phi(\sigma) = \mt(\sigma)  =\left[\begin{array}{cc} -1 & 0 \\ 0 & 1 \end{array} \right]  = Y\, $$ and extending $f\mapsto \mt(f)$ defined on $\mathcal{T}^{+}$ to a map that satisfy the law of group morphisms for all those formulas, one can therefore uniquely define a map
$$\begin{array}{cccl} \Phi~:& \mathrm{Hom}\left(\R^{2} \setminus X\right)& \longrightarrow & \mathrm{PSL}_{2}(\Z) \\ & f & \longmapsto &  \mt(f) \end{array}\, $$
 
It is defined using the closed topological snail and the proof that it is an isomorphism between $\mathrm{Hom}(\R^{2}\setminus X)$ and $\mathrm{PSL}_{2}(\Z)$ will on the contrary be based on their common algebraic properties.

Some very important nice and useful relations, the zip-relations, will retain our attention. One has first (see figure \ref{figure35})
$$ Z  =  A^{-1}BA^{-1}= B^{-1}AB^{-1}= BA^{-1}B = A B^{-1} A  \, , $$
and then
$$ AZA  = B\, ;~~~AZB= BZA = Z\,;~~~~~BZB= A\,.$$
and similarily 
$$ AYA = Y\, ;~~~BYB=Y ;~~~ZYZ=Y ;~~~YZY=Z\, .$$
Let us define $\tau= \sigma \circ \varsigma= \varsigma \circ \sigma$ and  $T=YZ=ZY$. Then of course $\Phi(\tau)= \mt(\tau) = T$. And one can write the zip relations under their canonical form~:
$$\left\lbrace 
\begin{array}{rcl} 
AT& =  &TB \\
AY & =&  YA^{-1} \\
AZ & = & ZB^{-1} \end{array}\right.~~~~~\left\lbrace 
\begin{array}{rcl} 
TA& =  &BT \\
YA & =&  A^{-1}Y \\
ZA & = & B^{-1}Z \end{array}\right.~~~~~\left\lbrace 
\begin{array}{rcl} 
BT& = & TA \\
BY & = & YB^{-1} \\
BZ & = & ZA^{-1} \end{array}\right. ~~~~~\left\lbrace 
\begin{array}{rcl} 
TB& =  &AT \\
YB & =&  B^{-1}Y \\
ZB & = & ZA^{-1} \end{array}\right.~~~~~
 $$

They are very useful for several reasons. In the proof of the theorem, they allow to define an algorithm to compute the canonical form of a product from  the canonical forms of its factors by zipping thrue any expression the special transformations $Z$, $Y$ or $T$ according to the needs, and by introducing them if necessary to simplify a formula. One can move a letter $T$, $Y$ or $Z$ inside an expression in the relative powers of $A$ and $B$, like for exemple~:
\begin{eqnarray*}
Y\left(AB^{-3}AB^{5}\right) & = & \left(A^{-1}B^{3}A^{-1}\right)Y\left(B^{5}\right) \\
Z\left(AB^{-3}AB^{5}\right) & = & \left(B^{-1}A^{3}B^{-1}A^{-5}\right)Z \\
T\left(AB^{-3}AB^{5}\right) & = & \left(BA^{-3}\right)T\left(AB^{5}\right) 
\end{eqnarray*} 

Let us suppose first that $h_{2}$ and $h_{1}$ are both orientation-preserving homeomorphisms and that $h_{2} = f_{2}$ is positively turbulent relative to $\Delta^{+}$. If $h_{1} = f_{1}$ is also positively turbulent, or if $h_{2} = f_{2} \circ \varsigma$, then the positively turbulent part of  $h_{2} \circ h_{1}$ is immediatly $f_{2} \circ f_{1}$ and one can check the morphism rule directly writing for example~:
$$\Phi(h_{2} \circ h_{1})= \Phi(f_{2} \circ f_{1} \circ \varsigma)= \Phi(f_{2} \circ f_{1}) \times \Phi(\varsigma)= \Phi(f_{2}) \times \Phi(f_{1}) \times \Phi(\varsigma)= \Phi(f_{2}) \times \Phi(f_{1}\circ\varsigma) = \Phi(h_{2}) \times \Phi(h_{1})$$

But if on the contrary the canonical form of $h_{1}$ is $\varsigma \circ f_{1}$ or $\varsigma \circ f_{1} \circ \varsigma$, then we will use the zip-relations to reduce the product to a canonical form. Consider for exemple the isotopy classes $h_{2} =  f_{B}\circ f_{A}^{2}$ and $h_{1} = \varsigma \circ f_{B} \circ f_{A}^{3} \circ \varsigma$. One first work in $\mathrm{PSL}_{2}(\Z)$, using the zip relations to write~: 
\begin{eqnarray*}
\mt(h_{2}) \times \mt(h_{1}) & = & BA^{2} Z B A^{3} Z \\
& = &BA^{2}A^{-1}B^{-3} \\
& = &  BAB^{-3} \\
& = & B^{2}(B^{-1}AB^{-1})B^{-2} \\
& = & B^{2} Z B^{-2} \\
& = & B^{2} A^{2}Z \\
\end{eqnarray*}

One therefore considers the unique antecedent of $B^{2}A^{2}Z$ in canonical form, $f_{B}^{2} \circ f_{A}^{2} \circ \varsigma$. The zip-relations are valuable for the homeomorphisms $f_{A}$, $f_{B}$, $\tau$, $\sigma$, $\varsigma$, and they can be used reversely to show that 
$$  f_{B}^{2} \circ f_{A}^{2} \circ \varsigma = f_{B} \circ f_{A}^{2} \circ \varsigma \circ f_{B} \circ f_{A}^{3} \circ \varsigma = h_{2} \circ h_{1} $$
And therefore one obtains that $\mt(h_{2}) \times \mt(h_{1})= \mt(f_{B}^{2} \circ f_{A}^{2} \circ \varsigma)= \mt(h_{2} \circ h_{1})$.

The idea of the preceeding computation is indeed very simple. 

The first step is to obtain a canonical expression for $\Phi(h_{2}) \times \Phi(h_{1})= \mt(h_{2}) \times \mt(h_{1})$. One considers two homeomorphisms $h_{2}$ in and $h_{1}$ in canonical form, and their expressions using $\Phi$, uniquely decomposed as positive powers of $A$ and $B$, with the corresponding letters $Y,Z,T$. One uses the zip-relations to put all the letters $Y,Z,T$ in the product at the right end of the expression. In general, one obtains a word in all positive or all negative powers of $A$ and $B$, followed by a word  of the same nature. If the two words are at the positive powers of $A$ and $B$ the work is done and the form is canonical. Else, there is a unique place where one can read one of the expressions $A^{-1}B$, $AB^{-1}$, $B^{-1}A$, $BA^{-1}$ and introduce the letter $Z$ to replace those combinations respectively by $ZA$, $BZ$, $ZB$ and $AZ$. Zipping this additional $Z$ to the right end of the expression, one obtains a single word in the positive or the negative powers of $A$ and $B$. If the powers are positive then the expression is canonical. Else one can introduce $Z^{2} = \id$ at the left end of the total expression and zip one of the letter $Z$ to the right end : the result is therefore a canonical expression.  

Once this first step is donne, one can consider the unique homeomorphism $h_{3}$ in canonical form whose image by $\Phi$ has the obtained canonical expression. On this expression, all the preceeding operations have a traduction in the mapping-class group that can be applied retrospectively to come back to the product of the canonical expressions of $h_{2}$ and $h_{1}$. Once has therefore that 
$$\Phi(h_{2} \circ h_{1}) = \Phi(h_{2}) \times \Phi(h_{1})\, . $$
And since all this can be done for any homeomorphism $h_{2}$ and $h_{1}$, the fundamental morphism rule for $\Phi$ is proved.

Furthermore, the turbulence matrix $\mt(f)$ of any homeomorphism $f\in \mathrm{Hom}\left(R^{2} \setminus X\right)$ can be uniquely determined from its topological properties. Let $f(\sn(1;0)$ and $f(\sn(0;1)$ be the respective red and green snail of $f$. Suppose first that $f\nequiv \id$ with a closed snail $\sn(f)$ positively emerging (in the top half-plane). Let $a,b,c,d$ be the positive integers such that 
$$ \left\lbrace \begin{array}{c}f(\sn(1;0)) \equiv \sn(a;b) \equiv \sn(-a;-b) \\   f(\sn(0;1)) \equiv \sn(c;d) \equiv \sn(-c;-d)\end{array}\right. $$
The first row of $\mt(f)$ must be $\left[\begin{array}{c} a \\ b \end{array}\right]$ or $\left[\begin{array}{c} -a \\ -b \end{array}\right]$ and its second raw $\left[\begin{array}{c} c \\ d \end{array}\right]$ or $\left[\begin{array}{c} -c \\ -d \end{array}\right]$. 

One always has 
$$\left| \begin{array}{cc} a & c \\ b & d \end{array} \right| = ad - bc = \pm1 $$ and the determinant $\mathrm{det}(\mt(f))$ must be equal to 1 if $f$ preserves the orientation or else $-1$. And one can always take $\left[\begin{array}{c} a \\ b \end{array}\right]$ as a first row for $\mt(f)$ and choose $\left[\begin{array}{c} c \\ d \end{array}\right]$ or $\left[\begin{array}{c} -c \\ -d \end{array}\right]$ in order to obtain the determinant $+1$ if $f$ preserves the orientation or else $-1$. This determines a unique matrix $\mt(f)$ in $\mathrm{PSL}_{2}(\Z)$.
Similarily, if $f\nequiv \id$ and $f$ is negatively emerging, let $a,b,c,d$ be the positive integers such that 
$$ \left\lbrace \begin{array}{c}f(\sn(1;0)) \equiv \sn(a;-b) \equiv \sn(-a;b) \\   f(\sn(0;1)) \equiv \sn(c;-d) \equiv \sn(-c;d)\end{array}\right. $$
Then chosing $\left[\begin{array}{c} a \\ -b \end{array}\right]$ as a first row for $\mt(f)$ and $\left[\begin{array}{c} c \\ -d \end{array}\right]$ or $\left[\begin{array}{c} -c \\ d \end{array}\right]$ as a second row in order to obtain the determinant $+1$ if $f$ preserves the orientation or else $-1$ is the unique manner to define $\mt(f)$ in $\mathrm{PSL}_{2}(f)$. This shows that $\Phi$ is an isomorphism between the mapping-class group of $\R^{2} \setminus X$ and $\mathrm{PSL}_{2}(\Z)$.

The equivalence  
\begin{eqnarray*}
\sn(n';p') \equiv f\left(\sn(n;p)\right) & \iff & \left[\begin{array}{c} n' \\ p' \end{array}\right] \equiv \mt(f) \left[\begin{array}{c} n \\ p \end{array}\right]  
\end{eqnarray*} 
 for any $(n;p) \in \Z^{2}$ identified with $(-n;-p)$ is a simple consequence of the fundamental canonical decomposition and the fact that $h=f_{A}$, $h=f_{B}$, $h=\varsigma$ and $h=\sigma$ act on the parameter of $\sn(n;p)$ as their respective associated turbulence matrix on $\left[\begin{array}{c} n \\ p \end{array}\right]$ identified with $\left[\begin{array}{c} -n \\ -p \end{array}\right]$.

\end{proof}

\section{The trace of an homeomorphism.}

The canonical form (theorem \ref{canonicalform}) has of course a general meaning, analogous to the decomposition of a natural number in the product of its prime factors~: any homeomorphism $f$ that preserve $X= \Big\{ p_{1}; p_{2}; p_{3} \Big\}$ can be tought of as a composition of elementary essential moves that determine its topological properties. Those properties can be read directly on the matrix, on the snail itself, or on the types, $A$, $B$, $A^{-1}$ and $B^{-1}$ of those elementary homeomorphims. And it is important  to understand the role played by the line $\Delta$ in the expression of the homeomorphisms in $\homplus$ and the deep interest of the zip relations to exhibit their general properties. 

The orientation is perhaps the simplest of thoses caracteristics. Since the application $\sigma$ doesn't change neither the green $\sn(1;0)$ nor the red $\sn(0;1)$, the composition on the right by $\sigma$ keeps the physical topological snail unchanged. This means that the homeomorphisms $f$ and $f\circ \sigma$ can be thought of as in some sens conjugated relatively to the line $\Delta$. And it is therefore natural to add an orientation on the snail, with the following meaning : one sets that the positive orientation on $\sn(f)$ always keeps on its left the image by $f$ of the half-plane $\H^{+}$. It is therefore directed from the green to the red if and only if $f$ preserves the orientation, and equivalently if and only is $\mathrm{det}(\mt(f))=+1$.

Let us now suppose that $f$ preserves the orientation. Since $\varsigma$ is an involution, the forms $f\in \mathcal{T}^{+}$ and $\varsigma \circ f \circ \varsigma$ are simply conjugated : the homeomorphisms of the form $\varsigma \circ f \circ \varsigma$ relatively to $\Delta^{+}$ are purely turbulent relatively to $\Delta^{-}$. And if $f_{A}$ is of the type $A$ relatively to $\Delta^{+}$, its conjugate $\varsigma \circ f_{A} \circ \varsigma = f_{B}^{-1}$ is also of the type $A$ relatively to $\Delta^{-}$. Similarily, the forms $\varsigma \circ f$ and $\varsigma \circ f$ are conjugated. And using the zip-relations, we are able to understand the dynamical meaning of the presence of this factor $\varsigma$. Consider for example $f= f_{B} \circ f_{A}^{3} \circ f_{B}^{2} \circ f_{A} \circ f_{B} \circ f_{A} \circ \varsigma$. Use the zip-relations to write 
\begin{eqnarray}
\mt(f) & = &AB^{3}A^{2}BABZ \\
& = & \left(AB\right)\left(B^{2}A^{2}BZ\right)\left(B^{-1}A^{-1}\right) \\
& = & \left(AB\right)\left(B^{2}A^{2}B\left(B^{-1}AB^{-1} \right)\right)\left(B^{-1}A^{-1}\right) \\
& = & \left(AB^{2}\right)\left(BA^{2}\right)\left(B^{-2}A^{-1}\right) \\
& = & \left(AB^{2}\right)\left(BA^{2}\right)\left(AB^{2}\right)^{-1}
 \end{eqnarray}
If $\varphi =  f_{A} \circ f_{B}^{2}$, then $f = \varphi \circ \left( f_{B} \circ f_{A}^{2} \right)\circ \varphi^{-1}$. The homeomorphisms $f$ and $ f_{B} \circ f_{A}^{2}$ are therefore  conjugated via the homeomorphism $\varphi =  f_{A} \circ f_{B}^{2} \in \mathcal{T}^{+}$ and $f$ is a turbulent homeomorphism, positively turbulent relatively to the topological line $\varphi(\Delta)$. The $Z$ factor on the right is therefore the sign that the line $\Delta$ is not optimal to read the topological and dynamical properties of $f$. On the contrary, it has to be replaced in order to obtain a coincidence between the canonical form for $f$ and its intrisic properties.  The preceeding method is nevertheless universal : if one defines the equivalence relation on isotopy classes of homeomorphisms
$$f\equiv g \iff \exists \varphi \in \hom ~~|~~f \equiv \varphi \circ g \varphi^{-1}\, ,$$
then one can also naturally call turbulent an homeomorphism $g$  that is conjugated to some homeomorphism $f \in \mathcal{T}^{+}$ positively turbulent relatively to $\Delta$. If furthermore one denotes by $\mathcal{T}$ the set of turbulent homeomorphisms,  we obtain the following very nice classification theorem~:

\begin{theo}\label{classification}
Let $f \in \homplus$ be an orientation-preserving homeomorphism with an oriented line $\Delta^{+}$ such that $X=\lbrace p_{1};p_{2};p_{3} \rbrace\subset \Delta^{+}$. Then the absolute value $\vert \tr(\mt) \vert$ of the trace of the turbulence matrix $\mt(f)$ relatively to $\Delta^{+}$ defines a caracteristic number $\tr(f)$, constant on the isotopy classes of $f$ in $\hom$, and such that one has~:
\begin{eqnarray*}
\tr(f) = 0 & \iff & f^{2} \equiv \id  \\
\tr(f) = 1 & \iff & f^{3} \equiv \id \\
\tr(f) = 2 & \iff & f \equiv f_{A}^{n} \equiv f_{B}^{n}~~\mathrm{for~some}~n\in \Z \\
\tr(f) \geqslant 3 & \iff & f \equiv g \in \mathcal{T}.
\end{eqnarray*}

\end{theo}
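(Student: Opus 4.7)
The plan is to use the isomorphism $\Phi : \hom \to \mathrm{PSL}_{2}(\Z)$ from Theorem~\ref{canonicalform} to reduce the statement to the classification of elements of $\mathrm{PSL}_{2}(\Z)$ by their trace. Since the trace of a matrix in $\mathrm{SL}_{2}(\Z)$ is a conjugation invariant and is well-defined up to sign on $\mathrm{PSL}_{2}(\Z)$, the function $f \mapsto |\tr(\mt(f))|$ is constant on isotopy classes and on conjugacy classes in $\hom$, which establishes the invariance of $\tr(f)$ claimed in the theorem. The four equivalences then translate into a classification of matrices $M \in \mathrm{SL}_{2}(\Z)$ according to whether $|\tr(M)|$ equals $0$, $1$, $2$, or is at least $3$.

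For the first two cases I would apply the Cayley--Hamilton relation $M^{2} - \tr(M)\, M + I = 0$ directly: when $\tr(M) = 0$ it gives $M^{2} = -I$, hence $M^{2} = I$ in $\mathrm{PSL}_{2}(\Z)$; when $|\tr(M)| = 1$, multiplying by $M$ yields $M^{3} = \pm I$, hence $M^{3} = I$ in $\mathrm{PSL}_{2}(\Z)$. Conversely, a nontrivial element of order $2$ or $3$ in $\mathrm{PSL}_{2}(\Z)$ lifts to an element of $\mathrm{SL}_{2}(\Z)$ whose eigenvalues are respectively fourth or sixth roots of unity, and summing them yields $|\tr| = 0$ or $|\tr| = 1$. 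Through $\Phi^{-1}$ these translate to the statements $f^{2} \equiv \id$ and $f^{3} \equiv \id$ of Cases~1 and~2.

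In Case~3, $|\tr(M)| = 2$ forces $M$ to have a double eigenvalue $\pm 1$; after replacing $M$ by $-M$ in $\mathrm{PSL}_{2}(\Z)$ one may assume eigenvalue $+1$, so $M - I$ is nilpotent of rank at most one. A Hermite-normal-form argument over $\Z$ (complete any primitive generator of $\ker(M-I)$ to an $\mathrm{SL}_{2}(\Z)$-basis) conjugates $M$ into $A^{n}$ for some integer $n$. Back in $\hom$, $f$ is conjugate to $f_{A}^{n}$, and the zip-relation $Z A Z^{-1} = B^{-1}$ established in Section~\ref{canonicalsection}, which translates as $\varsigma \circ f_{A} \circ \varsigma \equiv f_{B}^{-1}$, simultaneously shows $f \equiv f_{B}^{-n}$. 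The identity is recovered with $n = 0$, settling Case~3.

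The main obstacle is Case~4, the hyperbolic range $|\tr(M)| \geqslant 3$. I would invoke Gauss's reduction of indefinite integral binary quadratic forms, or equivalently the theorem that the Perron eigendirection of any hyperbolic $M \in \mathrm{SL}_{2}(\Z)$ has an eventually purely periodic continued fraction expansion: every such $M$ is conjugate in $\mathrm{PSL}_{2}(\Z)$ to a strictly positive product $A^{\alpha_{1}} B^{\beta_{1}} \cdots A^{\alpha_{N}} B^{\beta_{N}}$, that is to the turbulence matrix of some $g \in \mathcal{T}^{+}$. Through $\Phi^{-1}$, $f$ is then conjugate to $g$, so $f \in \mathcal{T}$. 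Conversely I would induct on the length of the canonical decomposition of $g \in \mathcal{T}^{+}$: starting from $\mt(f_{A} \circ f_{B}) = AB$ of trace $3$, each further multiplication on either side by $A$ or $B$ preserves positivity of all four entries and strictly increases the trace, so every alternating positive product of length at least two has trace $\geqslant 3$. Combined with the three preceding cases this exhausts all possible values of $|\tr(M)|$ and completes the classification.
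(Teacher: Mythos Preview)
Your proof is correct but takes a genuinely different route from the paper's main argument. The paper establishes the classification by a direct combinatorial reduction: using the zip-relations, it shows that any canonical form containing a single factor $Z$ can be conjugated step by step (via identities such as $Aw'AZ = A(w'B)A^{-1}$ and $Aw'BZ = A(w'Z)A^{-1}$) so as to shorten the positive word in $A,B$, until one of the standard representatives is reached --- a word containing both letters, a pure power $A^{n}$ or $B^{n}$, one of $AZ,BZ,ZA,ZB$, or $Z$ itself --- and then reads off the trace in each case. Your argument works instead entirely on the matrix side: Cayley--Hamilton for the elliptic cases $|\tr|\in\{0,1\}$, an integral Jordan/Hermite reduction for the parabolic case $|\tr|=2$, and Gauss reduction (periodicity of the continued fraction of the Perron eigendirection) for the hyperbolic case $|\tr|\geqslant 3$. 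The paper itself sketches precisely your Cayley--Hamilton and Jordan-form route in the remarks immediately following its proof, so for traces $0,1,2$ you have essentially rediscovered the alternative the author already had in mind. For trace $\geqslant 3$, your appeal to continued-fraction periodicity is the classical number-theoretic counterpart of what the paper does by hand with zip-relations; your version is shorter but imports an external theorem, while the paper's reduction is elementary and self-contained within the framework it has built.
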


\begin{proof}
Let us first prove that the trace $\tr(f)$ is well defined and constant on the isotopy classes of $f$ in $\homplus$. Let us first consider $f\in\homplus$ and two lines $\Delta_{1}^{+}$ and $\Delta_{2}^{+}$ with $X=\lbrace p_{1};p_{2};p_{3} \rbrace\subset \Delta_{1}^{+} \cap \Delta_{2}^{+}$ and associated respective turbulence matrices $M^{1}_{T}(f), M^{2}_{T}(f) \in \mathrm{PSL}_{2}(\Z)$. Suppose that $\Delta_{1}^{+}$ is the horizontal axis and 

$$p_{1} <_{\Delta_{1}^{+}}p_{2} <_{\Delta_{1}^{+}} p_{3}$$ 

Consider the snails on this axis and define a permutation $\varphi$ of the set $X$ such that 
$$\varphi(p_{1}) <_{\Delta_{2}^{+}} \varphi(p_{2}) <_{\Delta_{2}^{+}} \varphi(p_{3})$$
The spinning skeleton of the curve $[\varphi(p_{1}) ;\varphi(p_{3})]_{\Delta_{2}^{+}}$ relatively to $\Delta_{1}^{+}$ defines a topological snail, image of $[p_{1} ;p_{3}]_{\Delta_{1}^{+}}$ by a unique homeomorphism $\varphi \in \homplus$. In the isotopy class of $\varphi$ inside $\homplus$, choose\footnote{This is a fundamental topological result but its proof is out of the scope of the present paper.} an element such that $\varphi(\Delta_{1}^{+}) = \Delta_{2}^{+}$ and set 
$g = \varphi^{-1} \circ f \circ \varphi$. Then $f= \varphi \circ g \circ \varphi^{-1}$ and by theorem \ref{canonicalform} one has
$$\mt^{1}(f) =  \mt^{1}(\varphi \circ g \circ \varphi^{-1}) =\mt^{1}(\varphi) \times \mt^{1}( g ) \times \left( \mt^{1}(\varphi)\right)^{-1}$$
Therefore $\tr(\mt^{1}(f))=\tr(\mt^{1}(g))$. But it is also clear since $\varphi\in \homplus$ and $\varphi(\Delta_{1}^{+}) = \Delta_{2}^{+}$ that the homeomorphisms of the respective types $A$ and $B$ relatively to $\Delta_{2}^{+}$ and $\Delta_{1}^{+}$ are respectively conjugated via $\varphi$. This therefore implies that $\mt^{2}(f) = \mt^{1}(g)$ and $\tr(\mt^{1}(f))=\tr(\mt^{2}(f))$. The trace is therefore a well defined caracterisic number on $\homplus$. And since the map $f\rightarrow \mt(f)$ is an isomorphism, it is of course constant on the hole conjugacy class of $f$.

It is also very easy to caracterize the nature of $f$ using the zip-relations and the canonical form (theorem \ref{canonicalform}) for $f\in \homplus$. 

The canonical form relatively to $\Delta^{+}$ appears as a word $w$ in the letters $A$ and $B$ and the additional letter $Z$ once or twice.  
The fact that $f\in \mathcal{T}^{+}$ simply means that $f$ can by uniquely decomposed into a product of homeomorphisms of the respective types $A$ and $B$, with at least a letter $A$ and a letter $B$. Since $\tr(AB) = \tr(BA)=3$ and since the trace is strictly increasing after any mutliplication by a finite number of factors $A$ and $B$, one has $\tr(f) \geqslant 3$ if $f\in \mathcal{T}^{+}$ and also $\tr(f) =2$ is $f=f_{A}^{n}$ of $f=f_{B}^{n}$ for some $n\in \Z$. If there are two $Z$ factors, it is sufficient to reverse the orientation on $\Delta^{+}$ to obtain the preceeding form. But the canonical form can always be reduced in other cases and this way the result of the theorem becomes obvious. 

When unique, the factor $Z$ can be supposed to be at the first position after the word $w$ in the letters $A$ and $B$. Let $n(f) = n(w)$ be the total number of letters with their multiplicity in this word $w$.  If $n(w)\geqslant 2$, then $f$ can always be conjugated to an homeomorphism $f'$ whose canonical form has only $n-1$ letters $A$ or $B$ or $n-2$ letters and the final letter $Z$ : 

\begin{itemize}
\item If  $w=Aw'A$, 
where $w'$ is a possibly trivial word in $A$ an $B$, change $Z$ to $A^{-1}BA^{-1}$ to write  
\begin{eqnarray*} wZ & = & Aw'AZ=Aw'A\left(A^{-1}BA^{-1}\right)= A\left(w'B\right)A^{-1}
\end{eqnarray*}
\item If $w=Bw'B$ write similarily~:
\begin{eqnarray*} wZ=Bw'BZ=Bw'B\left(B^{-1}AB^{-1}\right)= B\left(w'A\right)B^{-1}\,. 
\end{eqnarray*}
\item If $w=Aw'B$, simply zip Z to obtain~:
\begin{eqnarray*} wZ & = & Aw'BZ=A\left(w'Z\right)A^{-1} 
\end{eqnarray*}

\item If $w=Bw'A$ then write 
\begin{eqnarray*}  wZ &= &Bw'AZ=B\left(w'Z\right)B^{-1}
\end{eqnarray*}

\end{itemize}

This process progressively conjugates $f$ either to a positive power of $f_{A}$, a positive power of $f_{B}$,  the homeomorphisms $f_{A} \circ \varsigma$ or $f_{B} \circ \varsigma$ or the homeomorphism $\varsigma$. Therefore, any homeomorphism $f\in \homplus$ can be conjugated to a homeomorphism with the following canoncial forms~: 
\begin{itemize}
\item A word $w$ with at least one $A$ and one $B$, when $\tr(f) \geqslant 3$ and$f\in \mathcal{T}$.
\item A power $A^{n}$ or $B^{n}$ for $n\in \Z$ when $\tr(f)=2$.
\item One of the forms $AZ$, $BZ$, $ZA$, $ZB$ when $\tr(f)=1$
\item The form $Z$ when $\tr(f)=0$.
\end{itemize}

\end{proof}

\begin{remarks}
\item The homeomorphisms $f_{A}$ and $f_{B}$ are conjugated via $\tau = \varsigma \circ \sigma$ wich doesn't preserve the orientation. They are not conjugated via an element of $\homplus$ for their respective periodic orbit in $X$ don't circulate around their centers in the same direction.  

\item The homeomorphism $BZ$ is isotopic to a rotation of an angle $+\frac{2\pi}{3}$ around a center located inside a topogical circle whose direct order is such that $p_{1} <p_{2} <p_{3}$. Its inverse is $AZ$ since $AZBZ\equiv AA^{-1} \equiv \id$. The homeomorphism $ZA$  is isotopic to a rotation of an angle $+\frac{2\pi}{3}$ around a center located inside a topogical circle whose indirect order is such that $p_{1} <p_{2} <p_{3}$. Its inverse is $ZB$.

\item The result of the theorem can also be very nicely obtained using the classical Cayley-Hamilton and Jordan decomposition theorem. Chose the representant of the matrix $\mt(f)\in \mathrm{PSL}_{2}(\Z)$ whose trace is positive. Then its characteristic polynomial is 
$$\chi_{f}(X) = X^{2} - \tr(f) X + \id $$
and always satisfies 
$$ \chi_{f}(\mt(f)) = \left(\mt(f)\right)^{2} - \tr(f) \left(\mt(f)\right) + \id = 0$$
\begin{itemize}
\item If $\tr(f)=0$ then in $\mathrm{PSL}_{2}(\Z)$ one has $\left(\mt(f)\right)^{2} = -\id = \id$. Wich implies that $f^{2} \equiv \id$ by the theorem \ref{canonicalform}.
\item If $\tr(f)=1$ then 
$$\left(\mt(f)\right)^{2} - \left(\mt(f)\right) + \id = 0$$
and since $X^{3} -1 = (X-1)(X^{2} - X + 1)$ one obtains 
$$\left(\mt(f)\right)^{3} =\id\, .$$
But as before by theorem \ref{canonicalform} this implies that $f^{3} \equiv \id$.
\item If  $\tr(f)=2$ then $1$ is a proper value of order $2$ of $\mt(f)$, wich is conjugate in $\mathrm{SL}_{2}(\Z)$ to its Jordan form that is a relative power of $A$. And by the theorem \ref{canonicalform} in this case $f$ is conjugated to a relative power of $f_{A}$.

\item One can finally remark that $\tr(f)\geqslant 3$ if and only if $\mt(f)$ has a largest proper value equal to  
$$\lambda = \frac{\tr(f) + \sqrt{\tr(f)^{2} - 4}}{2} >1\, .$$
Since $\mt(f^{n})= \mt(f)^{n}$, $f$ can no longer be periodic nor conjugated to a power of $A$. It is therefore a turbulent homeomorphism and this finishes to establish the classification.

\end{itemize}

\item The homeomorphisms $f_{A}$ and $f_{B}$ can be thought of as fixing a point, respectively $p_{1}$ and $p_{3}$, and rotating the two others of a respectively positive and negative half-turn. A correponding homeomorphism fixing $p_{2}$ is $A^{2}Z = ABA^{-1}= ZB^{-2}= BA^{-1}B^{-1}$. One can also consider three positive half-turns $A$, $B^{-1}= ZAZ$, $AB^{-1}A^{-1} = A^{2}Z$.

\item If we consider a process during wich a homeomorphism evolves via successive compositions of homeomorphisms of the type $f_{A}$, $f_{B}$, $f_{A}^{-1}$, $f_{B}^{-1}$ and measure its complexity by its trace (recall that according to the annoucement of this paper it is a good idea to do so), then it can increase constantly and suddenly happen to break down. For example, if one computes 
$$A^{3}B^{3}AB^{4}A^{3}B^{2}A = \left[ \begin{array}{cc} 460 & 659 \\ 141 & 202 \end{array} \right]  $$
the trace of the associated homeomorphism is $\tr(f) = 460 + 202 = 662$. But this trace collapses when you compose $f$ by $f_{B}^{-1}$, since
$$ \tr\left(\left[ \begin{array}{cc} 1 & 0 \\ -1 & 1 \end{array} \right]\left[ \begin{array}{cc} 460 & 659 \\ 141 & 202 \end{array} \right] \right) = \tr\left(\left[ \begin{array}{cc} 460 & 659 \\ -319 & -457 \end{array} \right]\right)=3  $$
The homeomorphism $f$ was positively turbulent relatively to $\Delta$, but $f_{B}^{-1} \circ f$ happens to be conjugated to $f_{A}f_{B}$ via a complicated homeomorphism and this explain the breakdown of its complexity. To obtain such an example, simply use the zip relations to write 
\begin{eqnarray*}
\mt(f) & = & \left(B^{2}A^{3}B^{2}A\right)^{-1} (AB)\left(B^{2}A^{3}B^{2}A\right) \\
& = &A^{-1} B^{-2} A^{-3} B^{-2} (AB)\left(B^{2}A^{3}B^{2}A\right) \\
& = & \left( Z B A^{2} B^{3} A^{2} Z \right) (AB) \left(B^{2}A^{3}B^{2}A\right) \\
& = & ZBA^{2} B^{3} AB^{4}A^{3}B^{2}A \\
& = & B^{-1}A^{3} B^{3} AB^{4}A^{3}B^{2}A 
\end{eqnarray*}

\end{remarks}

\section{Action on $X$ and reduction modulo 2 of $\mt(f)$.}

From a general point of view, the group $\hom$ naturaly acts by restriction on $X$ and a specialy important of its subgroups is the stabilizator for this action. It is made of those homeomorphisms who fix each of the three points $p_{1}$, $p_{2}$, $p_{3}$.

First consider 
the application $\displaystyle \varphi~: X \cup \lbrace \infty \rbrace \longrightarrow \left(\frac{\Z}{2\Z}\right)^{2}$ defined by 
$$\varphi(p_{\infty}) = \left[ \begin{array}{c} 0 \\ 0 \end{array}\right] ~~~~~\varphi(p_{1}) = \left[ \begin{array}{c} 1 \\ 0 \end{array}\right] ~~~~~  \varphi(p_{2}) = \left[ \begin{array}{c} 1 \\ 1 \end{array}\right] ~~~~~  \varphi(p_{1}) = \left[ \begin{array}{c} 0 \\ 1 \end{array}\right].$$

For $f\in \hom$, define $\widetilde{f} = \varphi \circ f \circ \varphi^{-1}$. Then $\widetilde{f}$ is a map from $\left(\frac{\Z}{2\Z}\right)^{2}$ to itself and one has the following very nice application of the theorem \ref{canonicalform}~:

\begin{theo} \label{modulo} The map $\widetilde{f}$ associated to any homeomorphism $f\in\homplus$ is a linear isomorphism whose canonical matrix $\rdeux(f)$ is the reduction modulo 2 of the turbulence matrix $\mt(f)$.
\end{theo}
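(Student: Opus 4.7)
The plan is to observe that $\widetilde{f}$ is automatically a linear automorphism for every $f\in \hom$ preserving $X$, deduce that $f\mapsto \rdeux(f)$ is a group homomorphism to $GL_2(\Z/2\Z)$, and then use the canonical form of theorem \ref{canonicalform} to reduce the identity $\rdeux(f)\equiv \mt(f)\pmod 2$ to a verification on the four generators $f_A$, $f_B$, $\varsigma$, $\sigma$. The combinatorial coincidence that makes linearity automatic is the isomorphism $GL_2(\Z/2\Z)\cong S_3$ coupled with the fact that $\varphi(p_2)=\varphi(p_1)+\varphi(p_3)$ in $(\Z/2\Z)^2$.

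First I would check that $\widetilde{f}$ fixes the zero vector: any homeomorphism of the plane extends to the sphere fixing $\infty$, so $\widetilde{f}(0,0)=(0,0)$. Since $f$ permutes $X$, the map $\widetilde{f}$ permutes the three nonzero vectors $(1,0)$, $(0,1)$, $(1,1)$. In $(\Z/2\Z)^2$ the sum of any two distinct nonzero vectors is the third, so the identity $\widetilde{f}(1,0)+\widetilde{f}(0,1)=\widetilde{f}(1,1)$ is automatic; the remaining additive identities are trivial, hence $\widetilde{f}$ is linear. Equivalently, the natural action of $GL_2(\Z/2\Z)$ on its three nonzero vectors realizes all of $S_3$, so every permutation of $X$ is induced by a unique linear isomorphism of $(\Z/2\Z)^2$.

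Next, the assignment $f\mapsto \rdeux(f)$ is multiplicative because $\widetilde{f\circ g}=\widetilde{f}\circ \widetilde{g}$. Reduction modulo $2$ descends from $\mathrm{SL}_2(\Z)$ to a homomorphism $\pi:\mathrm{PSL}_2(\Z)\to \mathrm{PSL}_2(\Z/2\Z)=GL_2(\Z/2\Z)$ because $-\mathrm{Id}\equiv \mathrm{Id}\pmod 2$. Theorem \ref{canonicalform} gives $\Phi$ as an isomorphism with the explicit generating set $\{f_A,f_B,\varsigma,\sigma\}$, so to prove $\rdeux=\pi\circ \Phi$ it suffices to match the two homomorphisms on those four elements.

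The verification is then direct from the definitions: $f_A$ exchanges $p_1$ and $p_2$ while fixing $p_3$, $f_B$ exchanges $p_2$ and $p_3$ while fixing $p_1$, $\varsigma$ exchanges $p_1$ and $p_3$ while fixing $p_2$, and $\sigma$ fixes $X$ pointwise. Pushing these permutations through $\varphi$ produces four linear automorphisms of $(\Z/2\Z)^2$ whose matrices one recognizes as $A\bmod 2$, $B\bmod 2$, $Z\bmod 2$ and $Y\bmod 2$ respectively. The only delicate point is a bookkeeping one: one must fix the convention for reading off the matrix of $\widetilde{f}$ (left action on column vectors, say) so that $\rdeux(fg)=\rdeux(f)\rdeux(g)$ lines up with $\mt(fg)=\mt(f)\mt(g)$, and so that the matrices read off on generators are exactly the mod $2$ reductions of $\mt$ on those generators and not their transposes; once this convention is pinned down, the four generator computations close the proof.
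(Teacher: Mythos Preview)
Your proposal is correct and follows essentially the same approach as the paper: both proofs reduce the identity to a check on the generators $f_A$, $f_B$, $\varsigma$, $\sigma$ and invoke the canonical form of theorem~\ref{canonicalform}. Your version is slightly more careful in that you first argue abstractly that $\widetilde{f}$ is automatically linear via the coincidence $GL_2(\Z/2\Z)\cong S_3$, whereas the paper leaves this implicit and simply verifies by direct computation that $A\bmod 2$ acts on $\varphi(X\cup\{\infty\})$ exactly as $f_A$ acts on $X\cup\{\infty\}$ (and likewise for $B$, $Y$, $Z$); but the substance is the same.
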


\begin{proof}
With the matrix $A=\left[ \begin{array}{cc} 1 & 1 \\ 0 & 1 \end{array}\right]$, compute modulo 2~:
$$\begin{array}{cc} \left[ \begin{array}{cc} 1 & 1 \\ 0 & 1 \end{array}\right] \left[ \begin{array}{c} 0 \\ 0 \end{array}\right]= \left[ \begin{array}{c} 0 \\ 0 \end{array}\right]& \left[ \begin{array}{cc} 1 & 1 \\ 0 & 1 \end{array}\right] \left[ \begin{array}{c} 0 \\ 1 \end{array}\right]= \left[ \begin{array}{c} 1 \\ 1 \end{array}\right] \\ 
& \\
\left[ \begin{array}{cc} 1 & 1 \\ 0 & 1 \end{array}\right] \left[ \begin{array}{c} 1 \\ 1 \end{array}\right]= \left[ \begin{array}{c} 0 \\ 1 \end{array}\right] & \left[ \begin{array}{cc} 1 & 1 \\ 0 & 1 \end{array}\right] \left[ \begin{array}{c} 1 \\ 0 \end{array}\right]= \left[ \begin{array}{c} 1 \\ 0 \end{array}\right] \end{array}$$
Check that $A$ acts on $\varphi \left(X \cup \lbrace \infty \rbrace \right)$ as $f_{A}$ on $X \cup \lbrace \infty \rbrace$. But $B$ also acts on $\varphi \left(X \cup \lbrace \infty \rbrace \right)$ as $f_{B}$ on $X$ and this is also the case for $Y$ with $\sigma$ and $Z$ with $\varsigma$. This proves the result using theorem \ref{canonicalform}.
\end{proof}

This result also shows that the homeomorphism $f$ induces a permutation on $X$ in a certain natural order, corresponding to a certain complexity, obtained by taking successively the applications $f_{A}$ and $f_{B}$, according to the following sequences of permutations~:
$$\begin{array}{|r|c|l|c|} \hline  A & (1;2;3) \mapsto (2; 1; 3) & BABAB & A\\
BA & (1;2;3) \mapsto (2; 3; 1) & ABAB & AZ\\
ABA & (1;2;3) \mapsto (3; 2; 1) & BAB & Z\\
BABA & (1;2;3) \mapsto (3; 1; 2) & AB & ZA\\
ABABA & (1;2;3) \mapsto (1; 3; 2) & B & ZAZ\\
 BABABA & (1;2;3) \mapsto (1; 2; 3) & \id & \id\\ \hline \end{array}$$

\section{Circulation of the middle point. \label{relative}}

Let $(f_{t})_{t\in [0;1]}$ be an isotopy from $f_{0} = \id$ to $f_{1} = f$ and suppose that $\rdeux(f)\equiv \id$, for fixed points $p_{1}= -1$, $p_{2} = 0$ and $p_{3}=1$. 

With $z=x+iy \in \C$, let us define the function $h:\C \rightarrow [0;+\infty[$ by
$$h(z) = \vert z \vert~~\mathrm{if}~~ x\leqslant 0~~~~~\mathrm{and}~~~~~~h(z) = \max(\vert x \vert ; \vert y \vert)~~\mathrm{if}~~x\geqslant 0$$  
The function $h$ is therefore constant on each curve obtained by joinning half of a circle of radius $r\geqslant 0$ and center $O$ in the half plane $x\leqslant 0$ and a half of a square of width $h$ and symetric relatively to $y=0$ in the half plane $x\geqslant 0$. 

In a similar manner, for $z=x+iy$ let us define the function $k: \C \mapsto [0,+\infty[$ by 
$$k(z) = \vert z -1 \vert~~\mathrm{if}~~ x\leqslant -1\, ,~~~k(z) =\vert y \vert~~\mathrm{if}~~-1 \leqslant x \leqslant 1~~~\mathrm{and}~~~k(z) = \vert z -1\vert~~\mathrm{if}~~ x\geqslant 1$$ 
Then $k$ is constant on the curves obtained by joining half of a disc of radius $r$ and center $-1$ in the half plane $x\leqslant -1$, two horizontal segments, and half of a disc of radius $r$ and center $1$ in the half plane $x\geqslant 1$. This function is also a measure of the distance between the point of affix $z$ and the segment $[-1;1]$. For each $z\in \C$, let $\omega(z)$ be the unique point $\omega(z) \in [-1;1]$ that satisfies $\vert z - \omega(z) \vert = k(z)$.

Let $N_{1}(0;-1)$, $N_{2}(0;1)$, $E_{1}(_2;0)$, $I(0;0)$ and $E_{3}(2;0)$ (see figure \ref{figure37}) and  let us define the set $\Omega=\Gamma_{1} \sqcup \Gamma_{2} \sqcup \Gamma_{2}$ with~:
\begin{itemize}
\item $\Gamma_{1} = \lbrace z=x+iy \in \C~~|~~h_{1}(z)=k(z) = 1 \rbrace$
\item  $\Gamma_{2} = \lbrace z=x+iy \in \C~~|~~h_{1}(z)=h_{2}(z) = 1 \rbrace$
\item $\Gamma_{3} = \lbrace z=x+iy \in \C~~|~~h_{2}(z)=k(z) = 1 \rbrace$
\end{itemize}
Then $\Gamma_{1}$, $\Gamma_{2}$ and $\gamma_{3}$ are simple arcs doubly branched on the set $\lbrace N_{1} ; N_{2} \rbrace$.

For $t\in ]0;2[$ and $x\in [0;+\infty]$ consider the coefficient defined by 
$$C(t;x) = \frac{x}{t} ~~\mathrm{if}~~x\in [0;t]\, ,C(t;x) = \frac{x+2-2t}{2-t} ~~\mathrm{if}~~x\in [t;2]\, ~~~\mathrm{and}~~~C(t;x) = x ~~\mathrm{if}~~x\geqslant 2$$ 

And for $m\in \C \setminus \lbrace p_{1}; p_{3} \rbrace$, let us define $\Psi_{m}(z)$ :
\begin{itemize}
\item If $h(m+1) \leqslant 1$ set $\Psi_{m}(-1)=-1$ and for $z\neq-1$:
\begin{eqnarray*} \Psi_{m}(z) & = & -1 + C\left(h(m+1); h(z+1 )\right) \times \frac{z +1}{\vert z+1 \vert}
\end{eqnarray*}

\item If $k(m) \geqslant 1$ set  $\Psi_{m}(z)=z$ for $k(z)=0$ 

\begin{eqnarray*} \Psi_{m}(z) = & \omega(z) +  \frac{\displaystyle z - \omega(z)}{\displaystyle k(m)}\end{eqnarray*}

\item if $h(1-m) \leqslant 1$ set $\Psi_{m}(1) =1$ and for $z\notin [-1;1]~:$

\begin{eqnarray*} \Psi_{m}(z) & = & 1 +  C(h(1-m);h(1-z)) \times \frac{z - 1}{\vert z -1 \vert}   
\end{eqnarray*}
\end{itemize}

%%%%
%%%%
%%%%
\setlength{\unitlength}{0.7mm}
\begin{figure}
%\begin{picture}(100, 90)(-140,-30)
\begin{picture}(100,120)(-120,-65)

\color{blue}
\put(-40,-0.2){\line(1,0){80}}
\put(-40,0.2){\line(1,0){80}}
\put(-40,0){\line(1,0){80}}

\color{black}
\put(-40,0){\circle*{1.5}}
\put(-37,9){\circle*{0.5}}

\put(-36,8){\makebox(0,0)[tl]{$\scriptstyle m_{1}$}}

\put(-26,39){\makebox(0,0)[tl]{$\scriptstyle \Psi_{m_{1}}(m_{1})$}}

\put(-26.7,40){\circle*{0.5}}

\put(-40,-2){\makebox(0,0)[tc]{$p_{1}$}}

\put(-37,9){\vector(1,3){10.3}}

\put(-40,0){\qbezier[15](0,0)(1.5,4.5)(3,9)}

\put(55,-15){\vector(1,-1){13.28}}
\put(55,-15){\circle*{0.5}}
\put(68.28,-28.28){\circle*{0.5}}
\put(56,-14){\makebox(0,0)[bl]{$\scriptstyle m_{3}$}}
\put(70,-30){\makebox(0,0)[tl]{$\scriptstyle \Psi_{m_{3}}(m_{3})$}}

\put(40,0){\qbezier[25](0,0)(10,-10)(20,-20)}

\color{black}
\put(-80,0){\circle*{1.5}}
\put(-82,0){\makebox(0,0)[cr]{$E_{1}$}}

\put(80,0){\circle*{1.5}}
\put(82,0){\makebox(0,0)[cl]{$E_{2}$}}
\put(20,0){\circle*{1}}
\put(20,-2){\makebox(0,0)[tc]{$\scriptstyle \omega(z)$}}

\put(20,0){\qbezier[25](0,0)(0,20)(0,40)}
\put(20,60){\vector(0,-1){20}}
\put(20,60){\circle*{0.5}}
\put(20,40){\circle*{0.5}}
\put(21,60){\makebox(0,0)[lc]{$\scriptstyle m_{2}$}}
\put(21,39){\makebox(0,0)[tl]{$\scriptstyle \Psi_{m_{2}}(m_{2})$}}

\put(0,0){\circle*{1.5}}
\put(-2,-1){\makebox(0,0)[tr]{$ I$}}

\put(-75,-20){\makebox(0,0)[tr]{$\Gamma_{1}$}}
\put(2,-20){\makebox(0,0)[tl]{$\Gamma_{2}$}}
\put(75,23){\makebox(0,0)[bl]{$\Gamma_{3}$}}

\color{blue}

\color{blue}

\color{black}

\put(40,0){\circle*{1.5}}
\put(42,0){\makebox(0,0)[cl]{$p_{3}$}}

\put(-40,40){\line(1,0){80}}
\put(-40,-40){\line(1,0){80}}
\put(0,-40){\line(0,1){80}}

%%%%%%%%%
%RAYON****40%
%%%%%%%%%
\put(40,0){\qbezier[300](40,0)(40,-16.56)(28.28,-28.28)}
\put(40,0){\qbezier[300](28.28,-28.28)(16.56,-40)(0,-40)}
\put(40,0){\qbezier[300](0,40)(16.56,40)(28.28,28.28)}
\put(40,0){\qbezier[300](28.28,28.28)(40,16.56)(40,0)}

\put(-40,0){\qbezier[300](0,-40)(-16.56,-40)(-28.28,-28.28)}
\put(-40,0){\qbezier[300](-28.28,-28.28)(-40,-16.5)(-40,0)}
\put(-40,0){\qbezier[300](-40,0)(-40,16.56)(-28.28,28.28)}
\put(-40,0){\qbezier[300](-28.28,28.28)(-16.56,40)(0,40)}

%%%\color{mygreen}

\color{black}

%% COLORIAGE
\color{purple}
\put(0,40){\circle*{1.5}}
\put(0,42){\makebox(0,0)[bc]{$N_{2}$}}

\put(0,-40){\circle*{1.5}}
\put(0,-42){\makebox(0,0)[tc]{$ N_{1}$}}

\end{picture}
\caption{Action on $m$ of the family of homeomorphisms $\Psi_{m}$. \label{figure37}}

\end{figure}
\setlength{\unitlength}{0.7mm}
%%%%%
%%%%%
%%%%%

For each $m\in \C$ such that $m\notin \lbrace p_{1}; p_{3}\rbrace$, the map $z\mapsto \Psi_{m}(z)$ is a homeomorphism, it depends continuously on $m$ and fixes $p_{1}$ and $p_{3}$. It sends $m$ on $\Omega$. 

If $h(m+1)~\leqslant~1$, then $f$ respects the foliation of $\C$ by the curves $h(z+1)=\lambda \geqslant 0$, and by the half-lines of origin $-1$, fixes the points $z$ such that $h(z+1)\geqslant 2$ and sends $m$ on the curve $\Gamma_{1} \sqcup \Gamma_{2}$. If $m$ is on this curve, then $z\mapsto \Psi_{m}(z)$ fixes all points of $\C$. The situation is symetric if $h(m-1)=h(1-m) \leqslant 1$ and similar if $k(m)\geqslant 1$ (see figure \ref{figure37}). 

%%%%
%%%%
%%%%
\setlength{\unitlength}{0.7mm}
\begin{figure}
%\begin{picture}(100, 90)(-140,-30)
\begin{picture}(100,120)(-120,-65)

\color{black}

\put(-40,0){\circle*{1.5}}
\put(-40,-2){\makebox(0,0)[tc]{$p_{1}$}}

\color{black}

\put(-80,0){\circle*{1.5}}
\put(-82,0){\makebox(0,0)[cr]{$E_{1}$}}

\put(80,0){\circle*{1.5}}
\put(82,0){\makebox(0,0)[cl]{$E_{2}$}}

\put(0,0){\circle*{1.5}}
\put(-2,0){\makebox(0,0)[cr]{$ I$}}

\color{blue}

\put(-35,40){\circle*{2}}
\put(-35,40){\vector(-1,0){20}}
\put(-35,40.2){\line(-1,0){18}}
\put(-35,40.4){\line(-1,0){18}}
\put(-35,39.8){\line(-1,0){18}}
\put(-35,39.6){\line(-1,0){18}}

\color{black}

\put(-35,43){\makebox(0,0)[bc]{$M_{t}$}}

\color{black}

\put(40,0){\circle*{1.5}}
\put(40,-2){\makebox(0,0)[tc]{$p_{3}$}}

\put(-40,40){\line(1,0){80}}
\put(-40,-40){\line(1,0){80}}
\put(0,-40){\line(0,1){80}}

%%%%%%%%%
%RAYON****22%
%%%%%%%%%
\put(40,-2){\qbezier[300](-22,0)(-22,-9.1)(-15.55,-15.55)}
\put(40,-2){\qbezier[300](-15.55,-15.55)(-9.1,-22)(0,-22)}
\put(40,-2){\qbezier[300](0,-22)(9.1,-22)(15.55,-15.55)}
\put(40,-2){\qbezier[300](15.55,-15.55)(22,-9.1)(22,0)}

\put(40,2){\qbezier[300](-22,0)(-22,9.1)(-15.55,15.55)}
\put(40,2){\qbezier[300](-15.55,15.55)(-9.1,22)(0,22)}
\put(40,2){\qbezier[300](0,22)(9.1,22)(15.55,15.55)}
\put(40,2){\qbezier[300](15.55,15.55)(22,9.1)(22,0)}

%%%%%%%%%
%RAYON****18%
%%%%%%%%%
\put(-40,-2){\qbezier[100](-18,0)(-18,-7.45)(-12.72,-12.72)}
\put(-40,-2){\qbezier[100](-12.72,-12.72)(-7.45,-18)(0,-18)}
\put(-40,-2){\qbezier[100](0,-18)(7.45,-18)(12.72,-12.72)}
\put(-40,-2){\qbezier[100](12.72,-12.72)(18,-7.45)(18,0)}

\put(-40,2){\qbezier[100](-18,0)(-18,7.45)(-12.72,12.72)}
\put(-40,2){\qbezier[100](-12.72,12.72)(-7.45,18)(0,18)}
\put(-40,2){\qbezier[100](0,18)(7.45,18)(12.72,12.72)}
\put(-40,2){\qbezier[100](12.72,12.72)(18,7.45)(18,0)}

\put(40,-2){\qbezier[100](-18,0)(-18,-7.45)(-12.72,-12.72)}
\put(40,-2){\qbezier[100](-12.72,-12.72)(-7.45,-18)(0,-18)}
\put(40,-2){\qbezier[100](0,-18)(7.45,-18)(12.72,-12.72)}
\put(40,-2){\qbezier[100](12.72,-12.72)(18,-7.45)(18,0)}

\put(40,2){\qbezier[100](-18,0)(-18,7.45)(-12.72,12.72)}
\put(40,2){\qbezier[100](-12.72,12.72)(-7.45,18)(0,18)}
\put(40,2){\qbezier[100](0,18)(7.45,18)(12.72,12.72)}
\put(40,2){\qbezier[100](12.72,12.72)(18,7.45)(18,0)}

\put(62,2){\vector(0,-1){0}}
\put(22,2){\vector(0,-1){0}}
\put(58,-2){\vector(0,1){0}}
\put(18,-2){\vector(0,1){0}}

\put(-62,2){\vector(0,-1){0}}
\put(-22,2){\vector(0,-1){0}}
\put(-58,-2){\vector(0,1){0}}
\put(-18,-2){\vector(0,1){0}}

\put(-40,26){\makebox(0,0)[bc]{$\scriptstyle A$}}
\put(-40,19){\makebox(0,0)[tc]{$\scriptstyle A^{-1}$}}
\put(-40,-26){\makebox(0,0)[tc]{$\scriptstyle A$}}
\put(-40,-19){\makebox(0,0)[bc]{$\scriptstyle A^{-1}$}}

\put(40,26){\makebox(0,0)[bc]{$\scriptstyle B$}}
\put(40,19){\makebox(0,0)[tc]{$\scriptstyle B^{-1}$}}
\put(40,-26){\makebox(0,0)[tc]{$\scriptstyle B$}}
\put(40,-19){\makebox(0,0)[bc]{$\scriptstyle B^{-1}$}}

%%%%%%%%%
%RAYON****20%
%%%%%%%%%
\put(-40,-2){\qbezier[300](-22,0)(-22,-9.1)(-15.55,-15.55)}
\put(-40,-2){\qbezier[300](-15.55,-15.55)(-9.1,-22)(0,-22)}
\put(-40,-2){\qbezier[300](0,-22)(9.1,-22)(15.55,-15.55)}
\put(-40,-2){\qbezier[300](15.55,-15.55)(22,-9.1)(22,0)}

\put(-40,2){\qbezier[300](-22,0)(-22,9.1)(-15.55,15.55)}
\put(-40,2){\qbezier[300](-15.55,15.55)(-9.1,22)(0,22)}
\put(-40,2){\qbezier[300](0,22)(9.1,22)(15.55,15.55)}
\put(-40,2){\qbezier[300](15.55,15.55)(22,9.1)(22,0)}

%%%%%%%%%
%RAYON****40%
%%%%%%%%%
\put(40,0){\qbezier[300](40,0)(40,-16.56)(28.28,-28.28)}
\put(40,0){\qbezier[300](28.28,-28.28)(16.56,-40)(0,-40)}
\put(40,0){\qbezier[300](0,40)(16.56,40)(28.28,28.28)}
\put(40,0){\qbezier[300](28.28,28.28)(40,16.56)(40,0)}

\put(-40,0){\qbezier[300](0,-40)(-16.56,-40)(-28.28,-28.28)}
\put(-40,0){\qbezier[300](-28.28,-28.28)(-40,-16.5)(-40,0)}
\put(-40,0){\qbezier[300](-40,0)(-40,16.56)(-28.28,28.28)}
\put(-40,0){\qbezier[300](-28.28,28.28)(-16.56,40)(0,40)}

\color{red}

\put(-80,1){\vector(0,1){20}}
\put(-79.8,1){\line(0,1){18}}
\put(-80.2,1){\line(0,1){18}}
\put(-79.6,1){\line(0,1){18}}
\put(-80.4,1){\line(0,1){18}}

\put(80,1){\vector(0,1){20}}
\put(79.8,1){\line(0,1){18}}
\put(80.2,1){\line(0,1){18}}
\put(79.6,1){\line(0,1){18}}
\put(80.4,1){\line(0,1){18}}

\put(0,1){\vector(0,-1){20}}
\put(-0.2,1){\line(0,-1){18}}
\put(0.2,1){\line(0,-1){18}}
\put(-0.4,1){\line(0,-1){18}}
\put(0.4,1){\line(0,-1){18}}

\color{black}
%%%
%%S2GAUCHE
%%%%
\put(-2,20){\line(0,1){18}}
\put(-2,38){\vector(-1,0){18}}
\put(-4,36){\makebox(0,0)[cc]{$\scriptstyle l$}}
\put(-8,32){\vector(0,-1){12}}
\put(-8,32){\line(-1,0){12}}
\put(-10,30){\makebox(0,0)[cc]{$\scriptstyle r$}}

%%%
%%S2GAUCHE
%%%%

\put(2,20){\line(0,1){18}}
\put(2,38){\vector(1,0){18}}
\put(4,36){\makebox(0,0)[cc]{$\scriptstyle r$}}
\put(8,32){\vector(0,-1){12}}
\put(8,32){\line(1,0){12}}
\put(10,30){\makebox(0,0)[cc]{$\scriptstyle l$}}

%%%
%%S1GAUCHE
%%%%
\put(-2,-20){\line(0,-1){18}}
\put(-2,-38){\vector(-1,0){18}}
\put(-4,-36){\makebox(0,0)[cc]{$\scriptstyle r$}}
\put(-8,-32){\vector(0,1){12}}
\put(-8,-32){\line(-1,0){12}}
\put(-10,-30){\makebox(0,0)[cc]{$\scriptstyle l$}}

%%%
%%S2GAUCHE
%%%%

\put(2,-20){\line(0,-1){18}}
\put(2,-38){\vector(1,0){18}}
\put(4,-36){\makebox(0,0)[cc]{$\scriptstyle l$}}
\put(8,-32){\vector(0,1){12}}
\put(8,-32){\line(1,0){12}}
\put(10,-30){\makebox(0,0)[cc]{$\scriptstyle r$}}

%%%
%%S2DESSUS
%%%%
\put(-20,42){\vector(1,0){40}}
\put(-8,44){\makebox(0,0)[cc]{$\scriptstyle l$}}

\put(20,48){\vector(-1,0){40}}
\put(8,49){\makebox(0,0)[bc]{$\scriptstyle r$}}

%%%
%%S1DESSOUS
%%%%
\put(20,-42){\vector(-1,0){40}}
\put(-8,-44){\makebox(0,0)[cc]{$\scriptstyle l$}}

\put(-20,-48){\vector(1,0){40}}
\put(8,-49){\makebox(0,0)[tc]{$\scriptstyle r$}}

\color{mygreen}

\put(-80,-1){\vector(0,-1){20}}
\put(-79.8,-1){\line(0,-1){18}}
\put(-80.2,-1){\line(0,-1){18}}
\put(-79.6,-1){\line(0,-1){18}}
\put(-80.4,-1){\line(0,-1){18}}

\put(80,-1){\vector(0,-1){20}}
\put(79.8,-1){\line(0,-1){18}}
\put(80.2,-1){\line(0,-1){18}}
\put(79.6,-1){\line(0,-1){18}}
\put(80.4,-1){\line(0,-1){18}}

\put(0,1){\vector(0,1){20}}
\put(-0.2,1){\line(0,1){18}}
\put(0.2,1){\line(0,1){18}}
\put(-0.4,1){\line(0,1){18}}
\put(0.4,1){\line(0,1){18}}

\color{black}

\put(-82,10){\makebox(0,0)[cr]{$\scriptstyle Z$}}
\put(-82,-10){\makebox(0,0)[cr]{$\scriptstyle \id$}}
\put(82,10){\makebox(0,0)[cl]{$\scriptstyle Z$}}
\put(82,-10){\makebox(0,0)[cl]{$\scriptstyle \id$}}
\put(-2,10){\makebox(0,0)[cr]{$\scriptstyle \id$}}
\put(2,-10){\makebox(0,0)[cl]{$\scriptstyle Z$}}

\color{purple}
\put(0,40){\circle*{1.5}}
%\put(0,40){\line(4,1){30}}

\put(40,50){\vector(-4,-1){28}}
\put(41,50){\makebox(0,0)[lc]{$N_{2}$}}
\put(0,-40){\circle*{1.5}}
\put(0,-40){\circle*{1.5}}
\put(-40,-50){\vector(4,1){28}}
\put(-41,-50){\makebox(0,0)[cr]{$N_{1}$}}

\end{picture}
\caption{The graph $\Omega$ for the circulating point $M_{t} = f_{t}(0)$. \label{figure38}}

\end{figure}
\setlength{\unitlength}{0.7mm}
%%%%%
%%%%%
%%%%%

One can first suppose that for all $t\in [0;1]$, $f_{t}(-1) = -1$ and $f_{t}(1) = 1$~: if necessary, replace $(f_{t})$ by the isotopy
$$\tilde{f}_{t}(z) = \frac{2f_{t}(z) - f_{t}(-1) - f_{t}(1)}{f_{t}(1) - f_{t}(-1)}\, .$$    
Then define, for $m(t)=f_{t}(0)$, the isotopy $\varphi_{t} = \Psi_{m(t)} \circ f_{t}$. The new point $M_{t} = \varphi_{t}(0)$ keeps moving in $\Omega$, and one can then use further isotopies on $\varphi_{t}$ in order to suppress any step backward of the point $M_{t}$ in $\Omega$, and even suppose that the speed of $M_{t}$ is constant between the stations $N_{1}$ and $N_{2}$. In this last case, the movement of $M_{t}$ can be described from the starting point $I$ as an initial direction, to $N_{2}$ or to $N_{1}$, and a finite serie of turns on the left or on the right at the successive stations $N_{1}$ and $N_{2}$ (see figure \ref{figure38}), until the point returns back to the final point $I= f_{1}(I)$. 

Define a code $C(f)$ for this movement in the following manner~: take the succession of turns on left $l$ and right $r$ in their order of appearence from the right to the left (as for the composition of maps). Add a $Z$ on the right if the movement begins downward from $I$ to $N_{1}$, and add a $Z$ on the left if  the movement finishes downward from $N_{2}$ to $I$ (in the direction if the red arrow on figure \ref{figure38}). For example, the code for an eight inside $\Omega$, beginning in the direction of $N_{1}$ and turning first around $p_{1}$ left on the right and then around $p_{2}$ left on the right is $Z(l^{2}r^{2})Z$. This way, for any $f \in \hom$, one can define two unique caracteristic sequences of carcteristic numbers (see theorem \ref{caracteristic} page \pageref{caracteristic}) and two numbers $\varepsilon_{i}$ and $\varepsilon_{f}$ in  $\big\{ 0;1 \big\} $.

One has then the following result~: 

\begin{theo}\label{code}
Let $(f_{t})_{t\in [0;1]}$ be an isotopy whose code in $\Omega$ is $C(f)$. Then 
$$C(f) =Z^{\varepsilon_{f}} \left(\prod_{i=1}^{n} l^{\alpha_{i}}r^{\beta_{i}} \right)Z^{\varepsilon_{i}}  \iff  \Phi(f) = Z^{\varepsilon_{f}} \left(\prod_{i=1}^{n} A^{\alpha_{i}}B^{\beta_{i}} \right) Z^{\varepsilon_{i}} $$
\end{theo}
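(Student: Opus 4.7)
The strategy is to exploit the group-morphism property of $\Phi$ (Theorem~\ref{canonicalform}) and reduce to a finite catalogue of elementary correspondences. First I would show that the code $C(f)$ is a well-defined invariant of the isotopy class of $(f_{t})$: the successive reductions described in Section~\ref{relative} (composing with the continuously varying $\Psi_{m(t)}$ to push $f_{t}(0)$ onto the graph $\Omega$, eliminating backtrackings, and normalising the speed between the stations $N_{1}$, $N_{2}$ and $I$) are all through isotopies fixing $p_{1}$, $p_{2}$, $p_{3}$ at the endpoints, so the time-$1$ homeomorphism is unchanged and the associated sequence of left/right turns with its initial and final orientations at $I$ is canonical.

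Next I would establish the elementary dictionary. The claim is that an isotopy of $f_{A}$ whose middle point rises from $I$ to $N_{2}$, circles once around $p_{1}$ (turn $l$ at $N_{2}$) and descends back to $I$, realises exactly the code~$l$. This is the rotary isotopy of Definition~\ref{definition1} (after composition with $\Psi_{m(t)}$, which only straightens the trajectory onto $\Omega$); symmetric constructions give codes $r$ for $f_{B}$, $lZ$ for $f_{A}\circ\varsigma$, and $Z$ for $\varsigma$ itself (the central symmetry, realised by a half-turn sending $I$ down to $N_{1}$ and back up, whence the single trailing $Z$). More generally, I would check that an isotopy consisting of $\alpha$ consecutive left turns at $N_{2}$ followed by $\beta$ consecutive right turns at $N_{2}$ is isotopic relative to $X$ to the composition $f_{A}^{\alpha}\circ f_{B}^{\beta}$ (later proved: this is precisely how the unfolding picture of Theorem~\ref{decomposition} is drawn), with the middle point making $\alpha$ loops in the shell around $p_{1}$ then $\beta$ loops around $p_{3}$, without crossing back through $I$ in between.

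The proof of the theorem then follows by splitting the code at every return to the axis configuration $I$ with outgoing direction upward. If $C(f)=Z^{\varepsilon_{f}}l^{\alpha_{n}}r^{\beta_{n}}\cdots l^{\alpha_{1}}r^{\beta_{1}}Z^{\varepsilon_{i}}$, I factor the isotopy $(f_{t})$ as a concatenation of isotopies each realising one of the elementary blocks above. By the elementary dictionary each block has turbulence matrix equal to the corresponding word in $A$, $B$, $Z$, and by the morphism property of $\Phi$ the product of these matrices is exactly $\Phi(f)$ in the announced canonical form. Conversely, starting from the canonical form of $\Phi(f)$, the same elementary isotopies concatenated in order produce an isotopy with the prescribed code, and Theorem~\ref{canonicalform} guarantees it ends at the correct homeomorphism class.

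The main obstacle will be the careful handling of the two endpoint $Z$-factors and the rigorous justification of the ``no backtracking'' reduction. The boundary $Z$'s correspond to the binary choice of initial and final direction at $I$ (upward to $N_{2}$ or downward to $N_{1}$); to see that they correctly match the canonical form one must check that inserting a pair of trivial descents $I\to N_{1}\to I$ at either end changes both the code and the matrix by exactly a factor $Z$ on the same side, using the zip relations $Z=A^{-1}BA^{-1}=B^{-1}AB^{-1}$ of Section~\ref{canonicalsection} to identify the resulting isotopy with a central symmetry. The subtler verification is that removing a pair of contiguous opposite turns (a genuine backtracking along $\Omega$) is realised by an isotopy rel $X$, which reduces to the fact that a loop around $N_{1}$ or $N_{2}$ bounds a disc disjoint from $X$; this is the topological content that makes $C(f)$ an invariant and ensures the codes on the two sides of the equivalence are in bijection.
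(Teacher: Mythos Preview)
Your overall strategy (reduce to elementary pieces via the morphism property of $\Phi$) is the paper's strategy too, but your choice of decomposition does not work as stated. You propose to cut the isotopy at each return of $M_t$ to $I$ and to identify each resulting block with some $f_A^{\alpha}\circ f_B^{\beta}$ having code $l^{\alpha}r^{\beta}$. Two things go wrong. First, $f_A^{\alpha}\circ f_B^{\beta}$ does not fix $p_2$ in general (already $f_A$ alone sends $p_2$ to $p_1$), so it has no code in the present framework and your ``elementary dictionary $f_A\leftrightarrow l$'' is not well-posed. Second, returns to $I$ do \emph{not} occur at the block boundaries of the word: for the code $rllr$ (corresponding to $f_B f_A^{2} f_B$, which does fix $p_2$) one checks by tracing the theta-graph that $M_t$ never revisits $I$ between start and finish, so your splitting produces a single piece that is not of the form $l^{\alpha}r^{\beta}$, and you are back to proving the full statement for that piece.

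The paper's remedy is to change the set of stations: instead of $N_1,N_2$ it cuts the motion at $E_1,I,E_2$ (the crossings of $\Omega$ with the horizontal axis). At such a time the triple $\bigl(f_t(p_1),f_t(p_2),f_t(p_3)\bigr)=(p_1,M_t,p_3)$ differs from $X$ only by the position of the middle point at $E_1$, $I$ or $E_2$; composing with a fixed horizontal isotopy $\varphi$ (resp.\ $\psi$) that carries $(E_1,p_1,p_3)$ (resp.\ $(p_1,p_3,E_2)$) onto $(p_1,p_2,p_3)$ produces a genuine element of $\homplus$ at each cut. The elementary piece between two consecutive axis crossings is then a single half-arc around $p_1$ or $p_3$, and one reads off directly from the figure that it equals $f_A^{\pm1}$ or $f_B^{\pm1}$. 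Finally one checks on the same figure that the $l/r$ labels at $N_1,N_2$ coincide with the $A/B$ labels of those half-arcs, with the boundary $Z$'s recording the initial and final direction at $I$; since $Z^2=\id$, all intermediate $Z$'s cancel in pairs and only the first and last survive. This is the missing idea: you need a decomposition whose pieces are \emph{individually} in the mapping class group, and the axis crossings at $E_1,I,E_2$ (not the returns to $I$) provide exactly that.
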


\begin{proof}
Consider the movement of $M_{t}$ from its starting point $I$ but change the description of the movement. Instead of the stations $N_{1}$ and $N_{2}$, use the stations $E_{1}$, $I$ and $E_{2}$ to decompose the movement into elementary moves in $\Omega$ between those stations. Consider an horizontal\footnote{An isotopy is horizontal if it keeps the second coordinate unchanged.} isotopy $(\varphi_{t})$ between $\varphi_{0}=\id$ and $\varphi = \varphi_{1}$ such that 
$\varphi(E_{1})=p_{1}$, $\varphi(p_{1})=I$, $\varphi(p_{3})=p_{3}$ and 
an horizontal isotopy $(\psi_{t})$ between $\psi_{0}=\id$ and $\psi=\psi_{1}$ such that 
$\psi(p_{1})=p_{1}$, $\psi(p_{3})=I$, $\psi(E_{2})=p_{3}$.
At each station of the moving point $M$ in $E_{1}$ or $E_{2}$, introduce respectively an additional round trip along $\varphi$, $\varphi^{-1}$ or $\psi$, $\psi^{-1}$. This way, a decomposition of the hole movement into elementary isotopies between homeorphisms in $\homplus$ appears.

Those elementary moves can easily be analysed on figure \ref{figure38} and traduced in terms of the composition of at most two of the applications $f_{A}$, $f_{B}$, $f_{A}^{-1}$, $f_{B}^{-1}$. Furthermore, if one changes the direction for the letter $Z$ in $E_{1}$ and $E_{2}$, the formulas for those applications supports the precise rule of the theorem : for a left turn $l$ the letter $A$, for a right turn $r$ the letter $B$, and a possible initial with a possible final $Z$ depending on the orientation upward or downward of the movement and according to figure \ref{figure38}. And since the movement of $M$ never goes back, the final position at one of those moves is the initial position at the following one : since $Z^{2} = \id$, only the first and the last possible letters $Z$ are conserved in the final formula for any possible composition, and all the letters $l$ and $r$ have been changed into the respective letters $A$ and $B$.

Let us check for exemple the move from $E_{1}$ passing through $N_{2}$ either to $I$ or $E_{2}$. The code for the first case is $ZrZ$ for in this case the point moves upward in $E_{1}$, turns on its right in $N_{2}$ and finishes in $I$ downward. But according to the zip rules, this move also corresponds to $A^{-1} = ZBZ$ (see figure \ref{figure38}). In the second case, one obtains the identity $AZ= AA^{-1}BA^{-1}= BA^{-1}$.    
\end{proof}

\begin{remarks}

\item A natural representation of $\mathrm{S}_{3}$ as a quotient of the free group generated by $A$ and $B$ and the relations $A^{2}=\id$; $B^{2}=\id$, $(AB)^{3}=\id$ naturally appears.

\item One can also check that for any initial position and direction in $\Omega$, twice the same turn on the left or on the right brings to the initial position. This corresponds of course to the reduction modulo $2$.

\item When $\rdeux(f) = \id$, one can consider partial topological numbers usually called the linking numbers : they count the total number of turns of a fixed point around another during an isotopy (see P. Le Calvez \cite{PL1}, \cite{PL2}, \cite{PL3}, \cite{PL4}, \cite{PL5}, and J.M. Gambaudo \cite{JMG1}, \cite{PL4}). Since it is always possible to consider an isotopy $\ft$ with two given fixed points immobilized during the isotopy, there cannot be any meaning for a concept of linking number between two fixed points of a homeomorphism of the plane $\R^{2}$. Those numbers depend on the choice of a particular isotopy and can even be arbitrarily fixed. On the contrary, once two fixed points $p_{1}$ and $p_{3}$ are given, the topological trajectory $f_{t}(p_{2})$ of a third fixed point or periodic orbit between the two others is a very interesting intrinsic caracteristic with a deep dynamical signification : this object defines the topological snail and is the basic tool to exhibit new fixed points induced by the turbulence phenomenon. It can by no mean be reduced to nor replaced by any of the partial linking numbers associated to an isotopy.

\item Given a fixed point $p$, one can naturally define an equivalence relation on the set of the other fixed points of $f$, by relating two fixed points $x$ and $y$ such that the trajectory of $p$ during an isotopy that keeps $x$ and $y$ immobilized doesn't separate them (the curve is homotopic to a point in $\mathbb{S}^{2} \setminus \lbrace x; y \rbrace$). In terms of the present theory, this relation means that if $p_{1}=x$, $p_{3}= y$, and if the code for $f$ is $\mathrm{C}(f) = Zgd^{2n+1}g$, or $\mathrm{C}(f) = gd^{2n+1}gZ$, or $\mathrm{C}(f) = Zdg^{2n+1}d$, or $\mathrm{C}(f) = dg^{2n+1}dZ$, then we say that $x$ and $y$ are related. And this is particularily interesting since this can of course be considered for an homeomorphism $f=F^{n}$ that is an iterate of another one, with a periodic point $p$ of order $n$ and precisely means that $x$ and $y$ are in the same Nielsen class relatively to $p$.

\item To compute the linking numbers for $f$ relatively to $\lbrace p_{1}; p_{3} \rbrace$, simply consider another decomposition of the movement of $f_{t}(p_{2})$ (see section \ref{relative} proof of theorem \ref{code}), with stations $E_{1}$; $I$ and $E_{2}$. When $M_{t}$ passes from $E_{1}$ to $E_{2}$ without visiting $I$, introduce an intermediary round trip on $[N_{1}; I]$ or $[I;N_{2}]$. The movement will therefore be naturally decomposed into successive powers of $\alpha= A^{2}$, $\beta=B^{2}$, $\alpha^{-1} = A^{-2}$, $\beta^{-1}=B^{-2}$, with no possible simplification. The sum of the exponents of $\alpha$ will be the linking number of $p_{2}$ relatively to $p_{1}$, and the sum of the exponents of $\beta$ will be the linking number of $p_{2}$ relatively to $p_{3}$.

\item The zip relations can also be used to compute the linking number from the turbulence code of an homeomorphism. Read the code from right to left, and if you encounter an odd power of a letter $A$ or $B$, increase it to the next even power, correct the formula on the left with the inverse power of this letter, and use the zip relations to introduce a $Z$. Go on until the hole code gets written with only even powers of $A$ and $B$ and some letters $Z$. When $\rdeux(f) = \id$ this algorithm finishes after an even number of letters $Z$, with only even powers of $A$ and $B$. For example~:

\begin{eqnarray*}
BABABA^{2}BAB^{5}AB^{3}A^{2} & = & BABABA^{2}BAB^{5}B\left(B^{-1}AB^{-1}\right)B^{4}A^{2} \\
& = &  BABABA^{2}A\left(A^{-1}BA^{-1}\right)A^{2}B^{6}Z\left(B^{4}A^{2}\right)\\
& = &  BABAA\left(A^{-1}BA^{-1}\right)\left(A^{4}\right)\left(ZA^{2}B^{6}Z\right)\left(B^{4}A^{2}\right)\\
& = &  BB\left(B^{-1}AB^{-1}\right)\left(B^{2}A^{2}Z\right)\left(A^{4}\right)\left(ZA^{2}B^{6}Z\right)\left(B^{4}A^{2}\right) \\
& = & B^{2}\left(ZB^{2}A^{2}Z\right)\left(A^{4}\right)\left(ZA^{2}B^{6}Z\right)\left(B^{4}A^{2}\right) \\
& = & \left(B^{2}\right)\left(A^{-2}B^{-2}\right)\left(A^{4}\right)\left(B^{-2}A^{-6}\right)\left(B^{4}A^{2}\right)
\end{eqnarray*}

In this case, the linking number with $p_{1}$ is $n_{1} = 2-6+4-2=-2$ and the linking number with $p_{3}$ is $n_{2}=4-2-2+2=2.$

 \item An important universal curve appears in the preceeding analysis. It corresponds to a code $gdgdg$, or $(AB)^{3}$. In this case, the trace $\tr(f)=3$ is minimal for a turbulent homeomorphism and the linking number with $p_{1}$ and $p_{3}$ is zero (see figure \ref{figure39}).

\item It will be relevant to consider a turbulent  homeomorphism as progressively composed of successive homeomorphisms of the type $A$ and $B$. In this case, once the homeomorphism becomes turbulent (for example some $f= f_{A} \circ f_{B}$), two fixed points appear that can be taken as base points $\omega_{A}$ and $\omega_{B}$ for a retrospective analysis of the considered homeomorphism : in this case, there are five points to consider~: the invariant set $X$ with the fixed $\omega_{A}$ and $\omega_{B}$. One can for example consider rotary homeomorphisms $f_{A}$ and $f_{B}$ respectively around $\omega_{A}$ and $\omega_{B}$.

\item The circulation around two fixed points follow a system of successive rotations according to inverse directions, as in the case of convection cells in fluid dynamics. And this point of view will generalize below to define general turbulence in the case of more than three points. 

%%%%
%%%%
%%%%
\setlength{\unitlength}{0.7mm}
\begin{figure}
%\begin{picture}(100, 90)(-140,-30)
\begin{picture}(100,100)(-120,-55)

\color{black}

\put(-30,0){\circle*{1.5}}
\put(-30,-2){\makebox(0,0)[tc]{$P_{1}$}}

\color{black}

\put(-80,0){\circle*{1.5}}

\put(80,0){\circle*{1.5}}

\put(0,0){\circle*{1.5}}
\put(-2,0){\makebox(0,0)[tr]{$ P_{2}$}}

\color{black}

\put(-40,23){\makebox(0,0)[bc]{$M_{t}= f_{t}(P_{2})$}}

\color{black}

\put(30,0){\circle*{1.5}}
\put(30,-2){\makebox(0,0)[tc]{$P_{3}$}}

%LE HAUT
\put(0,0){\qbezier[500](-80,0)(-80,20)(-60,30)}
\put(0,0){\qbezier[500](-60,30)(-40,40)(0,40)}
\put(0,0){\qbezier[500](80,-10)(80,20)(60,30)}
\put(0,0){\qbezier[500](60,30)(40,40)(0,40)}

%LE BAS DROITE

\put(0,0){\qbezier[500](80,0)(80,-40)(40,-40)}
\put(0,0){\qbezier[500](40,-40)(0,-40)(0,0)}
\put(0,0){\qbezier[500](0,0)(0,20)(-30,20)}

%GAUCHE
\put(0,0){\qbezier[500](-30,20)(-55,20)(-55,0)}
\put(0,0){\qbezier[500](-55,0)(-55,-30)(0,-30)}
\put(0,0){\qbezier[500](0,-30)(55,-30)(55,0)}

\put(0,0){\qbezier[500](55,0)(55,20)(30,20)}
\put(0,0){\qbezier[500](30,20)(0,20)(0,0)}
\put(0,0){\qbezier[500](0,0)(0,-40)(-40,-40)}
\put(0,0){\qbezier[500](-40,-40)(-80,-40)(-80,0)}

\color{black}

%\put(-10,-30){\makebox(0,0)[cc]{$\scriptstyle l$}}

\color{mygreen}

\put(-55,-1){\vector(0,-1){20}}
\put(-54.8,-1){\line(0,-1){18}}
\put(-55.2,-1){\line(0,-1){18}}
\put(-54.6,-1){\line(0,-1){18}}
\put(-55.4,-1){\line(0,-1){18}}

\put(80,-1){\vector(0,-1){20}}
\put(79.8,-1){\line(0,-1){18}}
\put(80.2,-1){\line(0,-1){18}}
\put(79.6,-1){\line(0,-1){18}}
\put(80.4,-1){\line(0,-1){18}}

\put(0,1){\vector(0,1){20}}
\put(-0.2,1){\line(0,1){18}}
\put(0.2,1){\line(0,1){18}}
\put(-0.4,1){\line(0,1){18}}
\put(0.4,1){\line(0,1){18}}

\color{black}

\put(-82,10){\makebox(0,0)[cr]{$\scriptstyle Z$}}
\put(-62,-10){\makebox(0,0)[cr]{$\scriptstyle \id$}}
\put(-55,0){\circle*{1.5}}

\put(55,0){\circle*{1.5}}

\put(62,10){\makebox(0,0)[cl]{$\scriptstyle Z$}}
\put(82,-10){\makebox(0,0)[cl]{$\scriptstyle \id$}}
\put(-2,15){\makebox(0,0)[cr]{$\scriptstyle \id$}}
\put(2,-10){\makebox(0,0)[cl]{$\scriptstyle Z$}}

\color{red}

\put(-80,1){\vector(0,1){20}}
\put(-79.8,1){\line(0,1){18}}
\put(-80.2,1){\line(0,1){18}}
\put(-79.6,1){\line(0,1){18}}
\put(-80.4,1){\line(0,1){18}}

\put(55,1){\vector(0,1){20}}
\put(54.8,1){\line(0,1){18}}
\put(55.2,1){\line(0,1){18}}
\put(54.6,1){\line(0,1){18}}
\put(55.4,1){\line(0,1){18}}

\put(0,-1){\vector(0,-1){20}}
\put(-0.2,-1){\line(0,-1){18}}
\put(0.2,-1){\line(0,-1){18}}
\put(0.2,-1){\line(0,-1){18}}
\put(-0.4,-1){\line(0,-1){18}}

\color{blue}

\put(-30,20){\circle*{1}}

\put(-30,20){\vector(-1,0){10}}
\put(-30,20.2){\line(-1,0){9}}
\put(-30,19.8){\line(-1,0){9}}

\put(0,-30){\circle*{1}}

\put(0,-30){\vector(1,0){10}}
\put(0,-30.2){\line(1,0){9}}
\put(0,-29.8){\line(1,0){9}}

\put(30,20){\circle*{1}}

\put(30,20){\vector(-1,0){10}}
\put(30,20.2){\line(-1,0){9}}
\put(30,19.8){\line(-1,0){9}}

\put(-40,-40){\circle*{1}}

\put(-40,-40){\vector(-1,0){10}}
\put(-40,-40.2){\line(-1,0){9}}
\put(-40,-39.8){\line(-1,0){9}}

\put(40,-40){\circle*{1}}
\put(40,-40){\vector(-1,0){10}}
\put(40,-40.2){\line(-1,0){9}}
\put(40,-39.8){\line(-1,0){9}}

\put(0,40){\circle*{1}}
\put(0,40){\vector(1,0){10}}
\put(0,40.2){\line(1,0){9}}
\put(0,39.8){\line(1,0){9}}

\end{picture}
\caption{The important curve $(AB)^{3}$ corresponds to $gdgdg$. The linking number of $P_{2}$ with each of the points $P_{1}$ and $P_{3}$ is zero but the isotopy class is of course not trivial. It is the minimum of the trace $\tr(f)$ on $\mathcal{T}$  \label{figure39}}

\end{figure}
\setlength{\unitlength}{0.7mm}
%%%%%
%%%%%

 \end{remarks}
%%%%%

%Let $(n;p)$ be two positive integers relatively prime, $\sn(n;p)$ the associated snail and an homeomorphism $f$ such that $f\in \homplus$ and $\sn(n;p)= f\left( \sn(1;1) \right)$. In order to understand the direct access to the king's chamber $\da(n;p)$ (see section \ref{figures} and \ref{simple} and figure \ref{figure23}), it is necessary to locate it among the points $p_{1}$; $p_{2}$;$p_{3}$ and to understand this path from a topological point of view. 

%This is always the case for an iterate of any homeomorphism $f\in \homplus$, and if one considers a homeomorphism $f$ with two fixed points and a periodic orbit of period $n$, one can always set $F=f^{n}$ to obtain an homeomorphism with a set of three fixed points. In this case, I will suppose that $p_{1}$ and $p_{3}$ are the fixed points, $p_{2}$ the periodic point for $f$, and I will study $f$ using an isotopy $(f_{t})_{t\in [0;1]}$ such that $f_{0} = \id$ and $f_{1}=f$ and that $f$ keeps those two points $p_{1}$ and $p_{3}$ all along fixed. This method is justified by the following important result~:

%\section{The generalised snail.}

%%%%%
%%%%%
%\input{figure49}
%%%%%
%%%%%

%%%%%
%%%%%
%ESCARGOT-TOPOLOGIQUE

\begin{figure}
\begin{picture}(100, 160)(-110,-70)

%Un demi-cercle
%S=0.382 fois R; T = O.707 fois T
%\put(0,0){\qbezier[300](RX,0)(RX,SX)(TX,TX)}
%\put(0,0){\qbezier[300](TX,TX),(SX,RX),(0,RX)}

%\put(0,0){\qbezier[300](0,RX),(-SX,RX)(-TX,TX)}
%\put(0,0){\qbezier[300](-TX,TX)(-RX,SX)(-RX,0)}

%\put(0,0){\qbezier[300](-RX,0)(-RX,-SX)(-TX,-TX)}
%\put(0,0){\qbezier[300](-TX,-TX),(-SX,-RX),(0,-RX)}

%\put(0,0){\qbezier[300](0,-RX),(SX,-RX)(TX,-TX)}
%\put(0,0){\qbezier[300](TX,-TX)(RX,-SX)(RX,0)}
\color{blue}
\put(-110,0){\line(1,0){40}}
\put(-100,0){\vector(1,0){0}}

\color{black}

\put(-70,0){\circle*{1}}
\put(0,0){\circle*{1}}
\put(30,0){\circle*{1}}

\color{blue}

%C1
\put(-70,0){\qbezier[300](-25,0)(-25,-9.65)(-17.67,-17.67)}
\put(-87.67,-17.67){\vector(1,-1){0}}
\put(-70,0){\qbezier[300](-17.67,-17.67),(-9.65,-25),(0,-25)}
\put(-70,0){\qbezier[300](0,-25),(9.65,-25)(17.67,-17.67)}
\put(-70,0){\qbezier[300](17.67,-17.67)(25,-9.65)(25,0)}

%C2
%S=0.382 fois R; T = O.707 fois T
%R=45 ;45 S=0.382Ra=17.2 ; Ta=0.707 Ra=31.8
\put(0,0){\qbezier[300](-45,0)(-45,17.2)(-31.8,31.8)}
\put(0,0){\qbezier[300](-31.8,31.8),(-17.2,45),(0,45)}
\put(0,0){\qbezier[300](0,45),(17.2,45)(31.8,31.8)}
\put(31.8,31.8){\vector(1,-1){0}}
\put(0,0){\qbezier[300](31.8,31.8)(45,17.2)(45,0)}

%C3
%R=15 S=0.382*R T=0.707R

\put(30,0){\qbezier[300](-15,0)(-15,-5.73)(-10.6,-10.6)}
\put(30,0){\qbezier[300](-10.6,-10.6),(-5.73,-15),(0,-15)}
\put(30,0){\qbezier[300](0,-15),(5.73,-15)(10.6,-10.6)}
\put(19.4,-10.6){\vector(-1,1){0}}
\put(30,0){\qbezier[300](10.6,-10.6)(15,-5.73)(15,0)}

%C4
%R=15 S=0.382*R T=0.707R
\put(0,0){\qbezier[300](-15,0)(-15,5.73)(-10.6,10.6)}
\put(0,0){\qbezier[300](-10.6,10.6),(-5.73,15),(0,15)}
\put(0,0){\qbezier[300](0,15),(5.73,15)(10.6,10.6)}
\put(-10.6,10.6){\vector(-1,-1){0}}
\put(0,0){\qbezier[300](10.6,10.6)(15,5.73)(15,0)}

%C5
%S=0.382 fois R; T = O.707 fois T
%R=45 ;45 S=0.382Ra=17.2 ; Ta=0.707 Ra=31.8
\put(30,0){\qbezier[300](-45,0)(-45,-17.2)(-31.8,-31.8)}
\put(30,0){\qbezier[300](-31.8,-31.8),(-17.2,-45),(0,-45)}
\put(30,0){\qbezier[300](0,-45),(17.2,-45)(31.8,-31.8)}
\put(61.8,-31.8){\vector(1,1){0}}
\put(30,0){\qbezier[300](31.8,-31.8)(45,-17.2)(45,0)}

%C6 R=75
%S=0.382 fois R; T = O.707 fois T
\put(0,0){\qbezier[300](75,0)(75,28.75)(53.02,53.02)}
\put(0,0){\qbezier[300](53.02,53.02),(28.75,75),(0,75)}
\put(0,0){\qbezier[300](0,75),(-28.75,75)(-53.02,53.02)}
\put(0,0){\qbezier[300](-53.02,53.02)(-75,28.75)(-75,0)}
\put(-53.02,53.02){\vector(-1,-1){0}}

\color{mygreen}

\put(-70,0){\line(1,0){70}}
\put(-60,0){\vector(1,0){0}}

%C6 R=65
%S=0.382 fois R = 24.83; T = O.707 fois R = 45.95
\put(0,0){\qbezier[300](-65,0)(-65,24.83)(-45.95,45.95)}
\put(0,0){\qbezier[300](-45.95,45.95),(-24.83,65),(0,65)}
\put(0,0){\qbezier[300](0,65),(24.83,65)(45.95,45.95)}
\put(0,0){\qbezier[300](45.95,45.95)(65,24.83)(65,0)}
\put(45.95,45.95){\vector(1,-1){0}}

%C7 R=35
%S=0.382 fois R = 13.37; T = O.707 fois R = 24.74
\put(30,0){\qbezier[300](35,0)(35,-13.37)(24.74,-24.74)}
\put(30,0){\qbezier[300](24.74,-24.74),(13.37,-35),(0,-35)}
\put(30,0){\qbezier[300](0,-35),(-13.37,-35)(-24.74,-24.74)}
\put(5.26,-24.74){\vector(-1,1){0}}
\put(30,0){\qbezier[300](-24.74,-24.74)(-35,-13.37)(-35,0)}

\color{red}

\put(0,0){\line(1,0){30}}
\put(10,0){\vector(1,0){0}}
%C7 R=25
%S=0.382 fois R = 9.55 ; T = O.707 fois R = 17.67
\put(30,0){\qbezier[300](-25,0)(-25,-9.55)(-17.67,-17.67)}
\put(30,0){\qbezier[300](-17.67,-17.67),(-9.55,-25),(0,-25)}
\put(30,0){\qbezier[300](0,-25),(9.55,-25)(17.67,-17.67)}
\put(47.67,-17.67){\vector(1,1){0}}
\put(30,0){\qbezier[300](17.67,-17.67)(25,-9.55)(25,0)}

%C8 R=55
%S=0.382 fois R = 21.01 ; T = O.707 fois R = 38.88

\put(0,0){\qbezier[300](55,0)(55,21.01)(38.88,38.88)}
\put(0,0){\qbezier[300](38.88,38.88),(21.01,55),(0,55)}
\put(0,0){\qbezier[300](0,55)(-21.01,55)(-38.88,38.88)}
\put(-38.88,38.88){\vector(-1,-1){0}}
\put(0,0){\qbezier[300](-38.88,38.88),(-55, 21.01),(-55,0)}

%C9
%R=15 S=0.382*R = 5.73 T=0.707R = 10.6
\put(-70,0){\qbezier[300](15,0)(15,-5.73)(10.6,-10.6)}
\put(-70,0){\qbezier[300](10.6,-10.6),(5.73,-15),(0,-15)}
\put(-70,0){\qbezier[300](0,-15),(-5.73,-15)(-10.6,-10.6)}
\put(-80.6,-10.6){\vector(-1,1){0}}
\put(-70,0){\qbezier[300](-10.6,-10.6)(-15,-5.73)(-15,0)}

%C10
%R=85 S=0.382*R = 32.47 T=0.707R = 60.1
\put(0,0){\qbezier[300](-85,0)(-85,32.47)(-60.1,60.1)}
\put(0,0){\qbezier[300](-60.1,60.1),(-32.47,85),(0,85)}
\put(0,0){\qbezier[300](0,85),(32.47,85)(60.1,60.1)}
\put(60.1,60.1){\vector(1,-1){0}}
\put(0,0){\qbezier[300](60.1,60.1)(85, 32.47)(85,0)}

%C11
%R=55
%S=0.382 fois R = 21.01 ; T = O.707 fois R = 38.88

\put(30,0){\qbezier[300](55,0)(55,-21.01)(38.88,-38.88)}
\put(30,0){\qbezier[300](38.88,-38.88),(21.01,-55),(0,-55)}
\put(30,0){\qbezier[300](0,-55)(-21.01,-55)(-38.88,-38.88)}
\put(-8.88,-38.88){\vector(-1,1){0}}
\put(30,0){\qbezier[300](-38.88,-38.88),(-55, -21.01),(-55,0)}

%C12 R=25
%S=0.382 fois R = 9.55 ; T = O.707 fois R = 17.67
\put(0,0){\qbezier[300](-25,0)(-25,9.55)(-17.67,17.67)}
\put(0,0){\qbezier[300](-17.67,17.67),(-9.55,25),(0,25)}
\put(0,0){\qbezier[300](0,25),(9.55,25)(17.67,17.67)}
\put(17.67,17.67){\vector(1,-1){0}}
\put(0,0){\qbezier[300](17.67,17.67)(25,9.55)(25,0)}

\color{black}

\put(30,0){\line(1,0){70}}
\put(100,0){\vector(1,0){0}}

%C14 R=35
%S=0.382 fois R = 17.19; T = O.707 fois R = 31.81
\put(0,0){\qbezier[300](35,0)(35,13.37)(24.74,24.74)}
\put(0,0){\qbezier[300](24.74,24.74),(13.37,35),(0,35)}
\put(0,0){\qbezier[300](0,35),(-13.37,35)(-24.74,24.74)}
\put(-24.74,24.74){\vector(-1,-1){0}}
\put(0,0){\qbezier[300](-24.74,24.74)(-35,13.37)(-35,0)}

%C15 R=65

%S=0.382 fois R = 24.83; T = O.707 fois R = 45.95

\put(30,0){\qbezier[300](-65,0)(-65,-24.83)(-45.95,-45.95)}
\put(30,0){\qbezier[300](-45.95,-45.95)(-24.83,-65)(0,-65)}

\put(30,0){\qbezier[300](0,-65),(24.83,-65)(45.95, -45.95)}
\put(30,0){\qbezier[300](45.95, -45.95)(65,-24.83)(65,0)}

\put(75.95,-45.95){\vector(1,1){0}}

%\put(0,70){\vector(0,1){2}}
%\put(-70,0){\line(1,0){140}}
%\put(68,0){\vector(1,0){2}}
%\put(1,-5){\makebox(0,0)[tl]{$\scriptstyle O$}}
%\put(101,-1){\makebox(0,0)[tl]{$\scriptstyle x$}}
%\put(1,70){\makebox(0,0)[tl]{$\scriptstyle y$}}

%cercle de rayon 60

%\put(0,0){\qbezier[300](-100,0)(-100,-38.6)(-70.7,-70.7)}
%\put(0,0){\qbezier[300](-70.7,-70.7),(-38.6,-100),(0,-100)}
%\put(0,0){\qbezier[300](0,-100),(38.6,-100)(70.7,-70.7)}
%\put(0,0){\qbezier[300](70.7,-70.7)(100,-38.6)(100,0)}
%\put(0,0){\qbezier[300](-100,0)(25,-38.6)(70.7,-70.7)}
%\put(0,0){\qbezier[300](-70.7,-70.7),(-38.6,-100),(0,-100)}
%\put(0,0){\qbezier[300](0,-25),(38.6,-100)(70.7,-70.7)}
%\put(0,0){\qbezier[300](70.7,-70.7)(100,-38.6)(100,0)}

\end{picture}
\caption{A topological snail. \label{figure30}}

\end{figure}
%%%%%
%%%%%

\end{document}